\pdfoutput=1
\documentclass[11pt]{article} 
\def\arXiv{1}  
\usepackage[inline, shortlabels]{enumitem}
\usepackage{amssymb}
\usepackage{amsbsy}
\usepackage{amsmath}
\usepackage{amsthm}
\usepackage{amsfonts}
\usepackage{latexsym}
\usepackage{graphicx}
\usepackage{xifthen}
\usepackage{mathtools}
\usepackage[table, dvipsnames]{xcolor}
\usepackage{enumitem}
\usepackage{thmtools}
\usepackage{thm-restate}
\usepackage{graphicx}
\usepackage{color}
\usepackage{bbm}
\usepackage{xifthen}
\usepackage{xspace}
\usepackage{mathtools}
\usepackage{titlesec}
\usepackage{setspace}
\usepackage{etoolbox}
\usepackage{xfrac}
\usepackage{esvect}
\usepackage{nicefrac}
\usepackage{cases}
\usepackage{empheq}
\usepackage{booktabs}
\usepackage{multirow}
\usepackage{xparse}
\usepackage{subfig}
\usepackage{caption}
\usepackage{cprotect}
\usepackage{bbm}
\usepackage{placeins}
\usepackage{url}    
\usepackage{nicefrac}  
\usepackage{microtype}   
\usepackage[hidelinks]{hyperref}
\hypersetup{
	colorlinks=true,
	linkcolor=blue!70!black,
	citecolor=blue!70!black,
	urlcolor=blue!70!black
}  

\newcommand{\notarxiv}[1]{foo}
\newcommand{\arxiv}[1]{ba}
\ifdefined \arXiv
\renewcommand{\arxiv}[1]{#1}%
\renewcommand{\notarxiv}[1]{\ignorespaces}%
\else%
\renewcommand{\arxiv}[1]{\ignorespaces}%
\renewcommand{\notarxiv}[1]{#1}%
\fi%

\arxiv{
	\usepackage[top=1in, right=1in, left=1in, bottom=1in]{geometry}
	\usepackage[numbers, square]{natbib}
}

\notarxiv{
	\usepackage[utf8]{inputenc} 
	\usepackage[T1]{fontenc}    
}

\usepackage[linesnumbered,ruled,vlined]{algorithm2e}
\usepackage{cleveref}

\theoremstyle{plain}
\newtheorem{theorem}{Theorem}
\newtheorem{lemma}{Lemma}
\newtheorem{corollary}[theorem]{Corollary}

\newtheorem*{claim*}{Claim}

\theoremstyle{definition}
\newtheorem{definition}{Definition}

\newtheoremstyle{remark}{0.5\topsep}{0.5\topsep}{}{}{\bf}{.}{5pt plus 1pt minus 1pt}{}
\theoremstyle{remark}
\newtheorem{remark}{Remark}

\DeclarePairedDelimiter{\abs}{\lvert}{\rvert} %
\DeclarePairedDelimiter{\brk}{[}{]}
\DeclarePairedDelimiter{\crl}{\{}{\}}
\DeclarePairedDelimiter{\prn}{(}{)}

\DeclarePairedDelimiter{\norm}{\|}{\|}

\DeclarePairedDelimiter{\ceil}{\lceil}{\rceil}

\DeclarePairedDelimiterXPP{\onenorm}[1]{}{\|}{\|}{_{1}}{#1}
\DeclarePairedDelimiterXPP{\twonorm}[1]{}{\|}{\|}{_{2}}{#1}
\DeclarePairedDelimiterXPP{\infnorm}[1]{}{\|}{\|}{_{\infty}}{#1}
\DeclarePairedDelimiterXPP{\pnorm}[1]{}{\|}{\|}{_{p}}{#1}
\DeclarePairedDelimiterXPP{\qnorm}[1]{}{\|}{\|}{_{q}}{#1}
\DeclarePairedDelimiterXPP{\opnorm}[1]{}{\|}{\|}{_{\mathrm{op}}}{#1}
\DeclarePairedDelimiterXPP{\dualnorm}[1]{}{\|}{\|}{_{*}}{#1}
\DeclarePairedDelimiterXPP{\gennorm}[2]{}{\|}{\|}{_{#1}}{#2}

\DeclarePairedDelimiterXPP{\inner}[2]{}{\langle}{\rangle}{}{#1,#2}

\renewcommand{\P}{\mathbb{P}} %
\undef\Pr
\DeclarePairedDelimiterXPP{\Pr}[1]{\P}{(}{)}{}{\activatebar#1}

\newcommand{\E}{\mathbb{E}} %
\DeclarePairedDelimiterXPP{\Ex}[1]{\E}{[}{]}{}{\activatebar#1}

\newcommand{\activatebar}{%
	\begingroup\lccode`~=`|
	\lowercase{\endgroup\def~}{\;\delimsize\vert\;}%
	\mathcode`|=\string"8000
}

\undef\O
\DeclarePairedDelimiterXPP{\O}[1]{O}{(}{)}{}{#1}
\DeclarePairedDelimiterXPP{\Otil}[1]{\widetilde{O}}{(}{)}{}{#1}
\DeclarePairedDelimiterXPP{\OMEGA}[1]{\Omega}{(}{)}{}{#1}
\DeclarePairedDelimiterXPP{\OMEGAtil}[1]{\widetilde{\Omega}}{(}{)}{}{#1}

\newcommand{\overeq}[1]{\overset{#1}{=}}

\newcommand{\overge}[1]{\overset{#1}{\ge}}

\newcommand{\mc}[1]{\mathcal{#1}}

\newcommand{\R}{\mathbb{R}}

\newcommand{\N}{\mathbb{N}}

\newcommand{\xset}{\mathcal{X}}
\newcommand{\yset}{\mathcal{Y}}

\newcommand{\simiid}{\stackrel{\rm iid}{\sim}}
\newcommand{\uniform}{\mathsf{Unif}}  %
\providecommand{\argmax}{\mathop{\rm argmax}} %
\providecommand{\argmin}{\mathop{\rm argmin}}
\providecommand{\dom}{\mathop{\rm dom}}

\providecommand{\interior}{\mathop{\rm int}}
\providecommand{\bd}{\mathop{\rm bd}} %

\newcommand{\ones}{\mathbf{1}}

\providecommand{\opt}{^\star}

\providecommand{\minimize}{\mathop{\rm minimize}}
\providecommand{\maximize}{\mathop{\rm maximize}}

\newcommand{\half}{\frac{1}{2}}

\newcommand{\defeq}{\coloneqq}

\newcommand{\eps}{\epsilon}

\newcommand{\grad}{\nabla}

\SetKwInput{Input}{Input}                %
\SetKwInput{Output}{Output}              %
\SetKwInput{Parameters}{Parameters}
\SetKwProg{Function}{function}{}{}
\SetKwComment{Comment}{$\triangleright$\ }{}
\SetKwRepeat{Do}{do}{while}
\SetKwBlock{Repeat}{Repeat}{}
\SetCommentSty{mycommfont}
\makeatletter
\newcommand\Block[2]{%
	#1%
	\algocf@group{#2}%
}
\makeatother

\usepackage[textsize=scriptsize,textwidth=2cm]{todonotes}

\usepackage{csquotes}

\makeatletter
\newcommand{\oset}[3][0ex]{%
  \mathrel{\mathop{#3}\limits^{
    \vbox to#1{\kern-2\ex@
    \hbox{$\scriptstyle#2$}\vss}}}}
\makeatother

\usepackage[thinc]{esdiff}

\newcommand{\epsargmin}[2]{\underset{#2}{{\argmin}^{#1}} \,}

\newcommand{\epsargmaxtxtstyl}[2]{{\argmax}_{#2}^{#1} \,}

\newcommand{\xopt}{x\opt}
\newcommand{\yopt}{y\opt}
\newcommand{\breg}[3]{V^{#1}_{#2} \left( #3 \right)}
\newcommand{\bregtilde}[3]{\widetilde{V}^{#1}_{#2} \left( #3 \right)}
\newcommand{\rconj}{r^*}
\newcommand{\inparen}[1]{\left(#1\right)}
\newcommand{\inangle}[1]{\left\langle#1\right\rangle}

\newcommand{\inbraces}[1]{\left\{#1\right\}}
\newcommand{\inbracess}[1]{\{#1\}}
\newcommand{\insquare}[1]{\left[#1\right]}
\newcommand{\epsprim}{\epsilon_{\mathrm{p}}} %
\newcommand{\epsp}{\epsilon} %

\newcommand{\DRPO}{\textsc{DRPO}}
\newcommand{\DRPOSP}{\textsc{DRPOSP}}
\newcommand{\DRBR}{\textsc{DRBR}}
\newcommand{\CGM}{\textsc{CGM}}

\newcommand{\zopt}{z\opt}
\renewcommand{\hbar}{\bar{h}}
\newcommand{\gap}{\Delta}
\newcommand{\simplex}{\Delta}
\newcommand{\Otilde}{\widetilde{O}}
\newcommand{\Omegatilde}{\widetilde{\Omega}}
\newcommand{\Thetatilde}{\widetilde{\Theta}}

\newcommand{\gconj}{g^*}
\newcommand{\uset}{\mathcal{U}}
\newcommand{\pset}{\mathcal{P}}
\newcommand{\rconjuset}{r^*_{\uset}}

\newcommand{\indc}{\mathbb{I}}

\newcommand{\qopt}{q\opt}

\newcommand{\sset}{\mathcal{S}}
\newcommand{\hopt}{\xopt_h}
\newcommand{\fopt}{\xopt_f}
\newcommand{\gopt}{\xopt_g}
\newcommand{\fconj}{f^*}
\newcommand{\overimp}[1]{\overset{#1}{\implies}}
\newcommand{\gtilde}{\tilde{g}}
\newcommand{\mufconj}{\mu_{\fconj}}
\newcommand{\T}{\mathcal{T}}
\providecommand{\ri}{\mathop{\rm ri}}

\newcommand{\barf}{\bar{f}} %
\newcommand{\fbar}{\barf} %

\newcommand{\event}{\mathcal{E}}
\newcommand{\poly}{\mathrm{poly}}
\newcommand{\ftilde}{\tilde{f}}
\newcommand{\ytrunc}{\yset_{\mathrm{trunc}}}
\renewcommand{\L}{\mathcal{L}}
\newcommand{\barL}{\overline{\L}} %
\newcommand{\Lbar}{\overline{\L}} %

\newcommand{\tildeM}{\widehat{M}} %

\newcommand{\allpoly}{\poly(\cdots)}

\makeatletter
\newcommand{\leqnomode}{\tagsleft@true}
\newcommand{\reqnomode}{\tagsleft@false}
\makeatother

\title{Extracting Dual Solutions via Primal Optimizers} 

\arxiv{
\renewcommand{\And}{~~}

}

\author{%
	Yair Carmon\thanks{Tel Aviv University, \texttt{ycarmon@tauex.tau.ac.il}} 
	\And
	Arun Jambulapati\thanks{University of Michigan, \texttt{jmblpati@gmail.com}} 
	\And
	Liam O'Carroll\thanks{Stanford University, \texttt{\string{ocarroll,sidford\string}@stanford.edu}} 
	\And
	Aaron Sidford\footnotemark[3] 
}

\arxiv{\date{}}  %

\begin{document}

\maketitle

\begin{abstract}
	We provide a general method to convert a ``primal'' black-box algorithm for solving regularized convex-concave minimax optimization problems into an algorithm for solving the associated dual maximin optimization problem. Our method adds recursive regularization over a logarithmic number of rounds where each round consists of an approximate regularized primal optimization followed by the computation of a dual best response. We apply this result to obtain new state-of-the-art runtimes for solving matrix games in specific parameter regimes, obtain improved query complexity for solving the dual of the CVaR distributionally robust optimization (DRO) problem, and recover the optimal query complexity for finding a stationary point of a convex function.
\end{abstract}

\newpage

\setcounter{tocdepth}{2} %
\tableofcontents

\newpage

\section{Introduction}\label{sec:introduction}

We consider the foundational problem of efficiently solving convex-concave games. For nonempty, closed, convex constraint sets $\xset \subseteq \R^d$ and $\yset \subseteq \R^n$ and differentiable convex-concave objective function $\psi : \R^d \times \R^n \rightarrow \R$ (namely, $\psi(\cdot,y)$ is convex for any fixed $y$ and $\psi(x,\cdot)$ is concave for any fixed $x$), we consider the following \emph{primal}, minimax optimization problem \eqref{eq:primal} and its associated  \emph{dual}, maximin optimization problem \eqref{eq:dual}:
\leqnomode
\begin{align}
    \tag{P} \label{eq:primal} &\minimize_{x \in \xset} f(x) \text{ for } f(x) \defeq \max_{y \in \yset} \psi(x,y), \text{ and} \\
    \tag{D} \label{eq:dual} &\maximize_{y \in \yset} \phi(y)\text{ for }\phi(y) \defeq \min_{x \in \xset} \psi(x,y) .
\end{align}
\reqnomode
If additionally $\xset$ and $\yset$ are bounded (which we assume for simplicity in the introduction but generalize later), every pair of primal and dual optimizers $\xopt \in \argmin_{x \in \xset} f(x)$ and $\yopt \in \argmax_{y \in \yset} \phi(y)$ satisfies the \emph{minimax principle}: $f(\xopt) = \phi(\yopt) = \psi(\xopt,\yopt)$.

Convex-concave games are pervasive in algorithm design, machine learning, data analysis, and optimization. For example, the games induced by bilinear objectives, i.e., $\psi(x,y) = x^\top A y + b^\top x + c^\top y$, where $\xset$ and $\yset$ are either the simplex, $\simplex^{k} \defeq \{x \in \R^k_{\geq 0} : \norm{x}_1 =1\}$, or the Euclidean ball, $B^{k} \defeq \{x \in \R^k : \norm{x}_2 \leq 1\}$, encompass zero-sum games, linear programming, hard-margin support vector machines (SVMs), and minimum enclosing/maximum inscribed ball \cite{dantzig1953linear, adler2013equivalence, minsky1988perceptrons,clarkson2012sublinear}. Additionally, the case when $\psi(x,y) = \sum_{i = 1}^n y_i f_i(x)$ for some functions $f_i : \R^d \to \R$ and $\yset$ is a subset of the simplex encompasses a variety of distributionally robust optimization (DRO) problems \cite{levy2020largescale, carmon2022distributionally} and (for $\yset=\simplex^{n}$) the problem of minimizing the maximum loss \cite{carmon2021thinking, carmon2023whole,asi2021stochastic}.

In this paper, we study the following question:

\vbox{ %
	\begin{displayquote}
		\textit{Given only a black-box oracle which solves (regularized versions of) \eqref{eq:primal} to $\epsilon$ accuracy, and a black-box oracle for computing an exact dual best response $y_x \defeq \argmax_{y \in \yset} \psi(x, y)$ to any primal point $x \in \xset$, can we extract an $\epsilon$-optimal solution to \eqref{eq:dual}?}
	\end{displayquote}
}

\noindent We develop a general \emph{dual-extraction framework} which answers this question in the affirmative. We show that as long as these oracles can be implemented as cheaply as obtaining an $\epsilon$-optimal point of \eqref{eq:primal}, then our framework can obtain an $\epsilon$-optimal point of \eqref{eq:dual} at the same cost as that of obtaining an $\epsilon$-optimal point of \eqref{eq:primal}, up to logarithmic factors. We then instantiate our framework to obtain new state-of-the-art results in the settings of bilinear matrix games and DRO. Finally, as evidence of its broader applicability, we show that our framework can be used to recover the optimal complexity for computing a stationary point of a smooth convex function.

In the remainder of the introduction we describe our results in greater detail (\Cref{subsec:our-results}), give an overview of the dual extraction framework and its analysis (\Cref{subsec:dual-extraction-overview}), discuss related work (\Cref{subsec:related-work}), and provide a roadmap for the remainder of the paper (\Cref{subsec:organization}).

\subsection{Our results}\label{subsec:our-results}

\paragraph{From primal algorithms to dual optimization.} We give a general framework which obtains an $\epsilon$-optimal solution to \eqref{eq:dual} via a sequence of calls to two black-box oracles: (i) an oracle for obtaining an $\epsilon$-optimal point of a regularized version of \eqref{eq:primal}, and (ii) an oracle for obtaining a dual best response $y_x \defeq \argmax_{y \in \yset} \psi(x, y)$ for a given $x \in \xset$. In particular, we show it is always possible to obtain an $\epsilon$-optimal point to \eqref{eq:dual} with at most a logarithmic number of calls to each of these oracles, where the regularized primal optimization oracle is always called to an accuracy of $\epsilon$ over a logarithmic factor. We also provide an alternate scheme (or more specifically choice of parameters) for applications where the cost of obtaining an $\epsilon$-optimal point of the regularized primal problem decreases sufficiently as the regularization level increases. In such cases, e.g., in our stationary point application, it is possible to avoid even logarithmic factor increases in computational complexity for approximately solving \eqref{eq:dual} relative to the complexity of approximately solving \eqref{eq:primal}.

\paragraph{Application 1: Bilinear matrix games.} In this application, $\psi(x, y) \defeq x^\top A y$ for a matrix $A \in \R^{d \times n}$, $\yset$ is the simplex $\simplex^n$, and $\xset$ is either the simplex $\simplex^d$ or the unit Euclidean ball $B^d$. Recently, \cite{carmon2023whole} gave a new state-of-the-art runtime in certain parameter regimes of $\Otilde(nd + n (d / \epsilon)^{2/3} + d \epsilon^{-2})$ for obtaining an expected $\epsilon$-optimal point for the primal problem \eqref{eq:primal} for this setup. However, unlike previous algorithms for bilinear matrix games (see Section \ref{subsec:related-work} for details), their algorithm does not return an $\epsilon$-optimal solution for the dual \eqref{eq:dual}, and their runtime is not symmetric in the dimensions $n$ and $d$. As a result, it was unclear whether the same runtime is achievable for obtaining an $\epsilon$-optimal solution of the dual \eqref{eq:dual}. We resolve this question by applying our general framework to achieve an expected $\epsilon$-optimal point of \eqref{eq:dual} with runtime $\Otilde(nd + n (d / \epsilon)^{2/3} + d \epsilon^{-2})$. We then observe (see Corollary~\ref{cor:primal-mat-guarantee} and Table \ref{table:runtimes-bilinear}) that in the setting where $\xset = \simplex^d$, our result can equivalently be viewed as a new state-of-the-art runtime of $\Otilde(nd + d (n / \epsilon)^{2/3} + n \epsilon^{-2})$ for obtaining an $\epsilon$-optimal point of the primal \eqref{eq:primal} due to the symmetry of $\psi$ and the constraint sets.

\paragraph{Application 2: CVaR at level $\alpha$ DRO.} In this application, $\psi(x, y) \defeq \sum_{i = 1}^n y_i f_i(x)$ for convex, bounded, and Lipschitz loss functions $f_i : \R^d \to \R$, $\xset$ is a convex, compact set, and $\yset \defeq \inbraces{y \in \simplex^n : \norm{y}_\infty \le \frac{1}{\alpha n}}$ is the CVaR at level $\alpha$ uncertainty set for $\alpha \in [1 / n, 1]$. The primal \eqref{eq:primal} is a canonical and well-studied DRO problem, and corresponds to the average of the top $\alpha$ fraction of the losses. We consider this problem given access to a first-order oracle that, when queried at $x \in \R^d$ and $i \in [n]$, outputs $(f_i(x), \grad f_i(x))$. Ignoring dependencies other than $\alpha$, the target accuracy $\epsilon > 0$, and the number of losses $n$ for brevity, \cite{levy2020largescale} gave a matching upper and lower bound (up to logarithmic factors) of $\Otilde(\alpha^{-1} \epsilon^{-2})$ queries to obtain an expected $\epsilon$-optimal point of the primal \eqref{eq:primal}. However, the best known query complexity for obtaining an expected $\epsilon$-optimal point of the dual \eqref{eq:dual} was $\Otilde(n \epsilon^{-2})$ prior to this paper (see Section \ref{subsec:related-work} for details). Applying our general framework to this setting, we obtain an algorithm with a new state-of-the-art query complexity of $\Otilde(\alpha^{-1} \epsilon^{-2} + n)$ for obtaining an expected $\epsilon$-optimal point of the dual \eqref{eq:dual}. In particular, note that this complexity is nearly linear in $n$ when $\epsilon \ge (\alpha n)^{-2}$.

\paragraph{Application 3: Obtaining stationary points of convex functions.}
In this application, we show that our framework yields an alternative optimal approach for computing an approximate critical point of a smooth convex function given a gradient oracle. Specifically, for $\gamma > 0$ and convex and $\beta$-smooth $h : \R^n \to \R$, in Section \ref{sec:convex-critical-point}, we give an algorithm which computes $x \in \R^n$ such that $\norm{\grad h(x)}_2 \le \gamma$ using $O \inparen{ \sqrt{\beta \gap} / \gamma }$ gradient queries, where $\gap \defeq h(x_0) - \inf_{x \in \R^n} h(x)$ is the initial suboptimalityf. While this optimal complexity has been achieved before \cite{kim2020optimizing,nesterov2019primaldual,diakonikolas2021potential,lee2021geometric,lan2023optimal}, that we achieve it is a consequence of our general framework illustrates its broad applicability.

For this application, we instantiate our framework with $\psi(x, y) \defeq \inangle{x, y} - h^*(y)$, where $h^* : \R^n \to \R$ denotes the convex conjugate of $h$. (For reasons discussed in Section \ref{sec:convex-critical-point}, we actually first substitute $h$ for an appropriately regularized version of $h$, call it $f$, before applying the framework, but the following discussion still holds with respect to $f$.) This objective function $\psi$ is known as the \textit{Fenchel game} and has been used in the past to recover classic convex optimization algorithms (e.g., the Frank-Wolfe algorithm and Nesterov's accelerated methods) via a minimax framework \cite{abernethy2017equilibrium,wang2023noregretdynamicsfenchelgame,cohen2021relative,jin2022sharper}. In the Fenchel game, a dual best response corresponds to a gradient evaluation:
\begin{align*}
    \argmax_{y \in \R^n} \inbraces{\inangle{x, y} - h^*(y)} = \grad h(x),
\end{align*}
and we show that approximately optimal points for the dual objective \eqref{eq:dual} must have small norm. As a result, obtaining an approximately optimal dual point $y$ as a best response to a primal point $x$ yields a bound on the norm of $y = \grad f(x)$. Furthermore, we note that in this setting, adding regularization to $\psi$ with respect to an appropriate choice of distance-generating function (namely $h^*$) is equivalent to rescaling and recentering the primal function $f$, as well as the point at which a gradient is taken in the dual best response computation (cf. Lemma~\ref{lem:Fenchel-game-with-reg}). Thus, the properties of the Fenchel game extend naturally to appropriately regularized versions of $\psi$.

\subsection{Overview of the framework and analysis}
\label{subsec:dual-extraction-overview}

We now give an overview of the dual-extraction framework. Our framework applies generally to a set of assumptions given in Section \ref{subsec:main-framework-assumptions-preliminaries} (cf. Definition~\ref{def:dual-extraction-setup}), but for now we specialize to the assumptions given above, namely: (i) the constraint sets $\xset$ and $\yset$ are nonempty, compact, and convex; and (ii) $\psi$ is differentiable and convex-concave.
Throughout this section, let $\norm{\cdot}$ denote any norm on $\R^n$ and assume that the dual function, $\phi$, is $L$-Lipschitz with respect to $\norm{\cdot}$.\footnote{This is a weak assumption since we ensure at most a logarithmic dependence on $L$; see Remark \ref{rem:example-schedules-wlog}.} Let $r : \R^n \to \R$ denote a differentiable distance-generating function (dgf) which is $\mu_r$-strongly convex with respect to $\norm{\cdot}$ for $\mu_r > 0$,\footnote{\Cref{sec:notation-and-assumptions} gives the general setup for a distance-generating function which also covers the case where $\dom r \ne \R^n$.} and let $\breg{}{u}{v} \defeq r(v) - r(u) - \inangle{\grad r(u), v - u}$ denote the associated Bregman divergence. For the sake of illustration, it may be helpful to consider the choices $\norm{\cdot} \defeq \norm{\cdot}_2$, $r(u) \defeq \frac{1}{2} \norm{u}_2^2$, $\mu_r = 1$, and $\breg{}{u}{v} = \frac{1}{2} \norm{u - v}_2^2$ in the following, in which case relative strong convexity with respect to $r$ is equivalent to the standard notion of strong convexity with respect to $\norm{\cdot}_2$.

How should we obtain an $\epsilon$-optimal point for \eqref{eq:dual} using the two oracles discussed previously, namely: (i) an oracle for approximately solving a regularized primal objective, and (ii) an oracle for computing a dual best response? 
We call (i) a dual-regularized primal optimization (DRPO) oracle and (ii) a dual-regularized best response (DRBR) oracle; their formal definitions are given in Section \ref{subsec:main-framework-assumptions-preliminaries}. Note that to solve \eqref{eq:dual}, one cannot simply solve the primal problem \eqref{eq:primal} to high accuracy and then compute a dual best response. Consider $\psi(x, y) = xy$ with $\xset = \yset = [-1, 1]$; clearly $\xopt = \yopt = 0$, but for any $x$ arbitrarily close to $\xopt$, the dual best response is either $-1$ or $1$.

The key observation underlying our framework is that if $\psi(x, \cdot)$ is strongly concave for a given $x \in \xset$, it is possible to upper bound the distance between the best response $y_x \defeq \argmax_{y \in \yset} \psi(x, y)$ and the dual optimum $\yopt$ in terms of the primal suboptimality of $x$. Figure \ref{fig:key-lemma-intuition} illustrates why this should be the case when subtracting a quadratic regularizer in $y$ (so that $\psi(x, \cdot)$ is strongly concave) to the preceding example of $\psi(x, y) = xy$. We generalize this intuition in the following lemma (replacing strong concavity with relative strong concavity and a distance bound with a divergence bound), which is itself generalized further and proven in Section \ref{sec:dual-extraction-framework}:

\begin{figure}[h] %
	\centering
	\subfloat[]{\includegraphics[width=0.45\textwidth]{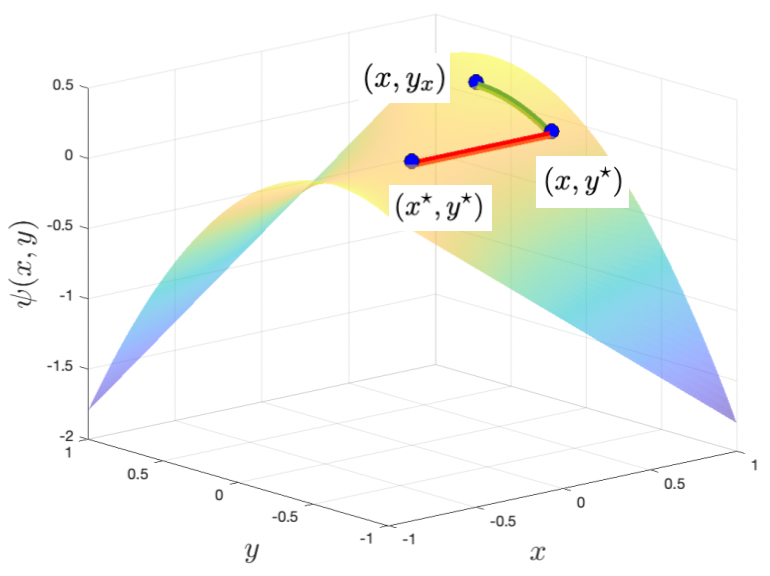}\label{fig:side}}
	\hfill
	\subfloat[]{\includegraphics[width=0.45\textwidth]{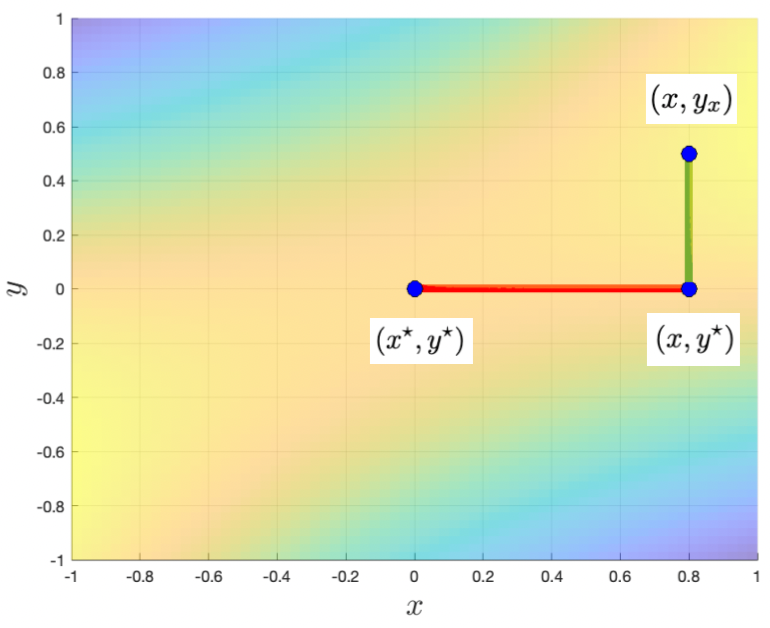}\label{fig:top-down}}
	\cprotect\caption{\label{fig:key-lemma-intuition}
    \small
		An example to give intuition behind Lemma~\ref{lem:bound-dual-div-primal-subopt-informal}. Here, $\psi(x, y) = xy - 0.8 y^2$, $(\xopt, \yopt) = (0, 0)$, $x = 0.8$, and $y_x = 0.5$. To see why it is possible to bound $|\yopt - y_x|$ in terms of the primal suboptimality $f(x) - f(\xopt)$, note that by the strong concavity of $\psi(x, \cdot)$ and the fact that $y_x$ is the maximizer of $\psi(x, \cdot)$ over $\yset$, we can upper bound $|\yopt - y_x|$ in terms of $\psi(x, y_x) - \psi(x, \yopt)$ (the vertical drop over the green line) via a standard strong-concavity inequality. In turn, $\psi(x, y_x) - \psi(x, \yopt)$ can be upper bounded by $\psi(x, y_x) - \psi(\xopt, \yopt) = f(x) - f(\xopt)$ (the vertical drop over the green line plus the vertical drop over the red line) due to the fact that $\psi(\xopt, \yopt) \le \psi(x, \yopt)$ by the optimality of $\xopt$.
	}
\end{figure}

\begin{lemma}[Lemma~\ref{lem:dual-div-bound-exact-best-response} specialized]
    \label{lem:bound-dual-div-primal-subopt-informal}
    For a given $x \in \xset$, suppose $-\psi(x, \cdot)$ is $\mu$-strongly convex relative to the dgf $r$ for some $\mu > 0$. Then $y_x \defeq \argmax_{y \in \yset} \psi(x, y)$ satisfies
\begin{align*}
    \breg{}{y_x}{\yopt} \le \frac{f(x) - f(\xopt)}{\mu}.
\end{align*}
\end{lemma}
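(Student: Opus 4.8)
The plan is to combine the first-order optimality condition for the exact best response $y_x$ with the relative strong concavity of $\psi(x,\cdot)$, and then to control the resulting objective gap using the minimax principle. Unpacking the hypothesis, $-\psi(x,\cdot)$ being $\mu$-strongly convex relative to $r$ means precisely that
\[
\psi(x, y') \le \psi(x, y) + \inangle{\grad_y \psi(x, y), y' - y} - \mu\, \breg{}{y}{y'} \qquad \text{for all } y, y' \in \yset,
\]
where $\grad_y \psi(x, y)$ denotes the gradient of the concave map $\psi(x,\cdot)$ at $y$; note also that this hypothesis guarantees $y_x \defeq \argmax_{y \in \yset}\psi(x,y)$ exists and is unique.

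First I would invoke the first-order optimality of $y_x$ over the convex set $\yset$, which (using differentiability of $\psi$) gives $\inangle{\grad_y \psi(x, y_x), y - y_x} \le 0$ for every $y \in \yset$, in particular for $y = \yopt$. Substituting $y = y_x$ and $y' = \yopt$ into the displayed inequality and discarding the resulting nonpositive inner-product term yields $\mu\, \breg{}{y_x}{\yopt} \le \psi(x, y_x) - \psi(x, \yopt)$. It then remains to bound the right-hand side by $f(x) - f(\xopt)$: by definition $\psi(x, y_x) = f(x)$, while the minimax principle stated in the introduction (valid since $\xset$, $\yset$ are nonempty, compact, and convex and $\psi$ is convex-concave) gives $f(\xopt) = \phi(\yopt) = \min_{x' \in \xset} \psi(x', \yopt) \le \psi(x, \yopt)$, hence $-\psi(x, \yopt) \le -f(\xopt)$. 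Chaining these gives $\mu\, \breg{}{y_x}{\yopt} \le f(x) - f(\xopt)$, and dividing by $\mu > 0$ completes the proof.

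I do not expect a serious obstacle in this specialized statement; the two points requiring care are (a) translating ``$-\psi(x,\cdot)$ is $\mu$-strongly convex relative to $r$'' into the gradient inequality above with the correct sign and with the divergence written as $\breg{}{y_x}{\yopt}$ (base point $y_x$, the point where the optimality condition is applied, and target $\yopt$), and (b) the use of the minimax principle to pass from $\psi(x, \yopt)$ to $f(\xopt)$. In the more general version proved later in the paper, step (b) presumably must be handled without assuming the constraint sets are bounded—e.g., by assuming a primal optimizer $\xopt$ exists and using $f(\xopt) = \psi(\xopt, \yopt) \le \psi(x, \yopt)$ directly—and the exact best response $y_x$ is plausibly replaced by an approximate maximizer, which would introduce an extra additive error term on the right-hand side.
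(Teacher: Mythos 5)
Your proof is correct and follows essentially the same route as the paper's: first-order optimality of $y_x$ at $\yopt$ combined with the relative strong convexity inequality gives $\mu\,\breg{}{y_x}{\yopt} \le \psi(x,y_x) - \psi(x,\yopt)$, and the minimax principle (via $f(\xopt)=\psi(\xopt,\yopt)\le\psi(x,\yopt)$) converts the right-hand side into $f(x)-f(\xopt)$, exactly as in the paper's proof of Lemma~\ref{lem:dual-div-bound-exact-best-response}. Your closing remarks about the general version are also on point regarding how the paper handles unboundedness via its setup assumptions, though the paper keeps $y_x$ an exact best response there as well.
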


\paragraph{A first try.} In particular, Lemma~\ref{lem:bound-dual-div-primal-subopt-informal} suggests the following approach: Define ``dual-regularized'' versions of $\psi, \phi, f$ as follows for $\lambda > 0$ and $y_0 \in \yset$:
\begin{align*}
    \psi_1 (x, y) &\defeq \psi(x, y) - \lambda V_{y_0}(y), \\
    f_1(x) &\defeq \max_{y \in \yset} \psi_1 (x, y), \\
    \phi_1 (y) &\defeq \min_{x \in \xset} \psi_1 (x, y)\,.
\end{align*}
(Here, the subscript 1 denotes one level of regularization and will be extended later.)
For any $x \in \xset$, note that $- \psi_1(x, \cdot)$ is $\lambda$-strongly convex relative to $r$, in which case Lemma~\ref{lem:bound-dual-div-primal-subopt-informal} applied to $\psi_1$ yields 
\begin{align}
    \label{eq:first-try-div-bound}
    \breg{}{y_x}{\yopt_1} \le \frac{f_1(x) - f_1(\xopt_1)}{\lambda},
\end{align}
for $\yopt_1 \defeq \argmax_{y \in \yset} \phi_1(y)$, $\xopt_1 \in \argmin_{x \in \xset} f_1(x)$, and $y_x \defeq \argmax_{y \in \yset} \psi_1(x, y)$. Then note
\begin{align}
    \label{eq:first-try-per-on-original}
    \phi(\yopt_1) \ge \phi_1(\yopt_1) \ge \phi_1(\yopt) = \min_{x \in \xset} \inbraces{\psi(x, \yopt) - \lambda \breg{}{y_0}{\yopt}} = \phi(\yopt) - \lambda \breg{}{y_0}{\yopt},
\end{align}
where the first inequality follows since $\phi \ge \phi_1$ pointwise. Then by the $L$-Lipschitzness of $\phi$ and $\mu_r$-strong convexity of $r$, it is straightforward to bound the suboptimality of $y_x$ as 
\begin{align}
    \label{eq:first-try-subopt}
    \phi(\yopt) - \phi(y_x)  \le  \lambda \breg{}{y_0}{\yopt} + L \sqrt{\frac{2 (f_1(x) - f_1(\xopt_1))}{\mu_r \lambda}}.
\end{align}
Consequently, an $\epsilon$-optimal point for \eqref{eq:dual} can be obtained via our oracles as follows: Set $\lambda \gets \frac{\epsilon}{2 \breg{}{y_0}{\yopt}}$, and use the $\DRPO$ oracle 
on the regularized primal problem
to obtain $x \in \xset$ such that
\begin{equation}
	\label{eq:framework_derive:1}
f_1(x) - f_1(\xopt_1) \le \frac{\epsilon^3 \mu_r}{16 L^2 \cdot \breg{}{y_0}{\yopt}}\,.
\end{equation}
Then the best response to $x$ with respect to $\psi_1$, namely $y_x \defeq \argmax_{y \in \yset} \psi_1(x, y)$, is $\epsilon$-optimal by \eqref{eq:first-try-subopt}. However, a typical setting in our applications is $\breg{}{y_0}{\yopt} = \Omega(1)$, $\mu_r = 1$, and $L \ge 1$, in which case ensuring \eqref{eq:framework_derive:1}  requires solving the regularized primal problem to $O(\epsilon^3)$ error.

\paragraph{Recursive regularization and the dual-extraction framework.} To lower the accuracy requirements, we apply dual regularization recursively. A key issue with the preceding argument is that it required a nontrivial bound on $\breg{}{y_0}{\yopt}$. However, it provided us with a nontrivial bound $\eqref{eq:first-try-div-bound}$ on $\breg{}{y_x}{\yopt_1}$, the ``level-one equivalent'' of $\breg{}{y_0}{\yopt}$. This suggests solving $f_1$ to lower accuracy while still obtaining a bound on $\breg{}{y_x}{\yopt_1}$ due to \eqref{eq:first-try-div-bound}, and then adding regularization centered at $y_x$ with a larger value of $\lambda$. Indeed, our framework recursively repeats this process until the total regularization is large enough so that (a term similar to) the right-hand side of \eqref{eq:first-try-subopt} can be bounded by $\epsilon$, despite never needing to solve a regularized primal problem to high accuracy.

To more precisely describe our approach, let $\psi_0 \defeq \psi, f_0 \defeq f, \phi_0 \defeq \phi$. Over iterations $k = 1, 2, \dots, K$, our framework implicitly constructs a sequence of convex-concave games $\psi_k : \R^d \times \R^n \to \R$, along with corresponding primal and dual functions $f_k : \xset \to \R$ and $\phi_k : \yset \to \R$ respectively, as follows:
\begin{align}
    \label{eq:recursive-reg-functions}
    \begin{split}
    \psi_k(x, y) &\defeq \psi_{k - 1}(x, y) - \lambda_{k - 1} \breg{}{y_{k - 1}}{y}, \\
    f_k(x) &\defeq \max_{y \in \yset} \psi_k(x, y), \\
    \phi_k(y) &\defeq \min_{x \in \xset} \psi_k(x, y). \\
    \end{split}
\end{align}
Here, $\inparen{\lambda_k \in \R_{>0}}_{k = 0}^{K - 1}$ is a dual-regularization schedule given as input to the framework, and $\inparen{y_k \in \yset}_{i = 0}^K$ is a sequence of dual-regularization ``centers'' generated by the algorithm, with $y_0$ given as input. For $k \in \inbraces{0} \cup [K]$, it will be useful to let $\yopt_k$ denote a maximizer of $\phi_k$ over $\yset$ and $\xopt_k$ denote a minimizer of $f_k$ over $\xset$, with $\yopt_0 \defeq \yopt$ and $\xopt_0 \defeq \xopt$ in particular.

Over the $K$ rounds of recursive dual regularization, we aim to balance two goals:
\begin{itemize}
    \item On the one hand, we want $\lambda_k$ to increase quickly so that $- \psi_k(x, \cdot)$ is very strongly convex relative to $r$, thereby allowing us to apply Lemma~\ref{lem:bound-dual-div-primal-subopt-informal} with a larger strong convexity constant.

    \item On the other hand, we want to maintain the invariant that, roughly speaking, $\yopt_k$ is always $\epsilon / 2$-optimal for the original dual $\phi$. Indeed, we were constrained in choosing $\lambda$ in \eqref{eq:first-try-per-on-original} to be on the order of $\epsilon / \breg{}{y_0}{\yopt}$ to ensure $\yopt_1$ is $\epsilon / 2$-optimal for $\phi$. A similar ``constraint'' on the dual-regularization schedule $(\lambda_k)_{k = 0}^{K - 1}$ appears when \eqref{eq:first-try-per-on-original} is extended to additional levels of regularization. This prevents us from increasing $\lambda_k$ too quickly.
\end{itemize}
In all the applications in this paper we choose $\lambda_k \approx 2 \lambda_{k - 1}$. $\lambda_0$ typically must remain on the order of $\epsilon / \breg{}{y_0}{\yopt}$ due to the second point.

Pseudocode of the framework is given in Algorithm~\ref{alg:dual-extraction-framework-sketch}.
Each successive dual-regularization center $y_k$ is computed via the $\DRBR$ oracle (Line~\ref{algline:y_k-informal}) as a best response to a primal point $x_k$ obtained via the $\DRPO$ oracle (Line~\ref{algline:x_k-informal}). In Section \ref{sec:dual-extraction-framework}, we generalize Algorithm~\ref{alg:dual-extraction-framework-sketch} (cf. Algorithm~\ref{alg:dual-extraction-framework}) in several ways: (i) we allow for stochasticity in the $\DRPO$ oracle; (ii) we allow for distance-generating functions $r$ such that $\dom r \ne \R^n$; (iii) we give different but equivalent characterizations of $x_k$ and $y_k$ which facilitate the derivation of explicit expressions for the $\DRPO$ and $\DRBR$ oracles in applications.

\begin{algorithm}[h] %
	\setstretch{1.1}
	\caption{Dual-extraction framework (Algorithm~\ref{alg:dual-extraction-framework} specialized)}
	\label{alg:dual-extraction-framework-sketch}
	\LinesNumbered
	\DontPrintSemicolon
	\Input{
		Initial dual-regularization center $y_0 \in \yset$, iteration count $K \in \N$, dual-regularization schedule $(\lambda_k \in \R_{>0})_{k = 0}^{K - 1}$, primal-accuracy schedule $(\epsp_k \in \R_{>0})_{k = 1}^K$, $\DRPO$ and $\DRBR$ oracles
	}

    $\psi_0 \defeq \psi$, $f_0 \defeq f$, and $\phi_0 \defeq \phi$\;

    \For{$k = 1, 2, \ldots, K$ }{ 
        Define $\psi_k$, $f_k$, and $\phi_k$ as in \eqref{eq:recursive-reg-functions}\;
        Let $x_k \in \xset$ be such that $f_k(x_k) - f_k(\xopt_k) \le \epsilon_k$ \tcp*{Computed via the $\DRPO$ oracle} \label{algline:x_k-informal}
        $y_k = \argmax_{y \in \yset} \psi_k(x_k, y) $ \tcp*{Computed via the $\DRBR$ oracle} \label{algline:y_k-informal}
    }

    \Return $y_K$\;
\end{algorithm}

\paragraph{Analysis of Algorithm~\ref{alg:dual-extraction-framework-sketch}.} Theorem~\ref{thm:main-framework-result-informal} is our main result for Algorithm~\ref{alg:dual-extraction-framework-sketch}. We then instantiate Theorem~\ref{thm:main-framework-result-informal} with two illustrative choices of parameters in Corollaries \ref{rem:example-schedules-wlog} and \ref{rem:example-schedules-no-log}, and defer the proofs of the latter to their general versions in Section \ref{sec:dual-extraction-framework}.
All of the remarks below (Remarks \ref{rem:picking-parameters-main-thm}, \ref{rem:example-schedules-wlog}, \ref{rem:example-schedules-no-log}) are stated with reference to the specialized results in this section (Theorem~\ref{thm:main-framework-result-informal} and Corollaries \ref{cor:example-schedules-wlog-informal}, \ref{cor:example-schedules-no-log-informal} resp.), but extend immediately to the corresponding general versions (Theorem~\ref{thm:main-framework-guarantee} and Corollaries \ref{cor:example-schedules-wlog}, \ref{cor:example-schedules-no-log} resp.).

\begin{theorem}[Theorem~\ref{thm:main-framework-guarantee} specialized]
    \label{thm:main-framework-result-informal}
    Algorithm~\ref{alg:dual-extraction-framework-sketch} returns $y_K$ satisfying
\begin{align}
    \label{eq:informal-dist-bound}
    \breg{}{y_K}{u} \le \frac{\epsilon_K}{\Lambda_K}
    \text{ where } 
    \Lambda_k \defeq \sum_{j = 0}^{k - 1} \lambda_j
    \text{ for }
    k \in [K]
\end{align}
and $u \in \yset$ is a point with dual suboptimality bounded as
\begin{align}
    \label{eq:informal-subopt-to-u}
    \phi(\yopt) - \phi(u) \le \lambda_0 \breg{}{y_0}{\yopt} + \sum_{k = 1}^{K - 1} \frac{\lambda_k}{\Lambda_k} \epsp_k.
\end{align}
If we additionally assume that $\phi$ is $L$-Lipschitz with respect to $\norm{\cdot}$, we can directly bound the suboptimality of $y_K$ as
\begin{align}
    \label{eq:informal-subopt}
    \phi(\yopt) - \phi(y_K) \le  \lambda_0 \breg{}{y_0}{\yopt} + \sum_{k = 1}^{K - 1} \frac{\lambda_k}{\Lambda_k} \epsp_k + L \sqrt{\frac{2}{\mu_r} \frac{\epsp_K}{\Lambda_K}}.
\end{align}
\end{theorem}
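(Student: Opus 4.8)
The plan is to first unwind the recursion \eqref{eq:recursive-reg-functions} into closed form: since subtracting a function of $y$ alone commutes with the inner minimization over $x$, we get $\psi_k(x, y) = \psi(x, y) - \sum_{j = 0}^{k-1} \lambda_j \breg{}{y_j}{y}$ and $\phi_k(y) = \phi(y) - \sum_{j = 0}^{k-1} \lambda_j \breg{}{y_j}{y}$. Two structural facts then fall out immediately. First, each $\breg{}{y_j}{\cdot}$ equals $r(\cdot)$ plus an affine function, so $-\psi_k(x, \cdot) = -\psi(x, \cdot) + \Lambda_k r(\cdot) + (\text{affine})$; since $-\psi(x, \cdot)$ is convex, $-\psi_k(x, \cdot)$ is $\Lambda_k$-strongly convex relative to $r$ for every $x \in \xset$. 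Second, Bregman divergences are nonnegative, so $\phi_k \le \phi$ pointwise, and in particular $\phi(\yopt_k) \ge \phi_k(\yopt_k) = \max_{y \in \yset} \phi_k(y)$.

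For the divergence bound \eqref{eq:informal-dist-bound}, I would apply Lemma~\ref{lem:bound-dual-div-primal-subopt-informal} to $\psi_k$ with strong-convexity constant $\Lambda_k$: since $y_k = \argmax_{y \in \yset} \psi_k(x_k, y)$ and the $\DRPO$ step guarantees $f_k(x_k) - f_k(\xopt_k) \le \epsp_k$, we obtain $\breg{}{y_k}{\yopt_k} \le \epsp_k / \Lambda_k$, so $u \defeq \yopt_K$ is the claimed point. For the suboptimality bound \eqref{eq:informal-subopt-to-u}, set $a_k \defeq \max_{y \in \yset} \phi_k(y) = \phi_k(\yopt_k)$ and chain, for $k \in [K]$, $a_k \ge \phi_k(\yopt_{k-1}) = \phi_{k-1}(\yopt_{k-1}) - \lambda_{k-1} \breg{}{y_{k-1}}{\yopt_{k-1}} = a_{k-1} - \lambda_{k-1} \breg{}{y_{k-1}}{\yopt_{k-1}}$, where the first step uses $\yopt_{k-1} \in \yset$ and optimality of $\yopt_k$, and the middle step uses the closed form of $\phi_k$. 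Telescoping from $k = 1$ to $K$ gives $a_K \ge a_0 - \sum_{j = 0}^{K-1} \lambda_j \breg{}{y_j}{\yopt_j}$; then substitute $a_0 = \phi(\yopt)$, keep the $j = 0$ term as $\lambda_0 \breg{}{y_0}{\yopt}$, and for $1 \le j \le K-1$ use the divergence bound $\breg{}{y_j}{\yopt_j} \le \epsp_j / \Lambda_j$ just established. Since $\phi(u) = \phi(\yopt_K) \ge \phi_K(\yopt_K) = a_K$, this yields exactly \eqref{eq:informal-subopt-to-u}.

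Finally, \eqref{eq:informal-subopt} follows by combining \eqref{eq:informal-dist-bound} with the $\mu_r$-strong convexity of $r$, which gives $\tfrac{\mu_r}{2} \norm{y_K - u}^2 \le \breg{}{y_K}{u} \le \epsp_K / \Lambda_K$, and with the $L$-Lipschitzness of $\phi$, which gives $\phi(u) - \phi(y_K) \le L \norm{y_K - u} \le L \sqrt{2 \epsp_K / (\mu_r \Lambda_K)}$; adding this to \eqref{eq:informal-subopt-to-u} and using $\phi(\yopt) - \phi(y_K) = (\phi(\yopt) - \phi(u)) + (\phi(u) - \phi(y_K))$ completes the proof. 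I expect the main obstacle to be purely bookkeeping rather than conceptual: carefully verifying that the closed forms and the relative-strong-convexity claim go through for a general dgf $r$ with $\dom r \ne \R^n$, and that Lemma~\ref{lem:bound-dual-div-primal-subopt-informal} is applicable at every level (in particular that maximizers $\yopt_k$ and minimizers $\xopt_k$ exist so the statement is well posed). The telescoping inequality itself is a short one-line induction, and everything else is a clean chain of the two oracle guarantees.
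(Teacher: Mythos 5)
Your proposal is correct and follows essentially the same route as the paper's proof: choose $u = \yopt_K$, apply Lemma~\ref{lem:bound-dual-div-primal-subopt-informal} to each $\psi_k$ (which is $\Lambda_k$-strongly concave relative to $-r$) to get $\breg{}{y_k}{\yopt_k} \le \epsp_k/\Lambda_k$, telescope $\phi_k(\yopt_k) \ge \phi_{k-1}(\yopt_{k-1}) - \lambda_{k-1}\breg{}{y_{k-1}}{\yopt_{k-1}}$ together with $\phi \ge \phi_K$ pointwise, and finish with $\mu_r$-strong convexity of $r$ plus Lipschitzness of $\phi$. Your explicit closed-form unwinding and telescoping notation is just a more formal rendering of the paper's "repeat the argument recursively" step.
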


\begin{proof}
We claim the first half of Theorem~\ref{thm:main-framework-result-informal} holds with $u \gets \yopt_K$. To see this, note that we can bound the suboptimality of $\yopt_K$ as
\begin{align*}
    \phi(\yopt_K) \overge{(i)} \phi_K(\yopt_K) \ge \phi_K(\yopt_{K - 1}) &= \max_{x \in \xset} \inbraces{\psi_{K - 1}(x, \yopt_{K - 1}) - \lambda_{K - 1} \breg{}{y_{K - 1}}{\yopt_{K - 1}}} \\
    &= \phi_{K - 1}(\yopt_{K - 1}) - \lambda_{K - 1} \breg{}{y_{K - 1}}{\yopt_{K - 1}} \\
    &\overge{(ii)} \phi_0(\yopt_0) - \lambda_0 \breg{}{y_0}{\yopt_0} - \sum_{k = 1}^{K - 1} \lambda_k \breg{}{y_k}{\yopt_k} \\
    &\overge{(iii)} \phi(\yopt) - \lambda_0 \breg{}{y_0}{\yopt} - \sum_{k = 1}^{K - 1} \frac{\lambda_k}{\Lambda_k} \epsilon_k,
\end{align*}
where $(i)$ follows since $\phi \ge \phi_K$ pointwise, $(ii)$ follows from repeating the argument in the previous lines recursively (starting by lower bounding $\phi_{K - 1}(\yopt_{K - 1})$, etc.), and $(iii)$ uses Lemma~\ref{lem:bound-dual-div-primal-subopt-informal} applied to $\psi_k$, which yields by Lines \ref{algline:x_k-informal} and \ref{algline:y_k-informal} in Algorithm~\ref{alg:dual-extraction-framework-sketch}:
\begin{align*}
    \breg{}{y_k}{\yopt_k} \le \frac{f_k(x_k) - f_k(\xopt_k)}{\Lambda_k} \le \frac{\epsilon_k}{\Lambda_k},
\end{align*}
since $\psi_k(x, \cdot) = \psi(x, \cdot) + \sum_{j = 0}^{k - 1} \lambda_j \breg{}{y_j}{\cdot}$ is $\Lambda_k$-strongly concave relative to $-r$. Thus, we have proven Equation~\ref{eq:informal-subopt-to-u}, and Equation~\ref{eq:informal-dist-bound} follows again from Lemma~\ref{lem:bound-dual-div-primal-subopt-informal} applied to $\psi_K$. Equation~\ref{eq:informal-subopt} then follows since the fact that $r$ is $\mu_r$-strongly convex with respect to $\norm{\cdot}$ and Equation~\ref{eq:informal-dist-bound} imply
\begin{align*}
    \norm{y_K - \yopt_K} \le \sqrt{ \frac{2}{\mu_r} \breg{}{y_K}{\yopt_K} } \le \sqrt{\frac{2}{\mu_r} \frac{\epsilon_K}{\Lambda_K}}.
\end{align*}
\end{proof}

We give a remark regarding how to pick the parameters $(\lambda_k)_{k = 0}^{K - 1}$ and $(\epsilon_k)_{k = 1}^K$ when applying Theorem~\ref{thm:main-framework-result-informal}:

\begin{remark}[Picking the parameters for Theorem~\ref{thm:main-framework-result-informal}]
    \label{rem:picking-parameters-main-thm}
    Equation~\ref{eq:informal-subopt} can be interpreted as follows: To ensure $y_K$ is $\epsilon$-optimal for $\phi$, it suffices to choose the sequences $(\lambda_k)_{k = 0}^{K - 1}$ and $(\epsilon_k)_{k = 1}^K$ so that the right side of \eqref{eq:informal-subopt} is at most $\epsilon$. Then the first term, $\lambda_0 \breg{}{y_0}{\yopt}$, constrains $\lambda_0$ to be on the order of $\epsilon / \breg{}{y_0}{\yopt}$. Skipping ahead, the third term, $L \sqrt{\frac{2}{\mu_r} \frac{\epsp_K}{\Lambda_K}}$, is the reason we always choose $\lambda_k \approx 2 \lambda_{k - 1}$ in our applications, as this ensures $\Lambda_K$ is large enough to handle this term with $K$ only needing to be logarithmic in the problem parameters. Then the second term, $\sum_{k = 1}^{K - 1} \frac{\lambda_k}{\Lambda_k} \epsp_k$, effectively constrains roughly $\sum_{k = 1}^{K - 1} \epsilon_k \le \epsilon$, as $\lambda_k / \Lambda_k \approx 1$.
\end{remark}

\begin{corollary}[Corollary~\ref{cor:example-schedules-wlog} specialized] \label{cor:example-schedules-wlog-informal}
    Suppose $\phi$ is $L$-Lipschitz with respect to $\norm{\cdot}$, and let $B > 0$ be such that $\breg{}{y_0}{\yopt} \le B$. Then for any $\epsilon > 0$,  and $K \ge \max \inbraces{\log_2 \frac{L^2 B}{\mu_r \epsilon^2}, 1} + 10$, the output of Algorithm~\ref{alg:dual-extraction-framework-sketch} with dual-regularization and primal-accuracy schedules of
    \begin{align*}
        \lambda_k = 2^k \frac{\epsilon}{4 B} \text{ for $k \in \inbraces{0} \cup [K - 1]$}
        \text{ and }
        \epsp_k = \frac{\epsilon}{4 K} \text{ for $k \in [K]$}
    \end{align*}
    satisfies $\phi(\yopt) - \phi(y_K) \le \epsilon$.
    \end{corollary}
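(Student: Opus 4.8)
The plan is to read the corollary directly off the Lipschitz suboptimality bound \eqref{eq:informal-subopt} of Theorem~\ref{thm:main-framework-result-informal}, by substituting the prescribed schedules $\lambda_k = 2^k\tfrac{\epsilon}{4B}$ and $\epsilon_k = \tfrac{\epsilon}{4K}$ and bounding each of its three summands; the formal write-up can simply be deferred to the general Corollary~\ref{cor:example-schedules-wlog}. The first step is pure bookkeeping for the geometric schedule: since $\Lambda_k = \sum_{j=0}^{k-1}\lambda_j = \tfrac{\epsilon}{4B}(2^k-1)$ for every $k \in [K]$, we get $\tfrac{\lambda_k}{\Lambda_k} = \tfrac{2^k}{2^k-1} \in (1,2]$ for all $k \ge 1$, and in particular $\Lambda_K \ge \tfrac{\epsilon}{8B}2^K$.

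With these estimates in hand I would bound the three terms of \eqref{eq:informal-subopt} in turn. The first term obeys $\lambda_0 \breg{}{y_0}{\yopt} \le \lambda_0 B = \tfrac{\epsilon}{4}$ by the hypothesis $\breg{}{y_0}{\yopt} \le B$. The second obeys $\sum_{k=1}^{K-1}\tfrac{\lambda_k}{\Lambda_k}\epsilon_k \le 2\sum_{k=1}^{K-1}\tfrac{\epsilon}{4K} \le \tfrac{\epsilon}{2}$, using $\tfrac{\lambda_k}{\Lambda_k}\le 2$ and $\epsilon_k = \tfrac{\epsilon}{4K}$. The third obeys $L\sqrt{\tfrac{2}{\mu_r}\tfrac{\epsilon_K}{\Lambda_K}} \le L\sqrt{\tfrac{2}{\mu_r}\cdot\tfrac{\epsilon/(4K)}{(\epsilon/8B)2^K}} = 2L\sqrt{\tfrac{B}{\mu_r K 2^K}}$.

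The one point requiring care, and the closest thing to an obstacle, is verifying that the stated lower bound on $K$ forces the third term below $\tfrac{\epsilon}{4}$; squaring, this amounts to $K 2^K \ge \tfrac{64 L^2 B}{\mu_r\epsilon^2}$. Since $K \ge \max\{\log_2\tfrac{L^2 B}{\mu_r\epsilon^2},1\}+10$ forces both $K \ge 11$ and $2^K \ge \tfrac{64 L^2 B}{\mu_r\epsilon^2}$ (checking the cases $\tfrac{L^2 B}{\mu_r\epsilon^2} \ge 2$ and $\tfrac{L^2 B}{\mu_r\epsilon^2} < 2$ separately, with plenty of slack from the additive $10$), we get $K 2^K \ge 2^K \ge \tfrac{64 L^2 B}{\mu_r\epsilon^2}$, so the third term is at most $\tfrac{\epsilon}{4}$. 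Summing the three bounds gives $\phi(\yopt)-\phi(y_K) \le \tfrac{\epsilon}{4}+\tfrac{\epsilon}{2}+\tfrac{\epsilon}{4} = \epsilon$. The remaining details — that the chosen $K$ is a legitimate positive-integer input to Algorithm~\ref{alg:dual-extraction-framework-sketch} and that the index ranges line up with those in \eqref{eq:informal-subopt} — are immediate.
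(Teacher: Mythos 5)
Your proposal is correct and follows essentially the same route as the paper's proof of the general Corollary~\ref{cor:example-schedules-wlog}: compute $\Lambda_k = \tfrac{\epsilon}{4B}(2^k-1)$, bound $\lambda_k/\Lambda_k \le 2$, and substitute the schedules into the three-term Lipschitz bound of the theorem to get $\tfrac{\epsilon}{4}+\tfrac{\epsilon}{2}+\tfrac{\epsilon}{4}$. The only difference is that you spell out the case analysis verifying the stated lower bound on $K$ controls the last term, a step the paper leaves implicit.
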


    \begin{remark}
        \label{rem:example-schedules-wlog}
    Corollary~\ref{cor:example-schedules-wlog-informal} achieves the stated goal of obtaining an $\epsilon$-optimal point for \eqref{eq:dual} by running for a number of iterations which depends logarithmically on the problem parameters, and solving each dual-regularized primal subproblem to an accuracy of $\epsilon$ divided by a logarithmic factor. Note in particular the logarithmic dependence on the dual divergence bound $B$ and dual Lipschitz constant $L$, meaning these are weak assumptions. Furthermore, it is clear from the proof of Theorem~\ref{thm:main-framework-result-informal} that $\phi$ only need be $L$-Lipschitz on a set containing $y_K$ and $\yopt_K$.
    \end{remark}
    
\begin{corollary}[Corollary~\ref{cor:example-schedules-no-log} specialized]
    \label{cor:example-schedules-no-log-informal}
    Let $B > 0$ be such that $\breg{}{y_0}{\yopt} \le B$. Then for any $\epsilon > 0$ and $K \in \N$, the output of Algorithm~\ref{alg:dual-extraction-framework-sketch} with dual-regularization and primal-accuracy schedules of
    \begin{align*}
        \lambda_k = 2^k \frac{\epsilon}{4 B} \text{ for $k \in \inbraces{0} \cup [K - 1]$}
        \text{ and }
        \epsp_k = \frac{\epsilon}{8 \cdot 1.5^k} \text{ for $k \in [K]$}
    \end{align*}
    satisfies
    \begin{align*}
        \norm{y_K - u} \le  \frac{1}{1.5^K} \sqrt{\frac{2 B}{\mu_r}},
    \end{align*}
    where $u \in \yset$ is a point whose suboptimality is at most $\epsilon$, i.e., $\phi(\yopt) - \phi(u) \le \epsilon$. 
\end{corollary}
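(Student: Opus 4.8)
The plan is to obtain Corollary~\ref{cor:example-schedules-no-log-informal} as a direct consequence of Theorem~\ref{thm:main-framework-result-informal}, by substituting the prescribed schedules into its conclusion and estimating the resulting geometric sums. First I would record the partial sums of the regularization schedule: for $\lambda_k = 2^k \frac{\epsilon}{4B}$ we get
\[
\Lambda_k = \sum_{j=0}^{k-1}\lambda_j = (2^k - 1)\frac{\epsilon}{4B} \qquad \text{for } k \in [K],
\]
so in particular $\Lambda_K = (2^K-1)\frac{\epsilon}{4B}$ and $\frac{\lambda_k}{\Lambda_k} = \frac{2^k}{2^k - 1} \le 2$ for all $k \ge 1$. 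Theorem~\ref{thm:main-framework-result-informal} then supplies a point $u \in \yset$ (namely $u = \yopt_K$) with $\breg{}{y_K}{u} \le \epsp_K / \Lambda_K$ and $\phi(\yopt) - \phi(u) \le \lambda_0 \breg{}{y_0}{\yopt} + \sum_{k=1}^{K-1}\frac{\lambda_k}{\Lambda_k}\epsp_k$; it remains only to bound these two right-hand sides.

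For the suboptimality of $u$, the first term satisfies $\lambda_0 \breg{}{y_0}{\yopt} \le \lambda_0 B = \epsilon/4$ by the hypothesis $\breg{}{y_0}{\yopt}\le B$ and the choice $\lambda_0 = \frac{\epsilon}{4B}$. For the second term, using $\frac{\lambda_k}{\Lambda_k} \le 2$ together with $\epsp_k = \frac{\epsilon}{8 \cdot 1.5^k}$ and summing the geometric series,
\[
\sum_{k=1}^{K-1}\frac{\lambda_k}{\Lambda_k}\epsp_k \le \frac{\epsilon}{4}\sum_{k=1}^{\infty} 1.5^{-k} = \frac{\epsilon}{4}\cdot\frac{1.5^{-1}}{1 - 1.5^{-1}} = \frac{\epsilon}{2}.
\]
Adding the two contributions gives $\phi(\yopt) - \phi(u) \le \tfrac{3}{4}\epsilon \le \epsilon$, which is the claimed suboptimality bound on $u$.

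For the distance bound, $\mu_r$-strong convexity of $r$ with respect to $\norm{\cdot}$ gives $\breg{}{y_K}{u} \ge \frac{\mu_r}{2}\norm{y_K - u}^2$, so combining with $\breg{}{y_K}{u} \le \epsp_K/\Lambda_K$,
\[
\norm{y_K - u}^2 \le \frac{2}{\mu_r}\cdot\frac{\epsp_K}{\Lambda_K} = \frac{2}{\mu_r}\cdot\frac{\epsilon/(8\cdot 1.5^K)}{(2^K-1)\epsilon/(4B)} = \frac{B}{\mu_r\, 1.5^K (2^K - 1)}.
\]
Then using $2^K - 1 \ge 2^{K-1}$ and $1.5^K 2^K = 3^K \ge 1.5^{2K}$ (both valid for $K \ge 1$), the right-hand side is at most $\frac{2B}{\mu_r\, 3^K} \le \frac{1}{1.5^{2K}}\cdot\frac{2B}{\mu_r}$, and taking square roots yields $\norm{y_K - u} \le \frac{1}{1.5^K}\sqrt{2B/\mu_r}$, completing the proof.

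I do not anticipate a genuine obstacle here: once Theorem~\ref{thm:main-framework-result-informal} is invoked, the argument is pure bookkeeping, the only content being the elementary estimates $\frac{2^k}{2^k-1}\le 2$, $\sum_{k\ge1}1.5^{-k} = 2$, and $2^K-1\ge 2^{K-1}$. The one point to be mildly careful about is the degenerate case $K = 0$, where $\Lambda_K = 0$ and the divergence bound of Theorem~\ref{thm:main-framework-result-informal} is vacuous; I would dispose of this separately (the algorithm simply returns $y_0$ and no bound of the stated form is required), or restrict to $K \ge 1$ exactly as the general version does.
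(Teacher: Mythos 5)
Your proposal is correct and follows essentially the same route as the paper: invoke the first part of the theorem (with $u = \yopt_K$), compute $\Lambda_k = \frac{\epsilon}{4B}(2^k-1)$, bound the geometric sums using $\lambda_k/\Lambda_k \le 2$, and convert the divergence bound to a norm bound via $\mu_r$-strong convexity of $r$, with the same $3^K \ge 1.5^{2K}$ bookkeeping. Your side remark about $K=0$ is moot since $K \in \N$ means $K \ge 1$ here, exactly as you note.
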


\begin{remark}
    \label{rem:example-schedules-no-log}
Later calls to the $\DRPO$ oracle during the run of Algorithm~\ref{alg:dual-extraction-framework-sketch} may be cheaper since there will be a significant amount of dual regularization at that point (namely, $\Lambda_k = \sum_{j = 0}^{k - 1} \lambda_j$ is large). One can sometimes take advantage of this (in particular, if the cost of a $\DRPO$ oracle call scales inverse polynomially with the regularization) to design schedules that avoid even the typically additional multiplicative logarithmic cost of Corollary~\ref{cor:example-schedules-wlog-informal} over the cost of a single $\DRPO$ oracle call. In such cases, a choice of schedules similar to those of Corollary~\ref{cor:example-schedules-no-log-informal} is often appropriate. With this choice of schedules, later rounds require very high accuracy. However, if one can argue that the increasing dual regularization $\Lambda_k$ makes the $\DRPO$ oracle call cheaper at a faster rate than the decreasing error $\epsilon_k$ makes it more expensive (as we do in Section \ref{sec:convex-critical-point}), the total cost of applying the framework may collapse geometrically to the cost of a single $\DRPO$ oracle call made with target error approximately $\epsilon$.

We purposely state Corollary~\ref{cor:example-schedules-no-log-informal} without the assumption that $\phi$ is Lipschitz because that is the form we will use in Section~\ref{sec:convex-critical-point}. However, it is straightforward to reformulate a version of Corollary~\ref{cor:example-schedules-no-log-informal} with the Lipschitz assumption. Here the focus was to illustrate different possible choices of schedules.
\end{remark}

\subsection{Related work}\label{subsec:related-work}

\paragraph{Black-box reductions.} Our main contribution can be viewed as a black-box reduction from (regularized) primal optimization to dual optimization. Similar black-box reductions exist in the optimization literature. For example, \cite{allenzhu2016optimalblackbox} develops reductions between various fundamental classes of optimization problems, e.g., strongly convex optimization and smooth optimization. In a similar vein, the line of work \cite{lin2015catalyst,frostig2015unregularizing,carmon2022RECAPP} reduces convex optimization to approximate proximal point computation (i.e., regularized minimization).

\paragraph{Bilinear matrix games.} Consider the bilinear objective $\psi(x,y) = x^\top A y$ where $\xset$ and $\yset$ are either the simplex, $\simplex^{k} \defeq \{x \in \R^k_{\geq 0} : \norm{x}_1 =1\}$, or the Euclidean ball, $B^{k} \defeq \{x \in \R^k : \norm{x}_2 \leq 1\}$. State-of-the-art methods in regard to runtime for obtaining an approximately optimal primal and/or dual solution can be divided into second-order interior point methods \cite{cohen2021solving,brand2021minimum} and stochastic first-order methods \cite{grigoriadis1995sublinear,clarkson2012sublinear,carmon2019variance,carmon2023whole}; see Table 2 in \cite{carmon2023whole} for a summary of the best known runtimes as well as other references. Of importance to this paper, all state-of-the-art algorithms other than that of \cite{carmon2023whole} are either (i) primal-dual algorithms which return both an $\epsilon$-optimal primal and dual solution simultaneously, and/or (ii) achieve runtimes which are symmetric in the primal dimension $d$ and dual dimension $n$, meaning the cost of obtaining an $\epsilon$-optimal dual solution is the same as that of obtaining an $\epsilon$-optimal primal solution. The algorithm of \cite{carmon2023whole}, on the other hand, only returns an $\epsilon$-optimal primal point and further has a runtime which is not symmetric in $n$ and $d$ (see the footnote on the first page of that paper). As a result, solving the dual by simply swapping the roles of the primal and dual variables may be more expensive than solving the primal. (In fact, swapping the variables in this way may not even always be possible without further modifications due to restrictions on the constraint sets.)

\paragraph{CVaR at level $\alpha$ distributionally robust optimization (DRO).} The DRO objectives we study are of the form $\psi(x, y) = \sum_{i = 1}^n y_i f_i(x)$, where the functions $f_i : \R^d \to \R$ are convex, bounded, and Lipschitz, and $\yset$, known as the uncertainty set, is a subset of the simplex. This objective corresponds to a robust version of the empirical risk minimization (ERM) objective where instead of taking an average over the losses (namely, $y_i$ is fixed at $1 / n$), larger losses may be given more weight. In particular, in this paper we focus on a canonical DRO setting, \emph{CVaR at level $\alpha$,} where the uncertainty set is given by $\yset \defeq \inbraces{y \in \simplex^n : \norm{y}_\infty \le \frac{1}{\alpha n}}$ for a choice of $\alpha \in [1 / n, 1]$. CVaR DRO, along with its generalization $f$-divergence DRO, has been of significant interest over the past decade; see \cite{levy2020largescale,carmon2022distributionally,curi2020adaptivesampling,namkoong2016fdivergences,duchi2021learningmodels} and the references therein. \cite{levy2020largescale} is the most relevant to this paper---omitting parameters other than $\alpha$, the number of losses $n$, and the target accuracy $\epsilon > 0$, they give a matching upper and lower bound (up to logarithmic factors) of $\Otilde(\alpha^{-1} \epsilon^{-2})$ first-order queries of the form $(\grad f_i(x), f_i(x))$ to obtain an expected $\epsilon$-optimal point of the primal objective. Their upper bound is achieved by a stochastic gradient method where the gradient estimator is based on a multilevel Monte Carlo (MLMC) scheme \cite{giles2008multilevel,giles2015multilevel}. However, the best known complexity for obtaining an expected $\epsilon$-optimal point of the dual of CVaR at level $\alpha$ is $O(n \epsilon^{-2})$ via a primal-dual method based on \cite{nemirovski2009robust}; see also \cite{curi2020adaptivesampling,namkoong2016fdivergences} as well as \cite[Appendix A.1]{carmon2022distributionally}, the last of which obtains complexity $\Otilde (n \epsilon^{-2})$ in the more general setting of the uncertainty set being an $f$-divergence ball.

\paragraph{Stationary point computation.} For $\gamma > 0$, convex and $\beta$-smooth $h : \R^n \to \R$ with global minimum $\zopt$, and initialization point $z_0$, consider the problem of computing a point $z$ such that $\norm{\grad h(z)}_2 \le \gamma$. Two worst-case optimal gradient query complexities for this problem exist in the literature: $O \inparen{ \sqrt{\beta (h(z_0) - h(\zopt))} / \gamma }$ and $O \inparen{\sqrt{\beta \norm{z_0 - \zopt}_2 / \gamma}}$. An algorithm (the OGM-G method) which achieves the former complexity was given in \cite{kim2020optimizing}, and \cite{nesterov2019primaldual} pointed out that any algorithm which achieves the former complexity can achieve the latter complexity. This is obtainable by running $N$ iterations of any optimal gradient method for reducing the function value, followed by $N$ iterations of a method which achieves the former complexity for reducing the gradient magnitude. In what may be of independent interest, we observe in Section \ref{subsec:critical-point-prelims} that a reduction in the opposite direction is also possible. More broadly, algorithms and frameworks for reducing the gradient magnitude of convex functions have been of much recent interest, and further algorithms and related work for this problem include \cite{kim2023timereversed, lan2023optimal, kim2024mirror, lee2021geometric, diakonikolas2021potential, nesterov2019primaldual,grapiglia2022tensormethodsstationary}, with lower bounds given in \cite{nemirovsky1991optimality, nemirovsky1992information}.

\subsection{Paper organization}\label{subsec:organization}

In Section \ref{sec:notation-and-assumptions}, we go over notation and conventions for the rest of the paper. 
We give our general dual-extraction framework and its guarantees in Section \ref{sec:dual-extraction-framework}. In Section \ref{sec:maximin-algorithms}, we apply our framework to bilinear matrix games and the CVaR at level $\alpha$ DRO problem. Finally, in Section \ref{sec:convex-critical-point} we give an optimal algorithm (in terms of query complexity) for computing an approximate stationary point of a convex and $\beta$-smooth function.
\section{Notation and conventions}
\label{sec:notation-and-assumptions}

\paragraph{General notation and conventions.}
For $\psi : \R^d \times \R^n \to \R$, we let $\grad_x \psi(x, y)$ (resp.\ $\grad_y \psi(x, y)$) denote the partial gradient of $\psi$ with respect to the first (resp.\ second) variable, evaluated at $(x, y) \in \R^d \times \R^n$. We use the notation $\psi(\cdot, y) : \R^d \to \R$ for a fixed $y \in \R^n$ to denote the map $x \mapsto \psi(x, y)$ (and define $\psi(x, \cdot)$ analogously). When we say $\psi(\cdot, y)$ satisfies a property, we mean it satisfies that property for any fixed $y \in \R^n$ (and analogously for $\psi(x, \cdot)$).

We let $[K] \defeq \inbraces{1, 2, \dots, K}$, $\simplex^n \defeq \inbraces{x \in \R_{\ge 0}^n : \norm{x}_1 = 1}$, and $B_r^n(x) \defeq \inbraces{x \in \R^n : \norm{x}_2 \le r}$. In the latter two definitions, we may drop the superscript $n$ if it is clear from context,  the argument $x$ if it is 0, and the subscript $r$ if it is 1. We use the notation $\Otilde(\cdot)$, $\Omegatilde(\cdot)$, and $\Thetatilde(\cdot)$ to hide polylogarithmic factors. $\bd U$, $\interior U$, and $\ri U$ denote the boundary, interior, and relative interior of the set $U$ respectively. For $y \in \R^n$, we may use either the notation $y_i$ or $[y]_i$ to denote its $i$-th entry. (In particular, the latter may be used in the form $[y_k]_i$ to denote the $i$-th entry of the $k$-th vector in a sequence of vectors $y_1, y_2, \dots.$) $\ones$ denotes the all-ones vector. For a function $f$ which depends on some inputs $x_1, \dots, x_k \in \R$, we write $f \le \poly(x_1, \dots, x_k)$ to denote the fact that $f$ is uniformly bounded above by a polynomial in $x_1, \dots, x_k$ as $x_1, \dots, x_k$ vary.

\paragraph{Approximate optimizers.} For $\epsilon > 0$,  $U \subseteq \R^n$, and $f : U \to \R$ we say that $u \in U$ is an $\epsilon$-minimizer of $f$ (or $\epsilon$-(sub)optimal if the fact that we are minimizing is clear from context) if $f(v) \ge f(u) - \epsilon$ for all $v \in U$. Similarly, we say a random point $u \in U$ is an expected $\epsilon$-minimizer (or expected $\epsilon$-(sub)optimal) if $f(v) \ge \E f(u) - \epsilon$ for all $v \in U$. $\epsilon$-maximizers and expected $\epsilon$-maximizers are defined analogously, and we may drop the qualifier ``expected'' if it is clear from context. We use the notation $\argmin^\epsilon$ and $\argmax^{\epsilon}$ to denote the set of $\epsilon$-minimizers and $\epsilon$-maximizers respectively.

\paragraph{Strong convexity/concavity, smoothness, and Lipschitzness.} For $\mu > 0$ and convex $U \subseteq V \subseteq \R^n$, we say $f : V \to \R$ is $\mu$-strongly convex with respect to a norm $\norm{\cdot}$ over $U$ if $f(tu + (1 - t)v) \le t f(u) + (1 - t)f(v) - \frac{\mu}{2} \cdot t (1 - t) \norm{u - v}^2$ for all $u, v \in U$. If $f$ is differentiable, then $f$ is $\mu$-strongly convex with respect to $\norm{\cdot}$ over $U$ if and only if $f(v) \ge f(u) + \inangle{\grad f(u), v - u} + \frac{\mu}{2} \norm{v - u}^2$ for all $u, v \in U$. We omit the qualification ``over $U$'' when $U = V$ or $U$ is clear from context, and we say a function is strongly convex if it is strongly convex for some $\mu > 0$. We say that differentiable $g : V \to \R$ is $\mu$-strongly convex relative to (differentiable) $f$ over $U$ if and only if $g(v) - g(u) - \inangle{\grad g(u), v - u} \ge \mu (f(v) - f(u) - \inangle{\grad f(u), v - u})$ for all $u, v \in U$. $f$ is $\mu$-strongly concave (relative to $g$) if and only if $-f$ is $\mu$-strongly convex (respectively relative to $-g$). Letting $\dualnorm{\cdot}$ denote the dual norm, $f$ is $\beta$-smooth over $U$ with respect to a norm $\norm{\cdot}$ if $\dualnorm{\grad f(u) - \grad f(v)} \le \beta \norm{u - v}$ for all $u, v \in U$. $f$ is $L$-Lipschitz over $U$ with respect to $\norm{\cdot}$ if $|f(u) - f(v)| \le L \norm{u - v}$ for all $u, v \in U$.

\paragraph{Distance-generating functions (dgfs) and Bregman divergences.} 
Following \cite[Sec. 6.4]{orabona2023modern}, we encapsulate the setup for a dgf as follows:

\begin{definition}[dgf setup]
    \label{def:dgf-setup}
We say $(\uset, \pset, \norm{\cdot}, r)$ is a \emph{dgf setup} over $\R^n$ for closed and convex sets $\uset \subseteq \pset \subseteq \R^n$ with $\uset \cap \interior \pset \ne \emptyset$ if: (i) the distance-generating function (dgf) $r : \pset \to \R$ is convex and continuous over $\pset$, differentiable on $\interior \pset$, and $\mu_r$-strongly convex with respect to the chosen norm $\norm{\cdot}$ on $\uset \cap \interior \pset$ for some $\mu_r > 0$; and (ii) either $\lim_{u \to \bd \pset} \norm{\grad r(u)}_2 = \infty$ or $\uset \subseteq \interior \pset$. 
\end{definition}

The assumptions in \Cref{def:dgf-setup} are common assumptions to ensure well-definedness of mirror descent or similar proximal-point-based methods (e.g., \cite[Sec. 3]{beck2003mirrordescent}, \cite[Thm. 3.12]{bauschke1997convexanalysis}, or \cite[Thm. 6.7]{orabona2023modern}); for example, two standard choices are: (i) $\uset \subseteq \R^n$, $\pset = \R^n$, $\norm{\cdot} = \norm{\cdot}_2$, and $r(u) = \frac{1}{2} \norm{u}_2^2$; and (ii) $\uset = \simplex^n$, $\pset = \R^n_{\ge 0}$, $\norm{\cdot} = \norm{\cdot}_1$, and $r(u) = \sum_{i = 1}^n u_i \ln u_i$. ($\mu_r = 1$ in both cases.) For a given dgf setup, we define its induced Bregman divergence $\breg{r}{u}{v} \defeq r(v) - r(u) - \inner{\grad r(u)}{v - u}$ for $u \in \interior \pset, v \in \pset$, and drop the superscript $r$ when it is clear from context.

\paragraph{Conjugation, indicators, and restrictions.} For an extended-real-valued convex function $f : \R^n \to [- \infty, \infty]$, its effective domain is $\dom f \defeq \inbraces{x \in \R^n : f(x) < \infty}$, and its convex or Fenchel conjugate $\fconj : \R^n \to [- \infty, \infty]$ is given by $\fconj(\theta) = \sup_{x \in \dom f} \inbraces{ \inangle{\theta, x} - f(x) }$. We collect some standard properties of the Fenchel conjugate and other useful convex-analytic facts in Appendix \ref{app:convex-analysis-facts}. For $S \subseteq \R^n$, we let $\indc_S$ denote the infinite indicator of $S$, namely $\indc_S(x) = 0$ if $x \in S$ and $\indc_S(x) = \infty$ if $x \notin S$. For a function $f : S \to [- \infty, \infty]$ initially defined on a strict subset $S \subset \R^n$, we may implicitly extend the domain of $f$ to all of $\R^n$ via its indicator as $f + \indc_S$ without additional comment. For a function $f : U \to [- \infty, \infty]$ with $S \subseteq U \subseteq \R^n$, we let $f_S \defeq f + \indc_S$ denote the restriction of $f$ to $S$. We note that $f^*_S$ denotes the convex conjugate of $f_S$ (and not $\fconj$ restricted to $S$).

\section{Dual-extraction framework}\label{sec:dual-extraction-framework}

In this section, we provide our general dual-extraction framework and its guarantees. In Section \ref{subsec:main-framework-assumptions-preliminaries}, we give the general setup, oracle definitions, and assumptions with which we apply and analyze the framework. Section \ref{subsec:main-framework} contains the statement and guarantees of the framework and Section~\ref{subsec:main-framework-proofs} contains the associated proofs.

\subsection{Preliminaries}
\label{subsec:main-framework-assumptions-preliminaries}

We bundle all of the inputs to our framework into what we call a \emph{dual-extraction setup}, defined below. Recall that when we say $\psi(x, \cdot)$ satisfies a property, we mean it satisfies that property for any fixed $x \in \R^d$ (and analogously for $\psi(\cdot, y)$).

\begin{definition}[Dual-extraction setup]
    \label{def:dual-extraction-setup}
A \emph{dual-extraction setup} is a tuple $(\psi, \xset, \yset, \uset, \pset, \norm{\cdot}, r)$ where: 
\begin{enumerate*}[series = tobecont, itemjoin =, label=(\roman*)]
    \item $\psi(x, \cdot)$ is differentiable; \label{as:dif}
    \item $\psi(\cdot, y)$ and $\psi(x, \cdot)$ are convex and concave respectively; \label{as:convex-concave}
    \item $(\uset, \pset, \norm{\cdot}, r)$ is a dgf setup over $\R^n$ per Definition~\ref{def:dgf-setup}; \label{as:dgf}
    \item the constraint sets $\xset \subseteq \R^d$ and $\yset \subseteq \R^n$ are nonempty, closed, and convex with $\yset \subseteq \uset$ and $\yset \cap \interior \pset \ne \emptyset$; \label{as:constraint-sets}
    \item $\xset$ is bounded or $\psi(\cdot, y)$ is strongly convex; \label{as:bounded-strongly-convex}
    \item $\yset$ is bounded or $\psi(x, \cdot)$ is strongly concave; \label{as:bounded-strongly-concave}
    \item over all $p \in \uset \cap \interior \pset$ and $w \in \partial \indc_\uset(p)$, the map $y \mapsto \inangle{w, y}$ is constant over $\yset$.\footnote{In all of our applications, this map will in fact be constant over $\uset$.} \label{as:technical-dual}
\end{enumerate*}
\end{definition}
 
Assumption~\ref{as:dif} is only used in the proofs of Lemma~\ref{lem:dual-div-bound-exact-best-response} (the general version of Lemma~\ref{lem:bound-dual-div-primal-subopt-informal} from Section \ref{subsec:dual-extraction-overview}) and Corollary~\ref{cor:main-framework-well-defined} (used to show the framework is well-defined when $\dom r \ne \R^n$).
Assumptions \ref{as:convex-concave}, \ref{as:bounded-strongly-convex}, and \ref{as:bounded-strongly-concave} ensure that the minimax optimization problem with objective $\psi$ and constraint sets $\xset$ and $\yset$ satisfies the \emph{minimax principle}; see below. Regarding Assumptions \ref{as:dgf}, \ref{as:constraint-sets}, and \ref{as:technical-dual}, the fact that $\yset$ is potentially a strict subset of $\uset$ as well as the necessity of the technical assumption \ref{as:technical-dual} is discussed in Remark \ref{rem:technical-dual-assumption-and-uset}. In particular, Assumption~\ref{as:technical-dual} is only used to derive an equivalent formulation of the framework to Algorithm~\ref{alg:dual-extraction-framework-sketch} which often allows for easier instantiations in applications, but is not strictly necessary to obtain our guarantees.

While our main results are stated in the full generality of Definition~\ref{def:dual-extraction-setup}, in our applications we only particularize to 
\Cref{ex:unbounded} and \Cref{ex:simplex} introduced below.

\begin{definition}[Unbounded setup]
    \label{ex:unbounded}
    \sloppy
    A \emph{$(\psi, \xset, \yset, r)$-unbounded setup} is a $(\psi, \xset, \yset, \R^n, \R^n, \norm{\cdot}_2, r)$-dual-extraction setup.
\end{definition}

In other words, in an unbounded setup we choose $\uset = \pset = \R^n$ and the Euclidean norm, in which case the dgf $r$ can be any differentiable and strongly convex function with respect to $\norm{\cdot}_2$. Note that Assumption~\ref{as:technical-dual} is trivial as $\partial \indc_\uset (p) = \inbraces{0}$ for all $p \in \R^n$.

\begin{definition}[Simplex setup]
    \label{ex:simplex}
    A \emph{$(\psi, \xset, \yset)$-simplex setup} is a $(\psi, \xset, \yset, \simplex^n, \R^n_{\ge 0}, \norm{\cdot}_1, r)$-dual-extraction setup where $r(u) \defeq \sum_{i = 1}^n u_i \ln u_i$ (with $0 \ln 0 \defeq 0$).
\end{definition}

In other words, in a simplex setup we choose $\uset = \simplex^n$, $\pset = \R^n_{\ge 0}$, we are using the $\ell_1$-norm, and the dgf is negative entropy when restricted to the simplex. It is a standard result known as Pinsker's inequality that $r$ is 1-strongly convex over $\simplex_{>0}^n$ with respect to $\norm{\cdot}_1$, and the associated Bregman divergence is given by the Kullback-Leibler (KL) divergence $\breg{}{u}{w} = \sum_{i = 1}^n w_i \ln \frac{w_i}{u_i}$ for $u \in \simplex^n_{>0}$ and $w \in \simplex^n$.
     We verify that Assumption~\ref{as:technical-dual} holds in Appendix \ref{subapp:verify-simplex-setting}.

\paragraph{Notation associated with a setup.} Whenever we instantiate a dual-extraction setup (Definition~\ref{def:dual-extraction-setup}), we use the following notation and oracles associated with that setup without additional comment. We define the associated primal $f : \xset \to \R$ and dual $\phi : \yset \to \R$ functions, along with their corresponding primal and dual optimization problems, as they were introduced above in \eqref{eq:primal} and \eqref{eq:dual}. We let $\xopt \in \argmin_{x \in \xset} f(x)$ and $\yopt \in \argmax_{y \in \yset} \phi(y)$ denote arbitrary primal and dual optima. 
To facilitate the discussion of dual-regularized problems, we define $f_{\lambda, q}(x) : \xset \to \R$ as follows
\begin{align*}
    f_{\lambda, q}(x) \defeq \max_{y \in \yset} \inbraces{\psi(x, y) - \lambda V_q(y)}
    \text{ for }
    \lambda > 0
    \text{ and }
    q \in \uset \cap \interior \pset\,.
\end{align*}

\paragraph{The minimax principle.} Assumptions \ref{as:convex-concave}, \ref{as:bounded-strongly-convex}, and \ref{as:bounded-strongly-concave} in Definition~\ref{def:dual-extraction-setup} guarantee $f(\xopt) = \psi(\xopt, \yopt) = \phi(\yopt)$, which we refer to as the \textit{minimax principle}. See, e.g., \cite{sion1958general,neumann1928zur} as well as Propositions 1.2 and 2.4 in \cite[Ch. VI]{ekeland1999convex}.

\paragraph{Oracle definitions.} Our framework assumes black-box access to $\psi$, $\xset$, and $\yset$ via a dual-regularized primal optimization ($\DRPO$) oracle and a dual-regularized dual best response ($\DRBR$) oracle defined below. Note that we generalize the setting of Section \ref{subsec:dual-extraction-overview} by allowing the $\DRPO$ oracle to return an expected $\epsilon$-optimal point; this is used in our applications in Section~\ref{sec:maximin-algorithms}.

\begin{definition}[$\DRPO$ oracle]
    \label{def:DRPO}
    A \emph{$(q \in \uset \cap \interior \pset, \lambda > 0, \epsprim > 0)$-dual-regularized primal optimization oracle,} $\DRPO(q, \lambda, \epsprim)$, returns an expected $\epsprim$-minimizer of $f_{\lambda, q}$, i.e., a point $x \in \xset$ such that $\E f_{\lambda, q}(x) \le \inf_{x' \in \xset} f_{\lambda, q}(x') + \epsprim$, where the expectation is over the internal randomness of the oracle.
\end{definition}

\begin{definition}[$\DRBR$ oracle]
    \label{def:DRBR}
    A \emph{$(q \in \uset \cap \interior \pset, \lambda > 0, x \in \xset)$-dual-regularized best response oracle,} $\DRBR(q, \lambda, x)$, returns $\argmax_{y \in \yset} \inbraces{\psi(x, y) - \lambda \breg{}{q}{y}}$.
\end{definition}

We also define a version of the $\DRPO$ oracle, called the $\DRPOSP$ oracle, which allows for a failure probability. We include this definition here due to its generality and broad applicability, but it is only used in Section~\ref{subsec:matrix-games} since the external result we cite to obtain an expected $\epsprim$-minimizer of $f_{\lambda, q}$ in that application has a failure probability. We also show in Appendix \ref{subapp:boosting-DRPOSP} how to boost the success probability of a $\DRPOSP$ oracle.

\begin{definition}[$\DRPOSP$ oracle]
    \label{def:DRPOSP}
A \emph{$(q \in \uset \cap \interior \pset, \lambda > 0, \epsprim > 0, \delta \in [0, 1))$-dual-regularized primal optimization oracle with success probability,} $\DRPOSP(q, \lambda, \epsprim, \delta)$, returns an expected $\epsprim$-minimizer of $f_{\lambda, q}$ with success probability at least $1 - \delta$, where the expectation and success probability are over the internal randomness of the oracle.
\end{definition}

\subsection{The framework and its guarantees}\label{subsec:main-framework}

\begin{algorithm}[h] %
	\setstretch{1.1}
	\caption{Dual-extraction framework}
	\label{alg:dual-extraction-framework}
	\LinesNumbered
	\DontPrintSemicolon
	\Input{
		$(\psi, \xset, \yset, \uset, \pset, \norm{\cdot}, r)$-dual extraction setup (Definition~\ref{def:dual-extraction-setup}), initial dual-regularization center $y_0 \in \yset \cap \interior \pset$, iteration count $K \in \N$, dual-regularization schedule $(\lambda_k \in \R_{> 0})_{k = 0}^{K - 1}$, primal-accuracy schedule $(\epsp_k \in \R_{> 0})_{k = 1}^K$, $\DRPO$ and $\DRBR$ oracles (Definitions \ref{def:DRPO} and \ref{def:DRBR})
	}

    \For{$k = 1, 2, \ldots, K$ }{ 
        $\Lambda_k = \sum_{j = 0}^{k - 1} \lambda_j$\;
        $q_k = \argmin_{q \in \uset}\frac{1}{\Lambda_k} \sum_{j = 0}^{k - 1} \lambda_j \breg{}{y_j}{q}$ \tcp*{Or, $q_k = \grad \rconjuset \inparen{\frac{1}{\Lambda_k} \sum_{j = 0}^{k - 1} \lambda_j \grad r(y_j)}$; see Appendix \ref{subapp:equiv-expression-q_k}} \label{algline:q_k}
        $x_k = \DRPO(q_k, \Lambda_k, \epsp_k)$ \tcp*{$\E [f_{\Lambda_k, q_k}(x_k) \mid x_{k - 1}] \le \inf_{x \in \xset} f_{\Lambda_k, q_k}(x) + \epsilon_k$} \label{algline:x_k}
        $y_k = \DRBR(q_k, \Lambda_k, x_k)$ \tcp*{$y_k = \underset{y \in \yset}{\argmax} \inbraces{\psi(x_k, y) - \Lambda_k \breg{}{q_k}{y}}$} \label{algline:y_k}
    }

    \Return $y_K$\;
\end{algorithm}

We now state the general dual-extraction framework, Algorithm~\ref{alg:dual-extraction-framework}, and its guarantees, with proofs in the next section. As mentioned in Section \ref{subsec:dual-extraction-overview}, Algorithm~\ref{alg:dual-extraction-framework} generalizes Algorithm~\ref{alg:dual-extraction-framework-sketch} in three major ways: (i) we allow for stochasticity in the $\DRPO$ oracle; (ii) we allow for distance-generating functions $r$ where $\dom r \ne \R^n$; and (iii) we give different but equivalent characterizations of $x_k$ and $y_k$ which often allow for easier instantiations of the framework.

Regarding (iii), consider the case where the $\DRPO$ oracle is deterministic and $\dom r = \R^n$ for the sake of discussion. Note that in this case, the definitions of $x_k$ and $y_k$ in Lines \ref{algline:x_k} and \ref{algline:y_k} of Algorithm~\ref{alg:dual-extraction-framework} may seem different than those in Lines \ref{algline:x_k-informal} and \ref{algline:y_k-informal} of Algorithm~\ref{alg:dual-extraction-framework-sketch} at first glance. In particular, $x_k$ in Line~\ref{algline:x_k} of Algorithm~\ref{alg:dual-extraction-framework} is an $\epsilon_k$-minimizer of $x \mapsto \max_{y \in \yset} \inbracess{\psi(x, y) - \Lambda_k \breg{}{q_k}{y}}$ over $\xset$, whereas $x_k$ in Line~\ref{algline:x_k-informal} of Algorithm~\ref{alg:dual-extraction-framework-sketch} is an $\epsilon_k$-minimizer of $x \mapsto \max_{y \in \yset} \inbracess{\psi(x, y) - \sum_{j = 0}^{k - 1} \lambda_j \breg{}{y_j}{y}}$ over $\xset$. Similarly, $y_k = \argmax_{y \in \yset} \inbracess{\psi(x_k, y) - \Lambda_k \breg{}{q_k}{y}}$ in Line~\ref{algline:y_k} of Algorithm~\ref{alg:dual-extraction-framework}, whereas $y_k = \argmax_{y \in \yset} \inbracess{\psi(x, y) - \sum_{j = 0}^{k - 1} \lambda_j \breg{}{y_j}{y}}$ in Line~\ref{algline:y_k-informal} of Algorithm~\ref{alg:dual-extraction-framework-sketch}. In fact, we show in Section \ref{subsec:main-framework-proofs} that these are equivalent; see Lemma~\ref{lem:connecting-to-overview} and Remark \ref{rem:technical-dual-assumption-and-uset}. The potential advantage of the expressions in Algorithm~\ref{alg:dual-extraction-framework} compared to those in Algorithm~\ref{alg:dual-extraction-framework-sketch} is that they involve only a single regularization term.

Note also that Line~\ref{algline:q_k} of Algorithm~\ref{alg:dual-extraction-framework} gives two equivalent expressions for the iterate $q_k$; their equivalence is proven in Appendix \ref{subapp:equiv-expression-q_k}. Also, note that Line~\ref{algline:x_k} is the only potential source of randomness in Algorithm~\ref{alg:dual-extraction-framework}; in particular, $y_k$ and $q_{k + 1}$ are deterministic upon conditioning on $x_k$. Finally, we show that Algorithm~\ref{alg:dual-extraction-framework} is well-defined in Appendix \ref{subapp:supporting-lemmas-well-defined}; in particular, whenever a Bregman divergence $\breg{}{u}{w}$ is written in Algorithm~\ref{alg:dual-extraction-framework}, it is the case that $u \in \uset \cap \interior \pset$. For example, in the context of a simplex setup per Definition~\ref{ex:simplex}, this corresponds to $u \in \simplex^n_{>0}$.

We now give the main guarantee for Algorithm~\ref{alg:dual-extraction-framework}. See Remark \ref{rem:picking-parameters-main-thm} for additional explanation.

\begin{restatable}[Algorithm~\ref{alg:dual-extraction-framework} guarantee]{theorem}{restateThmMainFrameworkThm}
    \label{thm:main-framework-guarantee}
    With $K$ calls to a $\DRPO$ oracle and $K$ calls to a $\DRBR$ oracle,
    Algorithm~\ref{alg:dual-extraction-framework} returns $y_K$ satisfying
    \begin{align*}
        \E \breg{}{y_K}{u} \le \frac{\epsilon_K}{\Lambda_K},
    \end{align*}
    where $u \in \yset$ is a point with expected suboptimality bounded as
    \begin{align*}
        \phi(\yopt) - \E \phi(u) \le \lambda_0 \breg{}{y_0}{\yopt} + \sum_{k = 1}^{K - 1} \frac{\lambda_k}{\Lambda_k} \epsp_k.
    \end{align*}
    If we additionally assume that $\phi$ is $L$-Lipschitz with respect to $\norm{\cdot}$, the expected suboptimality of $y_K$ can be directly bounded as
    \begin{align}
        \label{eq:Lipschitz-main-guarantee}
        \phi(\yopt) - \E \phi(y_K) \le  \lambda_0 \breg{}{y_0}{\yopt} + \sum_{k = 1}^{K - 1} \frac{\lambda_k}{\Lambda_k} \epsp_k + L \sqrt{\frac{2}{\mu_r} \frac{\epsp_K}{\Lambda_K}}.
    \end{align}
\end{restatable}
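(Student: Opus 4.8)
The plan is to reduce Theorem~\ref{thm:main-framework-guarantee} to the argument already carried out for Theorem~\ref{thm:main-framework-result-informal}, once the three ways in which Algorithm~\ref{alg:dual-extraction-framework} generalizes Algorithm~\ref{alg:dual-extraction-framework-sketch} are dispatched: (iii) the reformulation of the iterates through $q_k$ with a single regularizer, (ii) dgfs with $\dom r \ne \R^n$, and (i) the randomness of the $\DRPO$ oracle. For (iii) and (ii), I would first invoke Lemma~\ref{lem:connecting-to-overview} together with the well-definedness results of Appendix~\ref{subapp:supporting-lemmas-well-defined}: these certify that every Bregman divergence written in Algorithm~\ref{alg:dual-extraction-framework} is evaluated at a point of $\uset \cap \interior\pset$ (so in particular $y_k \in \interior\pset$), and --- the key point --- that if $\psi_k, f_k, \phi_k$ are defined by the recursion \eqref{eq:recursive-reg-functions}, then $x_k$ from Line~\ref{algline:x_k} is an expected $\epsp_k$-minimizer of $f_k$ and $y_k$ from Line~\ref{algline:y_k} equals $\argmax_{y \in \yset} \psi_k(x_k, y)$. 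This step is where Assumption~\ref{as:technical-dual} is needed (cf.\ Remark~\ref{rem:technical-dual-assumption-and-uset}): it is what allows $\Lambda_k \breg{}{q_k}{\cdot}$ to be swapped for $\sum_{j < k} \lambda_j \breg{}{y_j}{\cdot}$ up to an additive constant that is immaterial when maximizing over $\yset$.

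Next I would fix the randomness bookkeeping. Let $\mathcal{F}_k \defeq \sigma(x_1, \dots, x_k)$, with $\mathcal{F}_0$ trivial. Since Line~\ref{algline:q_k} is deterministic given the past iterates and Line~\ref{algline:y_k} is deterministic given $x_k$, an easy induction shows that $\psi_k, f_k, \phi_k$, $q_k$, and $\yopt_k$ (a maximizer of $\phi_k$ over $\yset$, unique for $k \ge 1$ since $\phi_k = \phi - \Lambda_k r + (\text{affine})$ is then strongly concave) are $\mathcal{F}_{k-1}$-measurable; the $\DRPO$ guarantee therefore reads $\E\!\left[ f_k(x_k) - \inf_{x \in \xset} f_k(x) \,\middle|\, \mathcal{F}_{k-1} \right] \le \epsp_k$. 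For every realization of the oracle's randomness, I would then replay the chain of inequalities from the proof of Theorem~\ref{thm:main-framework-result-informal}. Applying Lemma~\ref{lem:dual-div-bound-exact-best-response} to $\psi_k$ --- valid because $-\psi_k(x, \cdot) = -\psi(x, \cdot) + \Lambda_k r(\cdot) + (\text{affine})$ is $\Lambda_k$-strongly convex relative to $r$ --- gives $\breg{}{y_k}{\yopt_k} \le \bigl(f_k(x_k) - \inf_{x \in \xset} f_k(x)\bigr) / \Lambda_k$. Telescoping $\phi_k(\yopt_k) \ge \phi_k(\yopt_{k-1}) = \phi_{k-1}(\yopt_{k-1}) - \lambda_{k-1} \breg{}{y_{k-1}}{\yopt_{k-1}}$ from $k = K$ down to $k = 1$ and then using $\phi \ge \phi_K$ pointwise, I get, pathwise,
\[
\phi(\yopt_K) \ge \phi(\yopt) - \lambda_0 \breg{}{y_0}{\yopt} - \sum_{k=1}^{K-1} \frac{\lambda_k}{\Lambda_k} \Bigl( f_k(x_k) - \inf_{x \in \xset} f_k(x) \Bigr).
\]

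Taking total expectations and applying the tower property with the $\DRPO$ bound yields the stated bound on $\phi(\yopt) - \E\phi(u)$ with $u = \yopt_K$; likewise, Lemma~\ref{lem:dual-div-bound-exact-best-response} applied at level $K$ plus the tower property gives $\E \breg{}{y_K}{u} \le \E\bigl[ f_K(x_K) - \inf_{x \in \xset} f_K(x) \bigr] / \Lambda_K \le \epsp_K / \Lambda_K$. For the Lipschitz refinement I would split $\phi(\yopt) - \phi(y_K) = \bigl(\phi(\yopt) - \phi(\yopt_K)\bigr) + \bigl(\phi(\yopt_K) - \phi(y_K)\bigr)$, bound the second summand pathwise by $L \norm{y_K - \yopt_K} \le L \sqrt{(2/\mu_r)\, \breg{}{y_K}{\yopt_K}}$ via $\mu_r$-strong convexity of $r$, take expectations, and apply Jensen's inequality (concavity of $\sqrt{\cdot}$) to move the expectation inside the square root. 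The main obstacle is Step~1 --- making rigorous that the $q_k$-reformulation and the possibly-restricted dgf leave the iterates and the relevant divergences unchanged --- but that is precisely the content of Lemma~\ref{lem:connecting-to-overview} and the well-definedness lemmas; granting those, what remains is the earlier argument together with routine conditional-expectation bookkeeping.
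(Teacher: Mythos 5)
Your proposal is correct and follows essentially the same route as the paper: reduce to the sketch via Lemma~\ref{lem:connecting-to-overview} (this is where Assumption~\ref{as:technical-dual} enters), apply Lemma~\ref{lem:dual-div-bound-exact-best-response} to each $\psi_k$, telescope $\phi_k(\yopt_k) \ge \phi_{k-1}(\yopt_{k-1}) - \lambda_{k-1}\breg{}{y_{k-1}}{\yopt_{k-1}}$ with $u = \yopt_K$, and finish the Lipschitz bound via strong convexity of $r$ and Jensen --- exactly the content of the paper's Lemmas~\ref{lem:divergence-bound-y_k-yopt_k} and \ref{lem:yopt_K-performs-well-on-phi}, which you simply inline. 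Your explicit filtration $\mathcal{F}_{k}$ is a slightly more careful rendering of the paper's conditioning on $x_{k-1}$, but the argument is otherwise the same.
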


We now particularize Theorem~\ref{thm:main-framework-guarantee} using two exemplary choices of the dual-regularization and primal-accuracy schedules. See Remarks \ref{rem:example-schedules-wlog} and \ref{rem:example-schedules-no-log} for additional comments.

\begin{restatable}{corollary}{restateCorLogCost} \label{cor:example-schedules-wlog}
Suppose $\phi$ is $L$-Lipschitz with respect to $\norm{\cdot}$, and let $B > 0$ be such that $\breg{}{y_0}{\yopt} \le B$. Then for any $\epsilon > 0$, and $K \ge \max \inbraces{\log_2 \frac{L^2 B}{\mu_r \epsilon^2}, 1} + 10$, the output of Algorithm~\ref{alg:dual-extraction-framework} with dual-regularization and primal-accuracy schedules given by
\begin{align*}
    \lambda_k = 2^k \frac{\epsilon}{4 B} \text{ for $k \in \inbraces{0} \cup [K - 1]$}
    \text{ and }
    \epsp_k = \frac{\epsilon}{4 K} \text{ for $k \in [K]$}
\end{align*}
satisfies $\phi(\yopt) - \E \phi(y_K) \le \epsilon$.
\end{restatable}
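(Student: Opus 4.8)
The plan is to invoke Theorem~\ref{thm:main-framework-guarantee} with the given schedules and verify that the three terms on the right-hand side of \eqref{eq:Lipschitz-main-guarantee} each contribute at most $\epsilon/3$ (or some convenient split summing to at most $\epsilon$). The first term is immediate: since $\lambda_0 = \epsilon/(4B)$ and $\breg{}{y_0}{\yopt} \le B$, we get $\lambda_0 \breg{}{y_0}{\yopt} \le \epsilon/4$. For the second term, I would first compute $\Lambda_k = \sum_{j=0}^{k-1} \lambda_j = \frac{\epsilon}{4B}(2^k - 1)$, so that $\lambda_k / \Lambda_k = 2^k/(2^k - 1) \le 2$ for all $k \ge 1$. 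Hence $\sum_{k=1}^{K-1} \frac{\lambda_k}{\Lambda_k}\epsp_k \le 2 \sum_{k=1}^{K-1} \frac{\epsilon}{4K} \le \frac{2(K-1)}{4K}\epsilon \le \frac{\epsilon}{2}$. This leaves the third term, $L\sqrt{\frac{2}{\mu_r}\frac{\epsp_K}{\Lambda_K}}$, which is where the lower bound on $K$ is used and which I expect to be the only place requiring any care.

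For the third term, substitute $\epsp_K = \epsilon/(4K)$ and $\Lambda_K = \frac{\epsilon}{4B}(2^K - 1) \ge \frac{\epsilon}{8B} 2^K$ (using $2^K - 1 \ge 2^{K-1}$ for $K \ge 1$). Then
\begin{align*}
    L\sqrt{\frac{2}{\mu_r}\frac{\epsp_K}{\Lambda_K}} \le L\sqrt{\frac{2}{\mu_r} \cdot \frac{\epsilon/(4K)}{\epsilon 2^K/(8B)}} = L\sqrt{\frac{4B}{\mu_r K 2^K}} = \frac{2L}{\sqrt{\mu_r}}\sqrt{\frac{B}{K 2^K}}.
\end{align*}
It then suffices to show $\frac{4 L^2 B}{\mu_r K 2^K} \le \frac{\epsilon^2}{16}$, i.e. $K 2^K \ge \frac{64 L^2 B}{\mu_r \epsilon^2}$. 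Writing $m \defeq \log_2 \frac{L^2 B}{\mu_r \epsilon^2}$ (so the hypothesis is $K \ge \max\{m, 1\} + 10$), I would check that $K \ge m + 10$ together with $K \ge 11$ forces $2^K \ge 2^{10} \cdot 2^m = 1024 \cdot \frac{L^2 B}{\mu_r \epsilon^2}$ when $m \ge 1$ (and $2^K \ge 2^{11}$ handles the case $m < 1$, where $\frac{L^2 B}{\mu_r \epsilon^2} < 2$, so $\frac{64 L^2 B}{\mu_r \epsilon^2} < 128 \le 2^{11} \le K 2^K$). In the main case, $K 2^K \ge 11 \cdot 1024 \cdot \frac{L^2 B}{\mu_r \epsilon^2} \ge \frac{64 L^2 B}{\mu_r \epsilon^2}$ with enormous room to spare. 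Combining, the three terms sum to at most $\epsilon/4 + \epsilon/2 + \epsilon/4 = \epsilon$.

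The main (and essentially only) obstacle is the somewhat fiddly constant-chasing in the third term, specifically handling the two cases $m \ge 1$ and $m < 1$ implicit in the $\max\{m, 1\}$ and confirming the $+10$ slack genuinely suffices; none of this is deep, but it requires being careful about whether $K$ is controlled by $m$ or by the constant. One should also note a degenerate edge case: if $\yopt = y_0$ then $B$ could be taken arbitrarily small, but the statement only requires $\breg{}{y_0}{\yopt} \le B$ for the given $B > 0$, so there is no issue — the argument goes through for any valid $B$. I would also remark that, as in Theorem~\ref{thm:main-framework-guarantee}, $\phi$ need only be $L$-Lipschitz on a set containing $y_K$ and $\yopt_K$, but that refinement is inherited directly and needs no additional work here.
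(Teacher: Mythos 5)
Your proposal is correct and follows exactly the paper's route: invoke \eqref{eq:Lipschitz-main-guarantee} from Theorem~\ref{thm:main-framework-guarantee}, compute $\Lambda_k = \frac{\epsilon}{4B}(2^k-1)$ so that $\lambda_k/\Lambda_k \le 2$, and bound the three terms by $\epsilon/4 + \epsilon/2 + \epsilon/4$, with the lower bound on $K$ absorbing the third term. The only difference is that you spell out the constant-chasing (the $m \ge 1$ vs.\ $m < 1$ cases) that the paper leaves implicit, and your accounting checks out.
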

    
\begin{restatable}{corollary}{restateCorNoLogCost}
    \label{cor:example-schedules-no-log}
    Let $B > 0$ be such that $\breg{}{y_0}{\yopt} \le B$. Then for any $\epsilon > 0$ and $K \in \N$, the output of Algorithm~\ref{alg:dual-extraction-framework} with dual-regularization and primal-accuracy schedules given by
    \begin{align}
        \label{eq:schedules-no-log}
        \lambda_k = 2^k \frac{\epsilon}{4 B} \text{ for $k \in \inbraces{0} \cup [K - 1]$}
        \text{ and }
        \epsp_k = \frac{\epsilon}{8 \cdot 1.5^k} \text{ for $k \in [K]$}
    \end{align}
    satisfies
    \begin{align*}
        \E \norm{y_K - u} \le  \frac{1}{1.5^K} \sqrt{\frac{2 B}{\mu_r}},
    \end{align*}
    where $u \in \yset$ is a point whose expected suboptimality is at most $\epsilon$, i.e., $\phi(\yopt) - \E \phi(u) \le \epsilon$. 
\end{restatable}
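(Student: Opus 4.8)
The plan is to derive Corollary~\ref{cor:example-schedules-no-log} directly from Theorem~\ref{thm:main-framework-guarantee} by plugging in the stated schedules and bounding each of the two terms that appear in the distance/suboptimality guarantees. With $\lambda_k = 2^k \frac{\epsilon}{4B}$ we immediately get $\Lambda_k = \sum_{j=0}^{k-1} \lambda_j = (2^k - 1)\frac{\epsilon}{4B}$, so $\frac{\lambda_k}{\Lambda_k} = \frac{2^k}{2^k - 1} \le 2$ for $k \ge 1$, and $\Lambda_K = (2^K - 1)\frac{\epsilon}{4B} \ge 2^{K-1}\frac{\epsilon}{4B}$. These are the only facts about the dual-regularization schedule we need.

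First I would handle the suboptimality of the auxiliary point $u \gets \yopt_K$ furnished by the theorem. By the first conclusion of Theorem~\ref{thm:main-framework-guarantee},
\begin{align*}
    \phi(\yopt) - \E\phi(u) \le \lambda_0 \breg{}{y_0}{\yopt} + \sum_{k=1}^{K-1} \frac{\lambda_k}{\Lambda_k}\epsp_k \le \frac{\epsilon}{4B}\cdot B + 2\sum_{k=1}^{K-1} \frac{\epsilon}{8\cdot 1.5^k} \le \frac{\epsilon}{4} + \frac{\epsilon}{4}\sum_{k=1}^{\infty} 1.5^{-k} = \frac{\epsilon}{4} + \frac{\epsilon}{4}\cdot 2 < \epsilon,
\end{align*}
using $\breg{}{y_0}{\yopt} \le B$, the bound $\lambda_k/\Lambda_k \le 2$, and the geometric sum $\sum_{k\ge 1} 1.5^{-k} = 2$. (One should double-check the constant $8$ in $\epsp_k$ is chosen precisely so this closes with room to spare; the $< \epsilon$ above shows it does.) This establishes the expected-suboptimality claim on $u$.

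Next I would bound $\E\norm{y_K - u}$. The theorem gives $\E \breg{}{y_K}{u} \le \epsp_K/\Lambda_K$. Since $r$ is $\mu_r$-strongly convex with respect to $\norm{\cdot}$ on $\uset \cap \interior\pset$ (and $y_K, u \in \yset \subseteq \uset$ with the framework well-definedness ensuring the divergence centers lie in $\uset \cap \interior\pset$), we have $\frac{\mu_r}{2}\norm{y_K - u}^2 \le \breg{}{y_K}{u}$. Combining with Jensen's inequality, $\E\norm{y_K - u} \le \sqrt{\E\norm{y_K-u}^2} \le \sqrt{\frac{2}{\mu_r}\E\breg{}{y_K}{u}} \le \sqrt{\frac{2}{\mu_r}\cdot\frac{\epsp_K}{\Lambda_K}}$. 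Finally, plugging in $\epsp_K = \frac{\epsilon}{8\cdot 1.5^K}$ and $\Lambda_K \ge 2^{K-1}\frac{\epsilon}{4B} = 2^{K+1}\frac{\epsilon}{8B}$ gives $\frac{\epsp_K}{\Lambda_K} \le \frac{B}{1.5^K\, 2^{K+1}} \le \frac{B}{1.5^{2K}}$ (since $2^{K+1} \ge 1.5^K$ trivially, and even $2^K \ge 1.5^K$), hence $\E\norm{y_K - u} \le \frac{1}{1.5^K}\sqrt{\frac{2B}{\mu_r}}$, as claimed.

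The calculation is essentially routine bookkeeping, so I do not anticipate a genuine obstacle; the only place to be careful is the interplay of constants — confirming that $\lambda_0 = \epsilon/(4B)$ together with the $1/8$ and $1.5^{-k}$ factors in $\epsp_k$ make the suboptimality sum close at exactly $\epsilon$, and that the exponential decay rate $1.5^K$ in the distance bound is matched by $\Lambda_K$ growing like $2^K$ while $\epsp_K$ shrinks like $1.5^{-K}$, so that $\sqrt{\epsp_K/\Lambda_K}$ decays like $\sqrt{1/(2^K 1.5^K)} \le 1.5^{-K}$. I would present the proof as simply: invoke Theorem~\ref{thm:main-framework-guarantee} with $u = \yopt_K$, then substitute the schedules and perform the two short estimates above.
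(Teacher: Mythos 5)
Your proposal is correct and follows essentially the same route as the paper's proof: invoke the first part of Theorem~\ref{thm:main-framework-guarantee} with $u = \yopt_K$, compute $\Lambda_k = \frac{\epsilon}{4B}(2^k-1)$ and $\lambda_k/\Lambda_k \le 2$, bound the suboptimality sum by $\frac{\epsilon}{4} + \frac{\epsilon}{2}\sum_{k\ge 1}1.5^{-k} \le \epsilon$, and convert $\E\breg{}{y_K}{u} \le \epsp_K/\Lambda_K$ into the distance bound via $\mu_r$-strong convexity and Jensen. The only blemish is the identity $2^{K-1}\frac{\epsilon}{4B} = 2^{K+1}\frac{\epsilon}{8B}$, which should read $2^{K}\frac{\epsilon}{8B}$, but this is harmless since, as you note, $2^{K}\ge 1.5^{K}$ already yields $\epsp_K/\Lambda_K \le B/1.5^{2K}$ and hence the claimed bound.
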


\subsection{Proofs of the guarantees}\label{subsec:main-framework-proofs}

We now prove Theorem~\ref{thm:main-framework-guarantee} and Corollaries \ref{cor:example-schedules-wlog} and \ref{cor:example-schedules-no-log}. With the sole exception of Lemma~\ref{lem:dual-div-bound-exact-best-response} (which is stated independently of Algorithm~\ref{alg:dual-extraction-framework} so that it is easier to cite), all of the corollaries, lemmas, and theorems in this section are stated in the context of Algorithm~\ref{alg:dual-extraction-framework}; in particular, we have fixed a $(\psi, \xset, \yset, \uset, \pset, \norm{\cdot}, r)$-dual extraction setup per Definition~\ref{def:dual-extraction-setup} which has been given as input to Algorithm~\ref{alg:dual-extraction-framework}, and we will use the additional notation associated with this setup introduced in Section \ref{subsec:main-framework-assumptions-preliminaries} (as well as the other notation of Algorithm~\ref{alg:dual-extraction-framework} itself).

Also, for brevity, we may condition on $x_0$ in this section to avoid having to distinguish between the $k = 1$ and $k > 1$ cases when indexing (for example, when writing $\E[Z \mid x_{k - 1}]$ for some random quantity $Z$), even though $x_0$ is not defined in Algorithm~\ref{alg:dual-extraction-framework}. Full formality can be restored by, e.g., arbitrarily fixing a value for $x_0$.

First, let us compare Algorithm~\ref{alg:dual-extraction-framework} to Algorithm~\ref{alg:dual-extraction-framework-sketch}. For the rest of this section, define $\psi_k$, $f_k$, $\phi_k$, $\xopt_k$, and $\yopt_k$ for $k \in \inbraces{0} \cup [K]$ as in \eqref{eq:recursive-reg-functions} and the surrounding text. Then Lemma~\ref{lem:connecting-to-overview} below says that the sequence of iterates $(x_k, y_k)$ generated by Algorithm~\ref{alg:dual-extraction-framework-sketch} is equivalent to that generated by Algorithm~\ref{alg:dual-extraction-framework} (up to stochasticity). In particular, Algorithm~\ref{alg:dual-extraction-framework} combines the sequence of dual-regularization terms which appear in $\psi_k(x, y) = \psi(x, y) - \sum_{j = 0}^{k - 1} \lambda_j \breg{}{y_j}{y}$ into a single dual-regularization term centered at $q_k$, thereby yielding a form appropriate for the $\DRPO$ and $\DRBR$ oracles as specified in Definitions \ref{def:DRPO} and \ref{def:DRBR}.

\begin{lemma}[Connecting Algorithm~\ref{alg:dual-extraction-framework} to Algorithm~\ref{alg:dual-extraction-framework-sketch}]
    \label{lem:connecting-to-overview}
    For all $k \in [K]$, we have that $x_k$ given by Line~\ref{algline:x_k} of Algorithm~\ref{alg:dual-extraction-framework} is an expected $\epsilon_k$-minimizer of $f_k$ conditioned on $x_{k - 1}$ (formally, $\E \insquare{ f_k(x_k) \mid x_{k - 1}} \le f_k(x) + \epsilon_k$ for all $x \in \xset$), and $y_k$ given by Line~\ref{algline:y_k} is the dual best response to $x_k$ with respect to $\psi_k$, i.e., $y_k = \argmax_{y \in \yset} \psi_k(x_k, y)$. 
\end{lemma}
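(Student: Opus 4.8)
The plan is to show two claims: (a) the point $x_k$ from Line~\ref{algline:x_k} is an expected $\epsilon_k$-minimizer of $f_k$ (conditioned on $x_{k-1}$), and (b) the point $y_k$ from Line~\ref{algline:y_k} equals $\argmax_{y \in \yset} \psi_k(x_k, y)$. Both reduce to a single algebraic identity: for every fixed $x \in \xset$,
\begin{align*}
    \psi(x, y) - \Lambda_k \breg{}{q_k}{y} = \psi(x, y) - \sum_{j = 0}^{k-1} \lambda_j \breg{}{y_j}{y} + (\text{a term independent of } y),
\end{align*}
valid for $y \in \yset$. Granting this, the inner maximization problems $\max_{y \in \yset}\{\psi(x,y) - \Lambda_k \breg{}{q_k}{y}\}$ and $\max_{y \in \yset}\{\psi_k(x,y)\} = f_k(x)$ differ only by an additive constant (in $y$, though possibly depending on $x$ — but see below this constant will not depend on $x$), so they have the same argmax, giving (b) immediately once we plug in $x = x_k$; and the $\epsilon_k$-optimality of $x_k$ for $x \mapsto f_{\Lambda_k, q_k}(x)$ transfers to $\epsilon_k$-optimality for $f_k$ once we check the additive constant is actually independent of $x$, giving (a).

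So the crux is establishing the displayed identity and verifying the constant is $x$-free. Expanding the left side using $\breg{}{q_k}{y} = r(y) - r(q_k) - \inangle{\grad r(q_k), y - q_k}$ and the right side using $\breg{}{y_j}{y} = r(y) - r(y_j) - \inangle{\grad r(y_j), y - y_j}$, the $\psi(x,y)$ terms cancel on both sides, the $\Lambda_k r(y)$ and $\sum_j \lambda_j r(y)$ terms cancel against each other (since $\Lambda_k = \sum_{j=0}^{k-1}\lambda_j$), and what remains is a comparison of affine-in-$y$ expressions:
\begin{align*}
    \Lambda_k \inangle{\grad r(q_k), y}
    \quad\text{versus}\quad
    \sum_{j=0}^{k-1} \lambda_j \inangle{\grad r(y_j), y}
\end{align*}
plus $y$-independent constants. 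By the equivalent expression for $q_k$ given in Line~\ref{algline:q_k} (proven in Appendix \ref{subapp:equiv-expression-q_k}), namely $q_k = \grad \rconjuset\big(\frac{1}{\Lambda_k}\sum_{j=0}^{k-1}\lambda_j \grad r(y_j)\big)$, we have that $\grad r(q_k)$ and $\frac{1}{\Lambda_k}\sum_{j=0}^{k-1}\lambda_j\grad r(y_j)$ differ by an element of $\partial \indc_\uset(q_k)$ (this is the standard conjugate-subgradient relationship for the restriction $r_\uset$), i.e., $\Lambda_k \grad r(q_k) = \sum_{j=0}^{k-1}\lambda_j \grad r(y_j) + w$ for some $w \in \partial\indc_\uset(q_k)$. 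Hence the two expressions above differ by $\inangle{w, y}$, which by Assumption~\ref{as:technical-dual} (item~\ref{as:technical-dual} of Definition~\ref{def:dual-extraction-setup}) is \emph{constant over $\yset$} — and in particular does not depend on $x$ at all. This is exactly where Assumption~\ref{as:technical-dual} earns its keep, and I expect this to be the main obstacle: being careful that $q_k \in \uset \cap \interior\pset$ (so that $\grad r(q_k)$ makes sense and well-definedness from Appendix \ref{subapp:supporting-lemmas-well-defined} applies), that the subdifferential relation holds, and that the constant from Assumption~\ref{as:technical-dual} is genuinely independent of $x$ so the argmax and $\epsilon_k$-suboptimality both transfer cleanly.

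With the identity in hand, the two conclusions follow quickly. For (b): setting $x = x_k$, the functions $y \mapsto \psi(x_k, y) - \Lambda_k \breg{}{q_k}{y}$ and $y \mapsto \psi_k(x_k, y)$ agree on $\yset$ up to an additive constant, so $y_k = \DRBR(q_k, \Lambda_k, x_k) = \argmax_{y\in\yset}\{\psi(x_k,y) - \Lambda_k\breg{}{q_k}{y}\} = \argmax_{y\in\yset}\psi_k(x_k,y)$; one should also note the argmax is unique since $\psi_k(x_k,\cdot)$ is $\Lambda_k$-strongly concave relative to $-r$ and $r$ is strongly convex. For (a): taking the max over $y \in \yset$ on both sides of the identity shows $f_{\Lambda_k, q_k}(x) = f_k(x) + c_k$ for a constant $c_k$ not depending on $x$, so $\E[f_k(x_k)\mid x_{k-1}] = \E[f_{\Lambda_k,q_k}(x_k)\mid x_{k-1}] - c_k \le \inf_{x\in\xset} f_{\Lambda_k,q_k}(x) + \epsilon_k - c_k = \inf_{x\in\xset} f_k(x) + \epsilon_k$, where the inequality is the defining guarantee of the $\DRPO$ oracle (Definition~\ref{def:DRPO}) as invoked in Line~\ref{algline:x_k}. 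This completes the proof.
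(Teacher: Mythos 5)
Your proof is correct and follows essentially the same route as the paper's: both arguments reduce the claim to showing that $\sum_{j=0}^{k-1}\lambda_j \breg{}{y_j}{\cdot}$ and $\Lambda_k \breg{}{q_k}{\cdot}$ differ on $\yset$ by a $y$-independent (hence $x$-independent) quantity, which is established from $\Lambda_k \grad r(q_k) = \sum_{j=0}^{k-1}\lambda_j \grad r(y_j) + w$ with $w \in \partial \indc_\uset(q_k)$ together with Assumption~\ref{as:technical-dual} of Definition~\ref{def:dual-extraction-setup} and the well-definedness fact $q_k \in \uset \cap \interior \pset$. The only cosmetic difference is that you extract this subgradient identity from the conjugate-gradient expression for $q_k$ (Appendix~\ref{subapp:equiv-expression-q_k}), whereas the paper derives it directly from the first-order optimality condition of the $\argmin$ definition of $q_k$; since the appendix equivalence itself rests on that same optimality condition, the two derivations coincide, and your explicit handling of the transfer of the $\DRPO$ guarantee and of the uniqueness of the best response is a correct (if slightly more detailed) rendering of what the paper leaves implicit.
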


\begin{proof}
    To prove this claim, it suffices to show that for any $k \in [K]$, we have that over all $(x, y) \in \xset \times \yset$, it is the case that $\psi_k(x, y) = \psi(x, y) - \sum_{j = 0}^{k - 1} \lambda_j \breg{}{y_j}{y}$ and $\psi(x, y) - \Lambda_k \breg{}{q_k}{y}$ only differ by a quantity with no dependence on $x$ or $y$. In turn, it suffices to show that over $y \in \yset$, we have that $\sum_{j = 0}^{k - 1} \lambda_j \breg{}{y_j}{y}$ and $\Lambda_k \breg{}{q_k}{y}$ differ by a quantity with no dependence on $y$.
    Indeed, we claim
    \begin{align*}
        \sum_{j = 0}^{k - 1} \lambda_j \breg{}{y_j}{y} &= \Lambda_k \inparen{r(y) - \inangle{ \frac{1}{\Lambda_k} \sum_{j = 0}^{k - 1} \lambda_j \grad r(y_j), y}} + C \\
        &= \Lambda_k \breg{}{q_k}{y} + C',
    \end{align*}
    where $C, C'$ are quantities which don't depend on $y$. The first equality is straightforward algebra, and the second equality follows because we can equivalently express Line~\ref{algline:q_k} in Algorithm~\ref{alg:dual-extraction-framework} as
    \begin{align*}
        q_k = \argmin_{q \in \R^n} \inbraces{ \frac{1}{\Lambda_k} \sum_{j = 0}^{k - 1} \lambda_j \breg{}{y_j}{q}  + \indc_\uset(q)},
    \end{align*}
    in which case Lemma~\ref{lem:subdiff-optimality-cond} followed by Lemma~\ref{lem:subdiff-sum-of-funcs} imply
    \begin{align}
        \label{eq:q_k-opt-condition}
        0 \in \partial \inparen{ \frac{1}{\Lambda_k} \sum_{j = 0}^{k - 1} \lambda_j \breg{}{y_j}{\cdot}  + \indc_\uset}(q_k) = \grad r(q_k) - \frac{1}{\Lambda_k} \sum_{j = 0}^{k - 1} \lambda_j \grad r(y_j) + \partial \indc_{\uset}(q_k).
    \end{align}
    (Applying Lemma~\ref{lem:subdiff-sum-of-funcs} with $f_m \gets \indc_\uset$ reduces the condition to $\uset \cap \interior \pset = \emptyset$, which is true by assumption. We also used the fact that $q_k \in \uset \cap \interior \pset$ due to Corollary~\ref{cor:main-framework-well-defined}, so that $r$ is differentiable at $q_k$.) Then rearranging, we have
    \begin{align*}
        \grad r(q_k) = \frac{1}{\Lambda_k} \sum_{j = 0}^{k - 1} \lambda_j \grad r(y_j) - w
    \end{align*}
    for some $w \in \partial \indc_{\uset}(q_k)$. We conclude by applying Assumption~\ref{as:technical-dual} in Definition~\ref{def:dual-extraction-setup}.
\end{proof}

Before continuing, we give a remark regarding Lemma~\ref{lem:connecting-to-overview} and Assumption~\ref{as:technical-dual} in Definition~\ref{def:dual-extraction-setup}:

\begin{remark}[Regarding Assumption~\ref{as:technical-dual} and $\uset$ in Definition~\ref{def:dual-extraction-setup}]
    \label{rem:technical-dual-assumption-and-uset}
    The (end of the) proof of Lemma~\ref{lem:connecting-to-overview} is the only place where we use Assumption~\ref{as:technical-dual} in Definition~\ref{def:dual-extraction-setup}. In particular, it is due to Lemma~\ref{lem:connecting-to-overview} that the dgf is defined over an ``intermediate set'' $\uset$ in Definition~\ref{def:dual-extraction-setup} as opposed to being defined directly on $\yset$. In applications where there is no gain to be had by combining all of the regularization terms in $\psi_k(x, y) = \psi(x, y) - \sum_{j = 0}^{k - 1} \lambda_j \breg{}{y_j}{y}$ into a single regularization term, Assumption~\ref{as:technical-dual} and the presence of $\uset$ can be removed from Definition~\ref{def:dual-extraction-setup} (the dgf can be defined directly on $\yset$), and all of the results of Section \ref{subsec:main-framework} still hold if Algorithm~\ref{alg:dual-extraction-framework} is altered so that $x_k$ and $y_k$ are defined analogously to their definitions in Algorithm~\ref{alg:dual-extraction-framework-sketch}. Indeed, having connected the iterates of Algorithm~\ref{alg:dual-extraction-framework} to those of Algorithm~\ref{alg:dual-extraction-framework-sketch} (up to stochasticity) via Lemma~\ref{lem:connecting-to-overview}, the rest of the proofs in this section only use the fact that $x_k$ is an expected $\epsilon_k$-minimizer of $f_k$ conditioned on $x_{k - 1}$, and $y_k$ is the dual best response to $x_k$ with respect to $\psi_k$.
\end{remark}

We now give the general version of Lemma~\ref{lem:bound-dual-div-primal-subopt-informal} from Section \ref{subsec:dual-extraction-overview}. Recall that Lemma~\ref{lem:dual-div-bound-exact-best-response} is the one result in Section \ref{subsec:main-framework} that is not stated in the context of Algorithm~\ref{alg:dual-extraction-framework} (for ease of citation), although we still use the notation associated with a dual extraction setup per Section \ref{subsec:main-framework-assumptions-preliminaries}.

\begin{restatable}[Bounding dual divergence with primal suboptimality]{lemma}{restateLemDualDivBoundBestResponse}
    \label{lem:dual-div-bound-exact-best-response}
    Let $(\psi, \xset, \yset, \uset, \pset, \norm{\cdot}, r)$ be any dual-extraction setup (Definition~\ref{def:dual-extraction-setup}), and suppose for some $x \in \xset$ that $- \psi(x, \cdot)$ is $\mu$-strongly convex over $\yset \cap \interior \pset$ relative to the dgf $r$ for some $\mu > 0$. Then letting $y_x \defeq \argmax_{y \in \yset} \psi(x, y)$, we have
    \begin{align*}
        \breg{}{y_x}{\yopt} \le \frac{f(x) - f(\xopt)}{\mu}.
    \end{align*}
\end{restatable}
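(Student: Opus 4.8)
The plan is to chain together three inequalities, exactly mirroring the intuition from Figure~\ref{fig:key-lemma-intuition}. First I would use the relative strong convexity of $-\psi(x,\cdot)$ together with the fact that $y_x$ maximizes $\psi(x,\cdot)$ over $\yset$. Since $y_x$ is the maximizer, the first-order optimality condition gives $\inangle{\grad_y \psi(x, y_x), \yopt - y_x} \le 0$ (using that $\yopt \in \yset$). Combining this with the definition of $\mu$-strong convexity of $-\psi(x,\cdot)$ relative to $r$ — namely $-\psi(x,\yopt) + \psi(x,y_x) + \inangle{\grad_y\psi(x,y_x), \yopt - y_x} \ge \mu\breg{}{y_x}{\yopt}$ — and dropping the nonpositive inner-product term, I get
\begin{align*}
    \mu \breg{}{y_x}{\yopt} \le \psi(x, y_x) - \psi(x, \yopt).
\end{align*}
One subtlety here: $y_x$ and $\yopt$ need to lie in the region $\yset \cap \interior\pset$ where the relative strong convexity hypothesis applies, and where $r$ is differentiable; this should follow from the dgf-setup assumptions (Definition~\ref{def:dgf-setup}) — specifically the barrier condition $\lim_{u \to \bd \pset}\norm{\grad r(u)}_2 = \infty$ or $\yset \subseteq \interior\pset$ forces the maximizer $y_x$ into the interior, and I would invoke whatever well-definedness lemma the paper has (in the spirit of Corollary~\ref{cor:main-framework-well-defined}) to justify this.

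Second, I would bound $\psi(x, y_x) - \psi(x, \yopt)$ by $\psi(x, y_x) - \psi(\xopt, \yopt)$, using the optimality of $\xopt$ for $f$: since $\xopt$ minimizes $f(x') = \max_{y}\psi(x',y)$, we have $\psi(\xopt, \yopt) \le \max_{y \in \yset}\psi(\xopt, y) = f(\xopt) \le f(x)$, but more directly I want $\psi(x,\yopt) \ge \psi(\xopt,\yopt)$. This last inequality is the place where I must be slightly careful: it is NOT simply the optimality of $\xopt$ for $f$. Rather, it uses the minimax principle: $\psi(\xopt,\yopt) = \phi(\yopt) = \min_{x' \in \xset}\psi(x', \yopt) \le \psi(x, \yopt)$, where the middle equality is exactly the saddle-point property guaranteed by Assumptions \ref{as:convex-concave}, \ref{as:bounded-strongly-convex}, \ref{as:bounded-strongly-concave}. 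So $-\psi(x,\yopt) \le -\psi(\xopt,\yopt)$, giving
\begin{align*}
    \mu \breg{}{y_x}{\yopt} \le \psi(x, y_x) - \psi(\xopt, \yopt) = \psi(x,y_x) - f(\xopt),
\end{align*}
where the final equality is again the minimax principle, $f(\xopt) = \psi(\xopt,\yopt)$.

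Third and finally, I would note $\psi(x, y_x) = \max_{y \in \yset}\psi(x,y) = f(x)$ by definition of $y_x$ and $f$, so the right-hand side becomes $f(x) - f(\xopt)$, and dividing by $\mu > 0$ completes the proof. The main obstacle I anticipate is not the algebra — which is short — but the technical bookkeeping around domains: ensuring $y_x$ is well-defined as an exact argmax (existence/uniqueness from strong concavity plus closedness of $\yset$), that it lands in $\interior\pset$ so that $\breg{}{y_x}{\cdot}$ and the relative-strong-convexity inequality are meaningful, and that $\yopt$ exists (which needs boundedness-or-strong-concavity of the relevant side, Assumption~\ref{as:bounded-strongly-concave}, and boundedness-or-strong-convexity, Assumption~\ref{as:bounded-strongly-convex}, for $\xopt$). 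I expect the paper handles these via the minimax-principle discussion and a well-definedness lemma, so in the writeup I would cite those and keep the core three-line chain front and center.
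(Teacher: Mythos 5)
Your proposal is correct and follows essentially the same route as the paper's proof: the same three-step chain (first-order optimality of $y_x$ plus relative strong convexity, then $\psi(x,\yopt)\ge\psi(\xopt,\yopt)$ via the minimax principle, then $\psi(x,y_x)=f(x)$ and $\psi(\xopt,\yopt)=f(\xopt)$), with the interiority of $y_x$ handled exactly as the paper does via its well-definedness lemma (Lemma~\ref{lem:min-in-int}).
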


\begin{proof}
    Note that $y_x \in \yset \cap \interior \pset$ by Lemma~\ref{lem:min-in-int} with $\sset \gets \yset$.
    Then we have
    \begin{align*}
        f\left(x\right)-f\left(x^{\star}\right) = \psi\left(x,y_{x}\right)-\psi\left(x^{\star},y_{x^{\star}}\right) 
        &\overeq{(i)} \psi\left(x,y_{x}\right)-\psi\left(x^{\star}, \yopt \right)                              
        \overge{(ii)} \psi\left(x,y_{x}\right)-\psi\left(x, \yopt \right)\\
        &\overge{(iii)}   \psi\left(x,y_{x}\right) + \inner{\grad_y \psi (x, y_x)}{ \yopt - y_x} -\psi\left(x, \yopt \right)           \\
         & \overge{(iv)} \mu \breg{}{y_x}{ \yopt}.
    \end{align*}
    Here 
    $(i)$ follows because $y_{\xopt} = \yopt$ by the minimax principle (note that $y_{\xopt}$ and $\yopt$ are unique by strong concavity), $(ii)$ uses the fact that $\xopt$ minimizes $\psi(\cdot, \yopt)$ over $\xset$ (again by the minimax principle), $(iii)$ uses the following standard optimality condition (cf. Lemma~\ref{lem:first-order-optimality}), evaluated at $\yopt$, for the fact that $y_x$ maximizes $\psi(x, \cdot)$ over $\yset$:
    \begin{align*}
        \inner{\grad_y \psi (x, y_x)}{y - y_x} \le 0 \text{ for all $y \in \yset$},
    \end{align*}
    and $(iv)$ follows from the relative strong convexity assumption.
\end{proof}

Next, we bound the the expected divergence between $y_k$ and $\yopt_k$:

\begin{lemma}[Divergence bound between $y_k$ and $\yopt_k$]
    \label{lem:divergence-bound-y_k-yopt_k}
    For $y_k$ as defined in Line~\ref{algline:y_k} in Algorithm~\ref{alg:dual-extraction-framework}, we have $\E \insquare{ \breg{}{y_k}{\yopt_k} \mid x_{k - 1} } \le \frac{\epsp_k}{\Lambda_k}$ for $k \in [K]$.
\end{lemma}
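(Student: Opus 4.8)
The plan is to deduce \Cref{lem:divergence-bound-y_k-yopt_k} from \Cref{lem:dual-div-bound-exact-best-response} applied to the $k$-th regularized game $\psi_k$, combined with the $\DRPO$-accuracy guarantee supplied by \Cref{lem:connecting-to-overview}. First I would use \Cref{lem:connecting-to-overview} to reinterpret the relevant iterates: conditioned on $x_{k-1}$, the point $x_k$ from Line~\ref{algline:x_k} is an expected $\epsilon_k$-minimizer of $f_k$, and $y_k$ from Line~\ref{algline:y_k} is precisely the exact dual best response $\argmax_{y \in \yset} \psi_k(x_k, y)$. Hence $y_k$ is exactly the point ``$y_x$'' appearing in \Cref{lem:dual-div-bound-exact-best-response} when that lemma is instantiated with objective $\psi_k$ and primal point $x_k$.

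Next I would check that \Cref{lem:dual-div-bound-exact-best-response} really applies to $\psi_k$. Since $\psi_k(x, \cdot) = \psi(x, \cdot) - \sum_{j = 0}^{k - 1} \lambda_j \breg{}{y_j}{\cdot}$, we have $-\psi_k(x, \cdot) = -\psi(x, \cdot) + \sum_{j = 0}^{k - 1} \lambda_j \breg{}{y_j}{\cdot}$, where $-\psi(x, \cdot)$ is convex (i.e.\ $0$-strongly convex relative to $r$) and each map $v \mapsto \breg{r}{y_j}{v}$ is $1$-strongly convex relative to $r$ — a one-line computation shows that for this map the slack in the relative-strong-convexity inequality, expanded around any $w$, is exactly $\breg{r}{w}{v} \ge 0$. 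Relative strong convexity is additive and scales linearly, so $-\psi_k(x, \cdot)$ is $\Lambda_k$-strongly convex relative to $r$ over $\yset \cap \interior \pset$, with $\Lambda_k = \sum_{j = 0}^{k - 1} \lambda_j$ as in \Cref{alg:dual-extraction-framework}. One also checks that $(\psi_k, \xset, \yset, \uset, \pset, \norm{\cdot}, r)$ is itself a dual-extraction setup per \Cref{def:dual-extraction-setup}: convex-concavity survives because we only subtract from $\psi$ a term that is convex in $y$ and independent of $x$; Assumptions~\ref{as:bounded-strongly-convex}--\ref{as:bounded-strongly-concave} are only strengthened by the added regularization (in particular $\psi_k(x,\cdot)$ is now strongly concave with respect to $\norm{\cdot}$); and the differentiability of $\psi_k(x,\cdot)$ together with the containments $y_j \in \uset \cap \interior \pset$ needed for the divergences to be meaningful is exactly the content of the well-definedness results for \Cref{alg:dual-extraction-framework} (\Cref{cor:main-framework-well-defined} and its supporting lemmas).

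Finally I would assemble the bound. \Cref{lem:dual-div-bound-exact-best-response} with $\psi \gets \psi_k$, $\mu \gets \Lambda_k$, and $x \gets x_k$, together with the identifications above (the primal and dual optima of $\psi_k$ are $\xopt_k$ and $\yopt_k$, and the best response to $x_k$ is $y_k$), gives
\[
    \breg{}{y_k}{\yopt_k} \le \frac{f_k(x_k) - f_k(\xopt_k)}{\Lambda_k}.
\]
Then I would take $\E\insquare{\,\cdot \mid x_{k-1}}$ of both sides. Conditioned on $x_{k-1}$, all of $\psi_k$, $f_k$, $\xopt_k$, $\yopt_k$, and $\Lambda_k$ are deterministic and $x_k$ (hence $y_k$) carries the only randomness, so \Cref{lem:connecting-to-overview} evaluated at the (conditionally fixed) point $\xopt_k \in \xset$ gives $\E\insquare{f_k(x_k) - f_k(\xopt_k) \mid x_{k-1}} \le \epsilon_k$; dividing by $\Lambda_k$ yields $\E\insquare{\breg{}{y_k}{\yopt_k} \mid x_{k-1}} \le \epsilon_k/\Lambda_k$, which is the claim.

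The only genuinely delicate step is the middle one: confirming that the hypotheses of \Cref{lem:dual-div-bound-exact-best-response} transfer to $\psi_k$ — that $\psi_k$ with the same dgf data is still a legitimate dual-extraction setup and that $y_k$ (and the other points entering divergences) lie in $\yset \cap \interior \pset$. Once that is in place, the rest is just additivity of relative strong convexity plus one round of conditional expectation, and the well-definedness lemmas for \Cref{alg:dual-extraction-framework} are precisely what handle the delicate step.
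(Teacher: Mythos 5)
Your proposal is correct and follows essentially the same route as the paper: invoke \Cref{lem:connecting-to-overview} to identify $x_k$ as an expected $\epsilon_k$-minimizer of $f_k$ and $y_k$ as the exact best response under $\psi_k$, verify that $(\psi_k, \xset, \yset, \uset, \pset, \norm{\cdot}, r)$ is a valid dual-extraction setup with $-\psi_k(x,\cdot)$ being $\Lambda_k$-strongly convex relative to $r$, then apply \Cref{lem:dual-div-bound-exact-best-response} and take conditional expectations. Your added detail on why relative strong convexity accumulates additively and on the well-definedness of the divergences is exactly what the paper leaves implicit, so there is nothing to correct.
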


\begin{proof}
    Note that $(\psi_k, \xset, \yset, \uset, \pset, \norm{\cdot}, r)$ is a valid dual-extraction setup (Definition~\ref{def:dual-extraction-setup}), and $- \psi_k(x, \cdot)$ is $\Lambda_k$-strongly convex relative to $r$ over $\yset \cap \interior \pset$ for all $x \in \xset$. Applying Lemma~\ref{lem:dual-div-bound-exact-best-response} to the dual-extraction setup $(\psi_k, \xset, \yset, \uset, \pset, \norm{\cdot}, r)$ and taking expectations yields
\begin{align*}
    \E \insquare{ \breg{}{y_k}{\yopt_k} \mid x_{k - 1}} \le \frac{\E \insquare{f_k(x_k) - f_k(\xopt_k) \mid x_{k - 1}}}{\Lambda_k} \le \frac{\epsp_k}{\Lambda_k}.
\end{align*}
\end{proof}

We now bound the expected suboptimality of $\yopt_K$ for \eqref{eq:dual}:

\begin{lemma}[Suboptimality bound for $\yopt_K$]
    \label{lem:yopt_K-performs-well-on-phi}
We have 
\begin{align*}
    \phi(\yopt) - \E \phi(\yopt_K) \le  \lambda_0 \breg{}{y_0}{\yopt} + \sum_{k = 1}^{K - 1} \frac{\lambda_k}{\Lambda_k} \epsp_k.
\end{align*}
\end{lemma}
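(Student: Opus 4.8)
The plan is to prove the inequality pathwise, i.e.\ for every realization of the algorithm's internal randomness, by a telescoping argument over the regularization levels $k = K, K-1, \dots, 1$, and only at the very end to take expectations and invoke Lemma~\ref{lem:divergence-bound-y_k-yopt_k}. Two preliminary observations set this up. First, since $r$ is convex the Bregman divergence $\breg{}{y_{k-1}}{\cdot}$ is nonnegative on $\yset$, so $\psi_k = \psi_{k-1} - \lambda_{k-1}\breg{}{y_{k-1}}{\cdot} \le \psi_{k-1}$ pointwise and hence, taking $\min_{x \in \xset}$, $\phi_k \le \phi_{k-1}$ pointwise on $\yset$; in particular $\phi(\yopt_K) = \phi_0(\yopt_K) \ge \phi_K(\yopt_K)$. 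Second, every Bregman divergence appearing below is well-defined because $y_0 \in \yset \cap \interior \pset$ by the input requirements and $y_k \in \yset \cap \interior\pset$ for $k \in [K]$ by the well-definedness of Algorithm~\ref{alg:dual-extraction-framework}.

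The core step is the following identity-plus-optimality bound, valid for each $k \in [K]$: since $\yopt_k$ maximizes $\phi_k$ over $\yset$ and $\yopt_{k-1} \in \yset$, and since the regularizer $\lambda_{k-1}\breg{}{y_{k-1}}{\yopt_{k-1}}$ does not depend on $x$,
\begin{align*}
\phi_k(\yopt_k) \ge \phi_k(\yopt_{k-1}) = \min_{x \in \xset}\inbraces{\psi_{k-1}(x, \yopt_{k-1}) - \lambda_{k-1}\breg{}{y_{k-1}}{\yopt_{k-1}}} = \phi_{k-1}(\yopt_{k-1}) - \lambda_{k-1}\breg{}{y_{k-1}}{\yopt_{k-1}}.
\end{align*}
Chaining this from $k = K$ down to $k = 1$ and using $\yopt_0 = \yopt$, $\phi_0 = \phi$ yields the pathwise bound
\begin{align*}
\phi(\yopt_K) \ge \phi_K(\yopt_K) \ge \phi(\yopt) - \lambda_0\breg{}{y_0}{\yopt} - \sum_{k=1}^{K-1}\lambda_k\breg{}{y_k}{\yopt_k},
\end{align*}
where the $k = 0$ term is kept separate since $y_0$ and $\yopt$ are deterministic inputs.

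Finally I would take expectations. The term $\lambda_0\breg{}{y_0}{\yopt}$ is deterministic, and for $k \ge 1$ the tower property together with Lemma~\ref{lem:divergence-bound-y_k-yopt_k} gives $\E\breg{}{y_k}{\yopt_k} = \E\big[\,\E[\breg{}{y_k}{\yopt_k} \mid x_{k-1}]\,\big] \le \epsp_k / \Lambda_k$. Substituting and rearranging gives exactly $\phi(\yopt) - \E\phi(\yopt_K) \le \lambda_0\breg{}{y_0}{\yopt} + \sum_{k=1}^{K-1}\frac{\lambda_k}{\Lambda_k}\epsp_k$. There is no substantial obstacle here: every ingredient is either an elementary identity, the convexity/nonnegativity guaranteed by the dgf setup (for $\phi_k \le \phi_{k-1}$), or a direct application of Lemma~\ref{lem:divergence-bound-y_k-yopt_k}. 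The only points that need a word of care are the pointwise monotonicity $\phi_k \le \phi_{k-1}$, keeping the base case $\yopt_0 = \yopt$ straight, and the well-definedness of each Bregman divergence — all of which are handled by the observations in the first paragraph.
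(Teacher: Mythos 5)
Your proposal is correct and follows essentially the same route as the paper: the identity $\phi_k = \phi_{k-1} - \lambda_{k-1}\breg{}{y_{k-1}}{\cdot}$, the optimality of $\yopt_k$ applied at $\yopt_{k-1}$, telescoping down to $\phi_0(\yopt_0)$, the pointwise bound $\phi \ge \phi_K$, and finally expectations via the tower property and Lemma~\ref{lem:divergence-bound-y_k-yopt_k}. The only (harmless) difference is that you carry the argument pathwise and defer expectations to the very end, with a bit of extra care about well-definedness of the divergences, which matches the paper's intent.
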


\begin{proof}
Note that for any $k \in [K]$, we can write 
\begin{align*}
    \phi_k(y) = \min_{x \in \xset} \inbraces{\psi_{k - 1}(x, y) - \lambda_{k - 1} \breg{}{y_{k - 1}}{y}} = \phi_{k - 1}(y) - \lambda_{k - 1} \breg{}{y_{k - 1}}{y}.
\end{align*}
As a result, the following holds for all $k \in [K]$:
\[
\phi_k(\yopt_k) \ge \phi_{k}(\yopt_{k - 1})
=  \phi_{k - 1}(\yopt_{k - 1}) - \lambda_{k - 1} \breg{}{y_{k - 1}}{\yopt_{k - 1}}\,.
\]
Repeatedly applying this inequality, taking expectations, and applying Lemma~\ref{lem:divergence-bound-y_k-yopt_k} yields
\begin{align*}
    \phi_K(\yopt_K)
&\geq  \phi_0 (\yopt_0) - \sum_{k = 0}^{K - 1} \lambda_k \breg{}{y_k}{\yopt_k}, \\
\implies \E \phi_K(\yopt_K) &\ge \phi(\yopt) - \lambda_0 \breg{}{y_0}{\yopt} - \sum_{k = 1}^{K - 1} \E \insquare{ \E \insquare{ \lambda_k \breg{}{y_k}{\yopt_k} \mid x_{k - 1}} } \\
& \ge \phi(\yopt) - \lambda_0 \breg{}{y_0}{\yopt} - \sum_{k = 1}^{K - 1} \frac{\lambda_k}{\Lambda_k} \epsp_k.
\end{align*}
To conclude, note that $\E \phi(\yopt_K) \ge \E \phi_k(\yopt_K)$ since $\phi \ge \phi_K$ pointwise.
\end{proof}

We now prove the main result, restated here for convenience:

\restateThmMainFrameworkThm*

\begin{proof}
    The first part is immediate from Lemma~\ref{lem:divergence-bound-y_k-yopt_k} and Lemma~\ref{lem:yopt_K-performs-well-on-phi} with $u \gets \yopt_K$. As for the second part, using the Lipschitzness of $\phi$ and strong convexity of $r$, we have
    \begin{align*}
        \phi(y_K) \ge \phi(\yopt_K) - L \norm{y_K - \yopt_K} 
                    \ge \phi(\yopt_K) - L \sqrt{\frac{2}{\mu_r} \breg{}{y_K}{\yopt_K}},
    \end{align*}
    so taking expectations, applying Jensen's inequality for concave functions, and an application of Lemma~\ref{lem:divergence-bound-y_k-yopt_k} yields
    \begin{align*}
        \E \phi(y_K) \ge \E \phi(\yopt_K) - L \sqrt{\frac{2}{\mu_r} \E \insquare{\E \insquare{\breg{}{y_K}{\yopt_K} \mid x_{K - 1}}}} \ge 
        \E \phi(\yopt_K)
        - L \sqrt{\frac{2}{\mu_r} \frac{\epsilon_K}{\Lambda_K}}.
    \end{align*}
    Finally, combining this with Lemma~\ref{lem:yopt_K-performs-well-on-phi} yields the result.
\end{proof}

Finally, we prove the resulting corollaries, restated here for convenience:

\restateCorLogCost*

\begin{proof}
    Note that $\Lambda_k = \frac{\epsilon}{4B} \sum_{j = 0}^{k - 1} 2^j = \frac{\epsilon}{4B} \inparen{2^k - 1}$, implying $\lambda_k / \Lambda_k = 2^k / (2^k - 1) \le 2$ for $k \ge 1$. Then Equation~\ref{eq:Lipschitz-main-guarantee} becomes
    \begin{align*}
        \phi(\yopt) - \E \phi(y_K) \le  \frac{\epsilon}{4} + \sum_{k = 1}^{K - 1} 2 \cdot \frac{\epsilon}{4 K } + L \sqrt{\frac{2}{\mu_r} \frac{\epsilon}{4 K } \frac{4 B}{\epsilon \cdot 2^{K - 1}}} \le \epsilon.
    \end{align*}
\end{proof}

\restateCorNoLogCost*

\begin{proof}
    As in the proof of Corollary~\ref{cor:example-schedules-wlog}, we have $\Lambda_k = \frac{\epsilon}{4 B} (2^k - 1)$. Both claims then follow directly from the first part of Theorem~\ref{thm:main-framework-guarantee}, which posits the existence of $u \in \yset$ such that 
\begin{align*}
    \E \breg{}{y_K}{u} \le \frac{\epsilon}{8 \cdot 1.5^K} \cdot \frac{4B}{\epsilon \cdot (2^{K - 1})} = \frac{B}{3^K}.
\end{align*}
Then, the fact that $r$ is $\mu_r$-strongly convex and Jensen's inequality for concave functions implies
\begin{align*}
    \norm{y_K - u} \le \sqrt{\frac{2}{\mu_r} \breg{}{y_K}{u}} 
    \implies \E \norm{y_K - u} \le \frac{1}{1.5^K} \sqrt{\frac{2 B}{\mu_r}}.
\end{align*}
Furthermore, since $\lambda_k / \Lambda_k \le 2$ for $k \ge 1$, we have
\begin{align*}
    \phi(\yopt) - \E \phi(u) \le \frac{\epsilon}{4} + \frac{\epsilon}{4} \sum_{k = 1}^{K - 1} \frac{1}{1.5^k} \le \epsilon.
\end{align*}
\end{proof}

\section{Efficient maximin algorithms}\label{sec:maximin-algorithms}

In this section, we obtain new state-of-the-art runtimes for solving bilinear matrix games in certain parameter regimes (Section \ref{subsec:matrix-games}), as well as an improved query complexity for solving the dual of the CVaR at level $\alpha$ distributionally robust optimization (DRO) problem (Section \ref{subsec:alpha-CVaR}).
In each application, we apply Corollary~\ref{cor:example-schedules-wlog} to compute an $\epsilon$-optimal point for the dual problem at approximately the same cost as computing an $\epsilon$-optimal point for the primal problem (up to logarithmic factors and the cost of representing a dual vector when it comes to CVaR at level $\alpha$).

\subsection{Bilinear matrix games}\label{subsec:matrix-games}

In this section, we instantiate $\psi(x, y) \defeq x^\top A y$ for a matrix $A \in \R^{d \times n}$. Given $p, q \ge 1$, we write $\norm{A}_{p \to q} \defeq \max_{v \in \R^d, v \ne 0} \frac{\norm{Av}_q}{\norm{v}_p}$, and use the notation $A_{ij}$, $A_{i:}$, and $A_{:j}$ for the $(i, j)$ entry, $i$-th row as a row vector, and $j$-th column as a column vector.
We consider two setups: 

\begin{definition}[Matrix games ball setup]
    \label{def:mat-games-ball-in-body}
In the \emph{matrix games ball setup,} we set $\xset \defeq B^d$ (the unit Euclidean ball in $\R^d$), $\yset \defeq \simplex^n$, and fix a $(\psi, \xset, \yset)$-simplex setup (Definition~\ref{ex:simplex}).
We assume $\norm{A^\top}_{2 \to \infty} = \max_{i \in [n]} \norm{A_{:i}}_2 \le 1$.
\end{definition}

\begin{definition}[Matrix games simplex setup]
    \label{def:mat-games-simplex-in-body}
In the \emph{matrix games simplex setup,} we set $\xset \defeq \simplex^d$, $\yset \defeq \simplex^n$, and fix a $(\psi, \xset, \yset)$-simplex setup (Definition~\ref{ex:simplex}). 
We assume $\norm{A^\top}_{1 \to \infty} = \max_{i, j} |A_{ij}| \le 1$.
\end{definition}

Throughout Section \ref{subsec:matrix-games}, any theorem, statement, or equation which does not make reference to a specific choice of Definition~\ref{def:mat-games-ball-in-body} or \ref{def:mat-games-simplex-in-body} applies to both setups.
Specializing the primal \eqref{eq:primal} and dual \eqref{eq:dual} to this application gives
\leqnomode
\begin{align}
    \tag{P-MG} \label{eq:primal-MG} &\minimize_{x \in \xset} f(x) \text{ for } f(x) \defeq \max_{y \in \simplex^n} x^\top A y, \text{ and} \\
    \tag{D-MG} \label{eq:dual-MG} &\maximize_{y \in \simplex^n} \phi(y)\text{ for }\phi(y) \defeq \min_{x \in \xset} x^\top A y .
\end{align}
\reqnomode

Regarding the assumptions on the norm of the matrix $A$ in Definitions \ref{def:mat-games-ball-in-body} and \ref{def:mat-games-simplex-in-body}, note that we can equivalently write $f(x) = \max_{y \in \simplex^n} \sum_{i = 1}^n y_i f_i(x)$ with $f_i(x) \defeq [A^\top x]_i$. Then the assumptions on the norm of $A$ correspond to ensuring $f_i$ is 1-Lipschitz with respect to the $\ell_2$-norm in Definition~\ref{def:mat-games-ball-in-body} and $\ell_1$-norm in Definition~\ref{def:mat-games-simplex-in-body} (which in turn implies $f$ is 1-Lipschitz in the respective norms). This normalization is performed to simplify expressions as in \cite{carmon2023whole}. (In particular, \cite{carmon2023whole} also considers the more general problem where each $f_i$ can be any smooth, Lipschitz, convex function.)

Recently, \cite[Cor. 8.2]{carmon2023whole} achieved a state-of-the-art runtime in certain parameter regimes of $\Otilde(nd + n (d / \epsilon)^{2/3} + d \epsilon^{-2})$ for obtaining an $\epsilon$-optimal point for \eqref{eq:primal-MG}. However, unlike previous algorithms for  \eqref{eq:primal-MG} (see Section \ref{subsec:related-work} for an extended discussion), their algorithm does not yield an $\epsilon$-optimal point for \eqref{eq:dual-MG} with the same runtime.

\begin{algorithm}[h] %
	\setstretch{1.1}
	\caption{Dual extraction for matrix games}
	\label{alg:dual-extraction-mat-games}
	\LinesNumbered
	\DontPrintSemicolon
	\Input{
		$(\psi, \xset, \simplex^n)$-simplex setup (Definition~\ref{ex:simplex}), iteration count $K \in \N$, dual-regularization schedule $(\lambda_k \in \R_{> 0})_{k = 0}^{K - 1}$, primal-accuracy schedule $(\epsp_k \in \R_{> 0})_{k = 1}^K$, $\DRPOSP$ oracle (Definition~\ref{def:DRPOSP})
	}

    $y_0 \defeq \frac{1}{n} \ones$\;

    \For{$k = 1, 2, \ldots, K$ }{ 
        $\Lambda_k = \sum_{j = 0}^{k - 1} \lambda_j$\;
        $[q_k]_i \propto \prod_{j = 0}^{k - 1} [y_j]_i^{\lambda_j / \Lambda_k}, ~ \forall i \in [n]$ \tcp*{Note: $q_k \in \simplex^n$} \label{algline:q_k-mat-games}
        $x_k = \DRPOSP(q_k, \Lambda_k, \epsp_k, \frac{1}{10 K})$\; \label{algline:x_k-mat-games}
        $[y_k]_i \propto [q_k]_i \cdot \exp (\Lambda_k^{-1} \cdot [A^\top x_k]_i), ~ \forall i \in [n]$ \tcp*{$y_k = \underset{y \in \simplex^n}{\argmax} \inbraces{x_k^\top A y - \Lambda_k \breg{}{q_k}{y}}$} \label{algline:y_k-mat-games}
    }

    \Return $y_K$\;
\end{algorithm}

Our instantiation of the dual-extraction framework in Algorithm~\ref{alg:dual-extraction-mat-games} and the accompanying guarantee Theorem~\ref{thm:guarantee-mat-games} resolves this asymmetry between the complexity of obtaining a primal versus dual $\epsilon$-optimal point by obtaining an $\epsilon$-optimal point of \eqref{eq:dual-MG} with the same runtime of $\Otilde(nd + n (d / \epsilon)^{2/3} + d \epsilon^{-2})$. At the end of Section~\ref{subsec:matrix-games}, we observe that Theorem~\ref{thm:guarantee-mat-games} also yields a new state-of-the-art runtime for the primal \eqref{eq:primal-MG} in the setting of Definition~\ref{def:mat-games-simplex-in-body} due to the symmetry of the constraint sets and $\psi$.

Before giving the guarantee Theorem~\ref{thm:guarantee-mat-games} for Algorithm~\ref{alg:dual-extraction-mat-games}, the following lemma provides a runtime bound for the $\DRPOSP$ oracle when the success probability is $9/10$ (see Appendix \ref{subapp:deferred-matrix-games-proofs} for the proof). In particular, Lemma~\ref{lem:DRPOSP-matrix-games} shows that adding dual regularization to \eqref{eq:primal-MG} does not increase the complexity of obtaining an $\epsilon$-optimal point over the guarantee of \cite[Cor. 8.2]{carmon2023whole} discussed above.

\begin{restatable}[$\DRPOSP$ oracle for matrix games]{lemma}{restateLemDRPOSPMatrixGames}
    \label{lem:DRPOSP-matrix-games}
    In the settings of Definitions \ref{def:mat-games-ball-in-body} and \ref{def:mat-games-simplex-in-body},
    for any $q \in \simplex^n_{>0}$, $\epsprim > 0$, and $\lambda > 0$, with success probability at least $9/10$, there exists an algorithm which returns an expected $\epsprim$-optimal point of $f_{\lambda, q}$ with runtime $\Otilde(nd + n (d / \epsprim)^{2/3} + d \epsprim^{-2})$. (Equivalently, per Definition~\ref{def:DRPOSP}, we have that $\DRPOSP(q, \lambda, \epsprim, 1/10)$ can be implemented with this runtime.)
\end{restatable}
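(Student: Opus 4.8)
The plan is to recognize the regularized primal $f_{\lambda, q}$ as an instance of the entropy-regularized (``soft-max'') simplex objective that \cite{carmon2023whole} solves, so that the runtime quoted in the body for \eqref{eq:primal-MG} transfers with only cosmetic changes. Concretely, writing $f_i(x) \defeq [A^\top x]_i$, the standard closed form for negative-entropy-regularized linear maximization over $\simplex^n$ gives, for every $x \in \xset$,
\[
    f_{\lambda, q}(x) \;=\; \max_{y \in \simplex^n} \inbraces{\textstyle\sum_{i = 1}^n y_i f_i(x) - \lambda \breg{}{q}{y}} \;=\; \lambda \ln\inparen{\textstyle\sum_{i = 1}^n q_i \exp\inparen{f_i(x) / \lambda}} ,
\]
and, absorbing the constants $\lambda \ln q_i$ into the coordinate functions, this equals $\max_{y \in \simplex^n}\inbraces{\sum_i y_i (f_i(x) + \lambda \ln q_i) - \lambda \sum_i y_i \ln y_i}$, i.e.\ precisely the soft-max (``minimize the maximum loss'') objective underlying \cite{carmon2023whole} applied to the affine coordinate functions $x \mapsto f_i(x) + \lambda \ln q_i$, with an extra strongly concave $y$-block regularizer that only helps. (The corollary \cite[Cor.~8.2]{carmon2023whole} quoted in the body is the special case of uniform center and temperature $\Theta(\epsilon / \log n)$; we would invoke the more general form of that result --- which \cite{carmon2023whole} already establishes for arbitrary convex, Lipschitz, smooth coordinate functions --- for a general center $q \in \simplex^n_{>0}$ and level $\lambda > 0$, and for both $\xset = B^d$ and $\xset = \simplex^d$.)

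Next I would verify that the problem parameters entering \cite{carmon2023whole}'s runtime are all $O(1)$, uniformly in $q$ and $\lambda$. Each $f_i$ is affine, hence convex and $0$-smooth, and by the normalization in Definition~\ref{def:mat-games-ball-in-body} (resp.\ Definition~\ref{def:mat-games-simplex-in-body}) its gradient $A_{:i}$ satisfies $\norm{A_{:i}}_2 \le 1$ (resp.\ $\norm{A_{:i}}_\infty \le 1$) in the norm dual to the one in which $\xset$ is the unit ball, so each $f_i$ --- and hence each $f_i + \lambda \ln q_i$ up to an additive constant --- is $1$-Lipschitz over $\xset$. Moreover $f_i(x) \in [-1, 1]$ for $x \in \xset$, and consequently $f_{\lambda, q}(x) \in [-1, 1]$ as well, since it lies between $\inner{f(x)}{q} \ge -1$ (attained at $y = q$) and $\max_i f_i(x) \le 1$; thus the range of $f_{\lambda, q}$ over $\xset$ is $O(1)$ no matter how small the coordinates of $q$ are. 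With these parameters, \cite{carmon2023whole} yields a randomized algorithm that, with probability at least $9/10$ over its internal randomness, returns $x \in \xset$ with $\E f_{\lambda, q}(x) \le \inf_{x' \in \xset} f_{\lambda, q}(x') + \epsprim$ in time $\Otilde(nd + n(d / \epsprim)^{2/3} + d \epsprim^{-2})$; by Definition~\ref{def:DRPOSP} this realizes $\DRPOSP(q, \lambda, \epsprim, 1/10)$. (If the cited guarantee comes with some other constant success probability, a constant number of independent runs plus a cheap value estimate of $f_{\lambda, q}$ to select the best run boosts it to $9/10$.)

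The main obstacle is making precise that \cite{carmon2023whole}'s runtime is genuinely insensitive to the regularization center $q$ and level $\lambda$: a priori, quantities such as $\lambda \ln q_i$, or $\max_{y \in \simplex^n} \breg{}{q}{y} = \ln(1 / \min_i q_i)$, are unbounded, and the smoothness $\tfrac{1}{\lambda} \norm{A}^2$ of $f_{\lambda, q}$ blows up as $\lambda \to 0$. The resolution relies on (i) the range bound above --- entropic smoothing never moves the objective outside $[-1, 1]$, so there is no hidden dependence on $\min_i q_i$ --- and (ii) the fact that \cite{carmon2023whole}'s analysis is already built for the nonsmooth $\lambda \to 0$ (max-loss) regime, so any larger $\lambda$ only improves conditioning; and the center $q$ enters only through the additive constants $\lambda \ln q_i$ in the affine functions $f_i + \lambda \ln q_i$, which change no Lipschitz or smoothness constant and do not alter the argmax structure in $y$. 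This is the one step of the argument that inspects \cite{carmon2023whole}'s construction rather than treating it purely as a black box; the remaining bookkeeping (extracting an expected $\epsprim$-minimizer, the $9/10$ success probability) is routine.
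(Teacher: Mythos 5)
Your reduction is the same one the paper uses: absorb the centering terms into the losses via $f_i(x) \defeq [A^\top x]_i + \lambda \ln q_i$, so that $f_{\lambda,q}(x) = \max_{y\in\simplex^n}\sum_i (y_i f_i(x) - \lambda\, y_i\ln y_i)$ is the entropy-regularized softmax objective of \cite{carmon2023whole} with affine, $1$-Lipschitz, $0$-smooth losses, and then check that $L_f$, $R$, $\Teval$, $\Tmd$ are all $\Otilde(1)$ or $\Otilde(d)$ so the runtime $\Otilde(nd + n(d/\epsprim)^{2/3} + d\epsprim^{-2})$ carries over. Up to that point your argument matches the paper's proof of Lemma~\ref{lem:DRPOSP-matrix-games}.

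The gap is in the step you flag but then wave away: you claim the guarantee of \cite{carmon2023whole} ``is already established for arbitrary convex, Lipschitz, smooth coordinate functions'' at a general regularization level $\lambda$, and that since their analysis handles the $\lambda\to 0$ regime, ``any larger $\lambda$ only improves conditioning.'' That is not what their result says, and the failure is not about conditioning of the objective. Their Theorem~8.1 is proved only for temperature $\Lambda = \Theta(\epsilon/\log n)$, and the one place the softmax structure is used --- the gradient estimator of their Section~7 (Algorithm~5 / Theorem~7.1) --- carries the explicit constraint $\Lambda \le L_f r'/2$, because the $\ell_p$ matrix--vector maintenance data structure is initialized with accuracy $\Lambda/L_f$ and movement budget $r'$, and the data structure's interface forces a relation between these two parameters. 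In the regime this lemma is actually invoked (the framework drives $\Lambda_k$ up geometrically, far above $\epsilon/\log n$), that constraint is violated, so the cited theorem simply does not apply as a black box, and ``only improves conditioning'' does not repair a hypothesis of the data-structure-based estimator. The paper closes this gap in Appendix~\ref{subsubapp:whole-new-ball-game-mod} by modifying the estimator (Algorithm~\ref{alg:grad-est}): the data structure is initialized at the \emph{decoupled} accuracy $\epsilon'/L_f$ with $\epsilon' = \Theta(\epsilon/\log n) \le \Lambda$ while $\Lambda$ replaces $\epsilon'$ in the sampling and rejection steps, and the rejection-sampling and runtime analysis is re-verified (Theorem~\ref{thm:grad-est}), yielding the $\lambda$-insensitive guarantee Theorem~\ref{thm:lambda-reg-softmax} that the proof of Lemma~\ref{lem:DRPOSP-matrix-games} actually invokes. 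A second, smaller omission: for $\lambda$ below $\Theta(\epsilon/\log n)$ one cannot run the estimator at temperature $\lambda$ without blowing up the data-structure cost; the paper handles this by raising the temperature to $\epsilon/(2\log n)$ after absorbing $q$, which perturbs the objective by at most $O(\epsilon)$ because the plain-entropy regularizer is bounded by $\ln n$ (note this bound would fail if you kept the KL-to-$q$ form, since $\ln(1/\min_i q_i)$ is uncontrolled). Your range bound $f_{\lambda,q}\in[-1,1]$, while true, is not the relevant quantity and does not substitute for either fix.
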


Now for the main guarantee:

\begin{theorem}[Guarantee for Algorithm~\ref{alg:dual-extraction-mat-games}]
    \label{thm:guarantee-mat-games}
    In the settings of Definitions \ref{def:mat-games-ball-in-body} and \ref{def:mat-games-simplex-in-body}, given target error $\epsilon > 0$ and with success probability at least $9/10$, Algorithm~\ref{alg:dual-extraction-mat-games} with dual-regularization and primal-accuracy schedules given by
    \begin{align*}
        \lambda_k = 2^k \frac{\epsilon}{4 \ln n} \text{ for $k \in \inbraces{0} \cup [K - 1]$}
        \text{ and }
        \epsp_k = \frac{\epsilon}{4 K} \text{ for $k \in [K]$}
    \end{align*}
    for $K = \ceil*{\max \inbraces{\log_2 \frac{\ln n}{\epsilon^2}, 1}} + 10$ returns an expected $\epsilon$-optimal point for \eqref{eq:dual-MG}, and can be implemented with runtime $\Otilde(nd + n (d / \epsilon)^{2/3} + d \epsilon^{-2})$.
\end{theorem}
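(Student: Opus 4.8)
The plan is to apply Corollary~\ref{cor:example-schedules-wlog} with the distance bound $B = \ln n$, which governs both the choice of schedules and the iteration count $K$, and then bound the per-iteration cost via Lemma~\ref{lem:DRPOSP-matrix-games}. First I would verify that we are in a valid instance of the general framework: the constraint sets $\xset \in \{B^d, \simplex^d\}$ and $\yset = \simplex^n$ are nonempty, compact, and convex; $\psi(x,y) = x^\top A y$ is bilinear, hence differentiable and convex-concave; and a $(\psi,\xset,\yset)$-simplex setup (Definition~\ref{ex:simplex}) is valid, with Assumption~\ref{as:technical-dual} verified in Appendix~\ref{subapp:verify-simplex-setting}. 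Next I would pin down the two quantities $B$ and $L$ needed by Corollary~\ref{cor:example-schedules-wlog}. For $B$: with $y_0 = \frac1n \ones$, the KL divergence satisfies $\breg{}{y_0}{\yopt} = \sum_i [\yopt]_i \ln(n [\yopt]_i) \le \ln n$ since $[\yopt]_i \le 1$, so we may take $B = \ln n$. For $L$: the normalization assumptions on $A$ ($\|A^\top\|_{2\to\infty}\le 1$ in the ball setup, $\|A^\top\|_{1\to\infty}\le 1$ in the simplex setup) make each $f_i(x) = [A^\top x]_i$ $1$-Lipschitz in the relevant norm on $\xset$, and $\phi(y) = \min_{x\in\xset} x^\top A y$ is then $1$-Lipschitz in $\|\cdot\|_1$ on $\simplex^n$ — this should be checked directly: for $y, y'\in\simplex^n$, $|\phi(y)-\phi(y')| \le \max_{x\in\xset} |x^\top A(y-y')| \le \|A^\top x\|_\infty \|y - y'\|_1 \le \|y-y'\|_1$ using the column/entry bounds on $A$. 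So $L = 1$ and $\mu_r = 1$ (Pinsker). Plugging $B = \ln n$, $L = 1$, $\mu_r = 1$ into Corollary~\ref{cor:example-schedules-wlog} gives exactly the stated schedules $\lambda_k = 2^k \epsilon/(4\ln n)$, $\epsp_k = \epsilon/(4K)$, and requires $K \ge \max\{\log_2(\ln n / \epsilon^2), 1\} + 10$, which is satisfied by the stated choice of $K = \lceil \max\{\log_2(\ln n/\epsilon^2), 1\}\rceil + 10$; the corollary then yields that $y_K$ is expected $\epsilon$-optimal for \eqref{eq:dual-MG}.

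The second half is the runtime and the handling of failure probability. I would first confirm that Algorithm~\ref{alg:dual-extraction-mat-games} is a faithful specialization of Algorithm~\ref{alg:dual-extraction-framework} to the simplex setup: the closed form $[q_k]_i \propto \prod_{j=0}^{k-1} [y_j]_i^{\lambda_j/\Lambda_k}$ in Line~\ref{algline:q_k-mat-games} is the minimizer of $\frac{1}{\Lambda_k}\sum_{j} \lambda_j \breg{}{y_j}{q}$ over $\simplex^n$ for the KL divergence (this is the entropic mirror-descent/weighted-geometric-mean formula, equivalently $q_k = \grad \rconjuset(\frac{1}{\Lambda_k}\sum_j \lambda_j \grad r(y_j))$ as in Appendix~\ref{subapp:equiv-expression-q_k}), and the closed form $[y_k]_i \propto [q_k]_i \exp(\Lambda_k^{-1}[A^\top x_k]_i)$ in Line~\ref{algline:y_k-mat-games} is precisely $\argmax_{y\in\simplex^n}\{x_k^\top A y - \Lambda_k \breg{}{q_k}{y}\}$, i.e., the $\DRBR$ oracle output — a short Lagrangian/KKT computation. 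Both of these closed forms are computable in $O(n)$ time per iteration (given $A^\top x_k$, which costs $O(\mathrm{nnz}(A)) = O(nd)$), so they contribute only $\Otilde(nd)$ total over $K = \Otilde(1)$ iterations. The dominant cost is the $K$ calls to the $\DRPOSP$ oracle in Line~\ref{algline:x_k-mat-games}, each with target accuracy $\epsp_k = \epsilon/(4K) = \Thetatilde(\epsilon)$ and failure probability $\frac{1}{10K}$. By Lemma~\ref{lem:DRPOSP-matrix-games}, a single call with failure probability $1/10$ costs $\Otilde(nd + n(d/\epsp_k)^{2/3} + d\epsp_k^{-2})$; since $\epsp_k = \Thetatilde(\epsilon)$, this is $\Otilde(nd + n(d/\epsilon)^{2/3} + d\epsilon^{-2})$. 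To get failure probability $\frac{1}{10K}$ instead of $\frac{1}{10}$, I would invoke the success-probability boosting for $\DRPOSP$ oracles (Appendix~\ref{subapp:boosting-DRPOSP}), which costs an extra $\log(K) = \Otilde(1)$ factor, absorbed into $\Otilde(\cdot)$. A union bound over the $K$ oracle calls then shows all calls succeed with probability at least $1 - K \cdot \frac{1}{10K} = 9/10$, so on this event the iterates $x_k$ are genuine expected $\epsp_k$-optimizers of $f_{\Lambda_k, q_k}$ and the Corollary~\ref{cor:example-schedules-wlog} guarantee applies, giving the claimed expected $\epsilon$-optimality. Multiplying the per-iteration cost by $K = \Otilde(1)$ gives the total runtime $\Otilde(nd + n(d/\epsilon)^{2/3} + d\epsilon^{-2})$.

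One subtlety I would be careful about is the interaction between the failure event of the $\DRPOSP$ oracles and the expectation in the ``expected $\epsilon$-optimal'' guarantee. The cleanest way to handle this is to note that $\phi$ is bounded (by the normalization on $A$, $|\phi(y)| \le 1$ on $\simplex^n$), so conditioning on the high-probability event where all $\DRPOSP$ calls succeed changes the expectation $\E\phi(y_K)$ by at most $O(1/10 \cdot 1) = O(1)$ in the worst case — but this is not good enough by itself, so instead I would run the boosting to drive the per-call failure probability down far enough (say to $\mathrm{poly}(\epsilon)/K$, still only an $\Otilde(1)$ overhead) that even the worst-case contribution of the failure event to $\E\phi(\yopt) - \E\phi(y_K)$ is at most $\epsilon/2$, while on the success event Corollary~\ref{cor:example-schedules-wlog} contributes at most $\epsilon/2$; adjusting constants in the schedules accordingly. (Alternatively, one can state the theorem as: with probability $\ge 9/10$, the output is expected $\epsilon$-optimal conditioned on that event — which matches the ``with success probability at least $9/10$'' phrasing in the theorem statement, making the issue moot.) The main obstacle is thus not any one step in isolation but the bookkeeping of composing a logarithmic number of randomized, possibly-failing oracle calls while preserving both the expected-suboptimality guarantee and the target runtime; the individual ingredients (the KL closed forms, the $B = \ln n$ and $L = 1$ bounds, Lemma~\ref{lem:DRPOSP-matrix-games}, and Corollary~\ref{cor:example-schedules-wlog}) are each routine given what precedes.

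Finally, for the remark at the end of Section~\ref{subsec:matrix-games} about the primal in the simplex setup: when $\xset = \simplex^d$, the objective $\psi(x,y) = x^\top A y$ and the constraint sets are symmetric under swapping $(x, d) \leftrightarrow (y, n)$ and transposing $A$ (the normalization $\|A^\top\|_{1\to\infty} = \max_{ij}|A_{ij}| \le 1$ is itself symmetric), so applying Theorem~\ref{thm:guarantee-mat-games} to the transposed problem yields an $\epsilon$-optimal point of \eqref{eq:primal-MG} with runtime $\Otilde(nd + d(n/\epsilon)^{2/3} + n\epsilon^{-2})$, which I would record as Corollary~\ref{cor:primal-mat-guarantee}.
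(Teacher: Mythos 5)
Your proposal is correct and follows essentially the same route as the paper's proof: instantiate Corollary~\ref{cor:example-schedules-wlog} with $\mu_r = 1$, $B = \ln n$, $L = 1$, verify that Lines~\ref{algline:q_k-mat-games} and \ref{algline:y_k-mat-games} are the simplex-setup specializations of Lines~\ref{algline:q_k} and \ref{algline:y_k} of Algorithm~\ref{alg:dual-extraction-framework}, and bound the runtime by combining Lemma~\ref{lem:DRPOSP-matrix-games} with the boosting of Appendix~\ref{subapp:boosting-DRPOSP} plus a union bound over the $K = \Otilde(1)$ oracle calls (your direct $1$-Lipschitz computation for $\phi$ in the ball setup replaces the paper's citation of Lemma~\ref{lem:bounding-Lipschitz-dual-matrix-games}, and your parenthetical reading of the success-probability clause is exactly how the paper treats it).
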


\begin{proof}
Line~\ref{algline:x_k-mat-games} in Algorithm~\ref{alg:dual-extraction-mat-games}, which clearly dominates the runtime of a single iteration of the for loop, can be implemented in $\Otilde(nd + n (d / \epsilon)^{2/3} + d \epsilon^{-2})$ by combining Lemma~\ref{lem:DRPOSP-matrix-games} with Lemma~\ref{lem:boost-DRPOSP}. This yields the desired final runtime since $K$ is at most logarithmic in all parameters.

As for correctness, we apply Corollary~\ref{cor:example-schedules-wlog} with $\mu_r \gets 1$, $B \gets \ln n$ (see Lemma~\ref{lem:bound-on-KL-from-uniform}), and $L \gets 1$. Regarding the latter, it is straightforward to bound the Lipschitzness of $\phi$ with respect to $\norm{\cdot}_1$ by $1$ in the setting of Definition~\ref{def:mat-games-simplex-in-body}, and we show the same is true in the setting of Definition~\ref{def:mat-games-ball-in-body} in Lemma~\ref{lem:bounding-Lipschitz-dual-matrix-games}.
Also, note that the success probability follows from a union bound over all of the $\DRPOSP$ oracle calls in Line~\ref{algline:x_k-mat-games} of Algorithm~\ref{alg:dual-extraction-mat-games}. 
All that is left to check is that Lines \ref{algline:q_k-mat-games} and \ref{algline:y_k-mat-games} in Algorithm~\ref{alg:dual-extraction-mat-games} are indeed the appropriate particularizations of Lines \ref{algline:q_k} and \ref{algline:y_k} in Algorithm~\ref{alg:dual-extraction-framework} respectively. The correspondence between Line~\ref{algline:q_k-mat-games} in Algorithm~\ref{alg:dual-extraction-mat-games} and Line~\ref{algline:q_k} in Algorithm~\ref{alg:dual-extraction-framework} (see the alternate expression for $q_k$) follows immediately from the last part of Lemma~\ref{lem:convex-conjugate-neg-entropy}. As for the correspondence between Line~\ref{algline:y_k-mat-games} in Algorithm~\ref{alg:dual-extraction-mat-games} and Line~\ref{algline:y_k} in Algorithm~\ref{alg:dual-extraction-framework}, note that we have
\begin{align*}
    y_k = \argmax_{y \in \simplex^n} \inbraces{x_k^\top A y - \Lambda_k \breg{}{q_k}{y}} = \argmax_{y \in \simplex^n} 
        \sum_{i = 1}^n  \insquare{ \inparen{
        \Lambda_k^{-1} \cdot [A^\top x_k]_i + \ln ([q_k]_i)
        } y_i - y_i \ln y_i
        },
\end{align*}
in which case we conclude by applying Lemma~\ref{lem:subdiff-properties-of-conjugate} and Lemma~\ref{lem:convex-conjugate-neg-entropy}.
\end{proof}

\paragraph{The primal perspective.} As alluded to above, the guarantee of Theorem~\ref{thm:guarantee-mat-games} also implies a new state-of-the-art runtime for the primal \eqref{eq:primal-MG} in the setting of Definition~\ref{def:mat-games-simplex-in-body}. This follows because in the matrix games simplex setup, \eqref{eq:primal-MG} and \eqref{eq:dual-MG} are symmetric in terms of their constraint sets, so we can obtain an expected $\epsilon$-optimal point for \eqref{eq:primal-MG} via Theorem~\ref{thm:guarantee-mat-games} by negating and treating \eqref{eq:primal-MG} as if it were the dual problem. Formally:

\begin{corollary}[Guarantee for {\eqref{eq:primal-MG}} in the matrix games simplex setup]
    \label{cor:primal-mat-guarantee}
In the setting of Definition~\ref{def:mat-games-simplex-in-body}, there exists an algorithm which, given target error $\epsilon > 0$ and with success probability at least $9/10$, returns an expected $\epsilon$-optimal point for \eqref{eq:primal-MG} with runtime $\Otilde(nd + d (n / \epsilon)^{2/3} + n \epsilon^{-2})$.
\end{corollary}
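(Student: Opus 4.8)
The plan is to exploit the exact symmetry between \eqref{eq:primal-MG} and \eqref{eq:dual-MG} in the matrix games simplex setup: under the relabeling that swaps the primal and dual variables and replaces $A$ by $-A^\top$, the primal problem \eqref{eq:primal-MG} turns into (the negative of) a \emph{dual} matrix games problem of exactly the form handled by Theorem~\ref{thm:guarantee-mat-games}. So it suffices to run Algorithm~\ref{alg:dual-extraction-mat-games} on the transposed, negated game and read off a primal-optimal point.

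Concretely, I would set $\hat A \defeq -A^\top \in \R^{n \times d}$ and consider the bilinear game $\hat\psi(u, v) \defeq u^\top \hat A v$, where $u \in \simplex^n$ plays the role of the ``primal'' variable and $v \in \simplex^d$ plays the role of the ``dual'' variable; this is a matrix games simplex setup in the sense of Definition~\ref{def:mat-games-simplex-in-body} with the roles of $n$ and $d$ exchanged, and its normalization hypothesis holds since $\max_{i,j}|\hat A_{ij}| = \max_{i,j}|A_{ij}| \le 1$. The dual function of $\hat\psi$ is $\hat\phi(v) = \min_{u \in \simplex^n} u^\top \hat A v = -\max_{u \in \simplex^n} v^\top A u = -f(v)$, identifying $v \in \simplex^d$ with a point of $\xset = \simplex^d$. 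Hence a point $v \in \simplex^d$ is (expected) $\epsilon$-optimal for the dual of $\hat\psi$ if and only if $f(v) \le \inf_{x \in \simplex^d} f(x) + \epsilon$, i.e., if and only if $v$ is (expected) $\epsilon$-optimal for \eqref{eq:primal-MG}.

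Applying Theorem~\ref{thm:guarantee-mat-games} to $\hat\psi$ then yields, with success probability at least $9/10$, an algorithm returning an expected $\epsilon$-optimal point of the dual of $\hat\psi$, equivalently of \eqref{eq:primal-MG}; its runtime is obtained from the bound in Theorem~\ref{thm:guarantee-mat-games} by swapping $n$ and $d$, giving $\Otilde(dn + d(n/\epsilon)^{2/3} + n\epsilon^{-2})$, exactly the claimed complexity. The only points needing care — rather than being genuine obstacles — are the bookkeeping in the dimension swap, checking that the transposed instance still satisfies all hypotheses of Definition~\ref{def:mat-games-simplex-in-body} (and hence of Theorem~\ref{thm:guarantee-mat-games}), and observing that ``expected $\epsilon$-optimal'' and the $9/10$ success probability transfer verbatim because we are literally running the same algorithm on an isomorphic instance; no new quantitative estimate is required.
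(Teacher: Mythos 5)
Your proposal is correct and is essentially the paper's own argument: the paper likewise treats \eqref{eq:primal-MG} as the dual of the negated, transposed game (its pair (P-MG')/(D-MG') is exactly your $\hat\psi(u,v)=u^\top(-A^\top)v$ construction) and applies Theorem~\ref{thm:guarantee-mat-games} with the roles of $n$ and $d$ swapped to get the runtime $\Otilde(nd + d(n/\epsilon)^{2/3} + n\epsilon^{-2})$. Your additional checks (normalization $\max_{i,j}|\hat A_{ij}|\le 1$ and the transfer of expected optimality and the $9/10$ success probability) are exactly the bookkeeping the paper's proof implicitly relies on.
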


\begin{proof}
Consider
\leqnomode
\begin{align}
    \tag{P-MG'} \label{eq:primal-MG-prime} &\minimize_{y \in \simplex^n} \max_{x \in \simplex^d} - y^\top A^\top x, \text{ and} \\
    \tag{D-MG'} \label{eq:dual-MG-prime} &\maximize_{x \in \simplex^d} \min_{y \in \simplex^n} - y^\top A^\top x .
\end{align}
\reqnomode
Observe that \eqref{eq:dual-MG-prime} is equivalent to \eqref{eq:primal-MG} in the sense that any $\epsilon$-maximizer of \eqref{eq:dual-MG-prime} is an $\epsilon$-minimizer of \eqref{eq:primal-MG}. Then we apply Theorem~\ref{thm:guarantee-mat-games} to obtain an $\epsilon$-expected maximizer of \eqref{eq:dual-MG-prime} with success probability at least $9/10$ and runtime $\Otilde(nd + d(n / \epsilon)^{2/3} + n \epsilon^{-2})$, since the dimensions $d$ and $n$ have switched places compared to in \eqref{eq:primal-MG} and \eqref{eq:dual-MG}.
\end{proof}

See Table \ref{table:runtimes-bilinear} for a summary of how Corollary~\ref{cor:primal-mat-guarantee} fits into the current literature for obtaining an expected $\epsilon$-optimal point of \eqref{eq:primal-MG} in the setting of Definition~\ref{def:mat-games-simplex-in-body}. In particular, as discussed in \cite[Sec. 1]{carmon2023whole}, the runtime $\Otilde(nd + n (d / \epsilon)^{2/3} + d \epsilon^{-2})$ achieved by \cite[Cor. 8.2]{carmon2023whole} is the state of the art in certain subregimes under the broader restriction of $n \ge d$. (Note that when $n \le d$, the runtime $nd + n (d / \epsilon)^{2/3} + d \epsilon^{-2}$ is never better than $(n + d) \epsilon^{-2}$ achieved by \cite{grigoriadis1995sublinear}.) Thus, the runtime $\Otilde(nd + d(n / \epsilon)^{2/3} + n \epsilon^{-2})$ achieved in Corollary~\ref{cor:primal-mat-guarantee} mirrors the improvements of \cite[Cor. 8.2]{carmon2023whole} over prior algorithms except with $n$ and $d$ flipped (in particular, under the broader restriction of $d \ge n$).

\begin{table}[h!]
    \centering
    \begin{tabular}{@{}ll@{}}
        \toprule
        Method                                                                            & \begin{tabular}[c]{@{}l@{}}Runtime\end{tabular}  \\ \midrule
        Stochastic primal-dual \cite{grigoriadis1995sublinear,clarkson2012sublinear}      & $(n+d)\epsilon^{-2}$                                                                                           \\
        Exact gradient primal-dual \cite{nemirovski2004prox,nesterov2007dual}             & $nd\epsilon^{-1}$                                                                              \\
        Variance-reduced primal-dual \cite{carmon2019variance}                            & $nd + \sqrt{nd(n+d)}\epsilon^{-1}$                                                                   \\
        Ball acceleration (primal-only)     \cite{carmon2023whole}                                                           & $nd + n(d/\epsilon)^{2/3} + d\epsilon^{-2}$                                                                                                 \\
        \rowcolor[HTML]{EFEFEF} 
        Our method (Corollary~\ref{cor:primal-mat-guarantee})                                                                & $nd + d(n/\epsilon)^{2/3} + n\epsilon^{-2}$                                                                             \\                                                                                      & $\max\crl{n,d}^\omega$                                                                                                 \\
        \multirow{-2}{*}{Interior point \cite[resp.,][]{cohen2021solving,brand2021minimum}} & $nd + \min\crl{n,d}^{5/2}$                                                                             \\ \bottomrule
        \end{tabular}
    \caption{\label{table:runtimes-bilinear}
    Runtime bounds for solving the problem~\eqref{eq:primal-MG} to $\epsilon$ accuracy, omitting constant and polylogarithmic factors, in the setting of Definition~\ref{def:mat-games-simplex-in-body}. This table is based on Table 2 in \cite{carmon2023whole}.
    }
\end{table}

\subsection{CVaR at level $\alpha$ DRO}\label{subsec:alpha-CVaR}

In this section, we instantiate $\psi(x, y) \defeq \sum_{i = 1}^n y_i f_i(x)$ for convex, bounded, and $G$-Lipschitz (with respect to the Euclidean norm) functions $f_i : \R^d \to \R$.\footnote{Note that we do not require the functions $f_i$ to be differentiable. Here, it is important that Definition~\ref{def:dual-extraction-setup} only requires $\psi(x, \cdot)$ to be differentiable.} Given a compact, convex set $\xset$ and $\alpha \in [1 / n, 1]$, the primal and dual problem for CVaR at level $\alpha$ are as follows (we will explain the reason for the notation $\fbar$ as opposed to $f$ shortly):
\leqnomode
\begin{align}
    \tag{P-CVaR} \label{eq:primal-CVaR} &\qquad \qquad \qquad \minimize_{x \in \xset} \barf(x) \text{ \; \;\;\; for } \barf(x) \defeq \max_{y \in \simplex^n, \norm{y}_\infty \le \frac{1}{\alpha n}} \sum_{i = 1}^n y_i f_i(x), \text{ and} \\
    \tag{D-CVaR} \label{eq:dual-CVaR} &\qquad \qquad \qquad \maximize_{y \in \simplex^n, \norm{y}_\infty \le \frac{1}{\alpha n}} \phi(y)\text{ for }\phi(y) \defeq \min_{x \in \xset} \sum_{i = 1}^n y_i f_i(x) .
\end{align}
\reqnomode
Our complexity model in this section is the number of computations of the form $(f_i(x), \grad f_i(x))$ for $x \in \xset$ and $i \in [n]$. We refer to the evaluation of $(f_i(x), \grad f_i(x))$ for a given $x \in \xset$ and $i \in [n]$ as a single \emph{first-order query}.
Omitting the Lipschitz constant $G$ and bounds on the range of the $f_i$'s and size of $\xset$ for clarity, \cite[Sec. 4]{levy2020largescale} gave an algorithm which returns an expected\footnote{To be precise, \cite{levy2020largescale} gives a $\Otilde(\alpha^{-1} \epsilon^{-2})$-complexity high probability bound in Theorem 2. They do not state a $\Otilde(\alpha^{-1} \epsilon^{-2})$-complexity expected suboptimality bound explicitly in a theorem, but they note in the text above Theorem 2 that such a bound follows immediately from Propositions 3 and 4 in their paper.} $\epsilon$-optimal point of \eqref{eq:primal-CVaR} with $\Otilde(\alpha^{-1} \epsilon^{-2})$ first-order queries, and also proved a matching lower bound up to logarithmic factors when $n$ is sufficiently large. However, to the best of our knowledge, the best known complexity for obtaining an expected $\epsilon$-optimal point of \eqref{eq:dual-CVaR} is $\Otilde (n \epsilon^{-2})$ via a primal-dual method based on \cite{nemirovski2009robust}; see also \cite{curi2020adaptivesampling,namkoong2016fdivergences, carmon2022distributionally}. In our main guarantee for this section, Theorem~\ref{thm:dual-CVaR-guarantee}, we apply Algorithm~\ref{alg:dual-extraction-framework} to obtain an expected $\epsilon$-optimal point of \eqref{eq:dual-CVaR} with complexity $\Otilde(\alpha^{-1} \epsilon^{-2} + n)$, which always improves upon or matches $\Otilde (n \epsilon^{-2})$ since $\alpha \in [1 / n, 1]$.

Toward stating our main guarantee, we encapsulate the formal assumptions of \cite[Sec. 2]{levy2020largescale}, as well as the relevant setup for Algorithm~\ref{alg:dual-extraction-framework}, in the following definition:

\begin{definition}[CVaR at level $\alpha$ setup]
    \label{def:CVaR-setup}
 We assume $\xset$ is nonempty, closed, convex, and satisfies $\norm{x - y}_2 \le R$ for all $x, y \in \xset$. We also assume, for all $i \in [n]$, that $f_i$ is convex, $G$-Lipschitz with respect to $\norm{\cdot}_2$, and satisfies $f_i(x) \in [0, M]$ for all $x \in \xset$. For a given $\alpha \in [1 / n, 1]$ and target error $\epsilon \in (0, 4M)$, we set $\ytrunc \defeq \inbraces{y \in \simplex^n : \frac{\epsilon}{4 n M} \le y_i \le \frac{1}{\alpha n}, ~ \forall i \in [n]}$ and fix a $(\psi, \xset, \ytrunc)$-simplex setup (Definition~\ref{ex:simplex}).
\end{definition}

Regarding the notation $\fbar$ as opposed to the usual $f$ in \eqref{eq:primal-CVaR}, as well as the fact that we fix a $(\psi, \xset, \ytrunc)$-simplex setup in Definition~\ref{def:CVaR-setup}, where $\ytrunc$ is a truncated version of the dual constraint set which appears in \eqref{eq:primal-CVaR} and \eqref{eq:dual-CVaR} (namely, the CVaR uncertainty set $\inbraces{y \in \simplex^n : \norm{y}_\infty \le \frac{1}{\alpha n}}$): For technical reasons related to the implementation of the $\DRPO$ oracle, we do not apply the dual-extraction framework Algorithm~\ref{alg:dual-extraction-framework} directly to the CVaR primal-dual pair \eqref{eq:primal-CVaR} and \eqref{eq:dual-CVaR}. Instead, per the $(\psi, \xset, \ytrunc)$-simplex setup chosen in Definition~\ref{def:CVaR-setup}, we instantiate Algorithm~\ref{alg:dual-extraction-framework} with the following primal-dual pair (the ``T'' is for truncation):\footnote{We will treat $\phi$ as a function with domain $\simplex^n$ in this section as opposed to having domain $\ytrunc$, as it would if specified exactly as in Section \ref{subsec:main-framework-assumptions-preliminaries}, so that the definition \eqref{eq:dual-CVaR} makes sense. When we refer to an $\epsilon$-maximizer of $\phi$, we will always make it clear whether we are referring to \eqref{eq:dual-CVaR} or \eqref{eq:dual-CVaR-trunc}.}
\leqnomode
\begin{align}
    \tag{P-CVaR-T} \label{eq:primal-CVaR-trunc} &\qquad \qquad \qquad \minimize_{x \in \xset} f(x) \text{ for } f(x) \defeq \max_{y \in \ytrunc} \sum_{i = 1}^n y_i f_i(x), \text{ and} \\
    \tag{D-CVaR-T} \label{eq:dual-CVaR-trunc} &\qquad \qquad \qquad \maximize_{y \in \ytrunc} \phi(y) \text{ for } \phi(y) = \min_{x \in \xset} \sum_{i = 1}^n y_i f_i(x).
\end{align}
\reqnomode

Note that $\phi$ in \eqref{eq:dual-CVaR-trunc} is defined as in \eqref{eq:dual-CVaR}, but $f$ in \eqref{eq:primal-CVaR-trunc} differs from $\fbar$ in \eqref{eq:primal-CVaR} due to the maximization in the former being over the truncated set $\ytrunc$ as opposed to the CVaR uncertainty set. 
We argue below that approximately solving \eqref{eq:dual-CVaR-trunc} suffices to approximately solve \eqref{eq:dual-CVaR} due to the Lipschitzness of $\phi$. 

Next, we state our guarantee for the $\DRPO$ oracle (proven in Section \ref{subapp:alpha-CVaR}), which shows that the $\Otilde(\alpha \epsilon^{-2})$ complexity for \eqref{eq:primal-CVaR} achieved by \cite{levy2020largescale} extends to the case where additional regularization is added (up to a certain amount and with an additional condition on the center of regularization $q$). For the rest of this section, if $h$ is a quantity which depends on (some subset of) the parameters in Definition~\ref{def:CVaR-setup}, we write $h \le \allpoly$ to denote the fact that $h$ is bounded above by a polynomial in the problem parameters, i.e., $h \le \poly(n, d, R, G, M, \epsilon^{-1})$.

\begin{restatable}[$\DRPO$ oracle for CVaR]{lemma}{restateLemDRPOCVaR}
    \label{lem:DRPO-CVaR}
    In the setting of Definition~\ref{def:CVaR-setup},
    for any $\epsprim > 0$, $0 < \lambda \le \allpoly$, and $q \in \simplex^n_{>0}$ with $\max_{i \in [n]} q_i^{-1} \le \allpoly$, there exists an algorithm which returns an expected $\epsprim$-optimal point of $f_{\lambda, q}(x) = \max_{y \in \ytrunc} \inbraces{\sum_{i = 1}^n y_i f_i(x) - \lambda \breg{}{q}{y}}$ with complexity $\Otilde(G^2 R^2 \alpha^{-1} \epsprim^{-2})$. (Equivalently, per Definition~\ref{def:DRPOSP}, we have that $\DRPO(q, \lambda, \epsprim)$ can be implemented with this complexity.)
\end{restatable}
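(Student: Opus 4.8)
The plan is to rewrite $f_{\lambda, q}$ as an entropically regularized, truncated CVaR-at-level-$\alpha$ objective in modified losses, and then minimize it by averaged stochastic projected subgradient descent over $\xset$ using a multilevel Monte Carlo (MLMC) subgradient estimator in the style of \cite{levy2020largescale}; the point is that the extra entropic term is a bounded, strongly-concavifying perturbation that does not spoil their estimator bounds. \emph{Reduction:} since $\breg{}{q}{y} = \sum_i y_i \ln y_i - \sum_i y_i \ln q_i$ and $\sum_i y_i = 1$ for $y \in \ytrunc$, we have
\[
\sum_{i=1}^n y_i f_i(x) - \lambda \breg{}{q}{y} \;=\; \sum_{i=1}^n y_i \, \tilde f_i(x) - \lambda \sum_{i=1}^n y_i \ln y_i, \qquad \tilde f_i(x) \defeq f_i(x) + \lambda \ln q_i .
\]
Each $\tilde f_i$ is convex, $G$-Lipschitz with respect to $\norm{\cdot}_2$, shares the subdifferential of $f_i$, and takes values in $\insquare{-\lambda \ln\inparen{\max_j q_j^{-1}},\, M} \subseteq \insquare{-\allpoly,\, M}$, so its range is bounded by a polynomial in the problem parameters --- this is the only place $\lambda \le \allpoly$ and $\max_j q_j^{-1} \le \allpoly$ are used. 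Hence $f_{\lambda, q}(x) = \max_{y \in \ytrunc} \inbraces{ \sum_i y_i \tilde f_i(x) - \lambda \sum_i y_i \ln y_i }$, whose unique maximizer we denote $y^\star(x)$; the regularizer $-\lambda \sum_i y_i \ln y_i$ is nonnegative, at most $\lambda \ln n$, and makes the inner objective $\lambda$-strongly concave relative to negative entropy. As a maximum over $y \in \simplex^n$ of the maps $x \mapsto \sum_i y_i \tilde f_i(x)$, each $G$-Lipschitz, minus a term with no $x$-dependence, $f_{\lambda, q}$ is itself convex and $G$-Lipschitz on $\xset$, and $\sum_i [y^\star(x)]_i g_i \in \partial f_{\lambda, q}(x)$ whenever $g_i \in \partial f_i(x)$.

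Computing $y^\star(x)$ exactly would cost $n$ first-order queries, so instead we estimate the subgradient $\sum_i [y^\star(x)]_i g_i$ by MLMC, following \cite{levy2020largescale}. For $m = 0, 1, \dots, m_{\max}$ with $m_{\max} = \Otilde(1)$: draw coupled uniformly random coordinate sets $S_0 \subseteq S_1 \subseteq \cdots$ with $|S_m| = \min\inbraces{n, 2^m}$; solve the inner maximization restricted to $S_m$, with the box constraints of $\ytrunc$ rescaled by $n / |S_m|$ (the rescaled feasible set is nonempty precisely because $\epsilon < 4M$ and $\alpha \le 1$), and call its maximizer $\hat y^{(m)}$; and set $\hat g^{(m)} \defeq \sum_{i \in S_m} \hat y^{(m)}_i g_i$ using the subgradients $g_i \in \partial f_i(x)$ returned by the first-order queries at $i \in S_m$. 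Drawing a truncated geometric level $N$ with $p_m \defeq \Pr(N = m) \propto 2^{-m}$, output $\hat g \defeq \inparen{\hat g^{(N)} - \hat g^{(N-1)}} / p_N$ (with $\hat g^{(-1)} \defeq 0$). Its expected first-order query cost is $O\inparen{\sum_m 2^{-m} \cdot 2^m} = O(m_{\max}) = \Otilde(1)$, and $\norm{\hat g^{(m)}}_2 \le G$ for every $m$ since $\hat y^{(m)} \in \simplex$. We then run averaged stochastic projected subgradient descent on $f_{\lambda, q}$ over $\xset$ --- which has Euclidean diameter at most $R$, and onto which Euclidean projections are not charged in the first-order query model --- drawing a fresh $\hat g$ at each iterate.

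\emph{Analysis and main obstacle:} as in \cite{levy2020largescale}, $\E[\hat g]$ telescopes to $\E\insquare{\hat g^{(m_{\max})}}$ and $\E\norm{\hat g}_2^2 \lesssim G^2 + \sum_{m \ge 1} 2^m \, \E\norm{\hat g^{(m)} - \hat g^{(m-1)}}_2^2$, so both the bias and the second moment of $\hat g$ reduce to a stability estimate for the restricted inner maximizers under doubling the subsample: a decay of $\norm{\E \hat g^{(m)} - \sum_i [y^\star(x)]_i g_i}_2$ in $m$ fast enough that $m_{\max} = \Otilde(1)$ forces the bias below $\Otilde(\epsprim / R)$, together with $\E\norm{\hat g^{(m)} - \hat g^{(m-1)}}_2^2 \le \min\inbraces{O(G^2),\, \Otilde\inparen{G^2 / (\alpha 2^m)}}$, which gives $\E\norm{\hat g}_2^2 = \Otilde(G^2 / \alpha)$. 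The appearance of $1/\alpha$ rather than $n$ is exactly where the truncated-CVaR structure enters: every feasible $y$ --- hence $y^\star(x)$ and every $\hat y^{(m)}$ --- satisfies $\norm{y}_\infty \le 1/(\alpha n)$, so the relevant mass is spread over $\Omega(\alpha n)$ coordinates and a subsample of size $\Omega(1/\alpha)$ already resolves it. The $\lambda$-strong concavity of the inner objective only tightens these stability bounds, but the main technical obstacle of the proof is to verify carefully that the estimator guarantees of \cite{levy2020largescale} genuinely carry over in the presence of the entropic term and with the two-sided box constraints of $\ytrunc$ rescaled under subsampling. Given such an estimator, the standard guarantee for averaged stochastic projected subgradient descent on a convex $G$-Lipschitz function over a diameter-$R$ domain with a nearly unbiased stochastic subgradient of second moment $\sigma^2 = \Otilde(G^2/\alpha)$ gives $\E[f_{\lambda, q}(\bar{x})] - \inf_{\xset} f_{\lambda, q} \le O\inparen{R \sigma / \sqrt{T}} + \Otilde(\epsprim)$, the last term collecting the $\Otilde(\epsprim / R)$ residual gradient bias times the diameter; hence $T = \Otilde\inparen{R^2 \sigma^2 / \epsprim^2} = \Otilde\inparen{G^2 R^2 / (\alpha \epsprim^2)}$ iterations suffice, and the total first-order query count is $T \cdot \Otilde(1) = \Otilde\inparen{G^2 R^2 \alpha^{-1} \epsprim^{-2}}$, as claimed.
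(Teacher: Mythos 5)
Your reduction is exactly the paper's first step (absorb $\lambda \ln q_i$ into modified losses $\tilde f_i$, use $\lambda \le \allpoly$ and $\max_i q_i^{-1} \le \allpoly$ only to bound their range by $\poly(\cdots)$), and your overall plan --- MLMC subgradient estimation in the style of \cite{levy2020largescale} plus averaged stochastic projected subgradient descent --- is the same route the paper takes. But there is a genuine gap at the crux, and you name it yourself: you never establish the estimator bounds (the decay of $\E\norm{\hat g^{(m)} - \hat g^{(m-1)}}_2^2 \lesssim G^2/(\alpha 2^m)$ and the smallness of the bias), you only state them as ``the stability estimate that needs to be verified'' and declare that verifying them in the presence of the entropic term and the rescaled two-sided box constraints is ``the main technical obstacle.'' That obstacle is the lemma. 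The paper closes it without any new stability analysis by observing that $f_{\lambda,q}$, after the $\tilde f_i$ rewriting, is an \emph{exact instance} of the general penalized DRO objective of \cite{levy2020largescale} with $h_1 \equiv 0$ and $h_2(t) = \indc_{[\epsilon/(4M),\,1/\alpha]}(t) + t\ln t - t + 1$ (the truncation of $\ytrunc$ and the entropic regularizer are both encoded in $h_2$), and that this instance is $(\alpha^{-1}-1)$-$\chi^2$-bounded in their sense; their Claim 2$'$, Proposition 4$'$ and Proposition 1 then give the second-moment bound $\Otilde(G^2\alpha^{-1})$, the $\Otilde(1)$ expected query cost, and the bias bound off the shelf. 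Without either that observation or a from-scratch proof of your two asserted bounds, the proposal is a plan rather than a proof.

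A secondary issue is the shape of your error analysis: you charge the error as a \emph{gradient} bias of $\hat g$ relative to the true subgradient $\sum_i [y^\star(x)]_i g_i$, bounded by $\Otilde(\epsprim/R)$ and multiplied by the diameter. The guarantee actually available from \cite{levy2020largescale} is different: the MLMC estimator is an \emph{unbiased} gradient estimator for the $n'$-sample batch surrogate $\Lbar(\cdot\,;n')$, and what is controlled is the uniform \emph{function-value} gap $\abs{\Lbar(x;n') - f_{\lambda,q}(x)} = O\inparen{M'\sqrt{\alpha^{-1}(n')^{-1}\log n'}}$, which dictates the choice $n' = \Otilde(\alpha^{-1}\epsprim^{-2}(M')^2)$ (this is where the range bound $M' \le \allpoly$ is actually consumed, keeping the query cost per estimate at $\Otilde(1)$). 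A pointwise gradient-bias bound of the strength you assume is not what their results provide, so even granting their machinery you would need to restructure the argument as ``run SGD on the surrogate, then transfer via the uniform function bias,'' as the paper does.
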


The reason we apply Algorithm~\ref{alg:dual-extraction-framework} with a truncated dual contraint set $\ytrunc$ is precisely to ensure $q_k$ in Line~\ref{algline:q_k} of Algorithm~\ref{alg:dual-extraction-framework} satisfies $\max_{i \in [n]} [q_k]_i^{-1} \le \poly(\cdots)$ for all iterations $k \in [K]$, so that the oracle call $\DRPO(q_k, \Lambda_k, \epsilon_k)$ in Line~\ref{algline:x_k} of Algorithm~\ref{alg:dual-extraction-framework} can be implemented per Lemma~\ref{lem:DRPO-CVaR}. Note that it is not a priori clear that $q_k \in \ytrunc$, but it is straightforward to show that in the simplex setup of Definition~\ref{ex:simplex}, we have that each entry $[q_k]_i$ for $i \in [n]$ is a weighted geometric mean of $[y_0]_i, \dots, [y_{k - 1}]_i$, normalized to the probability simplex. Thus, the fact that $y_k \in \ytrunc$ for $k \in \inbraces{0} \cup [K]$ yields $\max_{i \in [n]} [q_k]_i^{-1} \le \poly(\cdots)$ as a result. We discuss the reason for the conditions $\max_{i \in [n]} q_i^{-1} \le \allpoly$ and $\lambda \le \allpoly$ in Lemma~\ref{lem:DRPO-CVaR} in Appendix \ref{subapp:alpha-CVaR}.

Applying Lemma~\ref{lem:DRPO-CVaR}, we obtain the following guarantee via Algorithm~\ref{alg:dual-extraction-framework}. Note that the upper bound on $\epsilon$ in Theorem~\ref{thm:dual-CVaR-guarantee} is without loss of generality since if $\epsilon \ge M$, any feasible point is $\epsilon$-optimal.

\begin{theorem}[Guarantee for \eqref{eq:dual-CVaR}]
    \label{thm:dual-CVaR-guarantee}
In the setting of Definition~\ref{def:CVaR-setup} with target error $\epsilon \in (0, 4M)$ and $\alpha \in [1 / n, 1]$, there exists an algorithm which computes an expected $\epsilon$-optimal point of \eqref{eq:dual-CVaR} with complexity $\Otilde(n + G^2 R^2 \alpha^{-1} \epsilon^{-2})$. 
\end{theorem}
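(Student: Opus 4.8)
The plan is to apply the dual-extraction framework of Algorithm~\ref{alg:dual-extraction-framework} instantiated with the $(\psi, \xset, \ytrunc)$-simplex setup of Definition~\ref{def:CVaR-setup}, using the schedules of Corollary~\ref{cor:example-schedules-wlog} with target error $\epsilon / 2$, and then to argue that an $(\epsilon/2)$-optimal point of the truncated dual \eqref{eq:dual-CVaR-trunc} is an $\epsilon$-optimal point of the original dual \eqref{eq:dual-CVaR}. First I would collect the constants needed for Corollary~\ref{cor:example-schedules-wlog}: $\mu_r = 1$ (Pinsker), $B = \ln n$ as an upper bound for $\breg{}{y_0}{\yopt}$ with $y_0 = \frac{1}{n}\ones$ (via a bound like Lemma~\ref{lem:bound-on-KL-from-uniform}, noting $y_0 \in \ytrunc$ since $\epsilon < 4M$), and a Lipschitz constant $L$ for $\phi$ with respect to $\norm{\cdot}_1$: since $\phi(y) = \min_{x \in \xset} \sum_i y_i f_i(x)$ is a minimum of linear functions with coefficients $f_i(x) \in [0, M]$, it is $M$-Lipschitz with respect to $\norm{\cdot}_1$. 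Plugging into Corollary~\ref{cor:example-schedules-wlog} gives $K = O(\log \frac{M^2 \ln n}{\epsilon^2})$ rounds, each requiring one $\DRPO$ call to accuracy $\epsp_k = \Theta(\epsilon / K)$ and one $\DRBR$ call.

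Next I would bound the per-iteration cost. The $\DRBR$ call on line~\ref{algline:y_k} is a closed-form entropic projection computable in $O(n)$ time (as in Algorithm~\ref{alg:dual-extraction-mat-games}, line~\ref{algline:y_k-mat-games}), and likewise $q_k$ on line~\ref{algline:q_k} is a normalized weighted geometric mean computable in $O(n)$ time. The $\DRPO$ call is handled by Lemma~\ref{lem:DRPO-CVaR}, which requires verifying its hypotheses: that $\Lambda_k \le \allpoly$ (true, since $\Lambda_k \le K \cdot 2^{K-1} \epsilon/(4\ln n) \le \poly(\cdots)$ given $K$ is logarithmic and $\epsilon^{-1} \le \poly$) and that $\max_{i} [q_k]_i^{-1} \le \allpoly$. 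For the latter, as noted in the text after Lemma~\ref{lem:DRPO-CVaR}, each $[q_k]_i$ is a normalized weighted geometric mean of $[y_0]_i, \dots, [y_{k-1}]_i$, each of which lies in $[\tfrac{\epsilon}{4nM}, \tfrac{1}{\alpha n}]$ by membership in $\ytrunc$, so $[q_k]_i \ge \tfrac{\epsilon}{4nM}$ (geometric mean of lower-bounded quantities, and the normalization constant is $\le 1$), giving $[q_k]_i^{-1} \le \tfrac{4nM}{\epsilon} \le \allpoly$. Hence each $\DRPO$ call costs $\Otilde(G^2 R^2 \alpha^{-1} \epsp_k^{-2}) = \Otilde(G^2 R^2 \alpha^{-1} (K/\epsilon)^2) = \Otilde(G^2 R^2 \alpha^{-1} \epsilon^{-2})$; summing over the $K = \Otilde(1)$ rounds and adding the $O(Kn)$ cost of the $q_k, y_k$ computations gives total complexity $\Otilde(n + G^2 R^2 \alpha^{-1} \epsilon^{-2})$.

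Finally I would translate the $(\epsilon/2)$-optimality guarantee for \eqref{eq:dual-CVaR-trunc} into an $\epsilon$-optimality guarantee for \eqref{eq:dual-CVaR}. Let $y_\alpha^\star$ be an optimum of \eqref{eq:dual-CVaR} over the full CVaR uncertainty set $\uset_\alpha \defeq \{y \in \simplex^n : \norm{y}_\infty \le \tfrac{1}{\alpha n}\}$. The point is that $\ytrunc$ is obtained from $\uset_\alpha$ by imposing the extra lower bound $y_i \ge \tfrac{\epsilon}{4nM}$, and one can project $y_\alpha^\star$ onto $\ytrunc$ by moving at most $\tfrac{\epsilon}{4nM}$ of mass per coordinate, hence total $\ell_1$-movement at most $n \cdot \tfrac{\epsilon}{4nM} = \tfrac{\epsilon}{4M}$; call the result $\tilde y \in \ytrunc$. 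By $M$-Lipschitzness of $\phi$ with respect to $\norm{\cdot}_1$, $\phi(\tilde y) \ge \phi(y_\alpha^\star) - \tfrac{\epsilon}{4}$, so the optimal value of \eqref{eq:dual-CVaR-trunc} is within $\epsilon/4$ of that of \eqref{eq:dual-CVaR}; since $\ytrunc \subseteq \uset_\alpha$, any $(\epsilon/4)$-optimal point of \eqref{eq:dual-CVaR-trunc} (in expectation) is $(\epsilon/2)$-optimal for \eqref{eq:dual-CVaR}, so running the framework to accuracy $\epsilon/4$ on the truncated problem suffices (adjust the constants in the schedules accordingly). The main obstacle I anticipate is not any single step but the bookkeeping of checking the $\allpoly$ side conditions of Lemma~\ref{lem:DRPO-CVaR} hold at every iteration — in particular confirming that $q_k$ stays suitably bounded away from the boundary of the simplex throughout the run, which relies on the $\ytrunc$ truncation propagating through the geometric-mean structure of the $q_k$'s.
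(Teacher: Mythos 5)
Your proposal is correct and follows essentially the same route as the paper's proof: instantiate Algorithm~\ref{alg:dual-extraction-framework} with the simplex setup of Definition~\ref{def:CVaR-setup} and the Corollary~\ref{cor:example-schedules-wlog} schedules with $\mu_r = 1$, $B = \ln n$, $L = M$, transfer optimality from the truncated dual \eqref{eq:dual-CVaR-trunc} to \eqref{eq:dual-CVaR} via the $M$-Lipschitzness of $\phi$, verify the $\allpoly$ hypotheses of Lemma~\ref{lem:DRPO-CVaR} through the weighted-geometric-mean form of $q_k$ and the $\ytrunc$ lower bound, and sum $\Otilde(G^2R^2\alpha^{-1}\epsilon^{-2})$ over $K = \Otilde(1)$ rounds plus the $O(n)$ per-round dual computations. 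The only quibbles are constant-level: the $\ell_1$ movement in your projection argument is really up to twice $n\cdot\tfrac{\epsilon}{4nM}$ (raising plus compensating lowering), and the best response over $\ytrunc$ is not the plain softmax of Line~\ref{algline:y_k-mat-games} since $\ytrunc\ne\simplex^n$ (though it still needs only $n$ first-order queries, which is all the complexity model counts), neither of which affects the result.
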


\begin{proof}
    Consider Algorithm~\ref{alg:dual-extraction-framework} with the simplex setup fixed in Definition~\ref{def:CVaR-setup}, 
    $y_0 \defeq \frac{1}{n} \ones$, and schedules given by Corollary~\ref{cor:example-schedules-wlog} with $\mu_r \gets 1$, $B \gets \ln n$ (see Lemma~\ref{lem:bound-on-KL-from-uniform}), and $L \gets M$. Regarding the latter, it is immediate that $\phi$ is $M$-Lipschitz with respect to $\norm{\cdot}_1$ due to the assumption on the range of each $f_i$ in Definition~\ref{def:CVaR-setup} (see, e.g., \cite[Sec. D.4.4]{urruty2004convexanalysistextbook}). Then by Corollary~\ref{cor:example-schedules-wlog}, we have that $K = O (\log_2 \frac{M^2 \ln n}{\epsilon^2})$ suffices to obtain an $\epsilon / 2$-optimal point for \eqref{eq:dual-CVaR-trunc}, which is $\epsilon$-optimal for \eqref{eq:dual-CVaR} since the Lipschitzness of $\phi$ implies
    \begin{align*}
        \max_{y \in \ytrunc} \phi(y)  \le \max_{y \in \simplex^n, \norm{y}_\infty \le \frac{1}{\alpha n}} \phi(y) \le \max_{y \in \ytrunc} \phi(y) + \epsilon / 4.
    \end{align*}
    
    As for the complexity, we claim the $\DRPO$ oracle call in Line~\ref{algline:x_k} in Algorithm~\ref{alg:dual-extraction-framework} can be implemented with complexity $\Otilde(G^2 R^2 \alpha^{-1} \epsilon^{-2})$ via Lemma~\ref{lem:DRPO-CVaR}, and it is immediate that the $\DRBR$ oracle call in Line~\ref{algline:y_k} in Algorithm~\ref{alg:dual-extraction-framework} can be implemented with complexity $n$ by querying $f_1(x_k), \dots, f_n(x_k)$. Thus, we obtain the desired complexity since $K = \Otilde(1)$.

    It remains to verify that the stipulations of Lemma~\ref{lem:DRPO-CVaR} hold when the $\DRPO$ oracle is called in Line~\ref{algline:x_k} in Algorithm~\ref{alg:dual-extraction-framework}; namely, that $\max_{i \in [n]} [q_k]_i^{-1} \le \poly(\cdots)$ and $\Lambda_k \le \poly(\cdots)$ for all $k \in [K]$. The latter is immediate from the choice of dual-regularization schedule per Corollary~\ref{cor:example-schedules-wlog} and the fact that $K = O (\log_2 \frac{M^2 \ln n}{\epsilon^2})$. As for the former, this follows because Lemma~\ref{lem:convex-conjugate-neg-entropy} and the alternate expression for $q_k$ in Line~\ref{algline:q_k} of Algorithm~\ref{alg:dual-extraction-framework} imply
    \begin{align*}
        [q_k]_i = \frac{\prod_{j = 0}^{k - 1} [y_j]_i^{\lambda_j / \Lambda_k}}{\sum_{\ell = 1}^n \prod_{j = 0}^{k - 1} [y_j]_\ell^{\lambda_j / \Lambda_k}}
    \end{align*}
    for all $k \in [K]$ and $i \in [n]$. Then the claimed bound on $[q_k]_i^{-1}$ follows from the fact that $y_k \in \ytrunc$ for all $k \in [K] \cup \inbraces{0}$. Indeed, note
    \begin{align*}
        1 \le [y_j]_i^{- \lambda_j / \Lambda_k} \le [y_j]_i^{-1} \le \allpoly \implies 1 \le \prod_{j = 0}^{k - 1} [y_j]_i^{- \lambda_j / \Lambda_k} \le \allpoly.
    \end{align*}
\end{proof}

\section{Obtaining critical points of convex functions}\label{sec:convex-critical-point}

In this section, our goal is to obtain an approximate critical point of a convex, $\beta$-smooth function $h : \R^n \to \R$, given access to a gradient oracle for $h$. We show that our general framework yields an algorithm with the optimal query complexity for this problem.
In Section \ref{subsec:critical-point-prelims}, we give the formal problem definition and some important preliminaries. In Section \ref{subsec:critical-point-extraction-framework}, we give the setup for applying our main framework Algorithm~\ref{alg:dual-extraction-framework} to this problem and a sketch of why the resulting algorithm works. In Section \ref{subsec:convex-critical-resulting-alg}, we formally state the resulting algorithm for obtaining an approximate critical point of $h$ and prove that it achieves the optimal rate using the guarantees associated with Algorithm~\ref{alg:dual-extraction-framework}.

\subsection{Preliminaries for Section \ref{sec:convex-critical-point}}\label{subsec:critical-point-prelims}

Throughout Section \ref{sec:convex-critical-point}, we fix $\norm{\cdot}$ to be the standard Euclidean norm over $\R^n$. We assume $h : \R^n \to \R$ is convex, $\beta$-smooth with respect to $\norm{\cdot}$, and $\gap \defeq h(x_0) - \inf_{x \in \R^n} h(x) < \infty$ for an arbitrary initialization point $x_0 \in \R^n$.
We access $h$ through a gradient oracle. For $\gamma > 0$, our goal will be to obtain a $\gamma$-critical point of $h$, i.e., a point $x \in \R^n$ such that $\norm{\grad h(x)} \le \gamma$.
Instead of operating on $h$ itself, our algorithm will operate on a regularized version of $h$:
\begin{align}
    \label{eq:reg-func-f}
    f(x) \defeq h(x) + \frac{\gamma^2}{16 \gap} \norm{x - x_0}^2.
\end{align}

This notation was chosen to mirror the notation of Section \ref{subsec:main-framework-assumptions-preliminaries}; $f$ will be the primal function when we apply the framework. Let $\fopt$ denote the unique global minimum of $f$. The following corollary of \Cref{lem:general-reg-func-properties} in Appendix \ref{app:deferred-convex-critical-proofs} summarizes the key properties of $f$:

\begin{corollary}[Properties of the regularized function $f$]
    \label{cor:prop-of-reg-func-f}
We have
\begin{enumerate}
    \item $\norm{\fopt - x_0} \le 4 \gap / \gamma$.

    \item If $u \in \R^n$ is such that $\norm{\grad f(u)} \le \gamma / 4$, then $\norm{\grad h(u)} \le \gamma$.
\end{enumerate}
\end{corollary}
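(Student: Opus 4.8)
The plan is to exploit the fact that $f$ is strongly convex and then control how far an approximate critical point of $f$ can be from $x_0$. Set $\mu \defeq \frac{\gamma^2}{8\gap}$, so that $f(x) = h(x) + \frac{\mu}{2}\norm{x - x_0}^2$; since $h$ is convex, $f$ is $\mu$-strongly convex, differentiable (as $h$ is $\beta$-smooth), and has a unique global minimizer $\fopt$ with $\grad f(\fopt) = 0$.

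For the first claim, I would combine optimality of $\fopt$ with the definition of $\gap$: from $f(\fopt) \le f(x_0) = h(x_0)$ we get $\frac{\mu}{2}\norm{\fopt - x_0}^2 \le h(x_0) - h(\fopt) \le h(x_0) - \inf_{x} h(x) = \gap$, hence $\norm{\fopt - x_0}^2 \le 2\gap/\mu = 16\gap^2/\gamma^2$, i.e.\ $\norm{\fopt - x_0} \le 4\gap/\gamma$. For the second claim, note $\grad f(u) = \grad h(u) + \mu(u - x_0)$, so by the triangle inequality $\norm{\grad h(u)} \le \norm{\grad f(u)} + \mu\norm{u - x_0} \le \tfrac{\gamma}{4} + \mu\norm{u - x_0}$ using the hypothesis $\norm{\grad f(u)} \le \gamma/4$. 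I then bound $\norm{u - x_0} \le \norm{u - \fopt} + \norm{\fopt - x_0}$, where the second term is at most $4\gap/\gamma$ by part~1. For the first term I use the standard strong-convexity gradient bound: $\mu$-strong convexity gives $\inner{\grad f(u) - \grad f(\fopt)}{u - \fopt} \ge \mu\norm{u - \fopt}^2$, and since $\grad f(\fopt) = 0$, Cauchy--Schwarz yields $\norm{\grad f(u)} \ge \mu\norm{u - \fopt}$, so $\norm{u - \fopt} \le \norm{\grad f(u)}/\mu \le \gamma/(4\mu) = 2\gap/\gamma$. Therefore $\norm{u - x_0} \le 6\gap/\gamma$, giving $\mu\norm{u - x_0} \le \frac{\gamma^2}{8\gap}\cdot\frac{6\gap}{\gamma} = \frac{3\gamma}{4}$, and hence $\norm{\grad h(u)} \le \frac{\gamma}{4} + \frac{3\gamma}{4} = \gamma$.

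The computation is entirely routine: the only real ``work'' is the constant bookkeeping, namely checking that the regularization weight $\frac{\gamma^2}{16\gap}$ is chosen exactly so that the chain of triangle inequalities closes with the slack $\gamma/4 + 3\gamma/4 = \gamma$; since that weight is already baked into the definition \eqref{eq:reg-func-f} of $f$, there is essentially no obstacle. Alternatively, both parts follow by specializing \Cref{lem:general-reg-func-properties} (which presumably handles a general strongly convex regularizer and regularization center), so the proof in the appendix can simply invoke that lemma with $\mu = \gamma^2/(8\gap)$ and center $x_0$.
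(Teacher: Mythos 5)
Your proposal is correct and follows essentially the same route as the paper: the paper simply invokes its general Lemma~\ref{lem:general-reg-func-properties} with $\alpha \gets \gamma^2/(8\gap)$ and $\nu \gets \gamma/4$, and the proof of that lemma is exactly your argument (bound $\norm{\fopt - x_0}$ via $f(\fopt)\le f(x_0)$, then use $\norm{u-\fopt}\le \norm{\grad f(u)}/\mu$ from strong convexity together with the triangle inequality). The constant bookkeeping in your write-up matches the paper's, merely grouped as $\gamma/4 + 3\gamma/4$ instead of $2\nu + \sqrt{2\gap\alpha} = \gamma/2 + \gamma/2$.
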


\begin{proof}
This follows immediately from  \Cref{lem:general-reg-func-properties} with $\alpha \gets \frac{\gamma^2}{8 \gap}$ and $\nu \gets \gamma / 4$.
\end{proof}

The second part of Corollary~\ref{cor:prop-of-reg-func-f} says that to find a $\gamma$-critical point of $h$, it suffices to find a $(\gamma / 4)$-critical point of $f$. Furthermore, clearly a single query to $\grad h$ suffices to obtain $\grad f$ at a point. As a result, we will focus on finding a $(\gamma / 4)$-critical point of $f$. Furthermore, Corollary~\ref{cor:prop-of-reg-func-f} may be of independent interest since it trivially allows one to achieve a gradient query complexity of $O \inparen{ \sqrt{\beta \gap} / \gamma }$ via a method which achieves query complexity $O \inparen{\sqrt{\beta \norm{x_0 - \hopt} / \gamma}}$ (for $\hopt$ defined as some minimizer of $h$ over $\R^n$, assuming one exists); see Section \ref{subsec:related-work}.

The reason we perform this regularization before applying our framework is it enables us to obtain a sufficiently tight bound on $\breg{}{y_0}{\yopt}$ (equivalently, a small enough value of $B$ when we ultimately apply Corollary~\ref{cor:example-schedules-no-log}). It is possible to apply the framework more directly to $h$, but it is not clear how to do so in a way that achieves an optimal complexity.

Finally, we provide a notation guide for Section \ref{sec:convex-critical-point} in Table \ref{table:convex-critical-notation}, which may be useful to reference as additional notation is introduced in Sections \ref{subsec:critical-point-extraction-framework} and \ref{subsec:convex-critical-resulting-alg}.

\begin{table}[]
    \centering
    \small
    \begin{tabular}{@{}lll@{}}
    \toprule
    \multicolumn{1}{l}{Notation} & \multicolumn{1}{l}{Description}                      & \multicolumn{1}{l}{Section}                       \\ \midrule
    $\norm{\cdot}$                 & Euclidean norm                               & $\ref{subsec:critical-point-prelims}$              \\
    $h$                            & Convex, $\beta$-smooth function                       &                                                    \\
    $\gamma$                       & Target critical point error for $h$                &                                                    \\
    $x_0$                          & Arbitrary initialization point                    &                                                    \\
    $\gap$                         & $h(x_0) - \inf_{x \in \R^n} h(x) < \infty$                                  &                                                    \\
    $f(x)$                         & $h(x) + \frac{\gamma^2}{16 \gap} \norm{x - x_0}^2$      &                                                    \\
    $\fopt$                        & The global minimizer of $f$                                 &                                                    \\
    $\psi(x, y)$                   & $\inangle{x, y} - \fconj(y)$                          &     $\ref{subsec:critical-point-extraction-framework}$                                               \\
    $R$                            & $5 \gap / \gamma$                                     &                                                    \\
    $\xset$                        & $B_R^n(x_0)$                                          &                                                    \\
    $\yset$                        & $\R^n$                                                &  \\
    dgf $r$                            & $\fconj$                                     &                                                    \\
    $\phi(y)$                      & $\inangle{x_0, y} - R \norm{y} - \fconj(y)$           &                                                    \\
    $\lambda_k$                    & $2^k / 32$                                            & $\ref{subsec:convex-critical-resulting-alg}$       \\
    $\epsilon_k$                   & $\gap / (64 \cdot 1.5^k)$                             &                                                    \\
    $\CGM$              & Fast composite gradient method oracle & \\ \bottomrule                                                  
    \end{tabular}
    \notarxiv{\vspace{8pt}}
	\cprotect\caption{\label{table:convex-critical-notation}
		Notation guide for Section \ref{sec:convex-critical-point}
	} 
    \end{table}

\subsection{Instantiating the framework}\label{subsec:critical-point-extraction-framework}

For this application, we instantiate
\begin{align*}
    \psi(x, y) &\defeq \inangle{x, y} - \fconj(y).
\end{align*}
Recall that $\psi$ is the \textit{Fenchel game} \cite{abernethy2017equilibrium,wang2023noregretdynamicsfenchelgame,cohen2021relative,jin2022sharper}; see Section \ref{subsec:our-results} for a discussion of why it is a natural choice in this setting. For the rest of Section \ref{sec:convex-critical-point}, we fix a $(\psi, \xset \defeq B_R^n(x_0), \yset \defeq \R^n,\fconj)$-unbounded setup (Definition~\ref{ex:unbounded}) with $R \defeq 5 \gap / \gamma$.
$\fconj$ is a valid choice for the dgf because $\fconj$ is differentiable and $\inparen{\beta + \frac{\gamma^2}{8 \gap}}^{-1}$-strongly convex \cite[Thm. 6.11]{orabona2023modern}. The strong convexity of $\fconj$ also implies that Assumption~\ref{as:bounded-strongly-concave} holds. Note that the associated primal function $x \mapsto \max_{y \in \R^n} \psi(x, y)$ is precisely $f^{**} = f$ (hence the choice of notation in \eqref{eq:reg-func-f}), and the dual function is given by
\begin{align*}
    \phi(y) = \min_{x \in B_R^n(x_0)} \inbraces{\inangle{x, y} - \fconj(y)} 
    = \inangle{x_0 -R \frac{y}{\norm{y}}, y} - \fconj(y) 
    = \inangle{x_0, y} - R \norm{y} - \fconj(y).
\end{align*}
Next, the following lemma fulfills part of the outline given in Section \ref{subsec:our-results} by showing that approximately optimal points for the dual objective \eqref{eq:dual} must have small norm.

\begin{lemma}[Bounding the norm by dual suboptimality]
    \label{lem:convex-crit-bounding-norm-dual-point}
   If $y \in \R^n$ is $\epsilon$-optimal for \eqref{eq:dual} for some $\epsilon > 0$, then $\norm{y} \le \epsilon \gamma / \Delta$.
\end{lemma}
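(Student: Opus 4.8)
The plan is to combine the minimax principle with an elementary upper bound on $\phi(y)$ coming directly from the definition of the conjugate.

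First I would record that the primal function associated with $\psi$ is $x \mapsto \max_{y \in \R^n}\psi(x,y) = f^{**} = f$, and that the unconstrained minimizer $\fopt$ of $f$ lies in the interior of $\xset = B_R^n(x_0)$, since $\norm{\fopt - x_0} \le 4\gap/\gamma < 5\gap/\gamma = R$ by the first part of \Cref{cor:prop-of-reg-func-f}. Hence $\fopt$ is also a minimizer of $f$ over $\xset$, so $\min_{x \in \xset} f(x) = f(\fopt)$. The minimax principle applies in this setup (the game is convex-concave, $\xset$ is bounded, and $\psi(x,\cdot)$ is strongly concave because $\fconj$ is strongly convex), and therefore $\phi(\yopt) = \min_{x \in \xset} f(x) = f(\fopt)$.

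Next I would bound $\phi(y)$ from above. By definition of the conjugate, $\fconj(y) = \sup_{x \in \R^n}\inbraces{\inangle{x,y} - f(x)} \ge \inangle{\fopt, y} - f(\fopt)$, so $-\fconj(y) \le f(\fopt) - \inangle{\fopt, y}$. Substituting this into the closed form $\phi(y) = \inangle{x_0, y} - R\norm{y} - \fconj(y)$ derived in the excerpt, and applying Cauchy--Schwarz together with $\norm{x_0 - \fopt} \le 4\gap/\gamma$ and $R = 5\gap/\gamma$, gives
\[
    \phi(y) \;\le\; f(\fopt) + \inangle{x_0 - \fopt, y} - R\norm{y} \;\le\; f(\fopt) + \left(\norm{x_0 - \fopt} - R\right)\norm{y} \;\le\; f(\fopt) - \frac{\gap}{\gamma}\,\norm{y}.
\]
Finally, $\epsilon$-optimality of $y$ means $\phi(y) \ge \phi(\yopt) - \epsilon = f(\fopt) - \epsilon$; combining with the previous display and rearranging yields $\frac{\gap}{\gamma}\norm{y} \le \epsilon$, i.e.\ $\norm{y} \le \epsilon\gamma/\gap$, which is the claim.

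I do not expect a genuine obstacle here; the argument is short. The only points needing care are (i) confirming that the minimax principle delivers the exact identity $\phi(\yopt) = f(\fopt)$ — which relies on $\fopt$ being interior to $\xset$ so that the constrained and unconstrained minimizers of $f$ coincide, and on the strong concavity of $\psi(x,\cdot)$ to cover the unboundedness of $\yset$ — and (ii) the bookkeeping that $R = 5\gap/\gamma$ was chosen precisely so that $R - \norm{x_0 - \fopt} \ge \gap/\gamma > 0$.
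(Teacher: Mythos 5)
Your proposal is correct and follows essentially the same route as the paper's proof: both use the minimax principle with $\fopt \in \xset$ to identify $\phi(\yopt) = f(\fopt)$, then the Fenchel--Young inequality, Cauchy--Schwarz, and the bound $R - \norm{\fopt - x_0} \ge \gap/\gamma$ to extract $\norm{y} \le \epsilon\gamma/\gap$. The only difference is cosmetic: you bound $\phi(y)$ from above and then invoke $\epsilon$-optimality, whereas the paper starts from $f(\fopt) - \phi(y) \le \epsilon$ and rearranges.
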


\begin{proof}
By Corollary~\ref{cor:prop-of-reg-func-f} and the choice of $R$, note that $\fopt \in \xset$, implying $\fopt$ is primal optimal even after restricting the search space to $\xset$, i.e., to use the notation of Section \ref{subsec:main-framework-assumptions-preliminaries}, $\fopt = \xopt \defeq \argmin_{x \in \xset} f(x)$. Let $\yopt \defeq \argmax_{y \in \R^n} \phi(y)$ denote the dual optimum as in Section \ref{subsec:main-framework-assumptions-preliminaries}, which is unique by the strong convexity of $\fconj$. Since $f(\fopt) = \phi(\yopt)$ by the minimax principle and $y$ is $\epsilon$-optimal, we have 
\begin{align*}
    \phi(\yopt) - \phi(y) \le \epsilon \iff f(\fopt) - \phi(y) \le \epsilon.
\end{align*}
Expanding the latter gives
\begin{align*}
    &f(\fopt) - \inangle{x_0, y} + R \norm{y} + \fconj(y) \le \epsilon, \\
    \overimp{(i)}    &  R \norm{y} + \inangle{\fopt - x_0, y} \le \epsilon, \\
    \overimp{(ii)} & R \norm{y} - \norm{\fopt - x_0} \norm{y} \le \epsilon, \\
    \overimp{(iii)} & \norm{y} \le \frac{\epsilon}{R - \norm{\fopt - x_0}} \le \frac{\epsilon \gamma}{\Delta},
\end{align*}
where we used $(i)$ the Fenchel-Young inequality, $(ii)$ Cauchy-Schwarz, and $(iii)$ Corollary~\ref{cor:prop-of-reg-func-f}. 
\end{proof}

We now derive the oracles of Definitions \ref{def:DRPO} and \ref{def:DRBR}. Regarding Definition~\ref{def:DRPO}, for the rest of Section \ref{sec:convex-critical-point} we restrict $\DRPO(\cdot)$ to denote a deterministic implementation of the $\DRPO$ oracle, since we can always obtain a deterministic implementation in this application.
Then the following corollary is an immediate consequence of a more general lemma given in Appendix \ref{app:deferred-convex-critical-proofs} which characterizes the properties of the Fenchel game with added dual regularization; see also Section \ref{subsec:our-results}.

\begin{corollary}
    \label{cor:oracles-for-convex-critical}
The set of valid output points of $\DRPO(q \in \R^n, \lambda > 0, \epsprim > 0)$ is precisely
\begin{align*}
     &\epsargmin{\epsprim}{x \in B_R^n(x_0)} (1 + \lambda) \cdot f \inparen{\frac{x + \lambda \grad \fconj (q)}{1 + \lambda}}, \text{ and} \\
     &\DRBR(q \in \R^n, \lambda > 0, x \in B_R^n(x_0)) = \grad f \inparen{\frac{x + \lambda \grad \fconj (q)}{1 + \lambda}}.
\end{align*}
\end{corollary}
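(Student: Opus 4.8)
The plan is to prove Corollary~\ref{cor:oracles-for-convex-critical} by unfolding the definitions of the $\DRPO$ and $\DRBR$ oracles (Definitions~\ref{def:DRPO} and \ref{def:DRBR}) for the specific choice $\psi(x,y) = \inangle{x,y} - \fconj(y)$ with dgf $r = \fconj$, and simplifying the inner maximization over $y$ in closed form. Since the corollary is stated as an immediate consequence of a more general lemma (on the Fenchel game with added dual regularization) in Appendix~\ref{app:deferred-convex-critical-proofs}, my proof will essentially be an instantiation of that lemma, so let me describe the direct computation that lemma must be carrying out.

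First I would write out the dual-regularized primal objective $f_{\lambda, q}(x) = \max_{y \in \R^n}\inbraces{\inangle{x,y} - \fconj(y) - \lambda \breg{\fconj}{q}{y}}$. Expanding the Bregman divergence $\breg{\fconj}{q}{y} = \fconj(y) - \fconj(q) - \inangle{\grad\fconj(q), y - q}$, the $y$-dependent part of the objective becomes $\inangle{x + \lambda\grad\fconj(q), y} - (1+\lambda)\fconj(y)$ plus constants independent of $y$ (and of $x$, except for the bilinear term already displayed). Pulling out $(1+\lambda)$, this is $(1+\lambda)\insquare{\inangle{\frac{x + \lambda\grad\fconj(q)}{1+\lambda}, y} - \fconj(y)}$ plus a constant, whose maximum over $y \in \R^n$ is $(1+\lambda)\fconj{}^*\inparen{\frac{x + \lambda\grad\fconj(q)}{1+\lambda}}$. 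Using $\fconj{}^* = f^{**} = f$ (valid since $f$ is convex, lower semicontinuous, and proper — indeed finite and strongly convex), this equals $(1+\lambda)\, f\inparen{\frac{x+\lambda\grad\fconj(q)}{1+\lambda}}$ up to a $q,\lambda$-dependent additive constant. Since additive constants do not change the set of $\epsp$-minimizers, the set of valid $\DRPO(q,\lambda,\epsp)$ outputs is exactly $\epsargmin{\epsp}{x \in B_R^n(x_0)} (1+\lambda)\, f\inparen{\frac{x + \lambda\grad\fconj(q)}{1+\lambda}}$, as claimed. For the $\DRBR$ formula, by definition $\DRBR(q,\lambda,x)$ is the unique maximizer $y$ of the same inner expression; the maximizer of $\inangle{\theta,y} - \fconj(y)$ over $\R^n$ is $\grad\fconj{}^*(\theta) = \grad f(\theta)$ (using differentiability of $f$, which follows from smoothness/strong convexity of $f$ by conjugate duality, cf.\ the facts in Appendix~\ref{app:convex-analysis-facts}), evaluated at $\theta = \frac{x + \lambda\grad\fconj(q)}{1+\lambda}$, giving $\DRBR(q,\lambda,x) = \grad f\inparen{\frac{x+\lambda\grad\fconj(q)}{1+\lambda}}$.

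The only genuinely delicate points are bookkeeping rather than conceptual: (i) confirming $\fconj{}^* = f$ and $\grad \fconj{}^* = \grad f$, which requires $f$ to be a proper, closed, strongly convex function — this holds because $f = h + \frac{\gamma^2}{16\gap}\norm{\cdot - x_0}^2$ is finite-valued, convex, and $\frac{\gamma^2}{8\gap}$-strongly convex, so its conjugate $\fconj$ is differentiable with $\grad\fconj = (\grad f)^{-1}$ and biconjugate equal to $f$; (ii) tracking that the dropped terms $-\fconj(q) - \inangle{\grad\fconj(q), -q}$ and the $(1+\lambda)\fconj(q)\cdots$ leftovers genuinely depend only on $q$ and $\lambda$ and not on $x$ or $y$, so they legitimately factor out of both the $\argmax$ (in $y$) and the $\epsargmin{}{}$ (in $x$); and (iii) verifying the inner sup over $y\in\R^n$ is attained (it is, by coercivity from strong convexity of $\fconj$, equivalently Assumption~\ref{as:bounded-strongly-concave}). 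I expect step (ii) — the careful constant-chasing to see that everything not of the stated form is an additive constant in $x$ — to be the main place to be careful, though it is routine; the cleanest exposition simply cites the general Fenchel-game-with-regularization lemma (analogous to Lemma~\ref{lem:Fenchel-game-with-reg} referenced in the introduction) and specializes $\alpha$, the regularization parameters, and the center.
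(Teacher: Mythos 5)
Your proposal is correct and follows essentially the same route as the paper: the paper proves the corollary by citing its general Fenchel-game-with-regularization lemma (Lemma~\ref{lem:Fenchel-game-with-reg}) with $g \gets f$, and that lemma's proof is exactly the computation you spell out — expand the Bregman divergence, absorb the $x$- and $y$-independent terms into a constant, rescale by $(1+\lambda)$, and invoke $f^{**}=f$ together with the conjugate-subdifferential correspondence to read off both the value and the maximizer. Your extra remarks (constants not affecting the $\epsprim$-argmin set, attainment of the inner supremum) are sound and consistent with the paper's treatment.
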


\begin{proof}
Apply Lemma~\ref{lem:Fenchel-game-with-reg} with $g \gets f$.
\end{proof}

Taken together, Lemma~\ref{lem:convex-crit-bounding-norm-dual-point} and Corollary~\ref{cor:oracles-for-convex-critical} nearly immediately imply that Algorithm~\ref{alg:dual-extraction-framework} can be applied to the above setup to obtain a $(\gamma / 4)$-critical point of $f$ (and therefore a $\gamma$-critical point of $h$). In particular, we will apply the schedules of Corollary~\ref{cor:example-schedules-no-log} to certify that the output $y_K$ of Algorithm~\ref{alg:dual-extraction-framework} is close in distance to an $\epsilon$-optimal point for \eqref{eq:dual} for an appropriate choice of $\epsilon > 0$. Then Lemma~\ref{lem:convex-crit-bounding-norm-dual-point} and a triangle inequality yield a bound on $\norm{y_K}$. Finally, since 
\begin{align*}
    y_K \defeq \DRBR(q_K, \Lambda_K, x_K) = \grad f \inparen{\frac{x_K + \Lambda_K \grad \fconj (q_K)}{1 + \Lambda_K}}
\end{align*}
by Corollary~\ref{cor:oracles-for-convex-critical}, we have that $\frac{x_K + \Lambda_K \grad \fconj (q_K)}{1 + \Lambda_K}$ is an approximate critical point of $f$ (and therefore $h$).
One may worry about the presence of $\grad \fconj(q_K)$ here and, more generally, the presence of $\grad \fconj(q)$ in the expressions for the oracles in Corollary~\ref{cor:oracles-for-convex-critical}. However, $\grad \fconj(\cdot)$ never needs to be evaluated explicitly since per the alternate expression for $q_k$ given in Line~\ref{algline:q_k} of Algorithm~\ref{alg:dual-extraction-framework}, note that $q_k$ was itself computed as the gradient of $f$ at a point (recall the dgf is $\fconj$ and $f = f^{**}$), in which case $\grad \fconj$ simply undoes this operation by Lemma~\ref{lem:subdiff-properties-of-conjugate}.

We formalize this sketch and provide a complexity guarantee in the next section. We also reframe this sketch and treat the sequence of $\frac{x_k + \Lambda_k \grad \fconj (q_k)}{1 + \Lambda_k}$ terms as our iterates (as opposed to the sequence of $x_k$'s), as this leads to a simpler statement and interpretation of the resulting algorithm.

\subsection{The resulting algorithm and guarantee}
\label{subsec:convex-critical-resulting-alg}

We now formalize the sketch given at the end of the previous section, state the resulting algorithm, and provide a complexity guarantee. But first, we define a subroutine which will be used by the algorithm to implement the $\DRPO$ oracle:

\begin{definition}[$\CGM$ oracle {\cite{tseng2008accelerated, nesterov2018textbook}}]
    \label{def:CGM}
    A \emph{$(\zeta > 0, w \in \R^n, \epsilon > 0)$-fast composite gradient method oracle}, $\CGM(\zeta, w, \epsilon)$, returns an $\epsilon$-minimizer of $f$ over $x \in B^n_\zeta(w)$, i.e., an element of $\epsargmaxtxtstyl{\epsilon}{x \in B^n_\zeta(w)} f(x)$, using at most $O\inparen{ 1 + \sqrt{\frac{\beta \zeta^2}{\epsilon}}}$ queries to $\grad f$.
\end{definition}

For example, implementations with a small constant can be found in \cite{tseng2008accelerated} or \cite[Sec. 6.1.3]{nesterov2018textbook}. The implementation of the $\CGM$ oracle falls under fast gradient methods for composite minimization, where letting $g$ denote a convex, $\beta$-smooth function and $\Psi$ a ``simple regularizer'' (a quadratic in our case), the goal is to minimize $\gtilde(x) \defeq g(x) + \Psi(x)$ with the same complexity as it takes to minimize $g$. The domain constraint can also be baked into the regularizer $\Psi$ by adding an indicator.

Our method for computing a $\gamma$-critical point of $h$ is given in Algorithm~\ref{alg:critical-point-algorithm}, with the associated guarantee in Theorem~\ref{thm:convex-critical-main-guarantee}. We note that the decision to introduce the equivalent notation $z_0$ for $x_0$ in Line~\ref{algline:z_0} is aesthetic (to make Line~\ref{algline:CGM} simpler to state and interpret). Furthermore, we state Algorithm~\ref{alg:critical-point-algorithm} for general schedules $(\lambda_k)_{k = 0}^{K - 1}$ and $(\epsp_k)_{k = 1}^K$ for clarity, but ultimately we will choose the schedules given in Theorem~\ref{thm:convex-critical-main-guarantee}, which correspond to particularizing the schedules of Corollary~\ref{cor:example-schedules-no-log} to this setting. With this choice of schedules, $\Lambda_k \approx 2^k$ and $\epsilon_k \approx \gap / 1.5^k$ so that $\frac{\epsilon_k}{1 + \Lambda_k} \approx \gap / 3^k$. As a result, Algorithm~\ref{alg:critical-point-algorithm} can be interpreted as optimizing $f$ in a sequence of balls where the radius and target error are both decreasing geometrically, and the center is a convex combination of the past iterates. While we choose the iteration count $K$ to be logarithmic in the problem parameters, we avoid multiplicative logarithmic factors in the total complexity because the ratio $\zeta^2 / \epsilon$ in the complexity of the $\CGM$ oracle call (to borrow the notation of Definition~\ref{def:CGM}) in Line~\ref{algline:CGM} of Algorithm~\ref{alg:critical-point-algorithm} is $\approx \frac{R^2}{4^k} \cdot \frac{3^k}{\gap}$ at the $k$-th iteration, meaning it is collapsing geometrically.

\begin{algorithm}[h] %
	\setstretch{1.1}
	\caption{Algorithm for obtaining a $\gamma$-critical point of $h$}
	\label{alg:critical-point-algorithm}
	\LinesNumbered
	\DontPrintSemicolon
	\Input{
		Sequences $(\lambda_k)_{k = 0}^{K - 1}$ and $(\epsp_k)_{k = 1}^K$ specified in Theorem~\ref{thm:convex-critical-main-guarantee}, iteration count $K \in \N$, $\CGM$ oracle (Definition~\ref{def:CGM})
	}

    $z_0 \defeq x_0$\; \label{algline:z_0}

    \For{$k = 1, 2, \ldots, K$ }{ 
        $\Lambda_k = \sum_{j = 0}^{k - 1} \lambda_j$\;
        $w_k = \frac{z_0 + \sum_{j = 0}^{k - 1} \lambda_j z_j}{1 + \Lambda_k}$\;
        $z_k = \CGM\inparen{\frac{R}{1 + \Lambda_k}, w_k, \frac{\epsilon_k}{1 + \Lambda_k}}$ \tcp*{$z_k \in \epsargmin{\epsilon_k / (1 + \Lambda_k)}{z \in B^n_{R / (1 + \Lambda_k)}\inparen{w_k}} f(z)$} \label{algline:CGM}
    }
    \Return $z_K$\;
\end{algorithm}

Toward analyzing Algorithm~\ref{alg:critical-point-algorithm}, we first connect the sequence of iterates $z_k$ produced by Algorithm~\ref{alg:critical-point-algorithm} to the sequence of iterates $x_k, y_k, q_k$ produced by Algorithm~\ref{alg:dual-extraction-framework} with the same input parameters. Namely, we are formalizing the comment made at the end of Section \ref{subsec:critical-point-extraction-framework} about reframing the sequence of iterates to achieve a more interpretable algorithm.

\begin{lemma}[Connecting Algorithm~\ref{alg:critical-point-algorithm} to Algorithm~\ref{alg:dual-extraction-framework}]
    \label{lem:connecting-convex-crit-to-framework}
    Consider Algorithm~\ref{alg:dual-extraction-framework} with input given by a $(\psi, B_R^n(x_0), \R^n,\fconj)$-unbounded setup (Definition~\ref{ex:unbounded}); $y_0 \defeq \grad f(x_0)$; and $K$, $(\epsilon_k)_{k = 1}^K$, and $(\lambda_k)_{k = 0}^{K - 1}$ as in Algorithm~\ref{alg:critical-point-algorithm}. Then letting $(z_k)_{k = 0}^K$
    denote the sequence of iterates generated by Algorithm~\ref{alg:critical-point-algorithm}, the following are valid sequences of iterates for Algorithm~\ref{alg:dual-extraction-framework}:
\begin{align}
    q_k &= \grad f \inparen{\frac{1}{\Lambda_k} \sum_{j = 0}^{k - 1} \lambda_j z_j} &&\text{ for $k \in [K]$}, \label{eq:q_k-expr} \\
    x_k &= (1 + \Lambda_k) z_k - \sum_{j = 0}^{k - 1} \lambda_j z_j &&\text{ for $k \in [K]$, and} \label{eq:x_k-expr} \\
    y_k &= \grad f(z_k) &&\text{ for $k \in \inbraces{0} \cup [K]$}. \label{eq:y_k-expr}
\end{align}
\end{lemma}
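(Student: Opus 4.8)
The plan is to prove Lemma~\ref{lem:connecting-convex-crit-to-framework} by induction on $k$, verifying at each step that the formulas \eqref{eq:q_k-expr}, \eqref{eq:x_k-expr}, \eqref{eq:y_k-expr} are consistent with Lines \ref{algline:q_k}, \ref{algline:x_k}, \ref{algline:y_k} of Algorithm~\ref{alg:dual-extraction-framework} when applied to the $(\psi, B_R^n(x_0), \R^n, \fconj)$-unbounded setup. The key structural fact I would invoke repeatedly is that the dgf is $r = \fconj$, so $\grad r = \grad \fconj$ and $\grad \fconj$ inverts $\grad f$ (by Lemma~\ref{lem:subdiff-properties-of-conjugate}, since $f = f^{**}$); thus, whenever an iterate is itself a gradient of $f$ at some point $p$, applying $\grad \fconj$ recovers $p$. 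This is what lets the $\grad \fconj$'s in Corollary~\ref{cor:oracles-for-convex-critical} disappear.

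For the base case, $y_0 = \grad f(x_0) = \grad f(z_0)$ since $z_0 \defeq x_0$, matching \eqref{eq:y_k-expr} at $k=0$. For the inductive step at iteration $k$, assume $y_j = \grad f(z_j)$ for $j < k$. First, $q_k$: by the alternate expression in Line~\ref{algline:q_k}, $q_k = \grad \fconj\big(\frac{1}{\Lambda_k}\sum_{j=0}^{k-1}\lambda_j \grad r(y_j)\big)$ — wait, more precisely $q_k = \grad \rconjuset\big(\frac{1}{\Lambda_k}\sum_j \lambda_j \grad r(y_j)\big)$, but in the unbounded setup $\uset = \R^n$ so $\rconjuset = \rconj = \fconj{}^* = f$ (again using $f = f^{**}$), hence $q_k = \grad f\big(\frac{1}{\Lambda_k}\sum_{j=0}^{k-1}\lambda_j \grad \fconj(\grad f(z_j))\big) = \grad f\big(\frac{1}{\Lambda_k}\sum_{j=0}^{k-1}\lambda_j z_j\big)$, using the inductive hypothesis and $\grad\fconj\circ\grad f = \mathrm{id}$. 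This gives \eqref{eq:q_k-expr}, and also $\grad\fconj(q_k) = \frac{1}{\Lambda_k}\sum_{j=0}^{k-1}\lambda_j z_j$. Next, by Corollary~\ref{cor:oracles-for-convex-critical} with $\lambda \gets \Lambda_k$, a valid $x_k$ is any $\epsilon_k$-minimizer over $x \in B_R^n(x_0)$ of $(1+\Lambda_k)\cdot f\big(\frac{x + \Lambda_k \grad\fconj(q_k)}{1+\Lambda_k}\big)$; substituting $\grad\fconj(q_k) = \frac{1}{\Lambda_k}\sum_j\lambda_j z_j$, this reads $(1+\Lambda_k)\cdot f\big(\frac{x + \sum_{j=0}^{k-1}\lambda_j z_j}{1+\Lambda_k}\big)$. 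Under the change of variables $z = \frac{x + \sum_j\lambda_j z_j}{1+\Lambda_k}$ — equivalently $x = (1+\Lambda_k)z - \sum_j\lambda_j z_j$ — the constraint $x \in B_R^n(x_0)$ becomes $z \in B^n_{R/(1+\Lambda_k)}(w_k)$ with $w_k = \frac{x_0 + \sum_{j=0}^{k-1}\lambda_j z_j}{1+\Lambda_k} = \frac{z_0 + \sum_{j=0}^{k-1}\lambda_j z_j}{1+\Lambda_k}$, matching Line $w_k$ of Algorithm~\ref{alg:critical-point-algorithm}; and the objective becomes $(1+\Lambda_k)f(z)$, whose $\epsilon_k$-minimizers are exactly the $\frac{\epsilon_k}{1+\Lambda_k}$-minimizers of $f(z)$ over that ball, i.e.\ exactly the set from which $z_k = \CGM(\cdot)$ is drawn in Line~\ref{algline:CGM}. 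Hence $x_k = (1+\Lambda_k)z_k - \sum_{j=0}^{k-1}\lambda_j z_j$ is a valid choice, giving \eqref{eq:x_k-expr}. Finally, $y_k = \DRBR(q_k, \Lambda_k, x_k) = \grad f\big(\frac{x_k + \Lambda_k\grad\fconj(q_k)}{1+\Lambda_k}\big) = \grad f(z_k)$ by the same substitution, giving \eqref{eq:y_k-expr} and closing the induction.

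I would organize the write-up so the change-of-variables computation $\frac{x_k + \Lambda_k\grad\fconj(q_k)}{1+\Lambda_k} = z_k$ is done once and reused for both $x_k$ and $y_k$. The main obstacle — really the only subtle point — is bookkeeping the identities $\rconjuset = f$ and $\grad\fconj\circ\grad f = \mathrm{id}$ in the unbounded setup (ensuring the relevant domains line up, i.e.\ that $\grad\fconj$ is well-defined at the points where we apply it, which follows since $f$ is finite-valued, convex, and $\beta + \tfrac{\gamma^2}{8\gap}$-smooth so its conjugate is strongly convex and everywhere differentiable per Corollary~\ref{cor:prop-of-reg-func-f}'s underlying lemma and \cite[Thm. 6.11]{orabona2023modern}), together with correctly tracking the constant multiplier $(1+\Lambda_k)$ when translating between $\epsilon_k$-minimizers of the scaled objective and $\frac{\epsilon_k}{1+\Lambda_k}$-minimizers of $f$. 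Everything else is direct substitution.
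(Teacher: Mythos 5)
Your proposal is correct and follows essentially the same route as the paper's proof: induction on $k$ with the hypothesis $y_j = \grad f(z_j)$, the identification $\rconjuset = f$ (via $f = f^{**}$) together with $\grad \fconj \circ \grad f = \mathrm{id}$ to simplify the alternate expression for $q_k$, Corollary~\ref{cor:oracles-for-convex-critical} to characterize valid $x_k$ and $y_k$, and the affine change of variables $z = \frac{x + \sum_j \lambda_j z_j}{1+\Lambda_k}$ mapping $B_R^n(x_0)$ to $B^n_{R/(1+\Lambda_k)}(w_k)$ and converting $\epsilon_k$-minimizers of the scaled objective into $\epsilon_k/(1+\Lambda_k)$-minimizers of $f$, exactly as in Line~\ref{algline:CGM} of Algorithm~\ref{alg:critical-point-algorithm}. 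No gaps; the bookkeeping you flag (scaling of the accuracy and well-definedness of $\grad\fconj$) is handled the same way in the paper.
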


\begin{proof}
We proceed by induction. For $k = 0$, we have $z_0 = x_0$ by definition and thus $y_0 = \grad f(z_0)$. Now for $k > 0$, suppose $y_j = \grad f(z_j)$ for all $j \in \inbraces{0, 1, \dots, k - 1}$. Then recalling that the dgf $r$ is $\fconj$ and $f^{**} = f$, it is immediate from the alternate expression for $q_k$ given in Line~\ref{algline:q_k} of Algorithm~\ref{alg:dual-extraction-framework} that
\begin{align*}
    q_k = \grad f \inparen{\frac{1}{\Lambda_k} \sum_{j = 0}^{k - 1} \lambda_j \grad  \fconj(y_j)} =
    \grad f \inparen{\frac{1}{\Lambda_k} \sum_{j = 0}^{k - 1} \lambda_j z_j} .
\end{align*}
Thus, we have proven the expression \eqref{eq:q_k-expr}, and now aim to prove the expression \eqref{eq:x_k-expr}. By Corollary~\ref{cor:oracles-for-convex-critical}, the expression \eqref{eq:x_k-expr} is a valid choice for $x_k$ if and only if 
\begin{align}
    x_k \in \epsargmin{\epsilon_k}{x \in B_R^n(x_0)} (1 + \Lambda_k) \cdot f \inparen{\frac{x + \Lambda_k \grad \fconj(q_k)}{1 + \Lambda_k}}
    &= \epsargmin{\epsilon_k}{x \in B_R^n(x_0)} (1 + \Lambda_k) \cdot f \inparen{\frac{x + \sum_{j = 0}^{k - 1} \lambda_j z_j}{1 + \Lambda_k}} \nonumber \\
    &= \epsargmin{\epsilon_k / (1 + \Lambda_k)}{x \in B_R^n(x_0)}  f \inparen{\frac{x + \sum_{j = 0}^{k - 1} \lambda_j z_j}{1 + \Lambda_k}}. \label{eq:transformed-set-for-x_k}
\end{align}
Next, rearranging \eqref{eq:x_k-expr}, note
\begin{align}
    \label{eq:z_k-in-terms-of-x_k}
    z_k = \frac{x_k + \sum_{j = 0}^{k - 1} \lambda_j z_j}{1 + \Lambda_k}.
\end{align}
The image of $B^n_R(x_0)$ under the map $x \mapsto \frac{x + \sum_{j = 0}^{k - 1} \lambda_j z_j}{1 + \Lambda_k}$ is precisely $B^n_{R / (1 + \Lambda_k)}\inparen{\frac{x_0 + \sum_{j = 0}^{k - 1} \lambda_j z_j}{1 + \Lambda_k}}$. This, combined with the fact that $z_0 = x_0$ and the expression \eqref{eq:z_k-in-terms-of-x_k} for $z_k$, implies $x_k$ lies in the set \eqref{eq:transformed-set-for-x_k} if and only if\footnote{Here, we are using the fact that for any $\epsilon > 0$, functions $f : \R^n \to \R$, $g : \R^n \to \R^n$, and set $U \subseteq \R^n$, it is the case that a point $x \in U$ satisfies $f(g(x)) \le f(g(x')) + \epsilon$ for all $x' \in U$ if and only if $z \defeq g(x) \in g(U)$ satisfies $f(z) \le f(z') + \epsilon$ for all $z' \in g(U)$.} 
\begin{align*}
    z_k \in  \epsargmin{\epsilon_k / (1 + \Lambda_k)}{z \in B^n_{R / (1 + \Lambda_k)}\inparen{\frac{z_0 + \sum_{j = 0}^{k - 1} \lambda_j z_j}{1 + \Lambda_k}}} f(z),
\end{align*}
which is exactly Line~\ref{algline:CGM} in Algorithm~\ref{alg:critical-point-algorithm}. Then we have proven the expression \eqref{eq:x_k-expr}, and to conclude, \eqref{eq:z_k-in-terms-of-x_k} and Corollary~\ref{cor:oracles-for-convex-critical} imply $y_k = \grad f(z_k)$. 
\end{proof}

Having connected Algorithm~\ref{alg:critical-point-algorithm} to Algorithm~\ref{alg:dual-extraction-framework}, we can apply the schedules given in Corollary~\ref{cor:example-schedules-no-log} to show that Algorithm~\ref{alg:critical-point-algorithm} returns a $\gamma$-critical point of $h$ with an optimal complexity.

\begin{theorem}[Guarantee for Algorithm~\ref{alg:critical-point-algorithm}]
    \label{thm:convex-critical-main-guarantee}
For any\footnote{The restriction on $\gamma$ is without loss of generality since $\norm{\grad h(x_0)} \le \sqrt{2 \beta \gap}$ by smoothness. We add it because it simplifies the analysis.} $\gamma \in (0, \sqrt{2 \beta \gap})$ and with $K = O(\log (\beta \Delta / \gamma))$,
the output of Algorithm~\ref{alg:critical-point-algorithm} with schedules given by
\begin{align}
    \label{eq:schedules-for-convex-crit}
    \lambda_k = \frac{2^k}{32} \text{ for $k \in \inbraces{0} \cup [K - 1]$}
    \text{ and }
    \epsp_k = \frac{\gap}{64 \cdot 1.5^k} \text{ for $k \in [K]$}
\end{align}
satisfies $\norm{\grad h(z_K)} \le \gamma$, and the algorithm makes at most $O \inparen{ \frac{\sqrt{\beta \gap}}{\gamma} }$ gradient queries to $h$.
\end{theorem}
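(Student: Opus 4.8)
The strategy is to invoke Lemma~\ref{lem:connecting-convex-crit-to-framework} to view Algorithm~\ref{alg:critical-point-algorithm} as an instance of Algorithm~\ref{alg:dual-extraction-framework} applied to the Fenchel-game setup of Section~\ref{subsec:critical-point-extraction-framework}, then feed this into the guarantee of Corollary~\ref{cor:example-schedules-no-log} (whose schedules exactly match those in \eqref{eq:schedules-for-convex-crit} up to the bookkeeping constants), and finally convert the resulting distance bound into a gradient bound via Lemma~\ref{lem:convex-crit-bounding-norm-dual-point}. The complexity claim is separate and handled by summing the per-iteration cost of the $\CGM$ oracle over the $K = O(\log(\beta\Delta/\gamma))$ iterations.

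\textbf{Correctness.} First I would fix $y_0 \defeq \grad f(x_0)$ and note, by Corollary~\ref{cor:prop-of-reg-func-f}(1) and the choice $R = 5\Delta/\gamma$, that $\fopt \in \xset = B_R^n(x_0)$, so $\fopt = \xopt$ and the minimax principle applies to the Fenchel game over $\xset \times \R^n$. I would also bound $B \defeq \breg{}{y_0}{\yopt}$: since the dgf is $\fconj$ and $\breg{\fconj}{y_0}{\yopt} = V^{\fconj}_{\grad f(x_0)}(\yopt)$, one can use the standard identity relating the Bregman divergence of $\fconj$ to that of $f$ (i.e.\ $V^{\fconj}_{\grad f(u)}(\grad f(v)) = V^{f}_{v}(u)$ by Lemma~\ref{lem:subdiff-properties-of-conjugate}) together with $\beta' \defeq \beta + \tfrac{\gamma^2}{8\Delta}$-smoothness of $f$ to get $B \le \tfrac{\beta'}{2}\norm{x_0 - \xopt}^2 \le \tfrac{\beta'}{2}(4\Delta/\gamma)^2 = O(\beta\Delta^2/\gamma^2)$ (using $\gamma^2 \le 2\beta\Delta$ so $\beta' = O(\beta)$). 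Then I would apply Corollary~\ref{cor:example-schedules-no-log} with $\epsilon \defeq \Delta/8$, $\mu_r \gets 1$, and this value of $B$: matching the schedules, $\lambda_k = 2^k \tfrac{\epsilon}{4B}$ and $\epsilon_k = \tfrac{\epsilon}{8\cdot 1.5^k}$; up to rescaling constants these are exactly \eqref{eq:schedules-for-convex-crit} when $B$ and $\epsilon$ are plugged in (this is the one place I would need to check the constants line up, possibly by instead directly invoking Theorem~\ref{thm:main-framework-guarantee} rather than the corollary). Corollary~\ref{cor:example-schedules-no-log} then furnishes a point $u \in \R^n$ with $\phi(\yopt) - \phi(u) \le \epsilon = \Delta/8$ and $\norm{y_K - u} \le 1.5^{-K}\sqrt{2B}$. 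By Lemma~\ref{lem:convex-crit-bounding-norm-dual-point}, $\norm{u} \le \epsilon\gamma/\Delta = \gamma/8$. By the triangle inequality $\norm{y_K} \le \gamma/8 + 1.5^{-K}\sqrt{2B}$, and choosing $K = O(\log(\beta\Delta/\gamma))$ large enough makes the second term $\le \gamma/8$, so $\norm{y_K} \le \gamma/4$. Finally, Lemma~\ref{lem:connecting-convex-crit-to-framework} gives $y_K = \grad f(z_K)$, so $\norm{\grad f(z_K)} \le \gamma/4$, and Corollary~\ref{cor:prop-of-reg-func-f}(2) yields $\norm{\grad h(z_K)} \le \gamma$.

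\textbf{Complexity.} Each iteration's only cost is the single $\CGM$ oracle call in Line~\ref{algline:CGM} with radius $\zeta_k = R/(1+\Lambda_k)$ and target error $\epsilon_k/(1+\Lambda_k)$, costing $O\!\bigl(1 + \sqrt{\beta \zeta_k^2 (1+\Lambda_k)/\epsilon_k}\bigr)$ gradient queries by Definition~\ref{def:CGM}. With $\Lambda_k = \Theta(2^k)$ and $\epsilon_k = \Theta(\Delta/1.5^k)$, we have $\beta\zeta_k^2(1+\Lambda_k)/\epsilon_k = \Theta\!\bigl(\beta R^2 /((1+\Lambda_k)\epsilon_k)\bigr) = \Theta\!\bigl(\beta R^2 \cdot 1.5^k/(2^k\Delta)\bigr) = \Theta\!\bigl(\tfrac{\beta R^2}{\Delta}(3/4)^k\bigr)$, which decays geometrically in $k$; since $R^2 = 25\Delta^2/\gamma^2$, the $k=1$ term is $\Theta(\beta\Delta/\gamma^2)$ so each query count is $O(\sqrt{\beta\Delta}/\gamma \cdot (\sqrt{3}/2)^k) + O(1)$. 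Summing the geometric series over $k \in [K]$ gives total query count $O(\sqrt{\beta\Delta}/\gamma) + O(K) = O(\sqrt{\beta\Delta}/\gamma)$, where the $O(K) = O(\log(\beta\Delta/\gamma))$ additive term from the ``$1+$'' in Definition~\ref{def:CGM} is dominated by $O(\sqrt{\beta\Delta}/\gamma)$ since $\gamma < \sqrt{2\beta\Delta}$ forces $\sqrt{\beta\Delta}/\gamma = \Omega(1)$ and in fact is easily seen to dominate a logarithm. Since each gradient query to $f$ is one gradient query to $h$, this proves the complexity bound.

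\textbf{Main obstacle.} The delicate part is reconciling the constants: Algorithm~\ref{alg:critical-point-algorithm}'s schedules \eqref{eq:schedules-for-convex-crit} are stated with absolute constants ($\lambda_k = 2^k/32$, $\epsilon_k = \Delta/(64\cdot 1.5^k)$), whereas Corollary~\ref{cor:example-schedules-no-log} is parametrized by $B$ and $\epsilon$; I need to verify that the specific choice $B = \ln$-free bound of order $\beta\Delta^2/\gamma^2$ (or whatever explicit value Lemma~\ref{lem:general-reg-func-properties} provides) together with $\epsilon = \Delta/8$ reproduces \eqref{eq:schedules-for-convex-crit} — or, more robustly, bypass the corollary and apply Theorem~\ref{thm:main-framework-guarantee} directly, tracking $\Lambda_k = \sum_{j<k}\lambda_j$, the suboptimality bound $\lambda_0 B + \sum_k \tfrac{\lambda_k}{\Lambda_k}\epsilon_k$, and the distance bound $\sqrt{\epsilon_K/\Lambda_K}$ by hand. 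The geometric collapse of the $\CGM$ cost is the conceptual crux (this is the point of Remark~\ref{rem:example-schedules-no-log}), but it is a routine geometric-sum estimate once the schedules are pinned down.
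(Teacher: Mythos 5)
Your overall architecture is the paper's: connect Algorithm~\ref{alg:critical-point-algorithm} to Algorithm~\ref{alg:dual-extraction-framework} via Lemma~\ref{lem:connecting-convex-crit-to-framework}, invoke Corollary~\ref{cor:example-schedules-no-log}, bound $\norm{u}$ via Lemma~\ref{lem:convex-crit-bounding-norm-dual-point}, conclude with $y_K = \grad f(z_K)$ and Corollary~\ref{cor:prop-of-reg-func-f}, and the complexity half (geometric collapse of the $\CGM$ cost, with the additive $O(K)$ absorbed) matches the paper essentially verbatim. However, there is a genuine gap in the correctness half, precisely at the point you flagged as the ``main obstacle'': your bound on $B \defeq \breg{\fconj}{y_0}{\yopt}$. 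Bounding $\breg{f}{\fopt}{x_0}$ by smoothness as $\tfrac{\beta'}{2}\norm{x_0-\fopt}^2 = O(\beta\gap^2/\gamma^2)$ is valid but too weak by a factor of $\beta\gap/\gamma^2$ (which is $\ge 1/2$ and can be arbitrarily large). With this $B$, the schedules of Corollary~\ref{cor:example-schedules-no-log} with $\epsilon = \gap/8$ become $\lambda_k = \Theta(2^k \gamma^2/(\beta\gap))$, which does \emph{not} match the fixed schedule $\lambda_k = 2^k/32$ of \eqref{eq:schedules-for-convex-crit} up to constants, so the corollary cannot be invoked as you propose; and your fallback of applying Theorem~\ref{thm:main-framework-guarantee} directly with the stated schedules also fails, since the term $\lambda_0 B = \Theta(\beta\gap^2/\gamma^2)$ then only certifies that $u$ is $\Theta(\beta\gap^2/\gamma^2)$-suboptimal, whence Lemma~\ref{lem:convex-crit-bounding-norm-dual-point} gives $\norm{u} = O(\beta\gap/\gamma)$ rather than the needed $\gamma/8$.

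The missing idea is that $B$ can be evaluated essentially exactly: by Lemma~\ref{lem:convex-crit-bounding-norm-dual-point} with $\epsilon \to 0$ one has $\yopt = 0$, so $\grad\fconj(\yopt) = \fopt$ and $\grad\fconj(y_0) = x_0$, and hence $\breg{\fconj}{y_0}{\yopt} = \breg{f}{\fopt}{x_0} = f(x_0) - f(\fopt)$, since the linear term vanishes because $\grad f(\fopt) = 0$. Then $f(x_0) = h(x_0)$ and $f(\fopt) \ge h(\fopt) \ge \inf_x h(x)$ give $B \le \gap$, and with $B \gets \gap$, $\epsilon \gets \gap/8$ the schedules of Corollary~\ref{cor:example-schedules-no-log} reproduce \eqref{eq:schedules-for-convex-crit} exactly, which is the point of defining $f$ by regularizing $h$ in the first place. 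A secondary slip: you set $\mu_r \gets 1$, but the dgf here is $\fconj$, which is only $\mufconj = (\beta + \tfrac{\gamma^2}{8\gap})^{-1}$-strongly convex ($\ge 1/(2\beta)$ under $\gamma < \sqrt{2\beta\gap}$); the distance bound must therefore read $\norm{y_K - u} \le 1.5^{-K}\sqrt{2B/\mufconj} \le 1.5^{-K}\cdot 2\sqrt{\beta\gap}$, which is what forces $K = O(\log(\beta\gap/\gamma))$. This does not change the shape of your argument, but as written the $\mu_r = 1$ step is incorrect.
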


\begin{proof}
    We first prove $\norm{\grad h(z_k)} \le \gamma$ and then bound the total complexity.

    \paragraph{Correctness.} By Corollary~\ref{cor:prop-of-reg-func-f}, it suffices to show $\norm{\grad f(z_K)} \le \gamma / 4$. Toward this goal, let $(q_k)_{k = 1}^K$, $(x_k)_{k = 1}^K$, and $(y_k)_{k = 0}^K$ be defined as in Lemma~\ref{lem:connecting-convex-crit-to-framework} as a function of the iterates $(z_k)_{k = 0}^K$ produced by Algorithm~\ref{alg:critical-point-algorithm}. Then the former are valid sequences of iterates for Algorithm~\ref{alg:dual-extraction-framework} run with input as in Lemma~\ref{lem:connecting-convex-crit-to-framework}, so they satisfy the guarantee of Corollary~\ref{cor:example-schedules-no-log}. Indeed, note that \eqref{eq:schedules-for-convex-crit} is precisely \eqref{eq:schedules-no-log} with $B \gets \gap$ and $\epsilon \gets \gap / 8$. 

    We claim $B \gets \gap$ is indeed a valid choice with respect to the input to Algorithm~\ref{alg:dual-extraction-framework} specified in Lemma~\ref{lem:connecting-convex-crit-to-framework}, namely, $\breg{f^*}{y_0}{\yopt} \le \Delta$. To see this, note that Lemma~\ref{lem:convex-crit-bounding-norm-dual-point} implies $\yopt = 0$ by sending $\epsilon \to 0$. 
    Then
    \begin{align*}
        \breg{\fconj}{y_0}{\yopt} = \breg{f}{\grad \fconj(\yopt)}{\grad \fconj(y_0)} = \breg{f}{\grad \fconj(\grad f(\fopt))}{\grad \fconj(\grad f(x_0))} = \breg{f}{\fopt}{x_0} &= f(x_0) - f(\fopt) \\
        &= h(x_0) - f(\fopt) \\
        &\le \gap 
    \end{align*}
    since $f(\fopt) \ge \inf_{x \in \R^n} h(x)$ as $f \ge h$ pointwise.

    Then the guarantee of Corollary~\ref{cor:example-schedules-no-log} for the parallel sequences of iterates $(q_k)_{k = 1}^K, (x_k)_{k = 1}^K, (y_k)_{k = 0}^K$ with $B \gets \gap$, $\epsilon \gets \gap / 8$ and resulting choice of schedules \eqref{eq:schedules-for-convex-crit} is 
    \begin{align*}
        \norm{y_K - u} \le \frac{1}{1.5^K} \sqrt{\frac{2 \gap}{\mufconj}},
    \end{align*}
    where $\mufconj \defeq \inparen{\beta + \frac{\gamma^2}{8 \gap}}^{-1} \ge 1 / (2 \beta)$ (recall $\gamma < \sqrt{2 \beta \Delta}$ by assumption) is the strong convexity constant of $\fconj$ \cite[Thm. 6.11]{orabona2023modern}, and $u \in \R^n$ is a point which is $(\gap / 8)$-optimal for the dual \eqref{eq:dual}. Then Lemma~\ref{lem:convex-crit-bounding-norm-dual-point} yields $\norm{u} \le \gamma / 8$, so a triangle inequality implies
    \begin{align*}
        \norm{y_K} \le  \frac{1}{1.5^K} \sqrt{\frac{2 \gap}{\mufconj}} + \frac{\gamma}{8} \le 
        \frac{1}{1.5^K} \cdot 2 \sqrt{\beta \Delta} + \frac{\gamma}{8}  
        \le \gamma / 4
    \end{align*}
    by the choice of $K$. Finally, $y_K = \grad f(z_K)$ by definition.

    \paragraph{Complexity.}     
    Note that we can clearly obtain the gradient of $f$ at a point via a single gradient query to $h$, so we will not bother distinguishing between the two. For $k \in [K]$, let $\T_k$ denote the number of gradient queries made during the $k$-th iteration of Algorithm~\ref{alg:critical-point-algorithm}. Then from Definition~\ref{def:CGM} and Line~\ref{algline:CGM} of Algorithm~\ref{alg:critical-point-algorithm}, clearly
    \begin{align*}
        \T_k = O \inparen{
            1 + \sqrt{\beta \cdot \frac{1 + \Lambda_k}{\epsilon_k} \cdot \frac{R^2}{(1 + \Lambda_k)^2}}
        } = O \inparen{1 + \sqrt{\frac{\beta R^2}{\epsilon_k(1 + \Lambda_k)}}}
    \end{align*}
    Note that
    \begin{align*}
        &\Lambda_k = \sum_{j = 0}^{k - 1} \lambda_j = \frac{1}{32} \sum_{j = 0}^{k - 1} 2^j = \frac{1}{32} (2^k - 1), \\
        \implies & \epsilon_k (1 + \Lambda_k) = \frac{\gap}{64 \cdot 1.5^k}  \cdot \inparen{ \frac{1}{32} (2^k - 1) + 1}  \ge C  \cdot \gap \cdot (4/3)^k
    \end{align*}
    for a universal constant $C$.
    Then recalling $R = 5 \Delta / \gamma$ by definition, we can bound the total complexity as
    \begin{align*}
        \sum_{k = 1}^K \T_k \le \sum_{k = 1}^K O \inparen{1 + \sqrt{\frac{\beta \Delta}{\gamma^2}} \cdot (3/4)^{k / 2}} = O \inparen{\frac{\sqrt{\beta \Delta}}{\gamma} + K} = O \inparen{ \frac{\sqrt{\beta \Delta}}{\gamma} }.
    \end{align*}

\end{proof}

 \section*{Acknowledgments}
 YC acknowledges support from the Israeli Science Foundation (ISF) grant no. 2486/21, and the Alon Fellowship. LO acknowledges support from NSF Grant CCF-1955039. AS acknowledges support from a Microsoft Research Faculty Fellowship, NSF CAREER Grant CCF-1844855, NSF Grant CCF-1955039, and a PayPal research award.

\bibliographystyle{abbrvnat}

\newpage
\appendix

\section{Deferred proofs from Section \ref{sec:dual-extraction-framework}}
\label{app:deferred-main-framework-proofs}

In this section, we give additional lemmas referenced in Section \ref{sec:dual-extraction-framework}. In Section \ref{subapp:verify-simplex-setting}, we verify that the simplex setting of Definition~\ref{ex:simplex} satisfies Assumption~\ref{as:technical-dual} in Definition~\ref{def:dual-extraction-setup}. In Section \ref{subapp:equiv-expression-q_k}, we verify the equivalent expression for $q_k$ given in Line~\ref{algline:q_k} of Algorithm~\ref{alg:dual-extraction-framework}. 
In Section \ref{subapp:supporting-lemmas-well-defined}, we prove that Algorithm~\ref{alg:dual-extraction-framework} is well-defined.
In Section \ref{subapp:boosting-DRPOSP}, we show how to boost the success probability of a $\DRPOSP$ (Definition~\ref{def:DRPOSP}).

\subsection{Verifying the assumptions for the simplex setting (Definition~\ref{ex:simplex})}
\label{subapp:verify-simplex-setting}

In this section, we verify that the simplex setting of Definition~\ref{ex:simplex} satisfies Assumption~\ref{as:technical-dual} in Definition~\ref{def:dual-extraction-setup}. To begin, fix any $p \in \uset \cap \interior \pset = \simplex^n \cap \R_{>0}$. Letting $\ones \in \R^n$ denotes the vector consisting of all ones, we claim
\begin{align*}
    \partial \indc_\uset (p) = \inbraces{c \cdot \ones : c \in \R},
\end{align*}
which clearly suffices. The fact that $\inbraces{c \cdot \ones : c \in \R} \subseteq \partial \indc_\uset (p)$ is trivial. As for the other direction, fix any $z \notin \inbraces{c \cdot \ones : c \in \R}$, and let $j^\star \in \argmin_{j \in [n]} z_j$ denote some index where $z$ is smallest. Letting $e_{j^\star}$ denote the vector with a one in its $j^\star$-th index and zeros elsewhere, the fact that $\inangle{z, e_{j^\star} - p} < 0$ implies $z \notin \partial \indc_\uset (p)$.

\subsection{Equivalent expression for $q_k$ in Algorithm~\ref{alg:dual-extraction-framework}}
\label{subapp:equiv-expression-q_k}

The following lemma shows that the two expressions for $q_k$ in Line~\ref{algline:q_k} of Algorithm~\ref{alg:dual-extraction-framework} are equivalent.

\begin{lemma}[Equivalent expression for $q_k$]
With the notation of Line~\ref{algline:q_k} in Algorithm~\ref{algline:q_k}, we have
\begin{align*}
    \grad \rconjuset \inparen{\frac{1}{\Lambda_k} \sum_{j = 0}^{k - 1} \lambda_j \grad r(y_j)} = \argmin_{q \in \uset}\frac{1}{\Lambda_k} \sum_{j = 0}^{k - 1} \lambda_j \breg{}{y_j}{q}.
\end{align*}
\end{lemma}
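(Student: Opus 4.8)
The plan is to unfold both sides using the definition of the Bregman divergence and the Fenchel conjugate, and show that the first-order optimality conditions for the two optimization problems coincide. First I would write out the objective on the right-hand side: for $q \in \uset \cap \interior \pset$,
\begin{align*}
    \frac{1}{\Lambda_k} \sum_{j = 0}^{k - 1} \lambda_j \breg{}{y_j}{q} = r(q) - \inangle{\frac{1}{\Lambda_k} \sum_{j = 0}^{k - 1} \lambda_j \grad r(y_j), q} + C,
\end{align*}
where $C$ collects the terms $\frac{1}{\Lambda_k}\sum_j \lambda_j(\inangle{\grad r(y_j), y_j} - r(y_j))$ that do not depend on $q$ (this is the ``straightforward algebra'' already invoked in the proof of Lemma~\ref{lem:connecting-to-overview}). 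Thus the minimizer $q_k$ over $\uset$ is unaffected by $C$ and equals $\argmin_{q \in \uset}\inbraces{r(q) - \inangle{\theta_k, q}}$ where $\theta_k \defeq \frac{1}{\Lambda_k}\sum_{j=0}^{k-1}\lambda_j \grad r(y_j)$, i.e.\ (extending $r$ by $\indc_\uset$ outside $\uset$) $q_k = \argmax_{q}\inbraces{\inangle{\theta_k, q} - r_\uset(q)}$.

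Next I would invoke the standard convex-analytic fact (collected in Appendix~\ref{app:convex-analysis-facts}; cf. Lemma~\ref{lem:subdiff-properties-of-conjugate}) that for a closed proper convex function whose conjugate is differentiable, the maximizer in the definition of the conjugate is attained at the gradient of the conjugate: that is, $\argmax_q \inbraces{\inangle{\theta, q} - r_\uset(q)} = \grad \rconjuset(\theta)$ whenever $\rconjuset$ is differentiable at $\theta$. Here $\rconjuset$ denotes the conjugate of the restriction $r_\uset = r + \indc_\uset$, consistent with the notation $\rconjuset$ and the convention for $f^*_S$ fixed in Section~\ref{sec:notation-and-assumptions}. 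Applying this with $\theta = \theta_k$ gives $q_k = \grad \rconjuset(\theta_k)$, which is exactly the claimed alternate expression. To make this rigorous I need to check the hypotheses of the conjugate-gradient fact, namely that $\rconjuset$ is differentiable at $\theta_k$ (equivalently, that the maximizer above is unique); this follows from $\mu_r$-strong convexity of $r$ on $\uset \cap \interior \pset$ together with the dgf-setup assumptions (Definition~\ref{def:dgf-setup}), which also guarantee the minimizer lies in $\uset \cap \interior \pset$ so that everything is well-defined — this is precisely the content of Corollary~\ref{cor:main-framework-well-defined}, which I would cite to conclude $q_k \in \uset \cap \interior \pset$ and hence that $r$ is differentiable there.

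The main obstacle is bookkeeping around the restriction/indicator: one must be careful that $\rconjuset$ means the conjugate of $r+\indc_\uset$ rather than something like $\fconj$ restricted to $\uset$, and that the ``gradient of the conjugate equals the argmax'' identity is being applied to $r_\uset$ (whose domain of differentiability of the conjugate may be a proper subset of $\R^n$), not to $r$ itself. Once the notation is pinned down and the well-definedness is imported from Corollary~\ref{cor:main-framework-well-defined}, the argument is a one-line application of the conjugate subgradient correspondence. I expect no genuine difficulty beyond this; the proof is essentially the ``dual'' of the optimality-condition computation in Lemma~\ref{lem:connecting-to-overview}.
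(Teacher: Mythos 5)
Your proposal is correct and takes essentially the same route as the paper's proof: both reduce the constrained problem to the statement that $q_k$ maximizes $\inangle{\theta_k, q} - r_{\uset}(q)$ (equivalently $\theta_k \in \partial r_{\uset}(q_k)$), apply the conjugate subgradient correspondence of Lemma~\ref{lem:subdiff-properties-of-conjugate}, and rely on Corollary~\ref{cor:main-framework-well-defined} for well-definedness. The only cosmetic difference is the final step: the paper gets differentiability of $\rconjuset$ everywhere from smoothness of the conjugate of the strongly convex $r_{\uset}$ (via \cite[Thm. 6.11]{orabona2023modern} and Lemma~\ref{lem:connect-subdiff-grad}), whereas you argue differentiability at the specific point $\theta_k$ through uniqueness of the maximizer, which is equally valid.
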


\begin{proof}
A rearrangement of Equation~\ref{eq:q_k-opt-condition} from the proof of Lemma~\ref{lem:connecting-to-overview} implies
\begin{align*}
    \frac{1}{\Lambda_k} \sum_{j = 0}^{k - 1} \lambda_j \grad r(y_j)  \in \grad r(q_k) + \partial \indc_{\uset}(q_k) = \partial r_{\uset}(q_k),
\end{align*}
where we again applied Lemma~\ref{lem:subdiff-sum-of-funcs}. Then Lemma~\ref{lem:subdiff-properties-of-conjugate} implies
\begin{align*}
    q_k \in \partial \rconjuset \inparen{\frac{1}{\Lambda_k} \sum_{j = 0}^{k - 1} \lambda_j \grad r(y_j)}.
\end{align*}
To conclude, the strong convexity of $r_{\uset}$ implies the conjugate $\rconjuset$ is smooth (in the sense that its gradient is Lipschitz) over all of $\R^n$ by \cite[Thm. 6.11]{orabona2023modern} (see also \cite[Thm. 6]{kakade2009duality}). In particular, $\rconjuset$ is differentiable, so we conclude $\partial \rconjuset = \inbraces{\grad \rconjuset}$ by Lemma~\ref{lem:connect-subdiff-grad}.
\end{proof}

\subsection{Algorithm~\ref{alg:dual-extraction-framework} is well-defined}
\label{subapp:supporting-lemmas-well-defined}

In this section, we show that Algorithm~\ref{alg:dual-extraction-framework} is well-defined in Corollary~\ref{cor:main-framework-well-defined}; namely, whenever a Bregman divergence of the form $\breg{}{u}{w}$ is written in the pseudocode of Algorithm~\ref{alg:dual-extraction-framework}, it is the case that $u \in \uset \cap \interior \pset$. But first, in the following lemma, we argue that a function which is $\mu$-strongly convex relative to the dgf $r$ cannot be minimized on the boundary of $\pset$: 

\begin{lemma}[Minimizer is in the interior]
    \label{lem:min-in-int}
    Fix a $(\uset, \pset, \norm{\cdot}, r)$-dgf setup, and let $g : \pset \to \R$ be continuous on $\pset$ and differentiable on $\interior \pset$. For nonempty, closed, convex $\sset \subseteq \uset$ such that $\sset \cap \interior \pset \ne \emptyset$, suppose $g$ is $\mu$-strongly convex relative to the dgf $r$ over $\sset \cap \interior \pset$ for some $\mu > 0$. Then $\argmin_{q \in \sset} g(q) \in \sset \cap \interior \pset$.
\end{lemma}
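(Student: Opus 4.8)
The plan is to argue by contradiction: suppose a minimizer $q^\star \in \argmin_{q \in \sset} g(q)$ (which exists by compactness of $\sset$ together with continuity of $g$, using that $\sset \subseteq \uset$ is closed and — via the dgf-setup assumptions — effectively bounded or that we can restrict to a sublevel set) lies on $\bd \pset$, i.e., $q^\star \notin \interior \pset$. I would then pick a reference point $p_0 \in \sset \cap \interior \pset$, which is nonempty by hypothesis, and look at the segment $q_t \defeq (1-t) q^\star + t p_0$ for $t \in (0,1]$. By convexity of $\sset$ and of $\pset$, and by the standard fact that the segment connecting a boundary point of a convex set to an interior point lies (except possibly at the endpoint $t=0$) in the interior, we get $q_t \in \sset \cap \interior \pset$ for all $t \in (0,1]$. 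So $g$ is differentiable at each $q_t$ and relative strong convexity applies along this segment.

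The heart of the argument is to show the directional derivative of $g$ at $q^\star$ pointing into the interior is $-\infty$, contradicting minimality (since then $g(q_t) < g(q^\star)$ for small $t>0$). First I would handle the case $\uset \subseteq \interior \pset$ separately: there $\sset \subseteq \uset \subseteq \interior \pset$, so $q^\star \in \interior \pset$ automatically and there is nothing to prove. Thus we may assume the other alternative in Definition~\ref{def:dgf-setup}(ii), namely $\lim_{u \to \bd \pset} \norm{\grad r(u)}_2 = \infty$. Now use the relative strong convexity inequality at the pair $(q_t, p_0)$ — or more conveniently, combine the two inequalities obtained from relative strong convexity in both directions along the segment — to bound $g(q_t) - g(q^\star)$ above by (a constant times) $r(q_t) - r(q^\star) + \langle \text{something}, \cdot\rangle$ plus a term controlled by $\mu \cdot (\text{Bregman divergence of } r)$. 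The cleanest route: relative strong convexity gives, for $u = q_t$, $v = p_0$ (both in $\sset \cap \interior \pset$),
\begin{align*}
    g(p_0) - g(q_t) - \langle \grad g(q_t), p_0 - q_t \rangle \ge \mu \big( r(p_0) - r(q_t) - \langle \grad r(q_t), p_0 - q_t\rangle \big) \ge 0,
\end{align*}
and rearranging, $\langle \grad g(q_t), p_0 - q_t\rangle \le g(p_0) - g(q_t)$. Since $q^\star$ is a minimizer over $\sset$ and $q_t \in \sset$, we have $g(q_t) \ge g(q^\star)$, so the right side is $\le g(p_0) - g(q^\star) =: C$, a finite constant independent of $t$. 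On the other hand, $p_0 - q_t = (1-t)(p_0 - q^\star)$, so $\langle \grad g(q_t), p_0 - q^\star\rangle \le C/(1-t) \le 2C$ for $t \le 1/2$. This means the inner product $\langle \grad g(q_t), p_0 - q^\star\rangle$ stays bounded above as $t \downarrow 0$ — which does \emph{not} immediately contradict anything about $g$, but it will once we compare with what relative strong convexity forces on the $r$ side.

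The real contradiction comes from pushing the relative-strong-convexity inequality the other way: writing it with $u = p_0$, $v = q_t$ gives $g(q_t) - g(p_0) - \langle \grad g(p_0), q_t - p_0 \rangle \ge \mu \,\breg{r}{p_0}{q_t}$, so
\begin{align*}
    \mu\, \breg{r}{p_0}{q_t} \le g(q_t) - g(p_0) - \langle \grad g(p_0), q_t - p_0\rangle \le g(q^\star) - g(p_0) + \| \grad g(p_0)\|_2 \| q_t - p_0 \|_2,
\end{align*}
using again $g(q_t) \le g(q^\star)$ (wait — here we need $g(q_t) \le g(q^\star)$, which holds since $q^\star$ minimizes and $g(q_t)\ge g(q^\star)$... so actually $g(q_t) - g(p_0) \le g(q^\star) - g(p_0)$ requires $g(q_t) \le g(q^\star)$, the \emph{wrong} direction). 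I would instead bound $g(q_t)$ above using that $g$ is continuous on $\pset$ and $q_t \to q^\star$, so $g(q_t) \to g(q^\star)$ and in particular $\sup_{t \in (0,1/2]} g(q_t) < \infty$; call this bound $C'$. Then $\mu\, \breg{r}{p_0}{q_t} \le C' - g(p_0) + \|\grad g(p_0)\|_2 \cdot \mathrm{diam}(\sset) =: C''$, a finite constant uniform in $t$. But $\breg{r}{p_0}{q_t} = r(q_t) - r(p_0) - \langle \grad r(p_0), q_t - p_0\rangle$, and as $t \downarrow 0$, $q_t \to q^\star \in \bd \pset$; I would argue this forces $\breg{r}{p_0}{q_t} \to \infty$, contradicting the uniform bound $C''$.

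The main obstacle, therefore, is establishing $\breg{r}{p_0}{q_t} \to \infty$ as $q_t \to q^\star \in \bd\pset$ purely from $\lim_{u \to \bd\pset}\|\grad r(u)\|_2 = \infty$ together with convexity/continuity of $r$. This is a standard "essential smoothness / steepness" fact (it is exactly the content that makes the dgf setup suitable for mirror descent), and I would prove it as follows: by convexity, $\breg{r}{p_0}{q_t} \ge \langle \grad r(q_t) - \grad r(p_0), q_t - p_0\rangle$ along the interior segment — no wait, the clean monotonicity statement is $\breg{r}{p_0}{q_t} + \breg{r}{q_t}{p_0} = \langle \grad r(q_t) - \grad r(p_0), q_t - p_0\rangle$, and $\breg{r}{p_0}{q_t} \ge \mu_r \cdot$ nothing here. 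Instead: parametrize $\rho(t) \defeq r(q_t)$; then $\rho$ is convex in $t$, differentiable on $(0,1]$, with $\rho'(t) = \langle \grad r(q_t), p_0 - q^\star\rangle$. Since $q_t \to q^\star \in \bd\pset$ and $\|\grad r(q_t)\|_2 \to \infty$, and the direction $p_0 - q^\star$ points from the boundary toward the interior, steepness (Rockafellar, Thm.~26.1, or \cite[Sec.~6.4]{orabona2023modern}) gives $\rho'(t) \to -\infty$ as $t \downarrow 0$; hence $\rho(t) - \rho(s) = \int_s^t \rho' \to +\infty$ as $s\downarrow 0$ for fixed $t$, i.e., $r(q_t)$ is unbounded... this shows $r(q_s) \to \infty$ which gives $\breg{r}{p_0}{q_s} \to\infty$ since the linear correction term $\langle\grad r(p_0), q_s - p_0\rangle$ stays bounded. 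I would cite \cite[Thm.~6.7 or Sec.~6.4]{orabona2023modern} (or \cite[Thm.~3.12]{bauschke1997convexanalysis}, or \cite{rockafellar1970textbook}) for this steepness-implies-divergence fact rather than reproving it, and phrase the contradiction cleanly: $C'' \ge \mu\,\breg{r}{p_0}{q_t} \to \infty$, impossible. Hence $q^\star \in \interior\pset$, and since $q^\star \in \sset$, we conclude $\argmin_{q\in\sset} g(q) \subseteq \sset \cap \interior\pset$.
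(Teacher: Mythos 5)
There is a genuine gap at the heart of your argument: the claimed divergence $\breg{r}{p_0}{q_t} \to \infty$ as $q_t \to q^\star \in \bd\pset$ is false. In this divergence the gradient of $r$ is evaluated at the \emph{fixed interior} point $p_0$, so $\breg{r}{p_0}{q_t} = r(q_t) - r(p_0) - \inangle{\grad r(p_0), q_t - p_0}$, and since the dgf setup assumes $r$ is continuous (hence finite) on all of $\pset$, this converges to the finite limit $\breg{r}{p_0}{q^\star}$. The steepness condition $\lim_{u\to\bd\pset}\norm{\grad r(u)}_2 = \infty$ does \emph{not} imply $r$ itself blows up at the boundary: negative entropy on the simplex — exactly the paper's simplex setup — is bounded and continuous on $\simplex^n$ while its gradient diverges, and correspondingly the KL divergence $\breg{}{p_0}{w}$ from a strictly positive $p_0$ to any $w \in \simplex^n$ (boundary included) is finite. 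Your intermediate deduction ``$\rho'(t)\to-\infty$ hence $r(q_s)\to\infty$'' also contradicts the assumed continuity of $r$ on $\pset$; a convex function can have derivative tending to $-\infty$ at an endpoint while remaining finite there (e.g.\ $\rho(s)=-\sqrt{s}$), and the integral $\int_s^t \rho'$ then stays bounded. So the inequality you end with, $\mu\,\breg{r}{p_0}{q_t} \le C''$, is true but vacuous, and no contradiction arises. (Secondary issues: existence of the minimizer does not follow from ``compactness of $\sset$'' since $\sset$ need only be closed, and $\mathrm{diam}(\sset)$ may be infinite — though only the bounded segment matters there.)

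The fix is to anchor the gradient at the point approaching the boundary, which is what the paper does and which you actually started doing before abandoning it. Applying relative strong convexity with $u = q_t$ (near $\bd\pset$) and a second point on the ray through $p_0$, the term $\mu\inangle{\grad r(q_t), q^\star - p_0}$ appears on the right-hand side; by Rockafellar's Lemma 26.2 the steepness condition is equivalent to this inner product tending to $+\infty$, while the remaining function-value terms stay bounded by continuity on the compact segment. Hence $\inangle{\grad g(q_t), q^\star - p_0} \to +\infty$, i.e.\ the derivative of $h(t) \defeq g(q_t)$ satisfies $h'(t) \to -\infty$ as $t \downarrow 0$. The mean value theorem (using continuity of $h$ at $t=0$) then gives $g(q_\delta) < g(q^\star)$ for some small $\delta > 0$ with $q_\delta \in \sset$, contradicting minimality of $q^\star$. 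Your first displayed inequality (with gradient at $q_t$) only extracted an upper bound on $\inangle{\grad g(q_t), p_0 - q_t}$; the useful direction is the lower bound on $\inangle{\grad g(q_t), q^\star - p_0}$ coming from the divergent $\grad r(q_t)$ term.
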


\begin{proof}
Recalling Definition~\ref{def:dgf-setup}, the result is immediate if $\uset \subseteq \interior \pset$, so suppose instead \\ $\lim_{u \to \bd \pset} \norm{\grad r(u)}_2 = \infty$. Letting $z_\lambda^{(p, u)} \defeq p + \lambda(u - p)$ for $\lambda \in \R$ and $u, p \in \R^n$, the condition $\lim_{u \to \bd \pset} \norm{\grad r(u)}_2 = \infty$ is equivalent to the following by \cite[Lemma 26.2]{rockafellar1970textbook}:\footnote{Note that if $p \in \bd \pset$ and $u \in \interior \pset$, then $z_\lambda^{(p, u)} \in \interior \pset$ for $\lambda \in (0, 1]$. See, for example, \cite{keepfrog2020linesegmentinterior}.}
\begin{align}
    \label{eq:equivalent-Legendre}
    \lim_{\lambda \to 0} \inangle{\grad r(z_\lambda^{(p, u)}), p - u} = \infty, \quad \text{for all $p \in \bd \pset$ and $u \in \interior \pset$}.
\end{align}
By relative strong convexity, we have the following for all $p \in \sset \cap \bd \pset$, $u \in \sset \cap \interior \pset$, and $\lambda \in (0, 1)$ such that $z_{- \lambda}^{(u, p)} \in \sset \cap \interior \pset$:
\begin{align}
    \label{eq:well-defined-relative-strong-convexity}
    \inangle{\grad g(z_\lambda^{(p, u)}), p - u} &\ge \mu \inangle{\grad r(z_\lambda^{(p, u)}), p - u} - g(z_{- \lambda}^{(u, p)}) + g(z_\lambda^{(p, u)}) + \mu \insquare{ r(z_{- \lambda}^{(u, p)}) - r(z_\lambda^{(p, u)}) }.
\end{align}
By continuity, $r$ and $g$ are both bounded over any compact subset of $\pset$. As a result, the above implies:\footnote{If $u \in \sset \cap \interior \pset$ is such that $z_{- \lambda}^{(u, p)} \in \sset \cap \interior \pset$ for all sufficiently small $\lambda > 0$, then \eqref{eq:well-defined-result} is immediate from \eqref{eq:well-defined-relative-strong-convexity} and \eqref{eq:equivalent-Legendre}. Otherwise, \eqref{eq:well-defined-result} follows from considering \eqref{eq:well-defined-relative-strong-convexity} with the ``$u$'' in \eqref{eq:well-defined-relative-strong-convexity} chosen to be some point in the relative interior of the line segment between $p$ and $u$, and then applying \eqref{eq:equivalent-Legendre}.}
\begin{align}
    \label{eq:well-defined-result}
    \lim_{\lambda \to 0} \inangle{\grad g(z_\lambda^{(p, u)}), p - u} = \infty, \quad \text{for all $p \in \sset \cap \bd \pset$ and $u \in \sset \cap \interior \pset$}.
\end{align}
We now use ideas from the proof of \cite[Thm. 6.7]{orabona2023modern}. Letting $\qopt \defeq \argmin_{q \in \sset} g(q)$, suppose for the sake of contradiction that $\qopt \in \sset \cap \bd \pset$, and fix any $u \in \sset \cap \interior \pset$. Define the ``perturbation function'' $h : [0, 1) \to \R$ via $h(\lambda) \defeq g(z_\lambda^{(\qopt, u)})$, and note that for $\lambda \in (0, 1)$:
\begin{align*}
    h'(\lambda) = \inangle{\grad g (z_\lambda^{(\qopt, u)}), u - \qopt},
\end{align*}
in which case $\lim_{\lambda \to 0} h'(\lambda) = - \infty$. But then the fact that $h$ is continuous over $[0, 1)$ along with the mean value theorem implies $h(\delta) < h(0)$ for some $\delta \in (0, 1)$, a contradiction.
\end{proof}

Now for the main proposition:

\begin{corollary}[Framework is well-defined]
    \label{cor:main-framework-well-defined}
In the setting of Algorithm~\ref{alg:dual-extraction-framework}, we have $q_k \in \uset \cap \interior \pset$ and $y_k \in \yset \cap \interior \pset$ for all $k \in [K]$.
\end{corollary}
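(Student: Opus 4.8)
\textbf{Proof proposal for Corollary~\ref{cor:main-framework-well-defined}.}

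The plan is to prove both claims by induction on $k$, leveraging Lemma~\ref{lem:min-in-int} for each. For $q_k$: by Line~\ref{algline:q_k} of Algorithm~\ref{alg:dual-extraction-framework} (in the form used in the proof of Lemma~\ref{lem:connecting-to-overview}), $q_k$ is the minimizer over $q \in \uset$ of the function $g_k(q) \defeq \frac{1}{\Lambda_k} \sum_{j=0}^{k-1} \lambda_j \breg{}{y_j}{q} + \indc_\uset(q)$, or equivalently the minimizer over $q \in \pset$ of $\tilde g_k(q) \defeq \frac{1}{\Lambda_k}\sum_{j=0}^{k-1}\lambda_j \breg{}{y_j}{q}$ restricted to $\uset$. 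To invoke Lemma~\ref{lem:min-in-int} with $\sset \gets \uset$, I need: (i) $\tilde g_k$ is continuous on $\pset$ and differentiable on $\interior \pset$ — this holds since it is a nonnegative combination of Bregman divergences of $r$, which inherit continuity/differentiability from $r$ per Definition~\ref{def:dgf-setup}, \emph{provided} each center $y_j \in \uset \cap \interior \pset$ (so $\grad r(y_j)$ is defined); (ii) $\tilde g_k$ is $\mu$-strongly convex relative to $r$ over $\uset \cap \interior \pset$ for some $\mu > 0$ — indeed $\tilde g_k$ is $1$-strongly convex relative to $r$ since $\breg{}{y_j}{\cdot}$ differs from $r$ by an affine function and $\frac{1}{\Lambda_k}\sum_j \lambda_j = 1$; (iii) $\uset \cap \interior \pset \ne \emptyset$, which is part of Definition~\ref{def:dgf-setup}. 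Conclusion: $q_k \in \uset \cap \interior \pset$. For $y_k$: by Line~\ref{algline:y_k}, $y_k = \argmax_{y \in \yset}\{\psi(x_k,y) - \Lambda_k \breg{}{q_k}{y}\}$, i.e., $y_k$ minimizes $h_k(y) \defeq -\psi(x_k, y) + \Lambda_k \breg{}{q_k}{y}$ over $\yset$. Since $-\psi(x_k,\cdot)$ is convex and $\breg{}{q_k}{\cdot}$ is $\Lambda_k \ge \lambda_0 > 0$ times "strongly convex relative to $r$" in the appropriate sense (it differs from $\Lambda_k r$ by an affine term), $h_k$ is $\Lambda_k$-strongly convex relative to $r$ over $\yset \cap \interior \pset$; continuity/differentiability of $h_k$ on $\pset$/$\interior\pset$ follows from that of $\psi(x_k,\cdot)$ (Assumption~\ref{as:dif}) and of $\breg{}{q_k}{\cdot}$ (using $q_k \in \uset \cap \interior \pset$ just established). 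Applying Lemma~\ref{lem:min-in-int} with $\sset \gets \yset$ (using $\yset \subseteq \uset$ and $\yset \cap \interior \pset \ne \emptyset$ from Assumption~\ref{as:constraint-sets}) yields $y_k \in \yset \cap \interior \pset$.

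The induction structure is: the base of the recursion needs $y_0 \in \yset \cap \interior \pset$, which is given as an input assumption to Algorithm~\ref{alg:dual-extraction-framework}. Then for $k \ge 1$, assuming $y_0, \dots, y_{k-1} \in \uset \cap \interior \pset$ (which holds since $\yset \subseteq \uset$), we first establish $q_k \in \uset \cap \interior \pset$ via the $q_k$ argument above, and then $y_k \in \yset \cap \interior \pset \subseteq \uset \cap \interior \pset$ via the $y_k$ argument, closing the loop for the next iteration. I would write this as a clean double application of Lemma~\ref{lem:min-in-int} inside a single inductive step.

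The main obstacle I anticipate is verifying the regularity hypotheses of Lemma~\ref{lem:min-in-int} carefully — specifically, confirming that $\breg{}{y_j}{\cdot}$ and $\breg{}{q_k}{\cdot}$ are genuinely continuous on all of $\pset$ and differentiable on $\interior \pset$. The subtlety is that these divergences are built from $r$, which is only guaranteed continuous on $\pset$ and differentiable on $\interior \pset$ by Definition~\ref{def:dgf-setup}; the first-order term $\inangle{\grad r(y_j), \cdot - y_j}$ is a harmless affine function once $y_j \in \interior \pset$, so the regularity of $\breg{}{y_j}{\cdot}$ is exactly that of $r$. The relative-strong-convexity check is essentially immediate (differences by affine functions don't affect relative strong convexity, and the weights sum to $\frac{1}{\Lambda_k}\sum \lambda_j = 1$ for $q_k$, while $\breg{}{q_k}{\cdot}$ contributes $\Lambda_k$ for $y_k$ — and $-\psi(x_k,\cdot)$ being merely convex only helps). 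I would also note explicitly that $\Lambda_k = \sum_{j=0}^{k-1}\lambda_j > 0$ since each $\lambda_j \in \R_{>0}$, so the strong-convexity constant is strictly positive as required.
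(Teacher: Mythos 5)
Your proposal is correct and follows essentially the same route as the paper: an induction seeded by the input assumption $y_0 \in \yset \cap \interior \pset$, applying Lemma~\ref{lem:min-in-int} with $\sset \gets \uset$ for $q_k$ and $\sset \gets \yset$ for $y_k$ (after flipping the sign of the concave maximization), with the relative-strong-convexity and regularity hypotheses verified exactly as you describe. The paper states this more tersely, but the content is the same.
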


\begin{proof}
    Note that $y_0 \in \yset \cap \interior \pset$ by assumption. Consequently, the result  follows immediately by applying Lemma~\ref{lem:min-in-int} inductively with $\sset \gets \yset$ or $\sset \gets \uset$ as appropriate. (For the concave maximization problem in Line~\ref{algline:y_k}, one can flip the sign to obtain an equivalent convex minimization problem.)
\end{proof}

\subsection{Boosting the success probability of a $\DRPOSP$ oracle}
\label{subapp:boosting-DRPOSP}

The following lemma, stated in the context of Definition~\ref{def:DRPOSP}, boosts the success probability of a $\DRPOSP$ oracle:

\begin{lemma}[Boosting the success probability of a $\DRPOSP$ oracle]
    \label{lem:boost-DRPOSP}
With $q, \lambda, \epsprim$ as in Definition~\ref{def:DRPOSP} and given $\delta \in (0, p]$, the oracle call $\DRPOSP(q, \lambda, \epsprim, \delta)$ can be implemented with $N \defeq \ceil*{ \frac{\log_2(1 / \delta)}{\log_2(1 / p)} }$ calls to $\DRPOSP(q, \lambda, \epsprim, p)$, along with $N$ evaluations of the function $f_{\lambda, q}$. (A single evaluation means computing $f_{\lambda, q}(x)$ for a given $x \in \xset$.)
\end{lemma}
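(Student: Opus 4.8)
The plan is to use a standard probability-amplification argument: run the weaker oracle $\DRPOSP(q,\lambda,\epsprim,p)$ independently $N$ times, evaluate $f_{\lambda,q}$ at each of the $N$ returned candidate points, and return the candidate with the smallest value of $f_{\lambda,q}$. The point is that although each run individually fails (i.e., fails to be an expected $\epsprim$-minimizer) with probability up to $p$, the probability that \emph{all} $N$ runs fail is at most $p^N \le \delta$ by the choice $N = \ceil{\log_2(1/\delta)/\log_2(1/p)}$, since $p^N \le 2^{-N\log_2(1/p)} \le 2^{-\log_2(1/\delta)} = \delta$. So with probability at least $1-\delta$, at least one of the $N$ candidates is a genuine expected $\epsprim$-minimizer.

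\textbf{Key steps.} First I would set up notation: let $x^{(1)},\dots,x^{(N)}$ be the points returned by the $N$ independent calls to $\DRPOSP(q,\lambda,\epsprim,p)$, let $E_j$ be the event that the $j$-th call ``succeeds'' in the sense of Definition~\ref{def:DRPOSP} (namely $\E[f_{\lambda,q}(x^{(j)}) \mid E_j] \le \inf_{x\in\xset} f_{\lambda,q}(x) + \epsprim$; more carefully, the oracle's guarantee is that conditioned on the success event, the conditional expectation bound holds), and let $x^\star$ be the candidate among $x^{(1)},\dots,x^{(N)}$ minimizing $f_{\lambda,q}$, with ties broken arbitrarily. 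Second, I would bound $\Pr[\bigcap_j E_j^c] \le p^N \le \delta$ by independence and the choice of $N$. Third, and this is the one point requiring a little care, I would argue the expected-suboptimality bound: on the event that \emph{some} $E_j$ holds, since $f_{\lambda,q}(x^\star) \le f_{\lambda,q}(x^{(j)})$ for that $j$, the output inherits the conditional expectation bound; combined with the $1-\delta$ probability of this event, and using $\delta \le p$ together with the fact that $f_{\lambda,q}$ being bounded isn't assumed, one concludes that $x^\star$ is an expected $\epsprim$-minimizer with success probability $1-\delta$ — matching the requirements of $\DRPOSP(q,\lambda,\epsprim,\delta)$. The cost accounting is immediate: $N$ calls to $\DRPOSP(q,\lambda,\epsprim,p)$ plus $N$ evaluations of $f_{\lambda,q}$ to select the minimizer.

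\textbf{Main obstacle.} The only subtlety is the precise meaning of ``expected $\epsprim$-minimizer with success probability $1-\delta$'': one must be careful whether the expectation in Definition~\ref{def:DRPOSP} is conditional on the success event or is a combination of a high-probability guarantee with a separate expectation bound. The cleanest reading is that the oracle, conditioned on its (probability-$\ge 1-\delta$) success event, outputs a point whose conditional expectation is within $\epsprim$ of optimal; then taking $x^\star$ to be the best of the $N$ candidates, conditioning on the union event $\bigcup_j E_j$ (which has probability $\ge 1-p^N \ge 1-\delta$), and using monotonicity $f_{\lambda,q}(x^\star)\le f_{\lambda,q}(x^{(j)})$ pointwise, gives exactly the desired conditional expectation bound for $x^\star$. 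Everything else is bookkeeping, so I expect no real difficulty beyond stating this conditioning argument precisely.
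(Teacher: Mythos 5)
Your proposal is correct and matches the paper's proof essentially verbatim: the paper also runs $\DRPOSP(q,\lambda,\epsprim,p)$ independently $N$ times, returns an element of $\argmin_{i\in[N]} f_{\lambda,q}(x_i)$, bounds the failure probability by $p^N\le\delta$, and asserts the conditional expectation bound $\E[\min_i f_{\lambda,q}(x_i)\mid \cup_i \event_i]\le f_{\lambda,q}(x)+\epsprim$ via the same monotonicity reasoning. The conditioning subtlety you flag is handled in the paper at the same level of informality, so no further comparison is needed.
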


\begin{proof}
    We claim $\DRPOSP(q, \lambda, \epsprim, \delta)$ can be implemented by evaluating $\DRPOSP(q, \lambda, \epsprim, p)$ a total of $N$ times yielding outputs $x_1, \dots, x_N$, and then returning some element of $\argmin_{i \in [N]} f_{\lambda, q}(x_i)$. Indeed, letting $\event_i$ for $i \in [N]$ denote the event that the $i$-th call to $\DRPOSP(q, \lambda, \epsprim, p)$ succeeds (meaning $\E \insquare{f_{\lambda, q}(x_i) \mid \event_i} \le f_{\lambda, q}(x) + \epsprim$ for all $x \in \xset$), we have
    \begin{align*}
        \E \insquare{\min_{i \in [N]} f_{\lambda, q}(x_i) \mid \cup_{i \in [N]} \event_i}
        \le f_{\lambda, q}(x) + \epsprim
    \end{align*}
    for all $x \in \xset$. Furthermore, 
    \begin{align*}
        \P \insquare{\cup_{i \in [N]} \event_i} \ge 1 - p^{N} \ge 1 - \delta.
    \end{align*}
\end{proof} 
\section{Deferred proofs from Section \ref{sec:maximin-algorithms}}
\label{app:maximin-proofs}

In Sections \ref{subapp:deferred-matrix-games-proofs} and \ref{subapp:alpha-CVaR}, we give deferred proofs from Sections \ref{subsec:matrix-games} and \ref{subsec:alpha-CVaR} respectively.

\subsection{Deferred proofs from Section \ref{subsec:matrix-games}}
\label{subapp:deferred-matrix-games-proofs}

Our goal in Appendix \ref{subapp:deferred-matrix-games-proofs} is to prove Lemma~\ref{lem:DRPOSP-matrix-games}, which shows that the algorithmic runtime guarantee \cite[Cor. 8.2]{carmon2023whole} for obtaining an expected $\epsilon$-optimal point for \eqref{eq:primal-MG} can be extended to the setting where additional dual regularization is added. Intuitively, additional dual regularization should only stabilize the primal problem and make it ``easier,'' but some care is still required to modify the algorithm and analysis of \cite{carmon2023whole} to handle this extension. In Section \ref{subsubapp:whole-new-ball-game-mod}, we modify an algorithm of \cite{carmon2023whole}, for a more general setting than that of Definitions \ref{def:mat-games-ball-in-body} and \ref{def:mat-games-simplex-in-body}, to handle added regularization. We then use this to prove Lemma~\ref{lem:DRPOSP-matrix-games} in Section \ref{subsubapp:proof-of-DRPOSP-for-mat-games}.

\subsubsection{Modifying an algorithm of \cite{carmon2023whole} to handle additional regularization}
\label{subsubapp:whole-new-ball-game-mod}

In this section, we modify the result of \cite[Sec. 8.1]{carmon2023whole} to handle additional regularization. Though the setting of this section can be viewed as a generalization of \eqref{eq:primal-MG}, we note that Section \ref{subsubapp:whole-new-ball-game-mod} is entirely separate from the context of Section \ref{subsec:matrix-games}. Indeed, Section \ref{subsubapp:whole-new-ball-game-mod} only consists of modifying the results of \cite{carmon2023whole} to obtain a form appropriate for our application in Section \ref{subsubapp:proof-of-DRPOSP-for-mat-games}.

With that said, we consider the following optimization problem in this section for differentiable convex functions $f_i : \R^d \to \R$ and $\Lambda > 0$ (the equivalence can be derived from Lemma~\ref{lem:convex-conjugate-neg-entropy}):

\begin{align}
    \label{eq:matrix-reg-obj}
    \minimize_{x \in \xset} \inbraces{
        f(x) \defeq \max_{y \in \simplex^n} \inbraces{ \sum_{i = 1}^n [ y_i f_i(x) - \Lambda \cdot y_i \ln y_i] } = \Lambda \ln \inparen{
            \sum_{i = 1}^n \exp \inparen{\frac{f_i(x)}{\Lambda}}
        }
	}.
\end{align}

We consider the following setups:

\begin{definition}[General ball setup, Definition 8.1 in \cite{carmon2023whole}]
	\label{def:mat-games-ball-setup-appendix} 
	In the \emph{general ball setup}, we use the Euclidean norm $\norm{\cdot}_2$, the domain $\xset$ is a closed and convex subset of the unit Euclidean ball $B^d$, and the Bregman divergence on $\R^d$ is $\widetilde{V}_x(y) = \half \norm{y-x}_2^2$. Furthermore, we let $\xset_\nu \defeq \xset$ for all $\nu \ge 0$ and set $p \defeq 2$.
\end{definition}

\begin{definition}[General simplex setup, Definition 8.2 in \cite{carmon2023whole}]
	\label{def:mat-games-simplex-setup-appendix} 
	In the \emph{general simplex setup}, we use the $\ell_1$-norm $\norm{\cdot}_1$, the domain $\xset$ is a closed and convex subset of the probability simplex $\Delta^d$, and the Bregman divergence on $\simplex^d$ is $\widetilde{V}_x(y) = \sum_{i = 1}^d y_i \log \frac{y_i}{x_i}$. Furthermore, we let $\xset_\nu \defeq \{x\in\xset \mid x_i \ge \nu, ~ \forall i\in[d]\}$ for all $\nu\ge0$, and set $p \defeq 1$.
\end{definition}

For the rest of this section, unless we explicitly state otherwise, all results and equations hold for the setups of Definitions \ref{def:mat-games-ball-setup-appendix} and \ref{def:mat-games-simplex-setup-appendix} simultaneously. The constant $p$ in Definitions \ref{def:mat-games-ball-setup-appendix} and \ref{def:mat-games-simplex-setup-appendix} is used to, e.g., simultaneously refer to both norms via $\norm{\cdot}_p$ when we state results. We use the notation $\bregtilde{}{u}{w}$ in Definitions \ref{def:mat-games-ball-setup-appendix} and \ref{def:mat-games-simplex-setup-appendix} as opposed to the usual $\breg{}{u}{w}$ for the Bregman divergence because the Bregman divergence in Definitions \ref{def:mat-games-ball-setup-appendix} and \ref{def:mat-games-simplex-setup-appendix} is being defined over the primal space (a subset of $\R^d$), whereas in the main body of the paper, the Bregman divergence is always defined over the dual space (a subset of $\R^n$). This helps avoid confusion when we apply the result of Section \ref{subsubapp:whole-new-ball-game-mod} in Section \ref{subsubapp:proof-of-DRPOSP-for-mat-games}.
We also note here that 
\begin{align}
    \label{eq:more-reg-softmax-grad}
    \grad f(x) = \sum_{i = 1}^n p_i(x) \grad f_i(x), \; \; \text{ where } \;\; p_i(x) = \frac{\exp(f_i(x) / \Lambda)}{\sum_{i = 1}^n \exp (f_i(x) / \Lambda)}.
\end{align}

When the functions $f_i$ are additionally $L_f$-Lipschitz and $L_g$-smooth with respect to $\norm{\cdot}_p$, \cite[Sec. 8.1]{carmon2023whole} provides an algorithm which obtains an $\epsilon$-optimum for \eqref{eq:matrix-reg-obj} when $\Lambda = \Theta( \frac{\epsilon}{\log n})$ (see the proof of Theorem~8.1 in that paper), in which case $f$ is $O(\epsilon)$ additively close to $x \mapsto \max_{y \in \simplex^n} \sum_{i = 1}^n y_i f_i(x)$ pointwise over $\xset$. Indeed, the goal of \cite[Sec. 8.1]{carmon2023whole} is actually to obtain an $\epsilon$-optimum for the optimization problem $\minimize_{x \in \xset} \inbraces{\max_{y \in \simplex^n} \sum_{i = 1}^n y_i f_i(x)}$, and this is done via the proxy objective \eqref{eq:matrix-reg-obj} for sufficiently small $\Lambda$. 
In our application however, we need to solve \eqref{eq:matrix-reg-obj} for much larger $\Lambda$. Intuitively, increasing $\Lambda$ should only make \eqref{eq:matrix-reg-obj} easier since it increases the stability of the dual variables. Still, we detail some minor but necessary modifications to the results of \cite{carmon2023whole} below since \cite{carmon2023whole} tightly couples $\Lambda$ to the target accuracy $\epsilon$.

The only place in \cite{carmon2023whole} where they use the fact that the objective $f$ takes the form \eqref{eq:matrix-reg-obj} (as opposed to $f$ being a black-box convex function which we are trying to minimize) is in Section 7 where they provide an efficient estimator for the gradient \eqref{eq:more-reg-softmax-grad}. Specifically, Algorithm 5 in that section takes as input a sequence of points $(x_t)_{t = 0}^T$ such that for some $r, r' > 0$, we have $\norm{x_t - x_0}_p \le r$ for $t \in [T]$ and $\sum_{t \le T} \norm{x_t - x_{t - 1}}_p \le r'$ (omitting a few other conditions for now). It then (roughly speaking) outputs stochastic gradient estimates for $f$ at the points $(x_t)_{t = 0}^T$. The reason we cannot use Algorithm 5 and the accompanying guarantee Theorem~7.1 as written is that Theorem~7.1 makes the restriction $\Lambda \le L_f r' / 2$ (their notation for $\Lambda$ is $\epsilon'$), where $L_f$ is the Lipschitz constant of $f$ with respect to $\norm{\cdot}_p$. This restriction is made because Line 1 of Algorithm 5 calls their matrix-vector maintenance data structure (see Definition 6.1 in that paper) with target accuracy $\Lambda / L_f$ and total movement bound $r'$, and the data structure enforces a relationship between these two quantities which becomes $\Lambda \le L_f r' / 2$ in the context of Theorem 7.1. They call the matrix-vector maintenance data structure with target error $\Lambda / L_f$ in Algorithm 5 because this is the largest error which suffices to ensure the correctness of Theorem~7.1 (and thereby minimizes the additional runtime of the matrix-vector maintenance data structure). In our case however where $\Lambda$ may be much larger than $\Theta(\frac{\epsilon}{\log n})$, we need to decouple the accuracy to which we call the matrix-vector maintenance data structure and $\Lambda$ to avoid breaking the aforementioned relationship imposed by Definition 6.1, since when we call our modified version of Algorithm 5, $r'$ will be the same as in \cite{carmon2023whole} but $\Lambda$ may be much larger.

Our modified version of Algorithm 5 from \cite{carmon2023whole} is given in Algorithm~\ref{alg:grad-est} below, with the guarantee given in Theorem~\ref{thm:grad-est}. Again, the main change is that we are initializing the matrix-vector data structure in Line~\ref{line:init-data-structure} of Algorithm~\ref{alg:grad-est} at a higher accuracy than we may need, so as to ensure that the restriction on $r'$ is satisfied.

\newcommand{\Lf}{L_f}
\newcommand{\Lg}{L_g}
\newcommand{\gest}{\mathcal{G}}
\newcommand{\filt}[1][t]{\mathcal{F}_{#1}}
\newcommand{\dsStyle}[1]{\textsc{#1}}
\newcommand{\dsInit}{\dsStyle{init}}
\SetKwInput{KwInput}{Input}                
\SetKwInput{KwParameters}{Parameters}   
\newcommand{\False}{\textup{\textsf{False}}}
\newcommand{\True}{\textup{\textsf{True}}}
\newcommand{\dsQuery}{\dsStyle{query}}

\begin{theorem}[Softmax gradient estimator]\label{thm:grad-est}
    
	Let $\epsilon'$ be such that $0 < \epsilon' \le \Lambda$, let $p\in\{1,2\}$, and let $\{ f_i \}_{i\in[n]}$ be $\Lg$-smooth and $\Lf$-Lipschitz with respect to
	$\norm{\cdot}_p$.
	For all $t\in [T]$, assume that input $x_t$ to \Cref{alg:grad-est} is a (deterministic)  function of the previous outputs $\gest(x_1),\ldots, \gest(x_{t-1})$, and that $\norm{x_t-x_0}_p\le r$ and $\sum_{t \le T} \norm{x_t - x_{t-1}}_p \le r'$ hold for parameters $r,r'>0$ such that\footnote{It is only actually necessary for $\frac{1}{2} L_g r^2 \le \Lambda$ for the theorem to go through, but the difference won't matter since we will only call this theorem with the same value of $r$ used in \cite{carmon2023whole}.} $\half \Lg r^2 \le \eps'$ and $\eps' \le \Lf r' / 2$. Let $\filt$ be the filtration induced by all the random bits \Cref{alg:grad-est} draws up to iteration $t$ and all those that may be used by $\mathcal{M}$.
	Then for any error tolerance $\delta \in (0,1)$ there exists event $\mc{E}$ such that the following hold:
	\begin{itemize}
	\item We have $\P(\mc{E}) \ge 1-\delta$.
	\item When $\mc{E}$ holds we have $\Ex*{\gest(x_t)|\filt[t-1]} = \grad f(x_t)$ for all $t\in[T]$.
	\item When $\mc{E}$ holds, \Cref{alg:grad-est} makes $O(n+T\log(1/\delta))$ queries of the form $\{f_i(x), \nabla f_i(x)\}$, and requires additional runtime
	\[O\left(T\bigg(d+\log\bigg(\frac{1}{\delta}\bigg)\bigg)+\left(nd\log^{p-1}\bigg(\frac{\Lf r'}{\eps'}\bigg)+d\bigg(\frac{\Lf r'}{\eps'}\bigg)\right)\log^{p-1}\bigg(\frac{n\Lf r'}{\eps'\delta}\bigg)+n\bigg(\frac{\Lf r'}{\eps'}\bigg)^2\log\frac{n\Lf r'}{\eps'\delta}\right).\]
	\item With probability 1 we have $\norm{\gest(x_t)}_{p^\star} \le \Lf$, where $p^*$ is such that $\frac{1}{p} + \frac{1}{p^*}=1$.
	\end{itemize}
\end{theorem}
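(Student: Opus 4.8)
The plan is to reduce Theorem~\ref{thm:grad-est} almost verbatim to Theorem~7.1 of \cite{carmon2023whole}, with the sole substantive change being a decoupling of the matrix-vector maintenance accuracy from the regularization level $\Lambda$. First I would observe that the only use of the softmax structure in the gradient estimator is the identity \eqref{eq:more-reg-softmax-grad}, which expresses $\grad f(x)$ as a convex combination $\sum_i p_i(x) \grad f_i(x)$; this is independent of the magnitude of $\Lambda$. The estimator of \cite[Alg.~5]{carmon2023whole} works by (i) maintaining an approximation to the vector $(f_i(x_t))_{i\in[n]}$ via the matrix-vector maintenance data structure of \cite[Def.~6.1]{carmon2023whole} so that an approximate softmax distribution $\widetilde{p}(x_t)$ can be computed cheaply, (ii) correcting the resulting bias by importance-sampling-style rejection so the estimator is exactly unbiased for $\grad f(x_t)$, and (iii) subsampling coordinates of the gradient. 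I would simply rerun this scheme, but initialize the data structure in Line~\ref{line:init-data-structure} of Algorithm~\ref{alg:grad-est} at accuracy $\epsilon'/\Lf$ where $\epsilon' \le \Lambda$ is a free parameter, rather than at $\Lambda/\Lf$.

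Next I would verify that this substitution preserves correctness. The data structure of \cite[Def.~6.1]{carmon2023whole} imposes a relationship between its target accuracy and the total movement bound $r'$; with target accuracy $\epsilon'/\Lf$ this relationship becomes $\epsilon' \le \Lf r'/2$, which is exactly the hypothesis we have assumed. The unbiasedness argument in \cite{carmon2023whole} requires only that the softmax weights computed from the maintained approximation are within the chosen tolerance of the true weights $p_i(x_t)$; since $\tfrac12 \Lg r^2 \le \epsilon'$ ensures the function values $f_i(x_t)$ do not move by more than $\epsilon'$ over the trajectory (using Lipschitzness/smoothness and $\norm{x_t - x_0}_p \le r$), and the data structure tracks them to accuracy $\epsilon'/\Lf$, the ratio $\exp(f_i(x_t)/\Lambda)$ is tracked to sufficient multiplicative accuracy because $\Lambda \ge \epsilon'$ means dividing by $\Lambda$ only shrinks errors. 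The rejection-sampling correction then yields an \emph{exactly} unbiased estimator of $\grad f(x_t)$ on the good event $\mc E$, just as in \cite[Thm.~7.1]{carmon2023whole}; the norm bound $\norm{\gest(x_t)}_{p^\star} \le \Lf$ holds with probability $1$ because each $\gest(x_t)$ is a convex combination of the $\grad f_i(x_t)$, each of which has dual norm at most $\Lf$ by Lipschitzness. The failure probability bound $\P(\mc E) \ge 1-\delta$ and the query count $O(n + T\log(1/\delta))$ are unchanged, since these come from the rejection sampling and subsampling steps, not the data structure's accuracy. Finally, the runtime bound is obtained by plugging the target accuracy $\epsilon'/\Lf$ into the complexity guarantee of \cite[Def.~6.1]{carmon2023whole} in place of $\Lambda/\Lf$, which produces exactly the stated expression with $\Lf r'/\epsilon'$ appearing wherever $\Lf r'/\Lambda$ would have appeared.

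The main obstacle I anticipate is bookkeeping rather than conceptual: one must check carefully that every place in the proof of \cite[Thm.~7.1]{carmon2023whole} that invoked the hypothesis $\Lambda = \epsilon'$ (in their notation, where $\epsilon'$ played the dual role of regularization and data-structure accuracy) can be cleanly separated into the two independent roles played here by $\Lambda$ (regularization, which only helps) and $\epsilon'$ (data-structure accuracy, constrained by $\tfrac12 \Lg r^2 \le \epsilon' \le \Lf r'/2$). In particular one should confirm that the variance/concentration bounds driving the $n(\Lf r'/\epsilon')^2$ term and the $O(n + T\log(1/\delta))$ query bound depend only on $\epsilon'$ and not on $\Lambda$, which they do because larger $\Lambda$ makes the softmax flatter and thus the importance weights more uniform, only reducing variance. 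I would also double-check the footnote claim that $\tfrac12 \Lg r^2 \le \Lambda$ (rather than $\le \epsilon'$) suffices, by noting that in the downstream application $r$ is the same as in \cite{carmon2023whole} so the distinction is immaterial. No new ideas beyond this decoupling are needed.
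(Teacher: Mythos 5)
Your proposal is correct and follows essentially the same route as the paper: the paper's proof is exactly this reduction to Theorem~7.1 of \cite{carmon2023whole}, keeping the data-structure accuracy at $\epsilon'/\Lf$ while using $\Lambda$ only in the sampling/rejection steps, and observing that $\epsilon' \le \Lambda$ means the maintained vector is at least as accurate as the rejection-sampling analysis requires, so all bounds carry over with $\Lf r'/\epsilon'$ in place of the original ratio. (One cosmetic note: the returned estimate is a single sampled $\grad f_{i_t}(x_t)$ rather than a convex combination, but the dual-norm bound $\le \Lf$ holds either way by Lipschitzness.)
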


\begin{algorithm}[t]
	\DontPrintSemicolon
	\caption{Gradient estimator for \eqref{eq:matrix-reg-obj}}
	\label{alg:grad-est}
	\KwInput{$\{f_i\}_{i\in[n]}$, query sequence $\{x_t\}_{t\le T}$ such that $x_t$ is a function of the previous outputs $\gest(x_1), \ldots, \gest(x_{t-1})$ (i.e., $x_0$ and $x_1$ do not depend on any outputs).}
	\KwParameters{Regularization level $\Lambda$, matrix-vector maintenance data structure accuracy $\epsilon'$, movement bound $r'$, Lipschitz constant $\Lf$, error tolerance $\delta \in (0,1)$, $\ell_p$-matrix-vector maintenance data structure $\mc{M}$.}
	Call 
	$\mathcal{M}.\dsInit(A, 0, r', \frac{\eps'}{\Lf},\frac{\delta}{2})$ where $A = [\frac{1}{\Lf}\nabla f_i(x_0)^\top ]_{i\in[n]}$ \label{line:init-data-structure}\;
	\For{$t=1,2,\cdots, T$}{
	$y_t\gets \Lf \cdot \mathcal{M}.\dsQuery(x_t-x_{t-1})$ \Comment{maintain vector $y_t\approx \Lf A (x_t-x_0) = \brk*{\inner{\grad f_i(x_0)}{x_t-x_0}}_{i\in[n]}$} 
	\textsf{accepted} $\gets \False$\;
	\While{\textup{not \textsf{accepted}}}{
	Draw $i\sim \exp\prn*{\frac{f_i(x_0)+[y_t]_i}{\Lambda}}$ \label{line:draw}\;
	\Block{With probability $\min\crl*{\exp\prn*{\frac{f_i(x_t)-f_i(x_0)-[y_t]_i}{\Lambda}-2}, 1}$\label{line:rejection-step}}{\textbf{yield} $i_t = i$ and $\gest(x_t) = \nabla f_{i_t}(x_t)$\;
	\textsf{accepted} $\gets \True$\;
	}
	}
	}
\end{algorithm}

\begin{proof}
The proof follows exactly as in the proof of Theorem~7.1 in \cite{carmon2023whole}. Indeed, the only difference between Algorithm~\ref{alg:grad-est} above and Algorithm 5 in \cite{carmon2023whole} is that we have replaced $\epsilon'$ with $\Lambda$ in Lines \ref{line:draw} and \ref{line:rejection-step} to account for the fact that the gradient is now given by \eqref{eq:more-reg-softmax-grad}. In particular, it is straightforward to check that the same rejection sampling analysis goes through since $\epsilon' \le \Lambda$, so we can only have initialized the matrix-vector maintenance data structure in Line~\ref{line:init-data-structure} to more than the required accuracy to make the analysis go through.
\end{proof}

Having made this modification to the estimator for \eqref{eq:more-reg-softmax-grad} so as to handle $\Lambda \gg \epsilon / \log n$, we now state our guarantee for the problem \eqref{eq:matrix-reg-obj}, mirroring Theorem~8.1 in \cite{carmon2023whole} which, recall, gives an algorithm for \eqref{eq:matrix-reg-obj} with the same runtime but only when $\Lambda = \Theta (\frac{\epsilon}{\log n})$. For simplicity (e.g., to avoid conditions on $\epsilon$), we only state the guarantee when each $f_i$ is $0$-smooth (equivalently, an affine function), since that is the only case we will need for the matrix games application in Section \ref{subsubapp:proof-of-DRPOSP-for-mat-games}.

\newcommand{\Teval}{\mathcal{T}_{\textup{eval}}}
\newcommand{\Tmd}{\mathcal{T}_{\textup{md}}}

\begin{theorem}
	\label{thm:lambda-reg-softmax}
Let $\epsilon, \Lambda > 0$, and consider the optimization problem \eqref{eq:matrix-reg-obj} in either the general ball or general simplex setup (Definitions \ref{def:mat-games-ball-setup-appendix} and \ref{def:mat-games-simplex-setup-appendix} respectively), where each function $f_i : \R^d \to \R$ is convex, $L_f$-Lipschitz with respect to $\norm{\cdot}_p$, and $0$-smooth. Then there exists an algorithm which returns a point $x \in \xset$ such that 
\begin{align*}
    \E f(x) - \min_{\xopt \in \xset} f(\xopt) \le \epsilon,
\end{align*}
and with probability at least $9/10$, the algorithm has runtime 
\begin{align*}
    \Otilde \inparen{n (\Teval + d) + n \inparen{\frac{(\Teval + d) L_f R}{\epsilon}}^{2/3} + (\Teval + \Tmd + d) \frac{L_f^2 R^2}{\epsilon^2}}.
\end{align*}
Here, $\Teval$ is the time to compute $f_i(x),\grad f_i(x)$ for any $x\in\xset$ and $i\in[n]$, and setting $\nu = \frac{\epsilon}{4d L_f}$,
we let $\Tmd$ denote the time to compute a mirror descent step of the form $\argmin_{z\in\xset_\nu} \crl*{\inner{g}{z} + \lambda \widetilde{V}_{y}(z)+ \widetilde{V}_{x}(z)}$ for any $g\in\R^d$ and $x,y\in\xset$. $R > 0$ is such that for initial point $x_0 \in \xset_\nu$, we have $\max_{x \in \xset_\nu} \bregtilde{}{x_0}{x} \le \frac{1}{2} R^2$.
\end{theorem}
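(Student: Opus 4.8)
The plan is to follow the proof of \cite[Thm.~8.1]{carmon2023whole} essentially verbatim: that argument minimizes $f$ over $\xset$ using the ``ball acceleration'' outer loop together with the composite-mirror-descent inner solver of \cite{carmon2023whole}, treating $f$ purely as a black-box convex function that is $L_f$-Lipschitz with respect to $\norm{\cdot}_p$ and is accessed only through stochastic, unbiased, norm-bounded gradient estimates (the inner subgradient-type solver and the outer loop never use smoothness of $f$, only its Lipschitz constant). The sole component of that proof that uses the softmax form of \eqref{eq:matrix-reg-obj} is the gradient estimator of \cite[Sec.~7]{carmon2023whole}, which we replace by \Cref{alg:grad-est} together with its guarantee \Cref{thm:grad-est}. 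Concretely, I would first record the two structural facts needed: (i) by \eqref{eq:more-reg-softmax-grad}, $\grad f(x)=\sum_i p_i(x)\grad f_i(x)$ is a convex combination of the $\grad f_i(x)$, so $\norm{\grad f(x)}_{p^\star}\le L_f$ and $f$ is $L_f$-Lipschitz with respect to $\norm{\cdot}_p$; and (ii) since each $f_i$ is affine, $f_i(x_t)=f_i(x_0)+\inner{\grad f_i(x_0)}{x_t-x_0}$, so approximately maintaining the vector $[\inner{\grad f_i(x_0)}{x_t-x_0}]_{i\in[n]}$ with the $\ell_p$-matrix-vector maintenance data structure and running the rejection step in Lines~\ref{line:draw}--\ref{line:rejection-step} produces exactly the estimator $\gest(x_t)$ of \Cref{thm:grad-est}.

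I would then substitute this estimator into \cite{carmon2023whole}'s framework with the same outer-loop and inner-loop parameters used there: the outer loop reduces minimizing $f$ to $\Otilde((L_f R/\epsilon)^{2/3})$ ball-restricted subproblems, each solved to a suitable accuracy by roughly $L_f^2 R^2/\epsilon^2$ composite mirror-descent steps of the form $\argmin_{z\in\xset_\nu}\inbraces{\inner{g}{z}+\lambda\bregtilde{}{y}{z}+\bregtilde{}{x}{z}}$ (cost $\Tmd$ each), and each such step makes one call to the gradient estimator, costing $O(\Teval)$ amortized queries plus the data-structure overhead quantified in \Cref{thm:grad-est}. Summing these contributions over all subproblems exactly as in \cite[proof of Thm.~8.1]{carmon2023whole} yields the claimed runtime $\Otilde(n(\Teval+d)+n((\Teval+d)L_f R/\epsilon)^{2/3}+(\Teval+\Tmd+d)L_f^2R^2/\epsilon^2)$; crucially, every additional smoothness-dependent term appearing in \cite[Thm.~8.1]{carmon2023whole} carries a factor of the $f_i$-smoothness constant $\Lg=0$ and therefore vanishes, which is also why no side condition on $\epsilon$ is required. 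The $9/10$ success probability is obtained, as in \cite{carmon2023whole}, by setting the failure tolerance $\delta$ in \Cref{thm:grad-est} to a small constant and union bounding over the $\Otilde(1)$ invocations.

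The main obstacle is honoring the hypotheses of \Cref{thm:grad-est} across the full range of $\Lambda>0$: the data-structure accuracy $\eps'$ must satisfy $\eps'\le\Lambda$, $\eps'\le\Lf r'/2$, and $\tfrac12\Lg r^2\le\eps'$ (the last being vacuous since $\Lg=0$). In \cite{carmon2023whole} one simply takes $\eps'=\Lambda=\Theta(\epsilon/\log n)$, but here $\Lambda$ is prescribed. When $\Lambda\ge\Theta(\epsilon/\log n)$ I would keep $\eps'=\Theta(\epsilon/\log n)\le\Lambda$ (and $\eps'\le\Lf r'/2$ still holds because $r'$, which depends only on $L_f$, $R$, $\epsilon$, is unchanged from \cite{carmon2023whole}, where this inequality is verified, after the harmless reduction to $\epsilon\le 2L_fR$), so the argument above applies directly. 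When $\Lambda<\Theta(\epsilon/\log n)$, I would instead invoke the elementary pointwise bound $0\le\Lambda\ln\inparen{\sum_i\exp(f_i(x)/\Lambda)}-\max_i f_i(x)\le\Lambda\ln n\le\epsilon/2$ on $\xset$, so that any $(\epsilon/2)$-minimizer of $x\mapsto\max_{y\in\simplex^n}\sum_i y_i f_i(x)$ is an $\epsilon$-minimizer of $f$; this objective is precisely the one handled by \cite[Thm.~8.1]{carmon2023whole} (again with $\Lg=0$), which has exactly the claimed runtime. I expect the only genuinely delicate points to be checking that these two regimes exhaust all $\Lambda>0$ and that the inherited movement parameters $r,r'$ satisfy the inequalities of \Cref{thm:grad-est}; everything else is a transcription of \cite[Sec.~8.1]{carmon2023whole}.
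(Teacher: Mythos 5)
Your proposal is correct and takes essentially the same route as the paper's proof: run the algorithm of \cite[Thm.~8.1]{carmon2023whole} with its parameter choices (simplified since $L_g=0$), substitute \Cref{alg:grad-est} and \Cref{thm:grad-est} for the Section-7 estimator of \cite{carmon2023whole}, verify the $r,r'$ and $\eps'\le\Lambda$ conditions, and pass from $\xset_\nu$ back to $\xset$ via the $L_f$-Lipschitzness of $f$ and the choice of $\nu$. The only cosmetic difference is the small-$\Lambda$ regime: the paper assumes $\Lambda\ge\frac{\epsilon}{2\log n}$ without loss of generality by raising $\Lambda$ (an $O(\epsilon)$ pointwise change to the objective), whereas you drop the regularization and invoke \cite[Thm.~8.1]{carmon2023whole} directly; both rest on the same $\Lambda\ln n$ pointwise bound.
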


\newcommand{\fsm}{f_{\mathrm{smax}}}
\newcommand{\Tmax}{T_{\mathrm{outer}}}
\newcommand{\Tinner}{T_{\mathrm{inner}}}

\begin{proof}
In summary, we cite the algorithm of Theorem~8.1 in \cite{carmon2023whole} (with almost the same choice of parameters except simplified slightly since $L_g = 0$), except we substitute Algorithm 5 and the corresponding guarantee Theorem~7.1 in that paper with Algorithm~\ref{alg:grad-est} and Theorem~\ref{thm:grad-est} above, which are designed to handle additional regularization.

As for the details, per the proof of Theorem~8.1 in \cite{carmon2023whole}, their algorithm actually\footnote{``Actually'' because as discussed earlier in Section \ref{subapp:deferred-matrix-games-proofs}, this objective is a proxy objective for their original goal of solving $\minimize_{x \in \xset} \inbraces{\max_{y \in \simplex^n} \sum_{i = 1}^n y_i f_i(x)}$.} obtains an $\epsilon / 4$-minimizer of 
\begin{align*}
    \minimize_{x \in \xset_\nu} \inbraces{ \epsilon' \ln \inparen{\sum_{i = 1}^n \exp \inparen{\frac{f_i(x)}{\epsilon'}}}},
\end{align*}
where $\epsilon' \defeq \frac{\epsilon}{2 \log n}$. (Note that the above minimization is over the set $\xset_\nu$ as opposed to $\xset$. Recall from Definitions \ref{def:mat-games-ball-setup-appendix} and \ref{def:mat-games-simplex-setup-appendix} that $\xset$ and $\xset_\nu$ only differ in the simplex setup, where this truncation is necessary for technical reasons to establish a relaxed triangle inequality for the Bregman divergence.) Then, as alluded to earlier in this section, we can instead run the same algorithm with the same choices of parameters (except for $r$ which we will keep general for now but set in a moment), except substituting Algorithm~\ref{alg:grad-est} and Theorem~\ref{thm:grad-est} above (with\footnote{Theorem~\ref{thm:grad-est} restricts $\Lambda \ge \epsilon'$, but we can assume $\Lambda \ge \frac{\epsilon}{2 \log n}$ without loss of generality since if $\Lambda < \frac{\epsilon}{2 \log n}$, we can increase $\Lambda$ in the objective \eqref{eq:matrix-reg-obj} to $\frac{\epsilon}{2 \log n}$ while maintaining an $O(\epsilon)$ additive pointwise approximation to the original objective.} $\epsilon' \defeq \frac{\epsilon}{2 \log n}$) in place of Algorithm 5 and Theorem~7.1 from \cite{carmon2023whole} respectively. One can check that everything goes through upon making this substitution.

Then per the end of the proof of Theorem~8.1 in \cite{carmon2023whole}, with probability at least $\frac{9}{10}$, the algorithm obtains an $\epsilon / 4$-minimizer of
\begin{align}
    \label{eq:lambda-obj-over-xset_nu}
    \minimize_{x \in \xset_\nu} \inbraces{ \Lambda \ln \inparen{\sum_{i = 1}^n \exp \inparen{\frac{f_i(x)}{\Lambda}}}}
\end{align}
with total runtime
\begin{align}
    \label{eq:runtime-before-choosing-r}
	\widetilde{O}\left(n\frac{\Lf^2R^{2/3}r^{4/3}}{\eps^2}+n(\Teval+d)\frac{R^{2/3}}{r^{2/3}}+(\Teval+\Tmd+d)\frac{\Lf^2R^2}{\eps^{2}}\right).
\end{align}
We have yet to choose $r$, which is subject to the conditions $r \le R$ (see Theorem~4.1 in \cite{carmon2023whole}) and $\epsilon' \le L_f r' / 2$ due to Theorem~\ref{thm:grad-est} above. (Note that the condition $\frac{1}{2} L_g r^2 \le \epsilon'$ in Theorem~\ref{thm:grad-est} is trivial due to our simplifying assumption $L_g = 0$.) Here, $r' = \Otilde(r)$ since up to choosing $r$, we are making the same parameter choices as in Theorem~8.1 in \cite{carmon2023whole} (see the proof of Theorem~8.1 in that paper). Thus, we can choose $r = \min \inbraces{R, \Thetatilde \inparen{ \frac{\epsilon \sqrt{\Teval + d}}{L_f}}}$ to obtain the stated runtime upper bound. (We use $\Thetatilde$ here in case $r$ needs to be increased by logarithmic factors to ensure $\epsilon' \le L_f r' / 2$ holds in all regimes.)

Finally, note that \eqref{eq:lambda-obj-over-xset_nu} and our original objective \eqref{eq:matrix-reg-obj} differ since the former is over $\xset_\nu$ instead of $\xset$. However, it is clear from \eqref{eq:more-reg-softmax-grad} that $f$ is $L_f$-Lipschitz, in which case we have
\begin{align*}
    \min_{x \in \xset} f(x) \le \min_{x \in \xset_\nu} f(x) \le \min_{x \in \xset} f(x) + \epsilon / 4.
\end{align*}
by the choice of $\nu$. Thus, an $\epsilon / 4$-minimizer of \eqref{eq:lambda-obj-over-xset_nu} is an $\epsilon/2$-minimizer of \eqref{eq:matrix-reg-obj}, yielding the result.
\end{proof}

\subsubsection{Proof of Lemma~\ref{lem:DRPOSP-matrix-games}}
\label{subsubapp:proof-of-DRPOSP-for-mat-games}

We now give the proof of Lemma 6, restated here for convenience.

\restateLemDRPOSPMatrixGames*

\begin{proof}
We have
\begin{align*}
	f_{\lambda, q}(x) &= \max_{y \in \simplex^n} \inbraces{x^\top A y - \lambda \sum_{i = 1}^n y_i \ln \frac{y_i}{q_i}} \\
	&= \max_{y \in \simplex^n} \sum_{i = 1}^n \inparen{y_i \cdot [A^\top x]_i + \lambda \cdot y_i \ln q_i - \lambda \cdot y_i \ln y_i} \\
	&= \max_{y \in \simplex^n} \sum_{i = 1}^n \inparen{y_i f_i(x) - \lambda \cdot y_i \ln y_i},
\end{align*}
where we defined $f_i(x) \defeq [A^\top x]_i + \lambda \cdot \ln q_i$. Having massaged the objective to obtain an instance of \eqref{eq:matrix-reg-obj}, we apply Theorem~\ref{thm:lambda-reg-softmax} to obtain the desired runtime for computing an expected $\epsprim$-optimal point of $f_{\lambda, q}$ with success probability at least $9/10$.  Indeed, recall from the discussion after Definitions \ref{def:mat-games-ball-in-body} and \ref{def:mat-games-simplex-in-body} in Section \ref{subsec:matrix-games} that $f_i$ is 1-Lipschitz ($L_f = 1$). Otherwise, it is straightforward to check that we can bound $R = \Otilde(1)$ (setting $x_0 = 0$ in the case of Definition~\ref{def:mat-games-ball-in-body} or $x_0 = \frac{1}{d} \ones$ in the case of Definition~\ref{def:mat-games-simplex-in-body}), $\Tmd = \Otilde(d)$, and $\Teval = O(d)$; see the proof of Corollary 8.2 in \cite{carmon2023whole} for details.
\end{proof}

\subsection{Deferred proofs from Section \ref{subsec:alpha-CVaR}}
\label{subapp:alpha-CVaR}

Here we give the proof of Lemma~\ref{lem:DRPO-CVaR}, restated below for convenience. Recall the notation $\poly(\cdots)$, which we also use here, is defined in Section \ref{subsec:alpha-CVaR}.
As for why Lemma~\ref{lem:DRPO-CVaR} requires the conditions $\max_{i \in [n]} q_i^{-1} \le \allpoly$ and $\lambda \le \allpoly$, the proof involves reexpressing $f_{\lambda, q}$ in the form of the general DRO objectives considered \cite{levy2020largescale}. In particular, to apply the results of \cite{levy2020largescale}, it is necessary that the regularization is with respect to the uniform distribution as opposed to an arbitrary distribution $q$. This can be achieved by pushing the dependence on $q$ in $f_{\lambda, q}$ into the loss functions, thereby ending up with a new set of loss functions $\ftilde_i : \R^d \to \R$ for $i \in [n]$, where each $\ftilde_i$ depends on $q_i$ and $\lambda$.
 However, the guarantee in \cite{levy2020largescale} we then apply depends polylogarithmically on $M' > 0$ such that $\ftilde_i(x) \in [0, M']$ for all $x \in \xset$ and $i \in [n]$. Thus, the assumptions $\max_{i \in [n]} q_i^{-1} \le \allpoly$ and $\lambda \le \allpoly$ are used to ensure we can bound $M' \le \poly(\cdots)$. (In fact, the condition $\max_{i \in [n]} q_i^{-1} \le \allpoly$ can be relaxed further.)

\restateLemDRPOCVaR*

\begin{proof}
Note that we can write
\begin{align}
	f_{\lambda, q}(x) &= \max_{y \in \ytrunc} \inbraces{ \sum_{i = 1}^n y_i f_i(x) - \lambda \sum_{i = 1}^n y_i \ln \frac{y_i}{q_i} }  \nonumber \\
	&= \max_{y \in \ytrunc} \inbraces{ \sum_{i = 1}^n y_i \ftilde_i(x) - \lambda \sum_{i = 1}^n y_i \ln y_i } , \label{eq:CVaR-reg-obj}
\end{align}
where $\ftilde_i(x) \defeq f_i(x) + \lambda \ln q_i$. We now apply the multilevel Monte Carlo (MLMC) gradient estimator scheme designed in \cite{levy2020largescale}. Per Appendix A.1 in that paper, \cite{levy2020largescale} considers objectives of the form
\begin{align}
	\label{eq:large-scale-obj}
	\L(x) \defeq \sup_{w \in \simplex^n, D_{h_1}(w, \frac{1}{n} \ones) \le \rho} \inbraces{\sum_{i = 1}^n w_i \ell_i(x) - \gamma D_{h_2}(w, \frac{1}{n} \ones)}
\end{align}
for $\rho, \gamma \ge 0$, convex functions $\ell_i : \R^d \to \R$ which satisfy the assumptions of Section 2 in that paper, and closed convex functions $h_1, h_2$ satisfying $h_1(1) = h_2(1) = 0$. Here, for a convex function $h : \R_{\ge 0} \to (- \infty, \infty]$ satisfying $h(1) = 0$, we define
\begin{align*}
	D_h(w, \frac{1}{n} \ones) \defeq \frac{1}{n} \sum_{i = 1}^n h(n w_i)
\end{align*}
for $w \in \simplex^n$. (See Section 2 in \cite{levy2020largescale}.)
We note that \eqref{eq:large-scale-obj} has been specialized to our application. Namely, it is a finite-sum version of the corresponding Equation 18 in Appendix A.1 of \cite{levy2020largescale}. We have also set $P$ in Equation 18 in that paper to be $\frac{1}{n} \ones$.

Note that \eqref{eq:CVaR-reg-obj} is an instantiation of \eqref{eq:large-scale-obj} with $h_1 \equiv 0$ (i.e., it is identically zero), $h_2(t) \defeq \indc_{[\epsilon / (4M), 1 / \alpha]} + t \ln t - t + 1$, $\ell_i \defeq \ftilde_i$, and $\gamma$ set appropriately. (The value of $\gamma$ will not matter in the rest of the proof; we will be able to obtain the same complexity for any $\gamma > 0$.) Next, note that treating \eqref{eq:CVaR-reg-obj} as an instantiation of \eqref{eq:large-scale-obj} via these parameter choices, \eqref{eq:CVaR-reg-obj} is a $(\alpha^{-1} - 1)$-$\chi^2$-bounded objective per Definition 1 in Appendix A.4 of \cite{levy2020largescale}.

To go into greater detail, while the goal of \cite{levy2020largescale} is to obtain expected $\epsilon$-optimal points of objectives of the form \eqref{eq:large-scale-obj} (namely, minimizing $\mathcal{L}$ over a compact, convex constraint set), their MLMC gradient estimator is an unbiased estimator of the gradient of a surrogate batch objective for \eqref{eq:large-scale-obj}, that being, for $n' \in \N$:
\begin{align}
	\label{eq:general-batch-surrogate}
	\barL (x; n') \defeq \E_{s_1, \dots, s_{n'} \simiid  \uniform[n]}
	\insquare{ \max_{w \in \simplex^n, D_{h_1}(w, \frac{1}{n'} \ones) \le \rho} \inbraces{\sum_{i = 1}^{n'} w_i \ell_{s_i}(x) - \gamma D_{h_2}(w, \frac{1}{n'} \ones)} },
\end{align}
where $\uniform[n]$ denotes the uniform distribution over $\inbraces{1, 2, \dots, n}$; see Equation 7 in \cite{levy2020largescale} (we have replaced their ``$n$'' with $n'$ since $n$ is already reserved in this paper for the number of loss functions). This is done to achieve a complexity independent of $n$ when $n \gg \alpha \epsprim^{-2}$. (Indeed, \cite{levy2020largescale} also handles continuous setups where, informally, $n \to \infty$.) To go into more detail behind the MLMC method of \cite{levy2020largescale} for approximately minimizing $\L$ over a convex, compact constraint set, \cite[Prop. 1]{levy2020largescale} yields a bound on the bias $\abs{\L(x) - \barL (x; n')}$ uniformly over all $x$ in the constraint set, and \cite[Prop. 4]{levy2020largescale} (see also the more formal version Proposition 4' in Appendix C.1 in that paper) yields an MLMC estimator which is an unbiased estimate of $\grad \Lbar(x; n')$ with bounded second moment. Standard stochastic gradient method guarantees for approximately optimizing $\Lbar(x)$ can then be applied \cite[Prop. 3]{levy2020largescale}.

We now apply this outline to our specific instantiation \eqref{eq:CVaR-reg-obj} of the more general \eqref{eq:large-scale-obj}. Let $\fbar_{\lambda, q}(x; n')$ denote the particularization of \eqref{eq:general-batch-surrogate} to the instantiation \eqref{eq:CVaR-reg-obj} of \eqref{eq:large-scale-obj}, making the same parameter choices as before: $h_1 \equiv 0$, $h_2(t) \defeq \indc_{[\epsilon / (4M), 1 / \alpha]} + t \ln t - t + 1$, $\ell_i \defeq \ftilde_i$, and $\gamma$ set appropriately. Then, using the fact that \eqref{eq:CVaR-reg-obj} is $(\alpha^{-1} - 1)-\chi^2$-bounded as mentioned above, Claim 2' and Proposition 4' in Appendix C.1 of \cite{levy2020largescale} yield an estimator $\tildeM[\grad f_{\lambda, q}]$, parameterized by a choice of $n' \in \N$ (we set the parameter $n_0$ in \cite[Sec. 4]{levy2020largescale} to be 2), such that
\begin{align}
	\E \tildeM[\grad f_{\lambda, q}] &= \grad \fbar_{\lambda, q}(x; n'), \nonumber \\
	\E \norm{\tildeM[\grad f_{\lambda, q}]}_2^2 &= O \inparen{G^2 \alpha^{-1} \log_2 n'}, \label{eq:MLMC-moment-bound}
\end{align}
and the estimator requires an expected number of first-order queries bounded by $O(\log_2 n')$. (Following \cite[Sec. 2]{levy2020largescale}, we use $\grad$ here to denote some subgradient.) Also, we can bound the bias $\abs{\fbar_{\lambda, q}(x; n') -f_{\lambda, q}(x) } \le O(M' \sqrt{\alpha^{-1} (n')^{-1} \log n'})$ per Proposition 1 in Appendix B.1.1 in \cite{levy2020largescale} and the comment under it about the bound (9) in that proposition holding for any $\chi^2$-bounded objective. Here, $M'$ is such that $\ftilde_i(x) \in [0, M']$ for all $i \in [n]$ and $x \in \xset$. By adding an appropriate uniform quantity to every $\ftilde_i(x)$ so that they are all nonnegative (for any $\delta > 0$, this doesn't change the set of $\delta$-minimizers of $f_{\lambda, q}$), and then using the upper bounds on $q_i^{-1}$ for $i \in [n]$ and $\lambda$ in the statement of Lemma~\ref{lem:DRPO-CVaR}, as well as the fact that $f_i(x) \in [0, M]$ for all $i \in [n]$ and $x \in \xset$ per Definition~\ref{def:CVaR-setup}, we can bound $M' \le \allpoly$.

Now set $n' \gets \Otilde(\alpha^{-1} \epsprim^{-2} (M')^2)$, so that $\abs{\fbar_{\lambda, q}(x; n') -f_{\lambda, q}(x) } \le O(M' \sqrt{\alpha^{-1} (n')^{-1} \log n'}) \le \epsprim / 4$. Then with this choice of $n'$, \eqref{eq:MLMC-moment-bound} becomes $\E \norm{\tildeM[\grad f_{\lambda, q}]}_2^2 \le \Otilde(G^2 \alpha^{-1})$. As a result, per \cite[Prop. 3]{levy2020largescale}, there exists a stochastic gradient method with gradient estimator given by $\tildeM[\grad f_{\lambda, q}]$ which obtains an expected $\epsprim$-optimal point of $f_{\lambda, q}$ over $\xset$ with complexity $\Otilde(G^2 R^2 \alpha^{-1} \epsprim^{-2})$.
\end{proof}

\section{Deferred proofs from Section \ref{sec:convex-critical-point}}
\label{app:deferred-convex-critical-proofs}

In this section, we give additional lemmas referenced in Section \ref{sec:convex-critical-point}. Every lemma in this section is stated independently of the setup of Section \ref{sec:convex-critical-point}, although we may suggestively mirror the notation of Section \ref{sec:convex-critical-point}.
To start, the following lemma summarizes some relevant properties of a general regularized function:

\begin{lemma}[Relevant properties of a regularized function]
    \label{lem:general-reg-func-properties}
    With $\norm{\cdot}$ denoting the Euclidean norm on $\R^n$, $h : \R^n \to \R$ any convex differentiable function, $x_0 \in \R^n$, and $\alpha > 0$, define
\begin{align*}
    f(x) \defeq h(x) + \frac{\alpha}{2} \norm{x - x_0}^2.
\end{align*}
Let $\fopt$ denote the global minimum of $f$, and suppose $\gap \defeq h(x_0) - \inf_{x \in \R^n} h(x) < \infty$. Then:
\begin{enumerate}
    \item $\norm{\fopt - x_0} \le \sqrt{2 \gap / \alpha}$.
    \item For any $\nu > 0$, let $u \in \R^n$ denote a $\nu$-critical point of $f$, i.e., $\norm{\grad f(u)} \le \nu$. Then $\norm{\grad h(u)} \le 2 \nu + \sqrt{2 \gap \alpha}$.
\end{enumerate}
\end{lemma}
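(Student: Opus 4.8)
The statement has two parts. For the first part, the key observation is that $f$ is $\alpha$-strongly convex (since it is a convex function plus $\frac{\alpha}{2}\norm{x-x_0}^2$), so it has a unique minimizer $\fopt$, and strong convexity gives the quadratic growth lower bound $f(x) \ge f(\fopt) + \frac{\alpha}{2}\norm{x - \fopt}^2$. Evaluating this at $x = x_0$ yields $f(x_0) - f(\fopt) \ge \frac{\alpha}{2}\norm{\fopt - x_0}^2$. Then I would bound the left side: $f(x_0) = h(x_0)$ (the regularizer vanishes at $x_0$), and $f(\fopt) \ge \inf_x h(x)$ since $f \ge h$ pointwise. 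Hence $\frac{\alpha}{2}\norm{\fopt - x_0}^2 \le h(x_0) - \inf_x h(x) = \gap$, which rearranges to $\norm{\fopt - x_0} \le \sqrt{2\gap/\alpha}$.

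For the second part, the plan is to use $\grad h(u) = \grad f(u) - \alpha(u - x_0)$, so by the triangle inequality $\norm{\grad h(u)} \le \norm{\grad f(u)} + \alpha\norm{u - x_0} \le \nu + \alpha\norm{u - x_0}$. It thus remains to bound $\norm{u - x_0}$. Here I would use the triangle inequality $\norm{u - x_0} \le \norm{u - \fopt} + \norm{\fopt - x_0}$, bound the second term by $\sqrt{2\gap/\alpha}$ from part 1, and bound the first term using strong convexity of $f$: since $\grad f(\fopt) = 0$ and $f$ is $\alpha$-strongly convex, $\norm{\grad f(u)} = \norm{\grad f(u) - \grad f(\fopt)} \ge \alpha\norm{u - \fopt}$, giving $\norm{u - \fopt} \le \nu/\alpha$. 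Combining, $\alpha\norm{u - x_0} \le \nu + \alpha\sqrt{2\gap/\alpha} = \nu + \sqrt{2\gap\alpha}$, and therefore $\norm{\grad h(u)} \le \nu + \nu + \sqrt{2\gap\alpha} = 2\nu + \sqrt{2\gap\alpha}$.

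I don't anticipate a serious obstacle here; this is a routine strong-convexity argument. The only minor point to be careful about is justifying that $\fopt$ exists and is unique (strong convexity plus the fact that $f$ is real-valued and coercive suffices) and that the first-order optimality condition $\grad f(\fopt) = 0$ holds (valid since $f$ is differentiable and defined on all of $\R^n$). I would also double-check the direction of the inequality $f \ge h$ pointwise, which is immediate since $\frac{\alpha}{2}\norm{x-x_0}^2 \ge 0$. The corollary in the main text then follows by plugging in $\alpha \gets \frac{\gamma^2}{8\gap}$ and $\nu \gets \gamma/4$: part 1 gives $\norm{\fopt - x_0} \le \sqrt{2\gap \cdot 8\gap/\gamma^2} = 4\gap/\gamma$, and part 2 gives $\norm{\grad h(u)} \le \gamma/2 + \sqrt{2\gap \cdot \gamma^2/(8\gap)} = \gamma/2 + \gamma/2 = \gamma$.
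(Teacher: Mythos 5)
Your proof is correct and follows essentially the same route as the paper: part 2 is identical (the identity $\grad f(u)=\grad h(u)+\alpha(u-x_0)$, the bound $\norm{u-\fopt}\le\nu/\alpha$ from $\alpha$-strong convexity, and the triangle inequality with part 1). For part 1 the paper argues slightly differently—directly comparing $f(w)$ with $f(x_0)$ to rule out minimizers with $\norm{w-x_0}>\sqrt{2\gap/\alpha}$, without invoking quadratic growth—but your strong-convexity argument via $f(x_0)-f(\fopt)\ge\frac{\alpha}{2}\norm{\fopt-x_0}^2$ and $f\ge h$ is an equally valid, essentially equivalent variant.
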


\begin{proof}
The first part follows because $f(x_0) = h(x_0)$, and thus every point $w \in \R^n$ which is distance strictly greater than $\sqrt{2 \gap / \alpha}$ from $x_0$ cannot be optimal for $f$, as such points $w$ satisfy
\begin{align*}
    f(w) = h(w) + \frac{\alpha}{2} \norm{w - x_0}^2 > h(w) + \gap = h(w) + h(x_0) - \inf_{x \in \R^n} h(x) \ge h(x_0) = f(x_0).
\end{align*}
For the second part, note that 
\begin{align}
    \label{eq:bound-on-grad-h}
    \grad f(u) = \grad h(u) + \alpha (u - x_0) \implies \norm{\grad h(u)} \le \norm{\grad f(u)} + \alpha \norm{u - x_0}.
\end{align}
We now focus on bounding $\norm{u - x_0}$.
Recall that if $g : \R^n \to \R$ is a $\mu$-strongly convex function with respect to $\norm{\cdot}$ and $\gopt$ is its global minimum, we have the following for all $x \in \R^n$:
\begin{align*}
    \frac{1}{2 \mu} \norm{\grad g(x)}^2 \ge g(x) - g(\gopt) \ge \frac{\mu}{2} \norm{x - \gopt}^2 \implies \norm{x - \gopt} \le \frac{1}{\mu} \norm{\grad g(x)}.
\end{align*}
Since $f$ is $\alpha$-strongly convex, instantiating the latter with $g \gets f$ and $x \gets u$ yields
\begin{align*}
    \norm{u - \fopt} \le \nu / \alpha.
\end{align*}
Then
\begin{align*}
    \norm{u - x_0} \le \norm{u - \fopt} + \norm{\fopt - x_0} \le \nu / \alpha + \sqrt{2 \gap / \alpha}
\end{align*}
by the first part. We conclude by plugging this back into \eqref{eq:bound-on-grad-h} and using the fact that $\norm{\grad f(u)} \le \nu$ by assumption.
\end{proof}

Next, the following lemma gives some relevant properties of the Fenchel game with added dual regularization; see also Section \ref{subsec:our-results}. Note that the smoothness and strong convexity of $g$ implies that $\gconj$ is strongly convex and differentiable \cite[Thm. 6.11]{orabona2023modern}.

\begin{lemma}[Fenchel game with added dual regularization]
    \label{lem:Fenchel-game-with-reg}
    Let $g : \R^n \to \R$ denote a smooth (in the sense that its gradient is Lipschitz) and strongly convex function.
    For $q \in \R^n$ and $\lambda > 0$, consider $\psi : \R^n \times \R^n \to \R$ defined as follows:
    \begin{align*}
        \psi(x, y) \defeq \inangle{x, y} - \gconj(y) - \lambda \breg{\gconj}{q}{y}.
    \end{align*} 
    Then for any $x \in \R^n$, we have
    \begin{align*}
        \max_{y \in \R^n} \psi(x, y) &=    (1 + \lambda) \cdot g \inparen{\frac{x + \lambda \grad \gconj(q)}{1 + \lambda}}  + C     , \text{ and}  \\
        \argmax_{y \in \R^n} \psi(x, y) &=  \grad g \inparen{\frac{x + \lambda \grad \gconj(q)}{1 + \lambda}}            ,
    \end{align*}
    where $C$ is a quantity with no dependence on $x$.
 \end{lemma}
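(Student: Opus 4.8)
The statement is a direct computation with Fenchel conjugates, so the plan is to expand $\psi(x,\cdot)$, collect the terms depending on $y$, recognize the result as (a shift of) a conjugate evaluation, and then invert via the standard conjugate-subgradient correspondence. First I would write out $\breg{\gconj}{q}{y} = \gconj(y) - \gconj(q) - \inangle{\grad\gconj(q)}{y-q}$, substitute into $\psi(x,y) = \inangle{x,y} - \gconj(y) - \lambda\breg{\gconj}{q}{y}$, and regroup to obtain
\begin{align*}
    \psi(x,y) = \inangle{x + \lambda\grad\gconj(q),\, y} - (1+\lambda)\gconj(y) + C_0,
\end{align*}
where $C_0 = \lambda\gconj(q) - \lambda\inangle{\grad\gconj(q)}{q}$ has no dependence on $x$ or $y$. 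Since $\gconj$ is differentiable (by smoothness/strong convexity of $g$ and \cite[Thm. 6.11]{orabona2023modern}, as noted just before the lemma), and since $\lambda > 0$, the term $(1+\lambda)\gconj$ is a proper closed strongly convex function, so the maximization over $y$ is exactly a conjugate evaluation.

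The key algebraic identity I would invoke is that for a closed convex function $\gconj$ and scalar $c = 1 + \lambda > 0$, the conjugate of $y \mapsto c\,\gconj(y)$ is $\theta \mapsto c\,\gconj^*(\theta/c) = c\, g(\theta/c)$ (using $\gconj{}^* = g^{**} = g$ since $g$ is closed convex). Applying this with $\theta = x + \lambda\grad\gconj(q)$ gives
\begin{align*}
    \max_{y\in\R^n}\psi(x,y) = (1+\lambda)\, g\!\inparen{\frac{x + \lambda\grad\gconj(q)}{1+\lambda}} + C_0,
\end{align*}
which is the first claimed identity with $C \gets C_0$. For the argmax, I would use the standard fact (cf. Lemma~\ref{lem:subdiff-properties-of-conjugate} in the paper) that the maximizer of $\inangle{\theta,y} - c\,\gconj(y)$ over $y$ is characterized by $\theta \in c\,\partial\gconj(y)$, equivalently $y \in \partial(c\,\gconj)^*(\theta) = \partial g(\theta/c)$; since $g$ is differentiable (strong convexity of $\gconj$ implies smoothness of $g$), this subdifferential is the singleton $\grad g(\theta/c)$, giving $\argmax_{y}\psi(x,y) = \grad g\!\inparen{\frac{x+\lambda\grad\gconj(q)}{1+\lambda}}$, the second claimed identity.

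I do not anticipate a genuine obstacle here — the content is entirely bookkeeping with conjugates. The one point requiring mild care is justifying that the maximum is attained and that interchanging the conjugate-of-scaled-function identity is legitimate: this needs $1+\lambda > 0$ (immediate since $\lambda > 0$) and that $\gconj$ is closed, proper, and strongly convex so that $(1+\lambda)\gconj$ is coercive, guaranteeing the supremum over $y$ is a finite attained maximum. All of these follow from the smoothness and strong convexity of $g$ via \cite[Thm. 6.11]{orabona2023modern}, which the paper already cites in the sentence preceding the lemma, so I would simply reference that. The only thing to double-check is that the residual constant $C_0$ indeed has no $x$-dependence, which is visible from its closed form above.
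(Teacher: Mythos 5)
Your proof is correct and follows essentially the same route as the paper's: expand the Bregman divergence, regroup into $\inangle{x+\lambda\grad\gconj(q),y}-(1+\lambda)\gconj(y)$ plus an $x$-independent constant, and conclude via $g^{**}=g$ together with the conjugate--subdifferential correspondence (Lemma~\ref{lem:subdiff-properties-of-conjugate}), with differentiability of $g$ making the argmax a singleton. Your use of the scaling identity $(c\,\gconj)^*(\theta)=c\,g(\theta/c)$ is just a repackaging of the paper's step of factoring $(1+\lambda)$ inside the bracket, so there is nothing further to add.
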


 \begin{proof}
Note that
\begin{align*}
    \psi(x, y) &= \inangle{x, y} - \gconj(y) - \lambda \breg{\gconj}{q}{y} \\
    &= \inangle{x, y} - \gconj(y) - \lambda \inparen{\gconj(y) - \gconj(q) - \inangle{\grad \gconj(q), y - q}} \\
    &= \inangle{x, y} - (1 + \lambda) \gconj(y) + \lambda \gconj(q) + \lambda \inangle{\grad \gconj(q), y - q} \\
    &= - (1 + \lambda) \gconj(y) + \inangle{x + \lambda \grad \gconj(q), y} + C \\
    &= (1 + \lambda) \insquare{\inangle{\frac{x + \lambda \grad \gconj(q)}{1 + \lambda}, y} - \gconj(y)} + C,
\end{align*}
where $C$ is a quantity with no dependence on $x$ or $y$. Both results follow immediately from the fact that $g^{**} = g$ and Lemma~\ref{lem:subdiff-properties-of-conjugate}.
 \end{proof} 
\section{Convex analysis facts and additional technical lemmas}
\label{app:convex-analysis-facts}

In this appendix, we collect some convex analysis definitions and facts used in this paper for ease of reference. All convex functions we consider in this paper are proper and closed:

\begin{definition}[Proper function {\cite[Def. 2.14]{orabona2023modern}}]
A function $f : \R^n \to (- \infty, \infty]$ is proper if it is finite somewhere. (Note that $f$ must never have value $-\infty$.)
\end{definition}

\begin{definition}[Closed function {\cite[Def. 5.3]{orabona2023modern}}]
A function $f : \R^n \to [- \infty, \infty]$ is closed if the sublevel sets $\inbraces{x : f(x) \le \alpha}$ are closed for all $\alpha \in \R$. (See also \cite[Thm. 7.1]{rockafellar1970textbook} for some equivalent conditions.)
\end{definition}

As a result, it will always be the case that $f = f^{**}$ for any convex function $f$ we consider in this paper:

\begin{lemma}[Fenchel-Moreau theorem {\cite[Thm. 5.6]{orabona2023modern}}]
    \label{lem:Fenchel-Moreau}
If $f : \R^n \to (- \infty, \infty]$ is a proper, closed, and convex, then $f^{**} = f$.
\end{lemma}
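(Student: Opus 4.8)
The final statement is the Fenchel--Moreau theorem (Lemma~\ref{lem:Fenchel-Moreau}), so I will treat it as a standard result and sketch the cleanest self-contained argument.

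\textbf{Plan of proof.} The plan is to prove the two inequalities $f^{**} \le f$ and $f^{**} \ge f$ separately. For the easy direction, $f^{**} \le f$, I would start from the Fenchel--Young inequality $\fconj(\theta) \ge \inangle{\theta, x} - f(x)$, which holds for all $\theta, x$ by the very definition of the conjugate as a supremum; rearranging gives $f(x) \ge \inangle{\theta, x} - \fconj(\theta)$ for all $\theta$, and taking the supremum over $\theta$ yields $f(x) \ge f^{**}(x)$. This direction uses neither closedness nor convexity and is a one-line calculation. Note also this immediately shows $f^{**}$ is proper wherever $f$ is finite, and that $\fconj$ is proper (it is never $-\infty$ because $f$ is proper, and it is finite somewhere because otherwise $f^{**} \equiv -\infty$ would contradict $f^{**} \le f$ with $f$ proper; a cleaner route is that $\dom \fconj \ne \emptyset$ follows from the existence of a supporting affine minorant of $f$, which is exactly what the hard direction supplies).

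\textbf{The hard direction.} For $f^{**} \ge f$, I would argue by contradiction at a fixed point $x_0$: suppose $f^{**}(x_0) < f(x_0)$, and pick a real number $\alpha$ strictly between them (if $f(x_0) = +\infty$, pick $\alpha$ arbitrarily large). The point $(x_0, \alpha)$ then lies strictly below the epigraph of $f$, which is a closed convex subset of $\R^{n+1}$ (closed because $f$ is a closed function, convex because $f$ is convex). By the strict separating hyperplane theorem applied to the compact set $\{(x_0,\alpha)\}$ and the closed convex set $\mathrm{epi}\, f$, there is a nonzero vector $(\theta, s) \in \R^{n} \times \R$ and a constant $c$ with $\inangle{\theta, x} + s t \ge c > \inangle{\theta, x_0} + s \alpha$ for all $(x,t) \in \mathrm{epi}\, f$. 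The main obstacle is the usual case analysis on the sign of $s$: one shows $s \ge 0$ (letting $t \to +\infty$), and then either (i) $s > 0$, in which case one normalizes $s = 1$ and directly reads off an affine function $x \mapsto \inangle{-\theta, x} + c$ that minorizes $f$ and exceeds $\alpha$ at $x_0$, giving $\fconj(-\theta) \le -c$ and hence $f^{**}(x_0) \ge \inangle{-\theta, x_0} - \fconj(-\theta) \ge \inangle{-\theta,x_0} + c > \alpha$, a contradiction; or (ii) $s = 0$, the degenerate ``vertical separation'' case, which can only occur when $x_0 \notin \dom f$ (or on its boundary), and is handled by perturbing: one first uses properness to fix any point $x_1$ where $f$ is finite and any genuine affine minorant $\ell$ of $f$ (which exists by applying case (i) at $x_1$), then adds a large multiple of the $s=0$ functional to $\ell$ to obtain an affine minorant of $f$ whose value at $x_0$ exceeds $\alpha$, again contradicting $f^{**}(x_0) \ge \alpha$ fails.

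\textbf{Summary.} In short: the $\le$ direction is Fenchel--Young plus a sup; the $\ge$ direction is separation of $(x_0,\alpha)$ from $\mathrm{epi}\, f$ followed by the $s>0$ / $s=0$ dichotomy, with the $s=0$ case reduced to the $s>0$ case via a properness argument. I expect the bookkeeping in the $s = 0$ subcase to be the fiddly part, but since this is a textbook result I would simply cite \cite[Thm.~5.6]{orabona2023modern} or \cite[Thm.~12.2]{rockafellar1970textbook} rather than reproduce the full separation argument in the paper.
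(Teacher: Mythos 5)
Your proposal is correct: the easy direction via Fenchel--Young and the hard direction via strict separation of $(x_0,\alpha)$ from $\mathrm{epi}\, f$ with the $s>0$ / $s=0$ dichotomy is exactly the standard argument behind the cited result, and the paper itself offers no proof—it simply cites \cite[Thm.~5.6]{orabona2023modern}, which is also your stated endpoint. Aside from a garbled clause at the end of the $s=0$ case (and the loose parenthetical ``or on its boundary,'' since strict separation in fact forces $x_0 \notin \dom f$), there is nothing to fix.
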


The next lemma summarizes some useful properties related to $\partial f$ and $\partial \fconj$:

\begin{lemma}[Subdifferential properties of the convex conjugate {\cite[Thm. 5.7]{orabona2023modern}}]
    \label{lem:subdiff-properties-of-conjugate}
Let $f : \R^n \to (- \infty, \infty]$ be proper. Then the following conditions are equivalent for $x, \theta \in \R^n$:
\begin{enumerate}[label=(\alph*)]
    \item $\theta \in \partial f(x)$.

    \item $\inangle{\theta, y} - f(y)$ achieves its supremum in $y$ at $y = x$.

    \item $f(x) + \fconj(\theta) = \inangle{\theta, x}$.
\end{enumerate}
Moreover, if $f$ is also convex and closed, we have an additional equivalent condition:
\begin{enumerate}
    \item[(d)] $x \in \partial \fconj(\theta)$. 
\end{enumerate}
\end{lemma}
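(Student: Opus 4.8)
The final statement to prove is Lemma~\ref{lem:subdiff-properties-of-conjugate} (Subdifferential properties of the convex conjugate), which asserts the equivalence of conditions (a)--(c) for a proper function $f$, together with (d) when $f$ is also closed and convex.

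\medskip

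\textbf{Plan.} The cleanest route is a cyclic chain of implications (a) $\Rightarrow$ (b) $\Rightarrow$ (c) $\Rightarrow$ (a), followed by the separate equivalence (c) $\Leftrightarrow$ (d) under the extra hypotheses. First I would unpack the definition: $\theta \in \partial f(x)$ means $f(y) \ge f(x) + \inangle{\theta, y - x}$ for all $y \in \R^n$, which I would immediately rearrange to $\inangle{\theta, y} - f(y) \le \inangle{\theta, x} - f(x)$ for all $y$. That inequality is exactly the statement that the map $y \mapsto \inangle{\theta, y} - f(y)$ is maximized at $y = x$, giving (a) $\Rightarrow$ (b) (and in fact the reverse reading gives (b) $\Rightarrow$ (a) just as directly, so these two are trivially equivalent). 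For (b) $\Rightarrow$ (c): if the supremum of $y \mapsto \inangle{\theta,y} - f(y)$ is attained at $x$, then by definition of the Fenchel conjugate $\fconj(\theta) = \sup_y \inbraces{\inangle{\theta, y} - f(y)} = \inangle{\theta, x} - f(x)$, which rearranges to $f(x) + \fconj(\theta) = \inangle{\theta, x}$; note one must check $f(x)$ is finite here, which follows since $\fconj(\theta)$ is finite at the attained supremum (and $f$ proper rules out $f(x) = -\infty$). For (c) $\Rightarrow$ (a): starting from $f(x) + \fconj(\theta) = \inangle{\theta, x}$ and using the Fenchel--Young inequality $f(y) + \fconj(\theta) \ge \inangle{\theta, y}$ for all $y$, subtracting gives $f(y) - f(x) \ge \inangle{\theta, y - x}$, i.e.\ $\theta \in \partial f(x)$.

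\medskip

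For the last equivalence (c) $\Leftrightarrow$ (d) when $f$ is proper, closed, and convex: condition (c) reads $f(x) + \fconj(\theta) = \inangle{\theta, x}$, which is symmetric in the pair $(f, x)$ and $(\fconj, \theta)$ once we know $\fconj$ is itself proper, closed, convex and that $f^{**} = f$ (Lemma~\ref{lem:Fenchel-Moreau}). So I would apply the already-proven equivalence (a) $\Leftrightarrow$ (c) to the function $\fconj$ in place of $f$, at the point $\theta$ with subgradient $x$: this says $x \in \partial \fconj(\theta)$ iff $\fconj(\theta) + \fconj^*(x) = \inangle{x, \theta}$, and $\fconj^* = f^{**} = f$ by Fenchel--Moreau, so the right-hand side becomes $\fconj(\theta) + f(x) = \inangle{\theta, x}$, which is exactly (c). Hence (d) $\Leftrightarrow$ (c).

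\medskip

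\textbf{Expected main obstacle.} The implications themselves are essentially one-line rearrangements; the only real care needed is bookkeeping about finiteness and the $\pm\infty$ conventions --- ensuring, for instance, that when the supremum defining $\fconj(\theta)$ is attained at $x$ one genuinely has $f(x) \in \R$ (so that ``$f(x) + \fconj(\theta) = \inangle{\theta,x}$'' is a statement about real numbers rather than an indeterminate form), and that properness of $f$ is invoked to exclude $f(x) = -\infty$. The other point requiring a cited fact rather than a direct argument is that $\fconj$ is proper, closed, and convex and that $f^{**} = f$, both of which are supplied by Lemma~\ref{lem:Fenchel-Moreau} and standard conjugacy facts, so the argument for (d) goes through by simply applying the (a)$\Leftrightarrow$(c) equivalence to $\fconj$. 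Since all the heavy lifting is delegated to Fenchel--Young and Fenchel--Moreau, I anticipate no substantive difficulty beyond this careful handling of extended-real-valued arithmetic.
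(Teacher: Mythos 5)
Your proof is correct; the paper does not prove this lemma itself but simply cites it as a standard fact (\cite[Thm.\ 5.7]{orabona2023modern}), and your argument --- the cyclic chain (a)$\Rightarrow$(b)$\Rightarrow$(c)$\Rightarrow$(a) via the subgradient inequality and Fenchel--Young, then (c)$\Leftrightarrow$(d) by applying the same equivalence to $\fconj$ together with $f^{**}=f$ from Lemma~\ref{lem:Fenchel-Moreau} --- is exactly the standard textbook proof underlying that citation. Your attention to the extended-real bookkeeping (properness excluding $f(x)=-\infty$, finiteness of $\fconj(\theta)$ before subtracting, and properness of $\fconj$ when invoking the equivalence for the conjugate) is the only delicate point, and you handle it adequately.
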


We have the following useful optimality conditions:

\begin{lemma}[Optimality condition for subdifferential {\cite[Thm. 6.12]{orabona2023modern}}]
    \label{lem:subdiff-optimality-cond}
Let $f : \R^n \to (- \infty, \infty]$ be proper. Then $\xopt \in \argmin_{x \in \R^n} f(x)$ if and only if $0 \in \partial f(\xopt)$.
\end{lemma}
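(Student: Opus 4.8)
The plan is simply to unwind the definition of the subdifferential; this is Fermat's rule, and essentially no work is required. Recall that for a proper function $f$ and a point $\xopt$ with $f(\xopt) < \infty$, we have $\theta \in \partial f(\xopt)$ if and only if $f(y) \ge f(\xopt) + \inner{\theta}{y - \xopt}$ for all $y \in \R^n$. First I would observe that properness of $f$ ensures the statement is meaningful at a minimizer: since $f$ is finite somewhere, $\inf_x f(x) < \infty$, and since $f$ never takes the value $-\infty$, any $\xopt \in \argmin_x f(x)$ satisfies $f(\xopt) \in \R$, so $\partial f(\xopt)$ is well-defined at $\xopt$.

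For the forward direction, suppose $\xopt \in \argmin_x f(x)$. Then $f(y) \ge f(\xopt)$ for every $y \in \R^n$, which is precisely the inequality $f(y) \ge f(\xopt) + \inner{0}{y - \xopt}$; hence $0 \in \partial f(\xopt)$. For the reverse direction, suppose $0 \in \partial f(\xopt)$. Then by the defining inequality of the subdifferential, $f(y) \ge f(\xopt) + \inner{0}{y - \xopt} = f(\xopt)$ for all $y \in \R^n$, so $\xopt$ minimizes $f$ over $\R^n$.

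The main obstacle is essentially nonexistent: the equivalence follows immediately once the defining inequality of $\partial f$ is written out with $\theta = 0$, since $\inner{0}{y - \xopt} = 0$. The only point meriting a word of care is the appeal to properness to guarantee that $f(\xopt)$ is finite at a minimizer, so that the subdifferential is being evaluated at a point in the effective domain of $f$.
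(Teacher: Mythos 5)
Your proof is correct: this is exactly Fermat's rule, and the paper itself states this lemma as a cited fact (Orabona, Thm.\ 6.12) without proof, the cited proof being the same one-line unwinding of the subdifferential inequality with $\theta = 0$ that you give. Your remark on properness guaranteeing $f(\xopt)$ is finite is the right (and only) point of care.
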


\begin{lemma}[First-order optimality condition {\cite[Thm. 2.8]{orabona2023modern}}]
    \label{lem:first-order-optimality}
Let $V$ be a convex and nonempty set, and let $f$ be a convex function, differentiable over an open set which contains $V$. Then $\xopt \in \argmin_{x \in V} f(x)$ if and only if $\inangle{\grad f(\xopt), y - \xopt} \ge 0$ for all $y \in V$.
\end{lemma}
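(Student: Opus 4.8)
The statement to prove is the First-order optimality condition (Lemma~\ref{lem:first-order-optimality}): for a convex function $f$ differentiable on an open set containing a convex nonempty set $V$, a point $\xopt \in V$ minimizes $f$ over $V$ if and only if $\inangle{\grad f(\xopt), y - \xopt} \ge 0$ for all $y \in V$.

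\textbf{Proof plan.} The plan is to prove the two implications separately, using convexity in one direction and a first-order Taylor/directional-derivative argument in the other. For the ``if'' direction, suppose $\inangle{\grad f(\xopt), y - \xopt} \ge 0$ for all $y \in V$. Since $f$ is convex and differentiable on an open set containing $V$, the gradient inequality gives $f(y) \ge f(\xopt) + \inangle{\grad f(\xopt), y - \xopt}$ for every $y \in V$; combined with the hypothesis, the inner product term is nonnegative, so $f(y) \ge f(\xopt)$ for all $y \in V$, i.e., $\xopt$ is a minimizer. For the ``only if'' direction, suppose $\xopt$ minimizes $f$ over $V$ but, for contradiction, there is some $y \in V$ with $\inangle{\grad f(\xopt), y - \xopt} < 0$. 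By convexity of $V$, the segment $x_t \defeq \xopt + t(y - \xopt)$ lies in $V$ for $t \in [0,1]$, and for small $t$ it lies in the open set where $f$ is differentiable. Consider $g(t) \defeq f(x_t)$; then $g$ is differentiable near $0$ with $g'(0) = \inangle{\grad f(\xopt), y - \xopt} < 0$. Hence for all sufficiently small $t > 0$ we have $g(t) < g(0)$, i.e., $f(x_t) < f(\xopt)$ with $x_t \in V$, contradicting the optimality of $\xopt$. This establishes the contrapositive, completing the equivalence.

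\textbf{Main obstacle.} There is no real obstacle here — this is a classical fact and the proof is short. The only points requiring mild care are (i) invoking the gradient inequality for convex differentiable functions, which is standard and can be cited or taken as known, and (ii) ensuring the segment $x_t$ stays inside the open neighborhood on which $f$ is differentiable for small enough $t$, which follows since that neighborhood is open and contains $\xopt$. One could alternatively present the ``only if'' direction directly via the gradient inequality evaluated at $\xopt$ in the role of the base point: for any $y \in V$ and $t \in (0,1]$, convexity gives $f(x_t) \le (1-t) f(\xopt) + t f(y)$, while differentiability gives $f(x_t) = f(\xopt) + t \inangle{\grad f(\xopt), y-\xopt} + o(t)$; subtracting $f(\xopt)$, dividing by $t$, and letting $t \to 0^+$ yields $\inangle{\grad f(\xopt), y - \xopt} \ge f(\xopt) - f(\xopt) \ge 0$ after using $f(y) \ge f(\xopt)$. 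Either route is routine; I would likely pick the directional-derivative contradiction argument for its brevity and then simply cite \cite[Thm. 2.8]{orabona2023modern} as the reference, since the statement is quoted from there.
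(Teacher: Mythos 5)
Your main argument is correct and complete: the ``if'' direction via the gradient inequality and the ``only if'' direction via the directional-derivative contradiction together establish the equivalence, and you handle the only delicate points (validity of the gradient inequality on $V$, and the segment staying in the open set of differentiability for small $t$) appropriately. For comparison, the paper does not prove this lemma at all — it is stated in the convex-analysis appendix purely as a cited fact from \cite[Thm. 2.8]{orabona2023modern} — so your write-up supplies a self-contained verification where the paper relies on a reference; both are reasonable, with the citation buying brevity and your proof buying self-containedness. One small caveat about your parenthetical ``alternative route'': as written, combining the convexity upper bound $f(x_t) \le (1-t)f(\xopt) + t f(y)$ with the expansion $f(x_t) = f(\xopt) + t\inangle{\grad f(\xopt), y - \xopt} + o(t)$ yields, after dividing by $t$ and letting $t \to 0^+$, the gradient inequality $\inangle{\grad f(\xopt), y - \xopt} \le f(y) - f(\xopt)$, which is an upper bound and does not by itself give the desired nonnegativity; the correct direct argument instead uses optimality, $f(x_t) \ge f(\xopt)$ for $x_t \in V$, together with the expansion, to get $t\inangle{\grad f(\xopt), y - \xopt} + o(t) \ge 0$ and hence the claim. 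Since you explicitly adopt the contradiction argument rather than this aside, the proof you commit to is sound.
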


Under mild conditions, the subdifferential of a sum of functions is the Minkowski sum of the individual subdifferentials:

\begin{lemma}[Subdifferential of sum of functions {\cite[Thm. 2.18]{orabona2023modern}} or {\cite[Def. 9.12, Cor. 16.39]{bauschke2011monotone}}]
    \label{lem:subdiff-sum-of-funcs}
    Let $f_1, \dots, f_m : \R^n \to (- \infty, \infty]$ be proper functions, and let $f \defeq f_1 + \dots + f_m$. Then 
    $
        \partial f_1(x) + \dots + \partial f_m(x) \subseteq \partial f(x)
    $
    for all $x \in \R^n$. If additionally the functions $f_1, \dots, f_m$ are convex, closed, and $\dom f_m \cap \bigcap_{i = 1}^{m - 1} \interior \dom f_i \ne \emptyset$, then $\partial f_1(x) + \dots + \partial f_m(x) = \partial f(x)$ for all $x \in \R^n$.
\end{lemma}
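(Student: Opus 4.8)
The plan is to dispatch the easy inclusion directly and then prove the equality under the constraint qualification by a separating-hyperplane argument (the Moreau--Rockafellar theorem). For $\partial f_1(x)+\cdots+\partial f_m(x)\subseteq\partial f(x)$ I would simply take $g_i\in\partial f_i(x)$, add the subgradient inequalities $f_i(y)\ge f_i(x)+\inner{g_i}{y-x}$ over $i$, and read off that $\sum_i g_i\in\partial f(x)$; this needs no hypotheses beyond properness. For the reverse inclusion I would first reduce to $m=2$ by induction on $m$: splitting $f=(f_1+\cdots+f_{m-1})+f_m$, note $\interior\dom(f_1+\cdots+f_{m-1})=\bigcap_{i=1}^{m-1}\interior\dom f_i$, so the stated hypothesis is exactly the two-function qualification for this split, and any witnessing point also witnesses the qualification needed to apply the inductive hypothesis to $f_1+\cdots+f_{m-1}$.

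For the base case, given $g\in\partial f(x)$ I would translate and tilt, replacing $f_1$ by $y\mapsto f_1(x+y)-f_1(x)-\inner{g}{y}$ and $f_2$ by $y\mapsto f_2(x+y)-f_2(x)$; both the qualification and the conclusion are invariant, so it suffices to show: if $0$ minimizes $f_1+f_2$ (finite at $0$) and $\dom f_2\cap\interior\dom f_1\ne\emptyset$, then some $g'$ satisfies $g'\in\partial f_1(0)$ and $-g'\in\partial f_2(0)$. I would produce $g'$ by working in $\R^n\times\R$ with $C_1\defeq\{(y,t):t\ge f_1(y)-f_1(0)\}$ (the shifted epigraph of $f_1$) and $C_2\defeq\{(y,t):t\le f_2(0)-f_2(y)\}$. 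These are convex, both contain $(0,0)$, and $\interior C_1\ne\emptyset$ since $\interior\dom f_1\ne\emptyset$ (here I use that a finite convex function is continuous on the interior of its domain). If $(y,t)\in\interior C_1\cap C_2$ then $f_1(y)-f_1(0)<t\le f_2(0)-f_2(y)$, forcing $(f_1+f_2)(y)<(f_1+f_2)(0)$ with $y\in\dom f_1\cap\dom f_2$, contradicting minimality of $0$; hence $\relint C_1\cap\relint C_2=\emptyset$ and there is a proper separating affine functional $(y,t)\mapsto\inner{a}{y}+bt$, vanishing at $(0,0)$.

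Letting $t\to+\infty$ inside $C_1$ forces $b\ge0$. The key step — and the one I expect to be the main obstacle — is ruling out $b=0$: if $b=0$ then $\inner{a}{\cdot}\ge0$ on $\dom f_1$ and $\le0$ on $\dom f_2$, and evaluating at a qualification point $y_0\in\interior\dom f_1\cap\dom f_2$ forces $a=0$, contradicting properness of the separation. So $b>0$; normalizing $b=1$ and evaluating the functional on the graph points $(y,f_1(y)-f_1(0))\in C_1$ and $(y,f_2(0)-f_2(y))\in C_2$ yields $f_1(y)\ge f_1(0)+\inner{-a}{y}$ and $f_2(y)\ge f_2(0)+\inner{a}{y}$, i.e.\ $-a\in\partial f_1(0)$ and $a\in\partial f_2(0)$; take $g'\defeq-a$. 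Undoing the tilt and shift gives the desired decomposition of the original subgradient $g$, and the induction then delivers the general $m$. Everything except the non-verticality step is routine epigraph bookkeeping; that step is precisely where the constraint qualification enters, and without it the equality can genuinely fail, so it is the crux. An alternative I would keep in reserve is the Fenchel-conjugate route: $(f_1+f_2)^*$ equals the infimal convolution of $f_1^*$ and $f_2^*$ with the infimum attained under the qualification, after which the decomposition drops out of the equality case of Fenchel--Young; but since that attainment statement itself rests on a separation theorem, I would present the direct argument above.
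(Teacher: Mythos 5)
Your proposal is correct. Note that the paper does not prove this lemma at all: it is collected in the appendix as a standard convex-analysis fact and attributed to the literature (Orabona; Bauschke--Combettes), where the stated equality is exactly the Moreau--Rockafellar sum rule. What you have written is, in essence, the classical proof of that theorem: the easy inclusion by adding subgradient inequalities; reduction to $m=2$ via the identity $\interior\dom(f_1+\dots+f_{m-1})=\bigcap_{i=1}^{m-1}\interior\dom f_i$ (valid since interior commutes with finite intersections and $\dom$ of a sum is the intersection of domains, and the qualification point survives both the split and the inductive call); and the base case by translating/tilting so that $0$ minimizes the sum, separating the epigraph of the first function from the hypograph of the negative of the second, pinning the separation level at $0$ because $(0,0)$ lies in both sets, and using the qualification point to rule out a vertical separating hyperplane. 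All the steps check out: $\relint C_1=\interior C_1$ since $C_1$ is full-dimensional (continuity of a convex function on $\interior\dom f_1$), so disjointness of $\interior C_1$ and $C_2$ does give proper separation; $b\ge 0$ from the epigraph direction; and the $b=0$ case genuinely collapses to $(a,b)=0$ at the qualification point, contradicting the separation. Two cosmetic remarks: in the displayed subgradient inequalities near the end you write $f_1,f_2$ where you mean the shifted/tilted functions (the undoing step you describe fixes this), and your argument never actually uses closedness of the $f_i$, which is consistent with the cited references --- the sum rule needs only properness, convexity, and the qualification, so you are proving a slightly stronger statement than the lemma as stated.
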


The following lemma connects the subdifferential to the gradient:

\begin{lemma}[Connecting the subdifferential and the gradient {\cite[Thm. 25.1]{rockafellar1970textbook}}]
    \label{lem:connect-subdiff-grad}
Let $f : \R^n \to [-\infty, \infty]$ be convex, and let $x$ be a point where $f$ is finite. If $f$ is differentiable at $x$, then $\grad f(x)$ is the unique subgradient of $f$ at $x$. Conversely, if $f$ has a unique subgradient at $x$, then $f$ is differentiable at $x$.
\end{lemma}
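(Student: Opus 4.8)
The two implications are handled separately; the forward direction is routine and the converse carries the real content.

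For the forward direction, suppose $f$ is differentiable at $x$. That $\grad f(x)\in\partial f(x)$ is just the first-order characterization of convexity: applying convexity along the segment from $x$ to $y$ and letting the parameter tend to $0$ gives $f(y)\ge f(x)+\langle\grad f(x),y-x\rangle$ for all $y\in\R^n$. For uniqueness, take any $g\in\partial f(x)$, fix a direction $v$, substitute $y=x+tv$ into the subgradient inequality, divide by $t>0$, and let $t\to0^+$; differentiability identifies the limit of the difference quotient with $\langle\grad f(x),v\rangle$, so $\langle\grad f(x),v\rangle\ge\langle g,v\rangle$. Using this with $v$ and with $-v$ forces $g=\grad f(x)$.

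For the converse, suppose $\partial f(x)=\{g_0\}$ is a singleton. The first step is to show $x\in\interior(\dom f)$: if $x$ were a boundary point of the convex set $\dom f$, a supporting hyperplane would make the normal cone $N_{\dom f}(x)$ contain a nonzero ray, and for $g\in\partial f(x)$, $n\in N_{\dom f}(x)$ one checks $g+n\in\partial f(x)$ (the subgradient inequality is vacuous off $\dom f$, and $\langle n,y-x\rangle\le0$ on $\dom f$), contradicting that $\partial f(x)$ is a singleton. Hence $f$ is finite and locally Lipschitz near $x$; all one-sided directional derivatives $f'(x;v)$ exist and are finite, and $f'(x;\cdot)$ is the support function of $\partial f(x)$, so $f'(x;v)=\langle g_0,v\rangle$ for every $v$. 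In particular each partial derivative $\partial_i f(x)=(g_0)_i$ exists.

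The remaining step---upgrading ``all partials exist'' to Fr\'echet differentiability---is where I expect the main obstacle, and it is exactly the point at which convexity is indispensable. Translating so that $x=0$, $f(0)=0$, and subtracting the linear map $y\mapsto\langle g_0,y\rangle$, we may assume $g_0=0$, so $f(se_i)/s\to0$ as $s\to0$ for each $i$. For $h=(h_1,\dots,h_n)$, Jensen's inequality applied to $h=\tfrac1n\sum_i(n h_i e_i)$ gives $f(h)\le\tfrac1n\sum_i f(n h_i e_i)=\tfrac1n\sum_i o(\|h\|)=o(\|h\|)$ as $h\to0$; and convexity with $f(0)=0$ gives $f(h)\ge-f(-h)\ge-o(\|h\|)$. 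Hence $f(h)=o(\|h\|)$, i.e.\ $f$ is differentiable at $0$ with gradient $0$, which (after undoing the reductions) is the claim.
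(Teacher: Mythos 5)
Your proof is correct. The paper does not prove this lemma at all---it is quoted directly from Rockafellar, Theorem~25.1---and your argument is essentially the standard textbook proof of that theorem: the forward direction via one-sided difference quotients, and the converse via (i) the normal-cone argument forcing $x \in \interior (\dom f)$ (so uniqueness of the subgradient rules out boundary points of $\dom f$), (ii) the identification of $f'(x;\cdot)$ with the support function of $\partial f(x)$, which is the one nontrivial imported ingredient (Rockafellar Thm.~23.4) and is legitimately invoked since $x \in \interior(\dom f)$, and (iii) the Jensen/coordinate-direction argument upgrading existence of all partial derivatives to full differentiability (Rockafellar Thm.~25.2); each step, including the final $o(\norm{h})$ estimate, checks out.
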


We now collect some useful facts regarding the convex conjugate of negative entropy. In the following lemma, recall from Section \ref{sec:notation-and-assumptions} that $r_{\simplex^n} : \R^n \to (-\infty, \infty]$ denotes $r + \indc_{\simplex^n}$, and $\rconj_{\simplex^n} : \R^n \to \R$ denotes the convex conjugate of $r_{\simplex^n}$ (and not $\rconj$ restricted to $\simplex^n$). Recall also that given a sequence of vectors $y_1, y_2, \dots$, we let $[y_j]_i$ denote the $i$-th entry of $y_j$.

\begin{lemma}[Convex conjugate of negative entropy facts]
    \label{lem:convex-conjugate-neg-entropy}
    Define $r : \R_{\ge 0}^n \to \R$ via $r(y) \defeq \sum_{i = 1}^n y_i \ln y_i$ (with $0 \ln 0 \defeq 0$). 
    Then:
\begin{align*}
    \rconj_{\simplex^n}(\theta) &= \ln \inparen{\sum_{i = 1}^n \exp(\theta_i)}, \\
    \grad \rconj_{\simplex^n}(\theta) &= \frac{1}{\sum_{i = 1}^n \exp(\theta_i)} \inparen{\exp(\theta_1), \dots, \exp (\theta_n)}.
\end{align*}
In particular, given some points $y_0, \dots, y_{k - 1} \in \simplex^n_{>0}$ and constants $\lambda_0, \dots, \lambda_{k - 1} > 0$ for $k \in \N$, and letting $\Lambda_k \defeq \sum_{j = 0}^{k - 1} \lambda_j$,
we have for $i \in [n]$:
\begin{align*}
    \insquare{ \grad \rconj_{\simplex^n} \inparen{\frac{1}{\Lambda_k} \sum_{j = 0}^{k - 1} \lambda_j \grad r(y_j)} }_i
    = \frac{\prod_{j = 0}^{k - 1} [y_j]_i^{\lambda_j / \Lambda_k}}{\sum_{\ell = 1}^n \prod_{j = 0}^{k - 1} [y_j]_\ell^{\lambda_j / \Lambda_k}} .
\end{align*}
\end{lemma}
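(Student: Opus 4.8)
\textbf{Proof plan for Lemma~\ref{lem:convex-conjugate-neg-entropy}.}

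The plan is to compute $\rconj_{\simplex^n}$ directly from the definition of the convex conjugate, then differentiate to obtain $\grad \rconj_{\simplex^n}$, and finally substitute the specific argument $\frac{1}{\Lambda_k}\sum_{j=0}^{k-1}\lambda_j \grad r(y_j)$ and simplify. First I would unpack $\rconj_{\simplex^n}(\theta) = \sup_{y \in \simplex^n}\inbraces{\inangle{\theta, y} - \sum_{i=1}^n y_i \ln y_i}$, which is a smooth strictly concave maximization over the simplex. Introducing a Lagrange multiplier for the constraint $\sum_i y_i = 1$ (and noting the maximizer lies in $\simplex^n_{>0}$ since $y_i \ln y_i$ has infinite slope at $0$, so the nonnegativity constraints are inactive), the stationarity condition $\theta_i - \ln y_i - 1 = c$ gives $y_i \propto \exp(\theta_i)$, hence $y_i = \exp(\theta_i)/\sum_\ell \exp(\theta_\ell)$. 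Plugging this back in yields $\rconj_{\simplex^n}(\theta) = \ln\inparen{\sum_{i=1}^n \exp(\theta_i)}$ after the $y_i \ln y_i$ terms telescope against $\inangle{\theta, y}$. (Alternatively one can cite Lemma~\ref{lem:subdiff-properties-of-conjugate} together with the known log-sum-exp/entropy conjugate pair, but the direct computation is self-contained.)

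Next I would differentiate $\rconj_{\simplex^n}(\theta) = \ln\inparen{\sum_{i=1}^n \exp(\theta_i)}$ coordinatewise: $\partial_i \ln\inparen{\sum_\ell \exp(\theta_\ell)} = \exp(\theta_i)/\sum_\ell \exp(\theta_\ell)$, giving the stated softmax formula for $\grad \rconj_{\simplex^n}(\theta)$. This is just the chain rule and requires no subtlety.

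For the final identity, I would substitute $\theta = \frac{1}{\Lambda_k}\sum_{j=0}^{k-1}\lambda_j \grad r(y_j)$. Since $r(y) = \sum_{i=1}^n y_i \ln y_i$, we have $[\grad r(y_j)]_i = \ln([y_j]_i) + 1$, so $\theta_i = \frac{1}{\Lambda_k}\sum_{j=0}^{k-1}\lambda_j(\ln[y_j]_i + 1) = \ln\inparen{\prod_{j=0}^{k-1}[y_j]_i^{\lambda_j/\Lambda_k}} + 1$, using $\sum_j \lambda_j/\Lambda_k = 1$. Then $\exp(\theta_i) = e \cdot \prod_{j=0}^{k-1}[y_j]_i^{\lambda_j/\Lambda_k}$, and the common factor $e$ cancels between numerator and denominator of the softmax, yielding exactly $\frac{\prod_{j=0}^{k-1}[y_j]_i^{\lambda_j/\Lambda_k}}{\sum_{\ell=1}^n \prod_{j=0}^{k-1}[y_j]_\ell^{\lambda_j/\Lambda_k}}$. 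I do not anticipate a genuine obstacle here; the only point requiring a moment's care is justifying that the supremum defining $\rconj_{\simplex^n}$ is attained in the relative interior $\simplex^n_{>0}$ so that the Lagrangian stationarity argument (or equivalently differentiating under the constraint) is valid — this follows from the steepness of $t \mapsto t\ln t$ at $t = 0$, exactly as in the well-definedness arguments for the simplex dgf setup used elsewhere in the paper.
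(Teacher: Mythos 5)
Your proof is correct and follows essentially the same route as the paper: the final identity is obtained by exactly the paper's computation ($[\grad r(y_j)]_i = \ln[y_j]_i + 1$, so $\exp(\theta_i) = e\cdot\prod_j [y_j]_i^{\lambda_j/\Lambda_k}$ and the factor $e$ cancels in the softmax). The only difference is that you derive the log-sum-exp conjugate and its softmax gradient directly via a Lagrangian/interior-maximizer argument, whereas the paper simply cites this standard fact; your self-contained derivation is valid.
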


\begin{proof}
    The first part is the standard result that the convex conjugate of negative entropy is the log-sum-exp function (see \cite[Sec. 6.6]{orabona2023modern} or \cite[Example 3.25]{boyd2004convexopttextbook}). For the last claim, we have for $i \in [n]$:
    \begin{align*}
        \insquare{\frac{1}{\Lambda_k} \sum_{j = 0}^{k - 1} \lambda_j \grad r(y_j)}_i &= \frac{1}{\Lambda_k} \sum_{j = 0}^{k - 1} \lambda_j (\ln [y_j]_i + 1), \\
        \implies \exp \inparen{ \insquare{\frac{1}{\Lambda_k} \sum_{j = 0}^{k - 1} \lambda_j \grad r(y_j)}_i} &= e \cdot \prod_{j = 0}^{k - 1} [y_j]_i^{\lambda_j / \Lambda_k} .
    \end{align*}
    The result follows from the expression for $\grad \rconj_{\simplex^n}$.
\end{proof}

Next, we recall a basic fact about KL divergence:

\begin{lemma}[Bound on KL divergence from uniform {\cite[Sec. 2.1.1]{duchi2023infotheorylecturenotes}}]
    \label{lem:bound-on-KL-from-uniform}
Letting $\breg{}{u}{w} \defeq \sum_{i = 1}^n w_i \ln \frac{w_i}{u_i}$ for $u \in \simplex^n_{>0}$ and $w \in \simplex^n$, we have that $\breg{}{\frac{1}{n} \ones }{w} \le \ln n$ for all $w \in \simplex^n$.
\end{lemma}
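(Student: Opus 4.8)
The statement to prove is Lemma~\ref{lem:bound-on-KL-from-uniform}: for $\breg{}{u}{w} \defeq \sum_{i=1}^n w_i \ln \frac{w_i}{u_i}$ with $u \in \simplex^n_{>0}$ and $w \in \simplex^n$, we have $\breg{}{\tfrac{1}{n}\ones}{w} \le \ln n$ for all $w \in \simplex^n$. The plan is to simply expand the definition with $u = \tfrac{1}{n}\ones$ and bound the resulting expression directly.

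First I would write out $\breg{}{\tfrac{1}{n}\ones}{w} = \sum_{i=1}^n w_i \ln \frac{w_i}{1/n} = \sum_{i=1}^n w_i \ln (n w_i) = \ln n \cdot \sum_{i=1}^n w_i + \sum_{i=1}^n w_i \ln w_i = \ln n + \sum_{i=1}^n w_i \ln w_i$, using that $\sum_i w_i = 1$ since $w \in \simplex^n$ (and adopting the convention $0 \ln 0 = 0$). The second step is to observe that the negative entropy term is nonpositive: $\sum_{i=1}^n w_i \ln w_i \le 0$ because each $w_i \in [0,1]$ implies $\ln w_i \le 0$, hence $w_i \ln w_i \le 0$ for every $i$ (with the $w_i = 0$ terms contributing $0$ by convention). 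Combining, $\breg{}{\tfrac{1}{n}\ones}{w} = \ln n + \sum_{i=1}^n w_i \ln w_i \le \ln n$, which is exactly the claim.

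There is essentially no obstacle here — this is a one-line computation once the convention $0 \ln 0 = 0$ is invoked to handle boundary points of the simplex. The only thing to be slightly careful about is that $w$ is allowed to lie on the boundary of $\simplex^n$ (only $u$ is required to be in the relative interior), so the argument of $\ln$ in the divergence can be zero; the standard convention $w_i \ln \frac{w_i}{u_i} = 0$ when $w_i = 0$ (which is what makes the KL divergence well-defined and is already used throughout the paper, e.g. in Definition~\ref{ex:simplex}) makes every step above valid. Alternatively, one can simply cite \cite[Sec.~2.1.1]{duchi2023infotheorylecturenotes} as the paper does, but the self-contained computation above is short enough to include in full.
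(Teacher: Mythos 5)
Your computation is correct and complete: expanding $\breg{}{\tfrac{1}{n}\ones}{w} = \ln n + \sum_i w_i \ln w_i$ and noting that negative entropy is nonpositive (with the $0\ln 0 = 0$ convention handling boundary points) is exactly the standard argument, and it is the same one underlying the cited reference. The paper itself offers no proof and simply cites \cite[Sec.~2.1.1]{duchi2023infotheorylecturenotes}, so your short self-contained derivation is a fine substitute with nothing missing.
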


Finally, we bound the Lipschitz constant of a particular function.

\begin{lemma}
    \label{lem:bounding-Lipschitz-dual-matrix-games}
Define $\phi : \R^n \to \R$ via $\phi(y) \defeq \min_{x \in B^d} x^\top A y$, where each column of $A \in \R^{d \times n}$ has Euclidean norm at most 1. Then $\phi$ is 1-Lipschitz with respect to $\norm{\cdot}_1$.
\end{lemma}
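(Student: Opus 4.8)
The plan is to reduce the claim to the elementary fact that a pointwise minimum of $1$-Lipschitz functions is $1$-Lipschitz. First I would fix $x \in B^d$ and observe that the map $y \mapsto x^\top A y$ is linear with gradient $A^\top x$, hence $\norm{A^\top x}_\infty$-Lipschitz with respect to $\norm{\cdot}_1$. Since the $j$-th coordinate of $A^\top x$ equals $\inangle{A_{:j}, x}$, Cauchy--Schwarz together with the hypothesis $\norm{A_{:j}}_2 \le 1$ gives $\abs{[A^\top x]_j} \le \norm{A_{:j}}_2\,\norm{x}_2 \le 1$ for every $j$, so $\norm{A^\top x}_\infty \le 1$. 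Thus each $y \mapsto x^\top A y$ is $1$-Lipschitz in $\norm{\cdot}_1$.

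Now, since $B^d$ is compact and the objective is continuous, the minimum defining $\phi$ is attained; fix $y, y' \in \R^n$ and let $x \in B^d$ attain $\phi(y') = x^\top A y'$. Then
\[
\phi(y) - \phi(y') \;\le\; x^\top A y - x^\top A y' \;=\; (A^\top x)^\top (y - y') \;\le\; \norm{A^\top x}_\infty \,\norm{y - y'}_1 \;\le\; \norm{y - y'}_1 ,
\]
and exchanging the roles of $y$ and $y'$ yields $\abs{\phi(y) - \phi(y')} \le \norm{y - y'}_1$, which is the claim.

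There is essentially no obstacle here; the only mild subtlety is that $\phi$ is defined through a minimization that one should note is attained (by compactness of $B^d$), after which the argument is immediate. As an alternative route, one may instead use the closed form $\phi(y) = -\norm{Ay}_2$ --- obtained by minimizing the linear functional $x \mapsto \inangle{Ay, x}$ over the unit ball --- and combine the reverse triangle inequality for $\norm{\cdot}_2$ with $\norm{A(y - y')}_2 = \norm{\sum_{j} (y_j - y'_j) A_{:j}}_2 \le \sum_j \abs{y_j - y'_j}\,\norm{A_{:j}}_2 \le \norm{y - y'}_1$; I would present whichever version fits the surrounding text better.
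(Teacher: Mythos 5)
Your proof is correct, and your primary route differs from the paper's. The paper first rewrites $\phi(y) = -\norm{Ay}_2$, then invokes the subdifferential chain rule $\partial\phi(y) = -A^\top \partial g(Ay)$ for $g = \norm{\cdot}_2$ (citing a standard composition result), bounds every subgradient of $g$ by $1$ in Euclidean norm, and concludes that every subgradient of $\phi$ has $\ell_\infty$-norm at most $1$, whence $1$-Lipschitzness in $\norm{\cdot}_1$. You instead avoid both the closed form and subgradient calculus: you note each $y \mapsto x^\top A y$ is $1$-Lipschitz in $\norm{\cdot}_1$ (via Cauchy--Schwarz giving $\norm{A^\top x}_\infty \le 1$ for $x \in B^d$), and then run the standard argument that an attained pointwise minimum of $1$-Lipschitz functions is $1$-Lipschitz, fixing a minimizer for $y'$ and applying H\"older. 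This is more elementary and self-contained (no appeal to a chain rule for subdifferentials of norm compositions), at the cost of a brief attainment remark; the paper's version is shorter on the page because it outsources the composition step to a textbook reference. Your sketched alternative via $\phi(y) = -\norm{Ay}_2$ and the reverse triangle inequality is essentially a third, equally valid route that sits between the two. Either of your versions would be a fine substitute.
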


\begin{proof}
Note that we can equivalently express $\phi(y) = - \norm{A y}_2$. Defining $g : \R^d \to \R$ via $g(x) = \norm{x}_2$, it is a standard result (e.g., \cite[Sec. D.4.2]{urruty2004convexanalysistextbook}) that $\partial \phi(y) = - A^\top \partial g(A y)$ for all $y \in \R^n$. Furthermore, it is straightforward to check that every subgradient of $g$ has Euclidean norm at most 1. Thus, every subgradient of $\phi$ takes the form $- A^\top u$ for some $u \in \R^d$ with $\norm{u}_2 \le 1$. Then it is immediate that the $\ell_\infty$-norm of every subgradient of $\phi$ is bounded by 1.
\end{proof}

\end{document}